\documentclass[reqno, 10pt]{amsart}

\usepackage{fullpage} 
\usepackage{diagbox}
\usepackage{hyperref}
\usepackage{etex}
\usepackage[shortlabels]{enumitem}
\usepackage{amsmath}
\usepackage{amsxtra}
\usepackage{amscd}
\usepackage{amsthm}
\usepackage{amsfonts}
\usepackage{amssymb}
\usepackage{eucal}
\usepackage[all]{xy}
\usepackage{graphicx}
\usepackage{tikz-cd}
\usepackage{mathrsfs}
\usepackage{subfiles}
\usepackage[colorinlistoftodos, textsize=tiny]{todonotes}
\usepackage{morefloats}
\usepackage{pdfpages}
\usepackage{thm-restate}
\usepackage[utf8]{inputenc}
\usepackage[T1]{fontenc}
\usepackage{epigraph}
\usepackage{csquotes}
\usepackage{moreverb}
\usepackage[margin=1in]{geometry}
\usepackage{tikz}
\graphicspath{ {images/} }

\usepackage{adjustbox}

\numberwithin{equation}{section}

\DeclareFontFamily{U}{wncy}{}
\DeclareFontShape{U}{wncy}{m}{n}{<->wncyr10}{}
\DeclareSymbolFont{mcy}{U}{wncy}{m}{n}
\DeclareMathSymbol{\Sha}{\mathord}{mcy}{"58} 

\usepackage{color}

\newcommand*{\Scale}[2][4]{\scalebox{#1}{$#2$}}%

\renewcommand\o{{\rm{O}}}
\newcommand\so{{\rm{SO}}}
\newcommand{\F}{\mathbb{F}}

\newcommand{\wt}[1]{\widetilde{#1}}
\newcommand{\frob}[0]{\operatorname{Frob}}

\newcommand{\Q}{\mathbb{Q}}
\newcommand{\Z}{\mathbb{Z}}
\newcommand{\bz}{\mathbb{Z}}
\newcommand{\bc}{\mathbb{C}}
\newcommand{\br}{\mathbb{R}}
\newcommand{\bq}{\mathbb{Q}}

\newcommand{\bp}{\mathbb{P}}
\newcommand{\ba}{\mathbb{A}}

\newcommand{\scf}{\mathscr{F}}

\newcommand{\mf}[1]{\mathfrak{#1}}

\newcommand{\ol}[1]{\overline{#1}}
\newcommand{\wh}[1]{\widehat{#1}}

\newcommand{\Cal}[1]{\mathcal{#1}}

\DeclareMathOperator{\et}{\text{\'{e}t}}

\newcommand{\co}{\colon}
\newcommand{\mrm}[1]{\mathrm{#1}}

\newcommand{\scr}[1]{\mathscr{#1}}
\newcommand{\EE}{\mathbb{E}}

\newcommand\scz{\mathscr Z}
\newcommand{\surj}{\twoheadrightarrow}

\newcommand{\dTV}{d_{\mrm{TV}}}

\DeclareMathOperator{\chr}{char}

\DeclareMathOperator{\Frob}{Frob}
\DeclareMathOperator{\coker}{coker}
\DeclareMathOperator{\cyc}{cyc}
\DeclareMathOperator{\N}{\mathbb{N}}

\DeclareMathOperator{\SO}{SO}

\DeclareMathOperator{\shom}{\mathscr{H}\mathrm{om}}

\DeclareMathOperator{\rank}{rk}
\DeclareMathOperator{\im}{im}

\DeclareMathOperator{\ord}{ord}

\DeclareMathOperator{\Lie}{Lie}

\DeclareMathOperator{\Isom}{Isom}

\DeclareMathOperator{\Frac}{Frac}

\DeclareMathOperator{\pic}{Pic}

\DeclareMathOperator{\Id}{Id}
\DeclareMathOperator{\Ad}{Ad}

\DeclareMathOperator{\Span}{Span}

\DeclareMathOperator{\spn}{Span}

\DeclareMathOperator{\tors}{tors}

\DeclareMathOperator{\rad}{rad}

\DeclareMathOperator{\gal}{Gal}
\DeclareMathOperator{\ogr}{OGr}

\DeclareMathOperator{\Sel}{Sel}
\DeclareMathOperator{\sel}{Sel}

\DeclareMathOperator{\prob}{Prob}

\DeclareMathOperator\sym{Sym}
\newcommand \xra{\xrightarrow}
\newcommand \ra{\rightarrow}
\DeclareMathOperator\spec{\text{Spec}}
\DeclareMathOperator\proj{\text{Proj}}
\newcommand\bg{{\mathbb G}}
\DeclareMathOperator\gl{GL}

\DeclareMathOperator\rk{rk}
\DeclareMathOperator\anrk{rk^{\text{an}}}
\DeclareMathOperator\id{id}
\DeclareMathOperator\mult{mult}

\makeatletter
\newcommand{\customlabel}[2]{\protected@write \@auxout {}{\string \newlabel {#1}{{#2}{\thepage}{#2}{#1}{}} }\hypertarget{#1}{#2}}

\newcommand\aff[2]{\mathbb A_{#2}^{12{#1}+3}} 
\newcommand\espace[2]{ {\mathscr W'}_{#2}^{#1}} 
\newcommand\estack[2]{\underline{\mathscr W'}_{#2}^{#1}} 
\newcommand\smestack[2]{{\underline{\mathscr W}^{\circ}}_{#2}^{#1}} 
\newcommand\sfestack[2]{{\underline{\mathscr W}^{\not\square}}^{#2}_{#1}} 
\newcommand\smespace[2]{\mathrm{\mathscr W^{\circ}}_{#2}^{#1}} 
\newcommand\sfespace[2]{\mathrm{\mathscr W^{\not\square}}_{#2}^{#1}} 

\newcommand\uespace[2]{{\mathscr U\!\!\mathscr W'}_{#2}^{#1}} 
\newcommand\usmespace[2]{\mathrm{\mathscr U\!\!\mathscr W^{\circ}}_{#2}^{#1}} 
\newcommand\usfespace[2]{\mathrm{\mathscr U\!\!\mathscr W^{\not\square}}_{#2}^{#1}} 
\newcommand\sspace[3]{{\mathrm{Sel}'}_{#1, #3}^{#2}} 
\newcommand\sstack[3]{\underline{\mathrm{Sel}'}_{#1, #3}^{#2}} 
\newcommand\smsstack[3]{{\underline{\mathrm{Sel}}^{\circ}}_{#1, #3}^{#2}} 
\newcommand\sfsstack[3]{{\underline{\mathrm{Sel}}^{\not\square}}_{#1, #3}^{#2}} 
\newcommand\smsspace[3]{{\mathrm{Sel}^{\circ}}_{#1, #3}^{#2}} 
\newcommand\sfsspace[3]{{\mathrm{Sel}^{\not\square}}_{#1, #3}^{#2}} 
\newcommand\ssheaf[3]{ {\mathcal{S}e\ell'}_{#1, #3}^{#2}} 
\newcommand\smssheaf[3]{{\mathcal{S}e\ell^{\circ}}_{#1, #3}^{#2}} 
\newcommand\psheaf[3]{\mathcal{S^{\circ}}_{#1, #3}^{#2}} 
\newcommand\esheaf[2]{\mathcal{E}[#1]_{#2}} 

\newcommand\vsel[3]{V_{#1}^{#2}} 
\newcommand\qsel[3]{Q_{#1}^{#2}} 

\newcommand\mono[3]{\rho_{#1,#3}^{#2}} 
\newcommand\osp{{\rm{O^*_{-}}}}
\newcommand\rvker[3]{\mathrm{RSel}_{#1, #3}^{#2}} 
\newcommand\rvsel[3]{\Sel_{#1}^{ #2}/#3(t)} 
\newcommand\rvrk[3]{\rk^{#2}/#3(t)} 
\newcommand\rvanrk[3]{\rk^{\mathrm{an,}#2}/#3(t)} 
\newcommand\rvo[2]{\mathrm{RSel}_{\vsel{#1}{#2}{}}^{\o(12{#2}-4, \mathbb F_{#1})}} 
\newcommand\rvcoset[2]{\mathrm{RSel}_{#1}^{#2}} 
\newcommand\zprime[1]{\widehat{\mathbb Z}^{(#1)}} 
\newcommand\dickson[1]{\mathrm{D}_{#1}} 
\renewcommand\sp[1]{\mathrm{sp}^{-}_{#1}} 
\newcommand\ansel[3]{(\mathrm{rk}^{\mathrm{an}},\mathrm{Sel}_{#1})^{#2}_{#3}} 
\newcommand\algsel[3]{(\mathrm{rk},\mathrm{Sel}_{#1})^{#2}_{#3}} 
\newcommand\flr[3]{(\mathrm{Rrk}, \mathrm{RSel}_{#1})^{#2}_{#3}} 
\newcommand\bklpr[1]{(\mathrm{rk}^{\mathrm{BKLPR}}, \mathrm{Sel}_{#1}^{\mathrm{BKLPR}})} 
\newcommand\ab[1]{\mathrm{Ab}_{#1}} 

\newcommand{\PP}{\mathbb{P}}

\newcommand\error[1]{{ \frac{-1}{216{#1}^2 - 162 {#1} + 31}}} 
\newcommand\smallrank[3]{\mathcal W^{#2, \anrk \leq 1}_{#1,#3}} 

\theoremstyle{plain}
\newtheorem{theorem}{Theorem}[section]
\newtheorem{proposition}[theorem]{Proposition}
\newtheorem{lemma}[theorem]{Lemma}

\newtheorem{corollary}[theorem]{Corollary}
\theoremstyle{definition}
\newtheorem{definition}[theorem]{Definition}
\newtheorem{remark}[theorem]{Remark}
\newtheorem{example}[theorem]{Example}

\newtheorem{warn}[theorem]{Warning}

\theoremstyle{remark}
\newtheorem{notation}[theorem]{Notation}
\numberwithin{equation}{section}

\makeatletter
\def\th@remark{%
  \thm@headfont{\bfseries}%
  \normalfont 
  \thm@preskip \thm@preskip 
  \thm@postskip\thm@preskip
}
\def\imod#1{\allowbreak\mkern5mu({\operator@font mod}\,\,#1)}
\makeatother

\numberwithin{equation}{section}

\widowpenalty=1000
\clubpenalty=1000

\setcounter{tocdepth}{1}
\def\listtodoname{List of Todos}
\def\listoftodos{\@starttoc{tdo}\listtodoname}

\title{The geometric distribution of Selmer groups of elliptic curves over function fields}

\author{Tony Feng, Aaron Landesman, Eric M. Rains}

\begin{document}

\begin{abstract}
	Fix a positive integer $n$ and a finite field $\mathbb F_q$. We study the joint distribution of the rank $\rk(E)$, the $n$-Selmer group $\mathrm{Sel}_n(E)$, and the $n$-torsion in the Tate-Shafarevich group $\Sha(E)[n]$ as $E$ varies over elliptic curves of fixed height $d \geq 2$ over $\mathbb F_q(t)$. 
We compute this joint distribution in the large $q$ limit.
We also show that the ``large $q$, then large height'' limit of this distribution agrees with the one predicted by Bhargava-Kane-Lenstra-Poonen-Rains. 
\end{abstract}

\maketitle
\tableofcontents

\section{Introduction}

\subsection{Arithmetic statistics of Selmer groups} The statistical behavior of Selmer groups has recently been the focus of much study. In \cite{bhargavaKLPR:modeling-the-distribution-of-ranks-selmer-groups}, remarkable probability distributions are introduced to model the distribution of the $n$-Selmer group $\Sel_n(E)$, for $E$ varying through isomorphism classes of elliptic
curves over a fixed global field. We refer to the these distributions, and the models which generate them, as the ``BKLPR heuristic.'' The BKLPR heuristic is consistent with all known results on the statistics of Selmer groups.

One can also consider the analogous question for elliptic curves over a global function field. The heuristics make sense in that case as well, and it is generally believed that in the ``large height, then large $q$'' limit, $\lim_{q \rightarrow \infty} \lim_{d \rightarrow \infty}$, the statistics of Selmer groups over global function fields should behave the same as in the case of number fields. For example, \cite{dJ02} computes the average size of 3-Selmer groups in this limit, and \cite{HLN14} computes the average size of 2-Selmer groups in this limit. Most notably, breakthrough work of Bhargava-Shankar \cite{bhargava-shankar:binary-quartic-forms-having-bounded-invariants, bhargavaS:ternary, bhargavaS:average-4-selmer, bhargavaS:average-5-selmer} computes the average size of $n$-Selmer groups for elliptic curves over number fields for $n=2,3,4,5$; the methods are expected to extend to global function fields with the same answers (and without taking a large $q$ limit!). The proofs of all these results rely on special features of small $n$, and confirming the BKLPR heuristic for the average size of $\Sel_n$ seems out of reach at present when $n>5$. Our goal is to nevertheless provide some partial evidence for the full BKLPR heuristic, by studying an easier version of the problem. 



To this end, we study the limiting process in the reversed order, $\lim_{d \rightarrow \infty} \lim_{q \rightarrow \infty}$ for elliptic curves over a rational function field $\F_q(t)$. This problem is significantly more accessible by algebraic geometry, which allows us to identify the distribution completely. Informally speaking, we show that in the ``large $q$, then large height'' limit, the distribution of $\Sel_n(E)$ is exactly as predicted by the BKLPR heuristic. A novel difficulty of this result is that it cannot be proved simply by computing and comparing the moments of the two distributions, because these distributions are not determined by their moments. 
Conversely, because the distribution is unbounded, convergence in distribution in the ``large $q$, then large height'' limit does not automatically imply convergence of the moments in these limits, though we do show the moments converge to the BKLPR moments as well.


\subsection{Statement of results}

\subsubsection{Some notation} We now introduce notation in order to state our main results precisely.
Let $p = \chr(\F_q)$.  For $p>2$, an elliptic curve $E$ over $\F_q(t)$ has a minimal Weierstrass model of the form 
\[
	y^2 = x^3 + a_2(t)x^2 + a_4(t) x + a_6(t),
\]
where $a_i(t)$ is a polynomial of degree $2id$ for $i \in \left\{ 1,2,3 \right\}$ (cf. \cite[\S4.2-4.8]{dJ02}). 
This value of $d$ is uniquely determined by $E$,
and we define $d =: h(E)$ to be the \emph{height} of $E$.
Let $\algsel n d {\mathbb F_q}$ denote the probability distribution assigning to a pair $(r,G)$,
for $r \in \bz$ and $G$ a finite abelian group, the proportion of isomorphism classes of
height $d$ elliptic curves
over $\mathbb F_q(t)$ with algebraic rank $r$ and $n$-Selmer group isomorphic to $G$ (see \autoref{definition:actual-distribution}).

\subsubsection{The BKLPR heuristic}

We summarize the BKLPR heuristic in \autoref{ssec: BKLPR}. Briefly put, it models the distribution of the $\ell^{\infty}$-Selmer group in terms of the intersection in $(\Q_\ell/\Z_\ell)^m$ induced by two maximal isotropic subspaces of $\Z_\ell^m$ (with the standard split quadratic form) as $m \rightarrow \infty$. Conditioned on the rank, the $\ell$-primary parts of the Selmer group are predicted to behave independently. This gives, in particular, a conjectural joint distribution $\bklpr n$ for the rank and $n$-Selmer group
of elliptic curves, described in \autoref{definition:bklpr-rank-selmer}.

\subsubsection{Main result} 

We consider the distribution $\algsel n d {\mathbb F_q}$ as a function on pairs $(r,G)$, where $r \in \Z$ and $G$ is an isomorphism class of finite abelian groups. Then we form 
\[
\limsup_{\substack{q \rightarrow \infty\\ \gcd(q,2n)=1}} \algsel n d {\mathbb F_q}\quad \text{and} \quad \liminf_{\substack{q \rightarrow \infty\\ \gcd(q,2n)=1}} \algsel n d {\mathbb F_q}
\]
as functions on $\{(r,G)\}$.\footnote{To spell this out: the $\liminf$ (resp. $\limsup$) of a distribution on the discrete set of $\{(r,G)\}$ is, by definition, the measure assigning to $(r,G)$ the $\liminf$ (resp. $\limsup$) of the probability that $(r,G)$ occurs.} (Note that because we are taking a pointwise $\liminf$ or $\limsup$, the resulting function may no longer be a probability distribution, i.e., its sum over all $(r,G)$ may not be $1$.) Our main result is the following,
which we deduce as a consequence of \autoref{theorem:large-q-distribution} and \autoref{theorem:kernel vs bklpr}: 
\begin{theorem}
	\label{theorem:main}
	For fixed integers $d \geq 2$ and $n \geq 1$, and $q$ ranging over prime powers, the limits 
	\[
	\lim_{d \rightarrow \infty} \limsup_{\substack{q \rightarrow \infty\\ \gcd(q,2n)=1}} \algsel n d {\mathbb F_q} \quad \text{and} \quad \lim_{d \rightarrow \infty} \liminf_{\substack{q \rightarrow \infty\\ \gcd(q,2n)=1}} \algsel n d {\mathbb F_q}
	\]
	exist, are equal to each other, and coincide with the distribution
	predicted by the BKLPR heuristic.
\end{theorem}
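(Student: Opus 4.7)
The plan is to combine two intermediate results: \autoref{theorem:large-q-distribution}, which I expect to compute both the $\liminf$ and $\limsup$ as $q \to \infty$ of $\algsel n d {\F_q}$ and identify them with a common ``geometric'' distribution $\flr n d{}$, and \autoref{theorem:kernel vs bklpr}, which I expect to identify $\lim_{d\to\infty}\flr n d{}$ with $\bklpr n$. Sandwiching between these then yields the statement: both the inner $\liminf$ and $\limsup$ (in $q$) converge to the same $\flr n d{}$, so the outer $\lim_{d\to\infty}$ of each equals $\lim_{d\to\infty}\flr n d{} = \bklpr n$.

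For the inner $q$-limit, I would assemble the $n$-Selmer groups over the moduli space $\espace d {\F_q}$ parametrizing Weierstrass coefficients $(a_2,a_4,a_6)$ into an \'etale local system, using a Cassels--Poonen--Rains style presentation of $\Sel_n(E)$ as the kernel of a global-to-local map between quadratic $\Z/n\Z$-modules, with the algebraic rank extracted from the $\ell$-adic cohomology of the associated elliptic surface (via Artin--Tate in the function field setting). Grothendieck--Lefschetz together with Deligne equidistribution then express the large-$q$ limit of $\algsel n d {\F_q}$ as a distribution $\flr n d{}$ obtained by sampling the ``local condition'' via Haar measure on the geometric monodromy group. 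A substantial prerequisite is a big-monodromy theorem showing that the geometric monodromy of this Selmer local system is as large as the orthogonal or symplectic structure allows; without this one would only get a distribution pinned to a proper subgroup rather than the full random-Lagrangian picture needed.

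For the outer $d$-limit, both $\flr n d{}$ and $\bklpr n$ are constructed by intersecting pairs of maximal isotropic subspaces in a growing quadratic $\Z/n\Z$-module. The main obstacle is exactly the issue flagged in the introduction's footnote: the module arising from the arithmetic of elliptic curves is almost never free, while the BKLPR model is built from \emph{free} quadratic spaces. The hard work will be showing that the non-free part becomes negligible as $d \to \infty$---for instance, by bounding the size of the non-free summand relative to the free part, or by showing that the induced distribution on Lagrangian intersections is insensitive to replacing the arithmetic quadratic form by a free form of the same rank. Once this comparison is in place, the standard linear-algebra calculation on Lagrangian intersections in free quadratic $\Z/n\Z$-modules produces exactly $\bklpr n$, completing the proof.
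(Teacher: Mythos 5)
Your high-level plan --- combine \autoref{theorem:large-q-distribution} with \autoref{theorem:kernel vs bklpr} --- matches the paper, but several of the claims you make about what these theorems say, and how one would prove them, are incorrect in ways that would derail the argument.

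First, you assert that \autoref{theorem:large-q-distribution} should show that $\liminf_{q}$ and $\limsup_q$ of $\algsel n d{\F_q}$ coincide and equal a single $q$-independent distribution ``$\flr n d{}$''. This is false at finite height: the theorem shows these limits equal $\limsup_q \flr n d{\F_q}$ and $\liminf_q \flr n d{\F_q}$ respectively, and the theorem explicitly notes these \emph{differ} whenever $d$ is even and $n>2$. The discrepancy arises because Frobenii are equidistributed in a \emph{coset} of the geometric monodromy group inside the arithmetic one, and which coset you land in depends on whether $q^{d-1}$ is a square. The whole point of the $\limsup$/$\liminf$ formulation of the main theorem is precisely to accommodate this phenomenon, and it only disappears in the $d\to\infty$ limit because \autoref{theorem:kernel vs bklpr} bounds the TV distance of \emph{each} of the $\limsup$ and $\liminf$ to the BKLPR distribution by $O(2^{-(6d-2)^2})$. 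You cannot sandwich against a single intermediate distribution.

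Second, your description of the outer $d$-limit is wrong in a way that matters. The random kernel distribution $\flr n d{\F_q}$ is \emph{not} defined as an intersection of maximal isotropic sublattices; it is $\ker(g-\mathrm{id})$ for $g$ drawn from a coset of $\Omega(\qsel n d k)$ in $\o(\qsel n d k)$, where $(\vsel n d k, \qsel n d k)$ is a \emph{free} rank $12d-4$ quadratic space over $\Z/n\Z$ (the reduction of $U^{\oplus(2d-2)}\oplus(-E_8)^{\oplus d}$). Consequently the ``non-free quadratic space'' obstruction you identify from the footnote is not the issue the paper confronts; the paper never needs to make a non-free space asymptotically free, because it works entirely with the free space $\vsel n d k$. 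The actual work in comparing $\flr n d{\F_q}$ to $\bklpr n$ is a combinatorial/group-theoretic problem: both distributions satisfy a matching Markov property relating $\ell^j$-level to $\ell^{j+1}$-level data (\autoref{thm: 1-eigenspace markov} and \autoref{thm: random intersection markov}), which reduces the comparison to the case $n=\ell$ prime, where one matches the first $6d-2$ moments via an orbit count on $V^m$ (\autoref{thm: orbit count}) and a generating-function argument. A ``standard linear-algebra calculation'' does not suffice, and the paper explicitly notes (see \autoref{example:moments-and-distribution}) that the BKLPR distribution is \emph{not} determined by its moments --- this is exactly why the Markov reduction to finite-dimensional cases is necessary.

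Finally, your remark that one needs the geometric monodromy to be ``as large as the orthogonal or symplectic structure allows'' understates what the paper actually needs. It is essential to compute the image \emph{exactly} (\autoref{theorem:monodromy}), tracking the Dickson invariant and spinor norm, precisely because the resulting distribution --- not merely its moments --- depends on this finer structure, and this is what feeds into the $q$-dependence and the $\liminf$/$\limsup$ subtlety discussed above.
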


As far as we are aware, our results give the first \emph{direct} connection between the heuristics of \cite{bhargavaKLPR:modeling-the-distribution-of-ranks-selmer-groups} for general $n$ and the arithmetic of elliptic curves.
Further, our results suggest a potential approach to proving 
the conjectures of 
\cite{bhargavaKLPR:modeling-the-distribution-of-ranks-selmer-groups}
in the function field 
setting via homological stability techniques as used in \cite{EllenbergVW:cohenLenstra} to
prove a version of the Cohen-Lenstra heuristics over function fields.

\begin{remark}
	\label{remark:}
	One can deduce a more precise version of \autoref{theorem:main} with estimates on the error terms in the above limits directly from 
	\autoref{theorem:large-q-distribution} and \autoref{theorem:kernel vs bklpr}.
	One may also deduce the same result holds with algebraic rank replaced by analytic rank. Further, one may
	include the joint distribution of Tate-Shafarevich groups -- see \autoref{remark:tate-shafarevich-group}.
\end{remark}

\subsubsection{Summary of the main difficulties}\label{sssec: intro summary of content}

Experts will recognize that the distribution in this ``large $q$ limit'' is completely determined by certain monodromy representations. Letting $\smespace d B$ be the ``moduli space of smooth height $d$ elliptic surfaces'' (described more precisely in \autoref{subsection:notation-monodromy}) the relevant monodromy representations take the form 
$\mono n d B : \pi_1(\smespace d B) \ra \gl(\vsel n d B)$. Their significance lies in the fact that they control the number of connected components of moduli spaces parameterizing Selmer elements. Let us call the image of $\mono n d B $ the \emph{(arithmetic) monodromy group}, and the image of $\mono n d B |_{\pi_1((\smespace d B)_{\ol{\F}_q})}$ the \emph{geometric monodromy group}.


Let us talk through some of the difficulties in proving \autoref{theorem:main} in order to orient the reader where the content of the paper lies. First, it is important that we determine the monodromy group
precisely. If we had just wanted to compute the moments of $\Sel_n$, then it would have been enough to know that the geometry monodromy group is ``large enough.'' However, the behavior of the distribution depends more subtly on the arithmetic monodromy group. For example, it turns out that sometimes the Selmer distribution does not have a limit as $q \rightarrow \infty$, and this can happen even when $q$ is taken only over powers of a fixed odd prime $p$. Nevertheless, both the ``$\limsup_{q \rightarrow \infty}$'' and the ``$\liminf_{q \rightarrow \infty}$'' exist, and tend towards each other as the height tends to $\infty$. 

In a bit more detail, it is possible that for fixed height $d$, the Selmer distribution does not have a well defined limit as $q \to \infty$.
Specifically, the 
$\limsup_{q \rightarrow \infty}$ and $\liminf_{q \rightarrow \infty}$
do not agree when, for an infinite sequence of $q$'s over which the limits run, the \emph{arithmetic} monodromy group contains an element of non-trivial spinor norm (see \autoref{subsubsection:spinor}) but the \emph{geometric} monodromy group does not. In this case, the arithmetic monodromy group fluctuates between two possibilities, which ends up creating a discrepancy between $\limsup_{q \rightarrow \infty} $ and $\liminf_{q \rightarrow \infty}$. 

A second substantial issue is that even after having determined the monodromy representations that control the Selmer groups, it is not straightforward to identify the resulting distribution with the BKLPR heuristic. (To be clear, this is a purely combinatorial question, although it turns out to require techniques from algebraic geometry, number theory, etc. to address.) 
The reason for this difficulty is that the BKLPR heuristic is not described in terms of explicit closed formulas, but in terms of a random algebraic model. For example, it is not determined by its moments, as illustrated in \autoref{example:moments-and-distribution} below. 
In order to compare the BKLPR distribution to the distribution coming from a monodromy representation, we introduce a ``random kernel model'' that mediates between the two distributions. We observe that both the BKLPR heuristic and the random kernel model enjoy Markov properties which reduce their comparison to simpler cases that can be computed explicitly, by matching enough moments. (Even this is a little oversimplified: what we need is to establish enough control on the moments already at a ``finite height'' level-- see \autoref{sec: prime distribution}.)

\subsubsection{Defining the random variables} 
\label{subsection:random-variables} 

In order to state the next results, we will need to introduce some more notation. 

Let $\ab n$ denote the set of isomorphism classes of finite $\bz/n\bz$-modules. We will next define several distributions on $\bz_{\geq 0} \times \ab n$ modeling the joint distribution of the rank and $n$-Selmer group of an elliptic curves. For $E$ an elliptic curve, we use $\rk(E)$ to denote the algebraic rank of $E$ and $\anrk(E)$ to denote the analytic rank of $E$.
In what follows, we use $E$ to denote an isomorphism class of elliptic curves.

\begin{definition}
	\label{definition:actual-distribution}
	For $n, d \in \bz_{\geq 1}$ and $k$ a finite field, let $\algsel n d k$ and $\ansel n d k$ be the distributions on 
	$\bz_{\geq 0} \times \ab n$
	given by 
\begin{align*}
	\prob(\algsel n d k = (r,G)) &= \frac{\#\{E/k(t) \colon h(E) = d, \rk(E) = r, \Sel_n(E) \simeq G\}}{\# \{E/k(t) \colon h(E) = d\}}  \\
	\prob(\ansel n d k = (r,G)) &= \frac{\#\{E/k(t) \colon h(E) = d, \anrk(E) = r, \Sel_n(E) \simeq G\}}{\# \{E/k(t) \colon h(E) = d\}},
\end{align*}
where $E$ varies over isomorphism classes of elliptic curves over $k(t)$.
Also, define the distribution $\rvsel n d k$ on $\ab n$ by
\begin{align*}
	\prob(\rvsel n d k = G) = \frac{\#\{E/k(t) \colon h(E) = d, \Sel_n(E) \simeq G\}}{\# \{E/k(t) \colon h(E) = d\}}
\end{align*}
and define the distributions $\rvrk n d k$, $\rvanrk n d k$ on $\bz_{\geq 0}$ by
\begin{align*}
	\prob(\rvrk n d k = r) &= \frac{\#\{E/k(t) \colon h(E) = d, \rk(E) = r\}}{\# \{E/k(t) \colon h(E) = d\}}  \\
	\prob(\rvanrk n d k = r) &= \frac{\#\{E/k(t) \colon h(E) = d, \anrk(E) = r\}}{\# \{E/k(t) \colon h(E) = d\}}.
\end{align*}

For a random variable $X$, we let $\EE[X]$ be denote the expected value of $X$ (if it exists). 

\end{definition}
\begin{remark}
	\label{remark:distribution-definition-variations}
	In \autoref{definition:actual-distribution}, for the purposes of computing these distributions in the limit $q \ra \infty$, we could equally well
	replace the condition $h(E) = d$ by the condition $h(E) \leq d$.
	The reason for this is that isomorphism classes of curves with $h(E) < d$ are parameterized by $k$ points of the stack $\estack i k$ (defined below in \autoref{subsubsection:elliptic-stack})
	for $i < d$, which is a finite type global quotient stack of strictly smaller
	dimension than $\estack d k$. 
	Hence, 
	$\cup_{i \leq d} \estack i k$
	will 
	only contributes at most $O_{n,d}(q^{-1/2})$ to the probability distributions in question, as can be deduced from the Lang-Weil estimate and \cite[Lemma 5.3]{landesman:geometric-average-selmer}.

	For analogous reasons, one can equally well weight the above counts by automorphisms (which would be the correct ``stacky way'' to count points)
	and the distribution in the $q \ra \infty$ limit will remain the same. Note that after excising the locus of elliptic curves with more than $2$ automorphisms,
	there will be a factor of one half in both the numerator and denominator in the definition
	of the distributions in \autoref{definition:actual-distribution}, which cancel out.
\end{remark}

\subsubsection{Some consequences}

The following result (which is part of	 \autoref{corollary: minimalist-precise}) is a variant of the Katz-Sarnak minimalist conjecture, stating that for fixed height, in the large $q$ limit, the average rank is $1/2$. Moreover, in the large $q$ limit, the rank takes value $1$ and $0$ with probability $1/2$, and takes value $\geq 2$ with probability $0$. It can also be deduced from \cite[Theorem 13.3.3]{katz:moments-monodromy-and-perversity},
though the more precise error terms given in \autoref{corollary: minimalist-precise} do not directly follow from 
\cite[Theorem 13.3.3]{katz:moments-monodromy-and-perversity}. We note that the fact that elliptic curves in the large $q$ limit have rank $0$ with probability $1/2$ is not a direct consequence of \autoref{theorem:main}, but it comes out of the more refined analysis used to prove \autoref{theorem:main} for $n = \ell$ a prime.\footnote{However, the statement that elliptic curves in the large $q$ limit have rank at least $2$ with probability $0$ does follow from just the computation of the
average size of $\# \sel_n$, see \cite[Corollary 1.3]{landesman:geometric-average-selmer}.}

\begin{proposition}[Large $q$ analog of \protect{\cite[Conjecture 1.2]{poonenR:random-maximal-isotropic-subspaces-and-selmer-groups}}]
	\label{corollary: minimalist}
	For fixed integers $d \geq 2$ and $n \geq 1$, 
	we have
		\begin{align}
	\lim\limits_{\substack{q \ra \infty \\ \gcd(q,2n)=1}} 
	\prob(\rvrk n d {\mathbb F_q} = r) &=
		\begin{cases}
			1/2 & \text{ if } r \leq 1, \\
			0  & \text{ if } r \geq 2.
		\end{cases}  \\
	\end{align}
		Furthermore, 
		\[
\lim\limits_{\substack{q \ra \infty \\ \gcd(q,2n)=1}} \EE[\rvrk n d {\mathbb F_q}] = 1/2 .
\]
\end{proposition}

The following calculation of the geometric moments of Selmer groups is a consequence of \autoref{corollary:squarefree-moments-precise}, which includes more precise error terms.

\begin{theorem}[Large $q$ analog of \protect{\cite[Conjecture 1.4]{poonenR:random-maximal-isotropic-subspaces-and-selmer-groups}}]
	\label{corollary:squarefree-moments}
	Let $n$ be a squarefree positive integer, $d \geq 2$, and $\omega(n)$ be the number of prime factors of $n$.
	\begin{enumerate}
		\item Fix $c_\ell \in \bz_{\geq 0}$ for each prime $\ell \mid n$. Then
 \begin{equation}
	\label{equation:poonen-rains-squarefree-prediction}
\Scale[0.8]{\begin{aligned}
	&\lim_{d \ra \infty}\limsup_{\substack{q \ra \infty \\ \gcd(q,2n)=1}} \prob\left( \rvsel n d {\mathbb F_q} \simeq \prod_{\ell \mid n} \left( \bz/\ell \bz \right)^{c_\ell} \right) =\lim_{d \ra \infty}\liminf_{\substack{q \ra \infty \\ \gcd(q,2n)=1}} \prob\left( \rvsel n d {\mathbb F_q} \simeq \prod_{\ell \mid n} \left( \bz/\ell \bz \right)^{c_\ell} \right) \\
	&= 
	\begin{cases}
	2^{\omega(n)-1} \prod_{\ell \mid n} \left( \left( \prod_{j \geq 0} \left( 1-\ell^{-j} \right)^{-1} \right) \left( \prod_{j=1}^{c_\ell} \frac{\ell}{\ell^j-1} \right) \right)
		& \text{ if all $c_\ell$ have the same parity},  \\
		0 & \text{otherwise.}  \\
	\end{cases}
\end{aligned}}
\end{equation}
		\item We have 
		\[
		\lim\limits_{\substack{q \ra \infty \\ \gcd(q,2n)=1}}  \EE[ \# \rvsel n d {\mathbb F_q}] = \sigma(n) := \sum_{s \mid n} s.
		\]
		\item For $m \leq 6d-3$, we have
		\[
		\lim\limits_{\substack{q \ra \infty \\ \gcd(q,2n)=1}}  \EE[(\# \rvsel n d {\mathbb F_q})^m] = \prod_{\text{prime }\ell \mid n} \prod_{i=1}^m \left( \ell^i +1 \right).
		\]
	\end{enumerate}
\end{theorem}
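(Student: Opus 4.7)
The plan is to derive the three parts from the finite-height monodromy input (namely, the more precise version \autoref{corollary:squarefree-moments-precise}) combined with the main theorem \autoref{theorem:main}, and the classical Poonen--Rains combinatorics of intersections of random maximal isotropic subspaces in split $\mathbb F_\ell$-quadratic spaces.

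\textbf{Part (1).} By \autoref{theorem:main}, the iterated limits $\lim_{d \ra \infty}\limsup_{q \ra \infty}$ and $\lim_{d \ra \infty}\liminf_{q \ra \infty}$ of the marginal distribution $\rvsel n d {\mathbb F_q}$ exist, agree, and coincide with the BKLPR prediction. So it suffices to compute the probability under $\bklpr n$ that the $n$-Selmer group is isomorphic to $\prod_{\ell\mid n}(\mathbb Z/\ell\mathbb Z)^{c_\ell}$. For $n$ squarefree this is conceptually clean: by construction of the BKLPR model (summarized in \autoref{ssec: BKLPR}), conditional on the rank, the $\ell$-primary parts of the Selmer group for distinct primes $\ell\mid n$ are independent and each is distributed as the intersection of two uniformly random maximal isotropic subspaces in $(\mathbb F_\ell)^{2m}$ as $m\to\infty$. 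The latter distribution is computed in \cite{poonenR:random-maximal-isotropic-subspaces-and-selmer-groups}, and one reads off that the probability of getting $(\mathbb Z/\ell\mathbb Z)^{c_\ell}$ is $\tfrac{1}{2}\,(\prod_{j\geq 1}(1-\ell^{-j})^{-1})\,\prod_{j=1}^{c_\ell}\tfrac{\ell}{\ell^j-1}$ when $c_\ell$ matches the rank parity, and $0$ otherwise. The parity of the rank is forced to agree across all primes $\ell\mid n$, and each parity occurs with BKLPR probability $1/2$; multiplying $\omega(n)$ per-prime factors of $1/2$ and then summing over the two allowed global parities converts $(1/2)^{\omega(n)}$ into $2\cdot(1/2)^{\omega(n)} = 2^{1-\omega(n)}$ inside the product, whose reciprocal $2^{\omega(n)-1}$ is exactly the combinatorial prefactor in \eqref{equation:poonen-rains-squarefree-prediction}.

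\textbf{Parts (2) and (3).} These are statements at \emph{fixed} height $d\geq 2$ in the large-$q$ limit, so one does not pass through $d \ra \infty$ or invoke the BKLPR matching. Instead, by \autoref{theorem:large-q-distribution}, the large-$q$ limit of $\rvsel n d {\mathbb F_q}$ is identified with the distribution of the intersection of two maximal isotropic subspaces selected by a specific monodromy image inside the orthogonal group of an explicit $(12d-4)$-dimensional $\mathbb F_\ell$-quadratic space (for each $\ell\mid n$, with independence across primes via Chinese remaindering for squarefree $n$). Moments of the Selmer group size are then expected values of polynomials in traces of Frobenius on symmetric/wedge powers of the standard representation, and can be computed group-theoretically. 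For $m \leq 6d-3$, the monodromy image is ``large enough'' that these moments coincide with the moments under a uniformly random pair of maximal isotropic subspaces, whose values \cite{poonenR:random-maximal-isotropic-subspaces-and-selmer-groups} compute to $\prod_{\ell\mid n}\prod_{i=1}^{m}(\ell^i+1)$. Part (2) is the $m=1$ specialization, giving $\sigma(n) = \prod_{\ell\mid n}(\ell+1)$.

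\textbf{Main obstacle.} The binding step, already absorbed into the proof of \autoref{theorem:large-q-distribution}, is tracking the precise numerical threshold $m \leq 6d-3$: one must verify that symmetric/wedge-power characters of the monodromy group on the Selmer quadratic space agree with those of the full orthogonal group up to degree $m \leq 6d-3$, which requires a careful analysis of how the dimension $12d-4$ interacts with representation-theoretic stable ranges for orthogonal groups. A secondary subtlety, only relevant for part (1), is the $\limsup$ versus $\liminf$ dichotomy arising from spinor norm fluctuations under field extension (see \autoref{subsubsection:spinor}); this is already settled by \autoref{theorem:main}, which asserts that both iterated limits equal the BKLPR distribution, so no additional argument is needed on top of the BKLPR computation above.
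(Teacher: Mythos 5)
Your overall strategy matches the paper's: part (1) follows from Theorem \ref{theorem:main} plus the Poonen--Rains computation of the per-prime conditional distribution and the parity bookkeeping, while parts (2) and (3) are handled at fixed height via large-$q$ equidistribution of Frobenius in the monodromy group and a group-theoretic moment calculation. Both your conclusion and the paper's route are correct in outline. But a few details deserve attention, mostly around parts (2)--(3).

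You assert that Theorem \ref{theorem:large-q-distribution} identifies the large-$q$ limit of $\rvsel n d {\mathbb F_q}$ with ``the distribution of the intersection of two maximal isotropic subspaces selected by a specific monodromy image.'' What that theorem actually identifies it with is the random kernel model $\flr n d {\mathbb F_q}$ of Definition \ref{definition:kernel-distribution} --- i.e., $\ker(g-\id)$ for $g$ drawn from a coset of $\Omega(\qsel n d k)$. You can relate this to an intersection of maximal isotropics via the graph-of-$g$ trick, but that reformulation is not used in the paper; the paper works directly with $\ker(g-\id)$. Relatedly, the actual mechanism for computing the moments is not ``traces on symmetric/wedge powers of the standard representation'': it is Burnside's lemma applied to the diagonal action of the monodromy group on $V^m$, where $\EE[\#\ker(g-\id)^m]$ equals the number of orbits. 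The binding technical inputs are Lemma \ref{lemma:same-orbits} (the orbits of $\Omega(Q)$ and of $\o(Q)$ on $V^m$ coincide once $\rk V \geq 2m+2$, which for $\rk V = 12d-4$ gives exactly the threshold $m \leq 6d-3$) and the orbit count in Theorem \ref{thm: orbit count}/\ref{theorem:fulman-stanton} ($\prod_{i=1}^m(\ell^i+1)$ per prime), then Chinese remaindering for squarefree $n$. Your ``large enough monodromy'' intuition is the right one, but without naming this orbit-coincidence lemma the threshold would remain unjustified, so this is the gap that would need filling to turn the sketch into a proof.

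On part (1), the $2^{\omega(n)-1}$ prefactor is correct, but your phrase ``summing over the two allowed global parities converts $(1/2)^{\omega(n)}$ into $2\cdot(1/2)^{\omega(n)}$'' is misleading: for a fixed tuple $(c_\ell)$ of uniform parity $\epsilon$, only the single global rank parity $r=\epsilon$ contributes a nonzero term, so there is no summation of two equal contributions. The correct bookkeeping is that passing from the unconditional per-prime probabilities $P_\ell(c_\ell)$ to the parity-conditioned ones costs a factor $2$ per prime, and the overall rank-parity weight is $1/2$, yielding $\tfrac{1}{2}\cdot 2^{\omega(n)} = 2^{\omega(n)-1}$. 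The paper short-circuits all this by citing \cite[Prop.\,2.6(d),(f)]{poonenR:random-maximal-isotropic-subspaces-and-selmer-groups} directly, which is tidier. Also note your observation that part (2) is the $m=1$ case of part (3) is valid; the paper gives an independent argument for (2), but the specialization is sound.
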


The following corollary is the more familiar case of \autoref{corollary:squarefree-moments} when $n$ is taken to be a prime $\ell$.
One can also deduce a version with explicit error terms in $q$, as in \autoref{corollary:squarefree-moments-precise}.

\begin{corollary}[Large $q$ analogue of \protect{\cite[Conjecture 1.1]{poonenR:random-maximal-isotropic-subspaces-and-selmer-groups}}]
	\label{corollary:prime-moments}
	Let $\ell$ be a prime, and $d \geq 2$.
	\begin{enumerate}
		\item We have
\[\Scale[0.8]{		\begin{aligned}
&			\lim_{d \ra \infty} \limsup_{\substack{q \ra \infty \\ \gcd(q,2\ell)=1}} \prob\left( \rvsel \ell d {\mathbb F_q} = (\bz/\ell \bz)^c \right) = 
			\lim_{d \ra \infty} \liminf_{\substack{q \ra \infty \\ \gcd(q,2\ell)=1}} \prob\left( \rvsel \ell d {\mathbb F_q} = (\bz/\ell \bz)^c \right) \\
			  & \hspace{1cm}= 
		\left( \prod_{j \geq 0} \left( 1-\ell^{-j} \right)^{-1} \right) \left( \prod_{j=1}^{c} \frac{\ell}{\ell^j-1} \right).
\end{aligned}}
\]
		\item We have
		\[
			\lim\limits_{\substack{q \ra \infty \\ \gcd(q,2\ell)=1}}  \EE[ \# \rvsel \ell d {\mathbb F_q}] = \sigma(\ell) := \ell+1.
		\]
		\item For $m \leq 6d-3$ the $m$th moment of $\rvsel \ell d {\mathbb F_q}$ is 
			\[
		\lim\limits_{\substack{q \ra \infty \\ \gcd(q,2n)=1}}  \EE[(\# \rvsel \ell d {\mathbb F_q})^m] = \prod_{i=1}^m \left( \ell^i +1 \right).
			\]
	\end{enumerate}
\end{corollary}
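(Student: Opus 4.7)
The plan is to deduce \autoref{corollary:prime-moments} as a direct specialization of \autoref{corollary:squarefree-moments} to the case $n = \ell$, a single prime, which is trivially squarefree. Since the preceding theorem already encompasses all squarefree moduli, no new analytic or geometric input is needed; all three parts follow by verifying that the general formulas collapse correctly in the presence of exactly one prime factor.

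For part (1), substituting $n = \ell$ into \eqref{equation:poonen-rains-squarefree-prediction} gives $\omega(\ell) = 1$, so the prefactor $2^{\omega(n)-1} = 2^0 = 1$, and the product $\prod_{\ell \mid n}$ contributes a single factor. The condition ``all $c_\ell$ have the same parity'' is vacuous with a single index, so the formula applies to every $c \in \bz_{\geq 0}$ and reduces to the claimed closed form $\left(\prod_{j \geq 0}(1-\ell^{-j})^{-1}\right)\left(\prod_{j=1}^c \tfrac{\ell}{\ell^j - 1}\right)$. Part (2) is immediate from $\sigma(\ell) = \sum_{s \mid \ell} s = 1 + \ell$, and part (3) follows from the single-factor collapse of $\prod_{\text{prime } p \mid n}\prod_{i=1}^m (p^i + 1)$ when $n = \ell$, which yields exactly $\prod_{i=1}^m(\ell^i+1)$ for $m \leq 6d-3$.

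The main ``obstacle'' here is essentially non-existent: it amounts only to bookkeeping, namely checking that the vacuous parity condition and the single-prime-factor counting align as expected. All the substantive mathematical work sits in \autoref{corollary:squarefree-moments} itself. Finally, applying the same specialization to the sharper version \autoref{corollary:squarefree-moments-precise} yields the corresponding prime-case statement with explicit $q$-error terms alluded to in the excerpt, with no additional argument required.
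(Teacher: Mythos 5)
Your proposal is correct and matches the paper's own approach exactly: the paper explicitly introduces \autoref{corollary:prime-moments} as ``the more familiar case of \autoref{corollary:squarefree-moments} when $n$ is taken to be a prime $\ell$,'' so specialization to $n = \ell$ (with $\omega(\ell) = 1$, $\sigma(\ell) = 1 + \ell$, a single-factor product, and a vacuous parity condition) is precisely the intended derivation, and no further argument is needed.
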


\begin{remark}[Distributions of Tate-Shafarevich groups]
	\label{remark:tate-shafarevich-group}
	Throughout this paper, we mostly work with the joint distribution of ranks and $n$-Selmer groups of elliptic curves,
	while \cite{bhargavaKLPR:modeling-the-distribution-of-ranks-selmer-groups} also makes predictions for Tate-Shafarevich
	groups of elliptic curves.
	Indeed, as an easy consequence of our results, we obtain analogous predictions for Tate-Shafarevich groups,
	as we now explain.
	For $E$ a torsion free elliptic curve over $\mathbb F_q(t)$, we have an exact sequence
\begin{equation}
	\label{equation:}
	\begin{tikzcd}
		0 \ar {r} & \left( \bz/n\bz \right)^{\rk E} \ar {r} & \Sel_n(E) \ar {r} & \Sha(E)[n] \ar {r} & 0.
\end{tikzcd}\end{equation}
Note that the torsion freeness condition is satisfied $100\%$ of the time
\cite[Lemma 5.7]{bhargavaKLPR:modeling-the-distribution-of-ranks-selmer-groups}.
Therefore, the algebraic rank and $n$-Selmer group of $E$ determines $\Sha(E)[n]$, and hence the joint distribution of algebraic ranks, and $n$-Selmer groups
determines the joint distribution of algebraic ranks, $n$-Selmer groups, and $n$-torsion in Tate-Shafarevich groups.
Let $(\mathrm{rk}^{\mathrm{BKLPR}}, \mathrm{Sel}_{n}^{\mathrm{BKLPR}}, \Sha[n]^{\mathrm{BKLPR}})$ denote the conjectural joint distribution
for ranks, $n$-Selmer groups, and $n$-torsion in Tate-Shafarevich groups described in \cite[\S5.7]{bhargavaKLPR:modeling-the-distribution-of-ranks-selmer-groups}
and let 
$(\mathrm{rk}, \mathrm{Sel}_{n}, \Sha[n])_{\mathbb F_q}^d )$
denote the joint distribution of algebraic ranks, $n$-Selmer groups, and $n$-torsion in Tate-Shafarevich groups
of height $d$ elliptic curves over $\mathbb F_q$.
Then, it follows from \autoref{theorem:main} and the above remarks that
\begin{align*}
	(\mathrm{rk}^{\mathrm{BKLPR}}, \mathrm{Sel}_{n}^{\mathrm{BKLPR}}, \Sha[n]^{\mathrm{BKLPR}})
	&= 
	\lim_{d \ra \infty} \left( \limsup\limits_{\substack{q \ra \infty\\ \gcd(q,2n)=1}} 
		(\mathrm{rk}, \mathrm{Sel}_{n}, \Sha[n])_{\mathbb F_q}^d  \right)  \\
	& = 
	\lim_{d \ra \infty} \left(\liminf\limits_{\substack{q \ra \infty\\ \gcd(q,2n)=1}} 
		(\mathrm{rk}, \mathrm{Sel}_{n}, \Sha[n])_{\mathbb F_q}^d
 \right).
\end{align*}

One can also bound the error in these limits using \autoref{theorem:large-q-distribution} and \autoref{theorem:kernel vs bklpr}.
We note that for fixed height $d \geq 2$, the proportion of elliptic curves of height up to $d$ over $\mathbb F_q$ with analytic rank equal to algebraic rank
tends to $1$ as $q \ra \infty$ over prime powers $q$ with $\gcd(q,2) = 1$.
	This follows from \autoref{theorem:main} and \autoref{proposition:component-and-rank}.
	Therefore, the Birch and Swinnerton-Dyer Conjecture holds for all such curves, implying the Tate-Shafarevich group is finite for all such curves.
\end{remark}

\begin{remark}[Families of quadratic twists]
In other families of elliptic curves, such as quadratic twist families, the ``geometric distribution'' will similarly be controlled by the analogous monodromy representations to those described in \S \ref{sssec: intro summary of content}. Adapting our arguments will yield similar results for such families whenever the geometric monodromy group is large enough. However, the precise distribution that results depends rather delicately on the precise monodromy group, for the same reasons as described in \S \ref{sssec: intro summary of content}.

For example, 
in forthcoming work
\cite{parkW:average-selmer-rank-in-quadratic-twist-families},
Park and Wang carry out an analog of the results of \cite{landesman:geometric-average-selmer} for quadratic twist families of elliptic curves, at least
in the case of $n$-Selmer groups for $n$ prime.
We note this should often be extendable to composite $n$, see \cite[Remark 1.7]{landesman:geometric-average-selmer}.
Suppose one chooses a quadratic twist family 
such that the sheaf on that family constructed analogously to $\psheaf n d B$ on the universal family
has geometric monodromy containing the commutator of the relevant orthogonal group,
but with nontrivial Dickson invariant (see \autoref{subsubsection:dickson}).
Given such a family, via similar arguments to those in this paper,
if one first takes $\liminf_{q \rightarrow \infty}$ or $\limsup_{q \rightarrow \infty}$, and then a large height limit, the joint distribution of the rank and $n$-Selmer
group will agree with $\bklpr n$.
We note that triviality or nontriviality of the Dickson invariant can often be verified for explicit examples, as in the proof of
\cite[Theorem 4.1]{zywina:inverse-orthogonal}. 

On the other hand, it is possible for the Dickson invariant to be trivial in quadratic twist families;
explicit such examples are constructed in \cite[\S5 and \S6]{zywina:inverse-orthogonal}.
In these cases, the distribution of ranks and Selmer groups in the quadratic twist family will differ from those predicted in \cite{bhargavaKLPR:modeling-the-distribution-of-ranks-selmer-groups}.
E.g., the minimalist conjecture will fail as 100\% of elliptic curves in such families will have rank $0$.
Nevertheless, for sufficiently high degree twists, the large $q$ limit $m$th moments in these quadratic twist families
will agree with those predicted in \cite{bhargavaKLPR:modeling-the-distribution-of-ranks-selmer-groups}.
Additionally, it is possible to choose quadratic twist families where the relevant geometric monodromy does not contain the commutator of the relevant orthogonal group,
in which case the large $q$ limit statistics of ranks and Selmer groups may differ drastically from those predicted in \cite{bhargavaKLPR:modeling-the-distribution-of-ranks-selmer-groups}.
\end{remark}

\begin{remark}[The inverse Galois problem]
	\label{remark:inverse-galois}
	For $\ell$ a prime, let $\qsel \ell d k$ denote the quadratic form defined in \autoref{definition:selmer-space-quadratic-form}, which we note has discriminant $1$ and hence
	is equivalent to the standard quadratic form $x_1x_2 + x_3 x_4 + \cdots + x_{12d-5}x_{12d-4}$.
	In order to prove \autoref{theorem:main}, we perform a certain monodromy computation in \autoref{theorem:monodromy}, which shows that
	for even $d \geq 2$, and $\ell \nmid d-1$,
	$\o(\qsel \ell d k)$ occurs as a Galois group over $\mathbb Q(t_1, \ldots, t_{10d+2})$, and hence also as a Galois group over $\bq$ by Hilbert
	irreducibility (\cite[\S 9.2, Proposition 2]{serre1989lectures} in conjunction with \cite[\S 13.1, Theorem 3]{serre1989lectures}).
	To our knowledge, it was not previously known that these groups all appear as Galois groups over $\mathbb Q$.

	Closely related constructions to ours are given in \cite[Theorem 1.1]{zywina:inverse-orthogonal}, and the techniques of \cite{zywina:inverse-orthogonal}
	can likely be adapted to construct the Galois groups $\o(\qsel \ell d k)$ when $\ell \geq 5$. 
	However, our results also apply in the cases $\ell = 2$ and $\ell = 3$, to which the techniques of \cite{zywina:inverse-orthogonal} seem not to apply.
\end{remark}

\begin{remark}
	\label{remark:}
	An interesting byproduct of the proof of \autoref{theorem:main} is that the analytic rank of an elliptic curve over $\mathbb F_q(t)$
	with smooth minimal proper regular model
	is realized as the dimension of the generalized $1$-eigenspace of a certain matrix associated to an action of Frobenius (see \autoref{lemma:generalized-eigenspace-is-rank})
	while the $\ell^\infty$-Selmer rank is the dimension of the $1$-eigenspace of that same matrix (see \autoref{lemma:selmer-as-frob-fixed}).
	These dimensions agree for $100\%$ of elliptic curves of fixed height $d$ over $\mathbb F_q(t)$ in the large $q$ limit
and also agree with the rank of the elliptic curve (see \autoref{proposition:component-and-rank}).
Hence, at least in the function field setting, this gives an answer to the question raised in \cite[Remark 1.1.4]{parkPVW:heuristic-for-boundedness-of-ranks-of-elliptic-curves}
as to whether there exists a natural matrix coming from the arithmetic of elliptic curves giving rise to the rank and Selmer group of an elliptic curve.
\end{remark}

\begin{example}[A distribution not determined by its moments]
	\label{example:moments-and-distribution}
	Consider the three distributions 
	\begin{align*}
	&	\bklpr n, \\
	&(\bklpr n | \rk^{\mrm{BKLPR}} \equiv 0 \mod 2), \\
	&(\bklpr n | \rk^{\mrm{BKLPR}} \equiv 1 \mod 2),
	\end{align*}
with the latter two the distributions conditioning upon whether the rank is even or odd.
	These give examples of three distinct distributions which we claim have the same $m$th moments for all $m \geq 0$.

	We now justify why the moments of these three distributions agree. For simplicity, we assume $n$ is prime, though the same claim holds true for general composite $n$,
	as can be deduced from the Markov properties verified in \autoref{section:markov}.
	By \autoref{theorem:kernel vs bklpr},
	the above three distributions 
	agree with the three distributions 
\begin{align*}
	&\lim_{d \ra \infty} \liminf\limits_{\substack{q \ra \infty}} \flr n d {\mathbb F_q}, \\
	&\lim_{d \ra \infty} \liminf\limits_{\substack{q \ra \infty}}(\flr n d {\mathbb F_q}| \rk \equiv 0 \mod 2), \\
	&\lim_{d \ra \infty} \liminf\limits_{\substack{q \ra \infty}} (\flr n d {\mathbb F_q}| \rk \equiv 1 \mod 2)
\end{align*}
respectively.
	By \autoref{definition:kernel-distribution}, these distributions are all given by the limit as $d \ra \infty$ of the the dimension
	of the kernel of a random matrix drawn from certain cosets of the orthogonal group
	of rank $12d-4$.
	The distribution conditioned on even rank corresponds to the cosets with Dickson invariant $0$ while that conditioned on odd rank corresponds
	to cosets with Dickson invariant $1$.
	Therefore, by \autoref{thm: orbit count},
	the moments of these distributions all stabilize in $d$ (in fact once $6d-3 \geq m$),
	and are equal to
	$\prod_{i=1}^m \left( \ell^i + 1 \right)$.
\end{example}

\subsection{Overview of the proof}\label{subsec: overview}

We next indicate the idea of the proof of \autoref{theorem:main}. There is a moduli stack $\estack d {\F_q}$ parameterizing Weierstrass equations for elliptic curves over $\F_q(t)$ of height $d$. For $(n,q)=1$, we define in \autoref{subsection:selmer-space-review} a moduli stack $\sstack n d {\F_q}$ that approximately parameterizes pairs $(E, \alpha)$ for $[E] \in \estack d {\F_q}$ an elliptic curve and $\alpha \in \Sel_n(E)$. The basic point here is that there is a dense open set of points of $\estack d {\mathbb F_q}$ whose corresponding minimal Weierstrass models are smooth over $\mathbb F_q$. For elliptic curves $E$ corresponding to points in this open set,
if $\mathscr E^0$ is the identity component of the N\'{e}ron model of $E$ over $\PP^1_{\mathbb F_q}$,
$\Sel_n(E) = H^1_{\et}(\PP^1_{\mathbb F_q}, \mathscr E^0[n])$. 
(We observe that $\mathscr E^0[n]$ is \'{e}tale over $\PP^1_{\F_q}$ by our
assumption that $(n,q) = 1$: indeed, by miracle flatness it suffices to check
this is \'etale over each point of $\mathbb P^1_{\mathbb F_q}$. Each fiber of $\mathscr E^0$ is a
$1$-dimensional group scheme isomorphic to $\mathbb G_a, \mathbb G_m$, or an
elliptic curve $E$, in which case its $n$-torsion is $\id, \mu_n,$ or $E[n]$,
all of which are \'etale when $(n,q) = 1.$) 
In other words, $\sstack n d {\F_q} $ is the stack classifying $E$ along with \'{e}tale $\mathscr E^0[n]$-torsors over $\PP^1_{\mathbb F_q}$. 

There is an natural quasi-finite map $\pi \co \sstack n d {\F_q} \rightarrow \estack d {\F_q}$, and over an open dense substack $\smestack d {\F_q} \subset \estack d {\F_q}$ the restriction 
\begin{equation}\label{eq: finite etale cover}
	\pi \co \smsstack n d {\F_q} := \sstack n d {\F_q}|_{\smestack d {\F_q}} \rightarrow \smestack d {\F_q}
\end{equation}
is finite \'{e}tale. The $n$-Selmer group of $[E] \in \smestack d {\F_q}(\F_q)$ is then identified with $\F_q$-points of $\pi^{-1}(E)$. The cover $\pi$ is associated to a monodromy representation $\mono n d {\mathbb F_q} \co \pi_1(\smestack d {\F_q}) \rightarrow \o(\qsel n d {\mathbb F_q})$, where $(\vsel n d {\mathbb F_q}, \qsel n d {\mathbb F_q})$ is a particular rank $12d-4$ quadratic space over $\Z/n\Z$, and $\pi^{-1}(E)(\F_q)$ identifies with $\ker (\mono n d {\mathbb F_q} (\Frob_E)-\id) \subset \vsel n d {\mathbb F_q}$. 

After determining the monodromy group, this reduces to a combinatorial problem: compute the distribution of $\dim \ker (g-\id)$ for a $g$ drawn randomly from the monodromy group. For $\vsel n d {\mathbb F_q}$ over $\bz/\ell\bz,$ (i.e., the case that $n = \ell$ is prime,) and $g$ drawn from the full $\o(\qsel \ell d {\mathbb F_q})$, 
this computation was done in unpublished work of Rudvalis and Shinoda, as we learned from \cite{fulmanS:distribution-number-fixed}. We give an alternative proof which generalizes to the case where $g$ is drawn from certain proper subgroups of $\o(\qsel \ell d {\mathbb F_q})$ related to the monodromy group (which is needed for our results).

After handling the case where $n=\ell$ is prime, we move on to the case of $\Sel_{\ell^e}$. In this case, we prove that there is a characterization of $\ker (g-\id)$ in terms of a Markov property, and that the BKLPR heuristic is also characterized by this same Markov property. The case of general $\Sel_n$ for $n$ composite follows from the prime power case by the Chinese remainder theorem.

\begin{figure}
	\centering
\begin{equation}
  \nonumber
  \adjustbox{scale=0.8}{
   \begin{tikzcd}[column sep = 1.7em]
	  \qquad && & \text{Thm.}~\ref{theorem:p-selmer-distribution} \ar{ld} & \text{Thm.}~\ref{theorem: coset generating functions} \ar{l}{\text{\cite{fulmanS:distribution-number-fixed}}}
	  &  \\
	  \qquad && \text{Thm.}~\ref{thm: random intersection markov} \ar{ld} & \text{Lem.}~\ref{lemma:alternating-and-radical-bklpr} \ar{l}& \text{Lem.}~\ref{lem: trans action} \ar{l}\\
	  \qquad \text{Thm.}~\ref{theorem:main} & \text{Thm.}~\ref{theorem:kernel vs bklpr} \ar{l} & \text{Thm.}~\ref{thm: 1-eigenspace markov} \ar{l} & \text{Thm.}~\ref{theorem: eigenspace markov} \ar{l} & \text{Lem.}~\ref{lemma:orthogonal-reduction}\ar{ld} &
	  {\text{\cite[Thm. 4.4]{landesman:geometric-average-selmer}}} \ar{l}{\text{\cite{kneser:erzeugung-ganzzahliger-orthogonaler}}}
	  \\
	  \qquad & & \text{Prop.}~\ref{proposition:component-and-rank} \ar{dl} &
	  \text{Thm.}~\ref{theorem:monodromy} \ar{l}\ar{dl}
	  &  \text{Lem.}~\ref{lemma:spinor-norm-commutes}\ar{l}& \text{Prop.}~\ref{proposition:zywina} \ar{ld} \ar{l} \\
	  \qquad & \text{Thm.}~\ref{theorem:large-q-distribution} \ar{uul} & \text{Cor.}~\ref{corollary:finite-field-distribution} \ar{l}&  \text{Prop.}~\ref{proposition:eigenvalues}\ar[crossing over]{ul}  & \text{Lem.}~\ref{lemma:dickson-monodromy}  \ar{ul} & \text{Prop.}~\ref{proposition:chavdarov} \ar{l} \ar[bend left]{lll}
	  \end{tikzcd}
	  }
\end{equation}
\caption{
A schematic diagram depicting the structure of the proof of \autoref{theorem:main}.}
\label{figure:proof-schematic}
\end{figure}
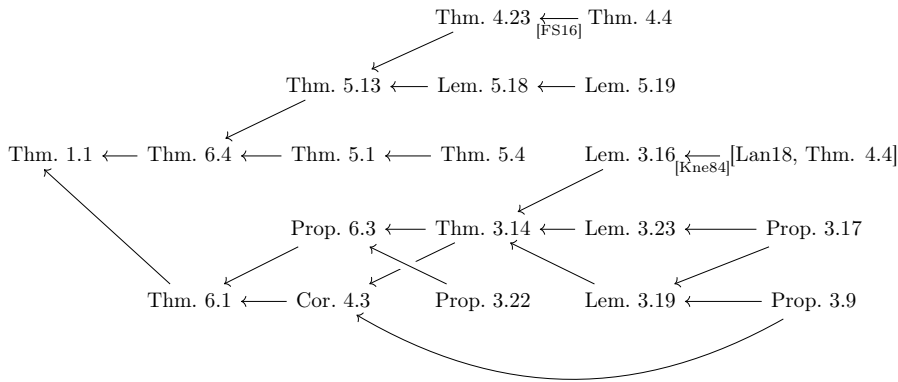

\subsection{Outline of Paper}
We next give a brief outline of the content of the various sections in this paper.
In
\autoref{section:summary}
we recall the construction of Selmer spaces, which parameterize Selmer elements of elliptic curves.
The Selmer spaces mentioned above are generically finite \'etale covers of the moduli space of height $d$ elliptic surfaces.
In
\autoref{section:monodromy}
we compute the monodromy associated to these covers.
Next, in
\autoref{sec: prime distribution}
we establish that the geometric distribution of prime order Selmer groups agree with that predicted by the BKLPR heuristic.
In
\autoref{section:markov},
we show that both the BKLPR heuristic distribution and our geometric distribution agree for prime powers, by relating the two distributions for $\ell^j$-Selmer groups to the two distributions for $\ell^{j+1}$-Selmer groups via separate Markov processes.
Finally, in
\autoref{section:proofs}
we put the pieces together to the prove our main results.

\subsection{Acknowledgements}

It is our pleasure to thank Ravi Vakil for organizing the ``What's on My Mind'' seminar, which led to the genesis of this paper. 
We thank Johan de Jong, Chao Li, Bjorn Poonen, Arul Shankar, Doug Ulmer, and Melanie Matchett Wood for helpful discussions.
We thank Lisa Sauermann for help translating \cite{kneser:erzeugung-ganzzahliger-orthogonaler}.
We also thank David Zureick-Brown and Jackson Morrow for help with writing and running MAGMA code.
The first author was supported by a Stanford ARCS Fellowship and an NSF Postdoctoral Fellowship under Grant No. 1902927, and the second author was supported by the National Science Foundation Graduate Research Fellowship Program under Grant No. DGE-1656518.

\section{Summary of Selmer spaces}\label{section:summary}

\subsection{Reviewing the definition of the Selmer space}
\label{subsection:selmer-space-review}

Here, we briefly recall the construction of the Selmer space and related spaces introduced in \cite[\S3]{landesman:geometric-average-selmer}.
The new content in this section occurs in \autoref{subsection:rank-sheaf}
where we introduce an sheaf is isomorphic to the Selmer sheaf (\autoref{subsubsection:selmer-space} for the definition) on a dense open.
This sheaf is closely related to the L-function of elliptic curves, and hence gives us a way to access the analytic ranks of elliptic curves
in terms of the Selmer sheaf.
Our notation differs slightly from that of \cite{landesman:geometric-average-selmer} 
due to a minor error 
(only appearing in characteristic $3$), as we will explain further in \autoref{remark:espace-notation}.

\subsubsection{The space of Weierstrass equations} 
\label{subsubsection:minimal-weierstrass}
Throughout this section, we work relatively over a scheme $B$ on which $2$ is invertible.
As in \cite[Definition 3.1]{landesman:geometric-average-selmer},
define $\bp^1_B := \proj_B \mathscr O_B[s,t]$.
	Form the affine space,
	\begin{align*}
		\aff d B := \spec_B \mathscr O_B[a_{2,0}, a_{2,1} \ldots, a_{2,2d}, a_{4,0}, \ldots, a_{4,4d}, a_{6,0} \ldots, a_{6,6d}].
	\end{align*}
	For $i \in \{1,2,3\}$, define $a_{2i}(s,t) := \sum_{j=0}^{2id} a_{2i,j} t^j s^{2id-j}$.
	Let $\espace d B \subset \aff d B$ denote the open subscheme parameterizing those points such that the Weierstrass equation 
	\[
	y^2z = x^3 + a_2(s,t)x^2z + a_4(s,t)xz^2 + a_6(s,t)z^3
	\]
	defines an elliptic surface with smooth generic fiber.
	This is open as it corresponds to the open subscheme of $\aff d B$ such that the discriminant is nonzero.
\begin{remark}
	\label{remark:espace-notation}
	There was a minor error in 
	\cite[Definition 3.1]{landesman:geometric-average-selmer}
	where it was claimed that a Weierstrass model is minimal if and only if it is of the form
	$y^2z = x^3 + a_2(s,t)x^2z + a_4(s,t)xz^2 + a_6(s,t)z^3$
	with no non-constant polynomial $f \in k[s,t]$ with
$f^{2i} \mid a_{2i}(s,t)$ for all $i \in \{1,2,3\}$.
However, it is only true that it can be written in this form after a change of variables. 

This makes it less obvious that in characteristic $3$,
the locus of minimal Weierstrass equations is open $\aff d B$.
It is fairly simple to see this is true in characteristic neither $2$ nor $3$, since one can make a change of variables to assume $a_2(s,t) = 0$,
and then the resulting equation $y^2z = x^3 + a_4(s,t)xz^2 + a_6(s,t)z^3$
is minimal if and only if 
there is no non-constant polynomial $f \in k[s,t]$ with
$f^{2i} \mid a_{2i}(s,t)$ for all $i \in \{2,3\}$.
In characteristic $3$, this non-minimal locus is still open, but we only found a somewhat involved proof which involves tracing through the steps of Tate's algorithm.

To avoid this fairly involved proof, we opt to work over a slightly larger open set $\espace d B$, which does not parameterize minimal Weierstrass models,
but instead parameterizes all Weierstrass models over $\aff d B$ with smooth generic fiber.
Since the two open subsets differ by a divisor, their point counts do not contribute in the large $q$ limit,
and so which set we work with does not substantially alter the argument.
\end{remark}

\subsubsection{The universal Weierstrass equation}
Similarly to \cite[Definition 3.1]{landesman:geometric-average-selmer},
	one can construct a family of minimal Weierstrass  models $\uespace d B$ over $\bp^1 \times \espace d B$ as the 
	subscheme of
\[
\proj_{\bp^1_B \times_B \espace d B} \sym^\bullet \left( \mathscr O_{\bp^1_B \times_B \espace d B} \oplus \mathscr O_{\bp^1_B \times_B \espace d B}(-2d) \oplus \mathscr O_{\bp^1_B \times_B \espace d B}(-3d) \right)
\]
cut out by the equation
\[
y^2z = x^3 + a_2(s,t)x^2z + a_4(s,t)xz^2 + a_6(s,t)z^3.
\]
As mentioned in \autoref{remark:espace-notation}, we work over 
$\espace d B$, a set including non-minimal elliptic curves, which is slightly different than that used in
\cite[Definition 3.1]{landesman:geometric-average-selmer}.

\subsubsection{An open subset} 
Recall our definition of $\espace d B$ from
\autoref{subsubsection:minimal-weierstrass} as a moduli space of height $d$ minimal Weierstrass equations.
Similarly to \cite[Definition 3.9]{landesman:geometric-average-selmer}, let $\smespace d B \subset \espace d B$ denote the open subscheme over which $\uespace d B \ra \espace d B$ is smooth.
In the case $B$ is a field $k$, $\smespace d k$ parameterizes elliptic curves of height $d$ over $k(t)$ so that the associated minimal Weierstrass
elliptic surface
is smooth over $k$.
Let $\usmespace d B := \uespace d B \times_{\espace d B} \smespace d B$
denote the universal elliptic surface over $\smespace d B$.
We also introduce
$\sfespace d B \subset \espace d B$ as the open subscheme parameterizing elliptic surfaces with squarefree discriminant
and let
$\usfespace d B := \uespace d B \times_{\espace d B} \sfespace d B$;
these subsets are indeed open and dense over $B$ as is explained in \cite[Lemma 3.14]{landesman:geometric-average-selmer}.
Loosely speaking, the idea is to show that the elliptic surfaces of height $d$ with squarefree discriminant are the complement of two divisors:
the divisor parameterizing elliptic surfaces of height $d$ which are singular and the divisor paramterizing elliptic surfaces of height $d$ with some cuspidal fiber.
These two divisorial subschemes can be defined via incidence correspondences. One can then use these incidence correspondences to compute the dimensions of these subschemes,
and verify they are indeed divisors, implying that the open locus of elliptic surfaces of height $d$ is fiberwise nonempty, hence fiberwise dense.

\subsubsection{The Selmer space}  
\label{subsubsection:selmer-space}
Similarly to \cite[Definition 3.3]{landesman:geometric-average-selmer},
(but see \autoref{remark:espace-notation} for a slight difference)
denote by $f$ and $g$ the projection maps
\[
\uespace d B \xra{f} \bp^1_B \times_B \espace d B \xra{g} \espace d B.
\]
Assuming further that $2n$ is invertible on $B$.
Define the {\em $n$-Selmer sheaf over $B$ of height $d$}
as
$\ssheaf n d B := R^1 g_* (R^1 f_* \mu_n)$.
Define the 
{\em $n$-Selmer space over $B$ of height $d$,}
denoted $\sspace n d B$
as the algebraic space representing the sheaf of $\bz/n\bz$ modules
$\ssheaf n d B$.
Let 
\[
	\smsspace n d B := \sspace n d B \times_{\espace d B} \smespace d B, 
	\hspace{1cm}
	\sfsspace n d B := \sspace n d B \times_{\espace d B} \sfespace d B, 
	\hspace{1cm}
	\smssheaf n d B := \ssheaf n d B \times_{\espace d B} \smespace d B.
\]

\subsubsection{A moduli stack of elliptic curves} 
\label{subsubsection:elliptic-stack}
Note that $\bg_a^{2d+1} \rtimes \bg_m$ acts on $\uespace d B$ and $\espace d B$ compatibly.
Loosely speaking, $(r_0, \ldots, r_{2d}) \in \bg_a^{2d+1}$ acts by sending $x \mapsto x + r_0 s^{2d} + r_1 t s^{2d-1} + \cdots + r_{2d} t^{2d}$ and
$\lambda \in \bg_m$ acts by sending $a_{2i}(s,t) \mapsto \lambda^{2i} a_{2i}(s,t)$,
see 
\cite[Definition 3.4]{landesman:geometric-average-selmer}
for a more precise formulation in terms of Weierstrass equations.
By
\cite[III.3.1(b)]{Silverman2009},
any two points in $\espace d B$ corresponding to isomorphic elliptic curves lie in the same orbit of this action.
Similarly to 
\cite[Definition 3.4]{landesman:geometric-average-selmer},
we define the {\em moduli stack of height $d$ minimal Weierstrass models over $B$} as the quotient stack 
\[
\estack d B := \left[ \espace d B/ \bg_a^{2d+1} \rtimes \bg_m \right].
\]

\subsubsection{The Selmer stack}
Similarly to
\cite[Definition 3.4]{landesman:geometric-average-selmer},
we define the {\em $n$-Selmer stack over $B$ of height $d$} as the quotient stack 
\[
\sstack n d B := \left[ \sspace n d B/ \bg_a^{2d+1} \rtimes \bg_m \right].
\]
Since the action of $\bg_a^{2d+1} \rtimes \bg_m$ restricts to an action on $\usmespace d B$, $\smespace d B$, and $\smsspace n d B$,
we similarly define
\[
	\smestack d B := \left[ \smespace d B/ \bg_a^{2d+1} \rtimes \bg_m \right],
\hspace{1cm}
\sfestack d B := \left[ \sfespace d B/ \bg_a^{2d+1} \rtimes \bg_m \right],
\]
and
\[
	\smsstack n d B := \left[ \smsspace n d B/ \bg_a^{2d+1} \rtimes \bg_m \right],
	\hspace{1cm}
\sfsstack n d B := \left[ \sfsspace n d B/ \bg_a^{2d+1} \rtimes \bg_m \right].
\]
\begin{remark}
	\label{remark:corresponding-elliptic-curve}
	For $x \in \espace d B$ or $x \in \estack d B$, we use $E_x$ denote the corresponding elliptic curve.
	Specifically, for $x \in \espace d B$, if $f: \uespace d B \ra \bp^1 \times \espace d B$, then $E_x = f^{-1}(\eta \times x)$, for $\eta$ the generic point of $\bp^1$.
	We often notate this by $[E_x] = x \in \espace d B$.
	Similarly, for $x \in \smestack d B$, we notate $[E_x] = x$ where $E_x$ is the elliptic curve corresponding to $x$.
\end{remark}

\subsection{The relation between Selmer spaces and Selmer groups}\label{ssec: relation between selmer space and group}

We have now defined the Selmer space, but have not yet explained the connection to Selmer groups of elliptic curves.
The following lemma
provides the relation.
\begin{lemma}[\protect{\cite[Corollary 3.24]{landesman:geometric-average-selmer}}]
	\label{lemma:selmer-fiber-estimate}
	Let $n \geq 1, d > 0, m \geq 0.$ Let $B$ be a noetherian scheme with $2n$ invertible,
	and let $\pi: \sspace n d B  \ra \espace d B$ denote the projection map.
		For $[E_x] = x\in \smespace d B(\mathbb F_q)$, we have
	\begin{align}
		\label{equation:selmer-equality-on-open}
		\# \Sel_n(E_x) = \# \left(\pi^{-1}(x)\left( \mathbb F_q \right) \right).
	\end{align}
\end{lemma}

\subsection{The sheaf governing rank}
\label{subsection:rank-sheaf}

In this section, we introduce a sheaf 
$\psheaf n d B$.
This is closely related to the Selmer sheaf $\smssheaf n d B$
and governs the rank of the elliptic curve.
This sheaf is not new, and has previously appeared in the literature, see
\autoref{remark:hall-sheaf}.
Our goal will be to show the two sheaves are isomorphic on the fiberwise over $B$ dense open of $\smespace d B$ parameterizing elliptic surfaces with squarefree discriminant.
We now define $\psheaf n d B$.

\begin{notation}
	\label{notation:sspace-notation}
	Let $B$ be a scheme with $2n$ invertible on $B$.
	Let $j: U \subset \bp^1_B \times_B \smespace d B$
	denote the open subscheme over which the projection
	$f : \usmespace d B \ra \bp^1_B \times_B \smespace d B$
	is smooth.
	Let $g: \bp^1_B \times_B \smespace d B \ra \smespace d B$ denote
	the projection.
	Then, if $\alpha_S : S \ra \smespace d B$ is a map of schemes,
	set up the following commutative diagram, where both squares are fiber squares.
	\begin{equation*}
		\begin{tikzcd} 
			\usmespace d B \times_{\smespace d B} S \ar{rd}{f^S}	& U_S \ar {r}{\alpha_S'} \ar {d}{j^S} \ar[bend right=70, swap, crossing over]{dd}{\ol{g}^S}& U \ar {d}{j} \ar[bend left=100]{dd}{\ol{g}} &  & \usmespace d B \ar[crossing over]{lld}{f}\\
	\qquad&		\bp^1_B \times_B S \ar{d}{g^S} \ar {r} & \bp^1_B \times_B \smespace d B \ar{d}{g} \\
	\qquad	&		S \ar {r}{\alpha_S} & \smespace d B \\
	\end{tikzcd}\end{equation*}
	Define
$\esheaf n S := (j^S)^* R^1 f^S_* \mu_n$ (we note that $\esheaf n S$ is a slight abuse of notation since it depends on the map $\alpha_S$ and not just the scheme $S$). This sheaf represents the relative
	$n$ torsion of $f^S$.
	Define the sheaf $\psheaf n d B := R^1 g_*(j_* \esheaf n {\smespace d B})$, with the implicit map $\alpha_{\smespace d B}: \smespace d B \ra \smespace d B$ taken to be the identity.
\end{notation}
\begin{remark}
	\label{remark:hall-sheaf}
	Sheaves defined analogously to $\psheaf n d B$ appeared in the context of quadratic twist families of elliptic curves in
	\cite[\S6.2]{hall:bigMonodromySympletic} and \cite[\S3.2]{zywina:inverse-orthogonal}.
	In fact, $\psheaf n d B$ is itself a reasonable candidate for the Selmer sheaf, but we will instead work with $\smssheaf n d B$, which has the advantage that it
	commutes with base change.
	On the other hand, we are not sure if $\psheaf n d B$ commutes with base change in general, though it does
	over $\sfespace d B$, as we show in \autoref{lemma:psheaf-basechange}.
\end{remark}

Having defined $\psheaf n d B$, we next wish to show it agrees with $\smssheaf n d B$, at least when both are restricted to $\sfespace d B$.
To verify this isomorphism, we will construct a map between them and check it is an isomorphism by checking it on fibers.
The verification on fibers is fairly immediate once we know that the formation of $\psheaf n d B$ commutes with base change, as we now verify.
A variant of the following \autoref{lemma:psheaf-basechange} is explained in \cite[Construction-Proposition 5.2.1(3)]{katz:twisted-l-functions-and-monodromy}.

\begin{lemma}
	\label{lemma:psheaf-basechange}
	With maps $f$ and $g$ as in \autoref{notation:sspace-notation},
	the sheaf $\psheaf n d B$ is a constructible sheaf of $\bz/n\bz$ modules whose formation commutes with base change on $\sfespace d B$.
	More precisely, for any base scheme $S$ factoring through $\sfespace d B$, the base change map 
	\[
	\alpha_S^* R^1 g_*(j_* \esheaf n {\smespace d B}) \ra R^1 g^S_* (j^S_*{\alpha'_S}^* \esheaf n {\smespace d B}),\]
	is an isomorphism.
\end{lemma}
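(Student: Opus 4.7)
The plan is to handle constructibility and the base-change claim separately, localizing near the discriminant for the latter.

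\emph{Constructibility.} Since $f$ is proper of relative dimension one, $R^1 f_* \mu_n$ is constructible on $\bp^1_B \times_B \espace d B$ and lisse on the smooth locus $U$. Restricting along $j$ preserves lisseness, pushing forward along the open immersion $j$ preserves constructibility, and finally $R^1 g_*$ along the proper map $g$ preserves it once more; hence $\psheaf n d B$ is constructible.

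\emph{Reduction of base change.} Let $\ol{\alpha}_S \co \bp^1_B \times_B S \to \bp^1_B \times_B \espace d B$ denote the base change of $\alpha_S$ along $g$. Proper base change for $g$ gives an isomorphism $\alpha_S^* R^1 g_*(j_* \esheaf n {\espace d B}) \cong R^1 g^S_*(\ol{\alpha}_S^* j_* \esheaf n {\espace d B})$, and proper base change for $f$ identifies $(\alpha'_S)^* \esheaf n {\espace d B} \cong \esheaf n S$ on $U_S$. It therefore suffices to verify that the canonical comparison map
\[
\ol{\alpha}_S^*\bigl(j_* \esheaf n {\espace d B}\bigr) \longrightarrow j^S_*\, \esheaf n S
\]
of sheaves on $\bp^1_B \times_B S$ is an isomorphism whenever $\alpha_S$ factors through $\sfespace d B$.

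\emph{Local analysis using the squarefree discriminant.} Over $\sfespace d B$, the complement $Z := (\bp^1_B \times_B \sfespace d B) \setminus U$ is cut out by a fibrewise squarefree discriminant polynomial. Hence $Z$ is a Cartier divisor, finite \'etale over $\sfespace d B$, and after \'etale localization on $\sfespace d B$ splits as a disjoint union of sections of $\bp^1_B \times_B \sfespace d B$. Near each such section the fibers of $f$ acquire only a node, and since $2n$ is invertible on $B$ the local monodromy of $\esheaf n {\espace d B}$ around $Z$ is tame, given explicitly by Picard--Lefschetz. In this local picture the stalk of $j_* \esheaf n {\espace d B}$ at a geometric point of $Z$ identifies with the tame-inertia invariants on the generic stalk; taking invariants under a pro-cyclic tame group is a linear operation on the stalk of a lisse $\bz/n\bz$-sheaf and so visibly commutes with pullback along any base change of parameters. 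This gives the required isomorphism \'etale-locally, hence globally.

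\emph{Main obstacle.} The crux is the last step: base-change compatibility of $j_*$ for a tame lisse $\bz/n\bz$-sheaf along a smooth relative divisor in a smooth morphism. The squarefree discriminant condition is doing essential work in two ways. It forces $Z$ itself to be smooth over $\sfespace d B$, so there are no self-intersections to handle and one can argue component by component; and it forces all local degenerations of $f$ along $Z$ to be nodal, ruling out Kodaira fibers of type $II, III, IV, \ldots$ whose local monodromy groups need not be pro-cyclic tame. Without these features one would be forced into a much more subtle analysis of nearby cycles, and $j_*$ need not be base-change compatible.
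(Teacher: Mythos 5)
Your proposal is correct in outline, but it takes a genuinely different route from the paper, and one step needs sharper justification.

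The paper's proof avoids analyzing $j_*\esheaf{n}{\espace{d}{B}}$ directly. Instead it identifies $\psheaf{n}{d}{B} = R^1 g_*(j_*\esheaf{n}{\espace{d}{B}})$ as the image of the natural map $R^1\ol{g}_!\,\esheaf{n}{\espace{d}{B}} \to R^1\ol{g}_*\,\esheaf{n}{\espace{d}{B}}$ (the ``forget supports'' map for $\ol{g} = g\circ j$), using that the injection from the Leray spectral sequence is always an injection and that the cokernel of $j_! \to j_*$ is supported on the finite locus. It then shows both $R^1\ol{g}_!$ and $R^1\ol{g}_*$ commute with base change over $\sfespace{d}{B}$: the former by proper base change with compact supports, the latter by first applying Deligne's semicontinuity theorem for Swan conductors (constant fiber degree $12d$ and vanishing Swan conductor, both enforced by the squarefree discriminant) to show $R^i\ol{g}_!$ is locally constant, and then applying Poincar\'e duality to conclude $R^1\ol{g}_*\simeq (R^1\ol{g}_!)^\vee$ is also locally constant. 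As a by-product this gives constructibility of $\psheaf{n}{d}{B}$ directly as the image of a map of constructible sheaves, without invoking a finiteness theorem for $j_*$ as your constructibility paragraph does.

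Your route reduces (correctly, via proper base change for $g$ and then for $f$) to showing the stronger statement that $\ol{\alpha}_S^*\,j_*\esheaf{n}{\espace{d}{B}} \to j^S_*\,\esheaf{n}{S}$ is already an isomorphism before applying $R^1g^S_*$. This is indeed true over $\sfespace{d}{B}$, and it is aesthetically cleaner. But the stated reason --- that taking invariants under a pro-cyclic tame group is ``a linear operation'' and hence ``visibly commutes with pullback'' --- is not by itself a proof: taking a kernel of a $\bz/n\bz$-linear map on a lisse sheaf does not automatically commute with base change (the issue is exactness of the relevant sequence). What actually makes it work is that $Z$ is smooth (indeed finite \'etale) over $\sfespace{d}{B}$, so the conjugacy class of the tame local monodromy generator is locally constant along $Z$; one can then invoke Abhyankar's lemma in the relative setting to conclude that $(j_*\esheaf{n}{\espace{d}{B}})|_Z$ is itself lisse on $Z$, and that its stalk at every geometric point is computed by inertia-invariants in a way compatible with specialization. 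Once one knows both sides of your comparison map are lisse on each stratum ($U$ and $Z$) and the map is the identity on $U$, checking it is an isomorphism reduces to a single geometric fiber, where it is the standard stalk computation for $j_*$ of a tame sheaf on a trait. So your proof can be completed, but the key lemma --- base-change compatibility of $j_*$ for a lisse $\bz/n\bz$-sheaf tamely ramified along a smooth relative divisor --- should be stated explicitly rather than folded into ``linearity.'' Both proofs spend the squarefree-discriminant hypothesis in essentially the same place: it guarantees tame, nodal ($I_1$) degenerations of constant combinatorial type, i.e.\ vanishing Swan conductor in the paper's language, pro-cyclic tame Picard--Lefschetz monodromy in yours.
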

\begin{proof}
	Let $R^1 \ol{g}_! \esheaf n {\smespace d B} \xra{\phi} \psheaf n d B$
	denote the map 
	induced by $j_! \esheaf n {\smespace d B} \ra j_* \esheaf n {\smespace d B}$,
	using the identification $R^1 \ol{g}_! \esheaf n {\smespace d B} = R^1 g_*(j_! \esheaf n {\smespace d B})$.
	Let $\psheaf n d B \xra{\psi} R^1\ol{g}_* \esheaf n {\smespace d B}$
	denote the map induced from the composition of functors spectral sequence for $g \circ j$.
	We will show
	that $\psheaf n d B$ is the image of the composition 
	$R^1 \ol{g}_! \esheaf n {\smespace d B} \xra{\phi} \psheaf n d B \xra{\psi} R^1\ol{g}_* \esheaf n {\smespace d B}$.
	Once we show this, it will immediately follow that $\psheaf n d B$
	is constructible, being the image of a map of constructible sheaves.

	By the Leray spectral sequence, $\psi$ is always injective.
	Hence, to identify $\psheaf n d B$ as the image of $\psi \circ \phi$,
	we only need to show $\phi$ is surjective. 
	To this end, 
	define $M$ as the quotient sheaf $j_* \esheaf n {\smespace d B}/j_! \esheaf n {\smespace d B}$.
	Note that $M$ is supported on the complement of $U$
	which is finite over $\smespace d B$.
	Therefore, $R^1 g_* M = 0$ and we conclude that
	$R^1 \ol{g}_! \esheaf n {\smespace d B}= R^1 g_* (j_! \esheaf n {\smespace d B}) \ra R^1 g_* (j_* \esheaf n {\smespace d B}) = \psheaf n d B$
	is surjective.
	Hence, $R^1 g_*\left( j_* \esheaf n {\smespace d B} \right)$ is a constructible
	$\bz/n\bz$ module, being the image of a map of constructible
	$\bz/n\bz$ modules.

	To conclude, we show that the formation of $\psheaf n d B$ commutes with base change over $\sfespace d B$.
	Since $\psheaf n d B$ is the image of $\psi \circ \phi: R^1 \ol{g}_! \esheaf n {\smespace d B} \ra R^1\ol{g}_* \esheaf n {\smespace d B}$, it suffices to show that the formation of 
	both $R^1 \ol{g}_! \esheaf n {\smespace d B}$
	and $R^1\ol{g}_* \esheaf n {\smespace d B}$ commute with base change over $\sfespace d B$.
	The former commutes with base change by 
	proper base change with compact supports.

	To conclude, it remains to show the formation of $R^1\ol{g}_* \esheaf n {\smespace d B}$
	commutes with base change over $\sfespace d B$.
	We will do this using Poincar\'e duality and Deligne's semicontinuity theorem for Swan conductors \cite[Corollaire 2.1.2 and Remarque 2.1.3]{laumon:semi-coninuity-du-conducteur-de-swan}.
	We first use Deligne's semicontinuity theorem to show $R^i\ol{g}_! \esheaf n {\smespace d B}$ is locally constant constructible for all $i \geq 0$.
	The semicontinuity theorem says that $R^i\ol{g}_! \esheaf n {\smespace d B}$ will be locally constant over any open subscheme
	of $\smespace d B$ for which the degree of $\bp^1 \times \smespace d B - U \to \smespace d B$ is constant
	and the total Swan conductor associated to $\esheaf n {\smespace d B}$ is constant. 

	We now verify the hypotheses of Deligne's semicontinuity theorem by verifying 
	$\bp^1 \times \smespace d B - U \to \smespace d B$ has constant fiber degree over $\sfespace d B$ and that the Swan conductor
	vanishes over $\sfespace d B$.
	Indeed, any elliptic curve corresponding to a point of $\sfespace d B$ has reduced discriminant, and hence $12d$ geometric fibers of type $I_1$
	reduction and no other singular fibers, by Tate's algorithm.
	This shows 
	$\bp^1 \times \smespace d B - U \to \smespace d B$ has constant fiber degree over $\sfespace d B$.
	Finally, the Swan conductor always vanishes when the reduction is multiplicative \cite[IV.10.2(b)]{Silverman:Advanced}.

	Using that $R^1\ol{g}_! \esheaf n {\smespace d B}$ is locally constant constructible 
	over $\sfespace d B$
	we next deduce 
	$R^1\ol{g}_* \esheaf n {\smespace d B}$
	is as well
	via Poincare duality.
	Namely, Poincar\'e duality \cite{Verdier} gives an isomorphism of sheaves in the derived category
	\begin{align*}
		R\overline{g}_* R\shom(\esheaf n {\smespace d B}, \mu_n[2]) \simeq R\shom(R\overline{g}_! \esheaf n {\smespace d B}, \mu_n).
	\end{align*}
	Note that the $[2]$ denotes a cohomological shift by $2$ while the $[n]$ refers to the $n$-torsion.
	
	We will now take $(-1)$st cohomology of both sides.
	By construction of $U$, $\esheaf n {\smespace d B}$ is locally constant on $U$, and therefore
	the $i$th cohomology of 
	$R \overline{g}_* R\mathscr{H}om(\esheaf n {\smespace d B}, \mu_n[2])$ is given by $R^{i+2} \overline{g}_* \shom(\esheaf n {\smespace d B}, \mu_n) \simeq R^{i+2} \overline{g}_* \esheaf n {\smespace d B},$ the latter isomorphism
	induced
	by the Weil pairing.
	Additionally, since $R^{-i}\overline{g}_! \esheaf n {\smespace d B}$ is locally constant constructible, we get that the $i$th cohomology of
	$R\shom(R\overline{g}_! \esheaf n {\smespace d B}, \mu_n[2])$ is given by $\shom(R^{-i} \overline{g}_! \esheaf n {\smespace d B}, \mu_n)$.
	Therefore, taking $(-1)$st cohomology of the Poincar\'e duality isomorphism yields an isomorphism
	$R^1 \overline{g}_* \esheaf n {\smespace d B} \simeq (R^1 \overline{g}_! \esheaf n {\smespace d B})^\vee$.
	Since the right hand side is locally constant constructible over $\sfespace d B$, the left hand side is as well, and therefore commutes with base change.
\end{proof}

We next produce an isomorphism $\smssheaf n d B|_{\sfespace d B} \simeq \psheaf n d B|_{\sfespace d B}$ over $\sfespace d B$, crucially using that the formation of both sheaves commute with base change.
\begin{proposition}
	\label{lemma:mu-n-pushforward-to-selmer-space}
	Retain notation from \autoref{notation:sspace-notation}.
	There is canonical map $R^1 f_* \mu_n \ra j_* \esheaf n {\smespace d B}$ of sheaves on $\bp^1_B \times_B \smespace d B$.
	This map induces an isomorphism $R^1 g_*(R^1 f_* \mu_n)|_{\sfespace d B} \simeq R^1 g_*(j_* \esheaf n {\sfespace d B})$, which commutes with base change.
\end{proposition}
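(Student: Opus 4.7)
The canonical map is the unit of the adjunction $(j^*, j_*)$ applied to $R^1 f_* \mu_n$: since by definition $\esheaf n {\espace d B} = j^* R^1 f_* \mu_n$, this unit has the form $R^1 f_* \mu_n \to j_* j^* R^1 f_* \mu_n = j_* \esheaf n {\espace d B}$. My strategy is to strengthen the conclusion: I will show that over $\sfespace d B$ this map is itself already an isomorphism of sheaves on $\bp^1_B \times_B \sfespace d B$ (not merely after $R^1 g_*$), from which both the isomorphism on $R^1 g_*$ and the base change statement follow immediately. On the open $U$ where $f$ is smooth, both source and target tautologically equal $\esheaf n {\espace d B}$ and the map is the identity, so all of the content lies at the closed complement $V = \bp^1 \times \sfespace d B \setminus U$, which is finite over $\sfespace d B$. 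Hence it suffices to check that the map induces an isomorphism on stalks at every geometric point $x$ of $V$.

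By the definition of $\sfespace d B$, any fiber of $f$ above a geometric point of $V$ is a nodal cubic (reduction type $I_1$). On the source side, proper base change for the proper map $f$ yields $(R^1 f_* \mu_n)_x = H^1(C_x, \mu_n)$ where $C_x$ is the nodal cubic, and since $\mathrm{Pic}^0(C_x) \simeq \bg_m$, Kummer theory gives $H^1(C_x, \mu_n) \simeq \mu_n$. On the target side, $(j_* j^* R^1 f_* \mu_n)_x$ is the module of monodromy invariants of the rank-two lisse sheaf $\esheaf n {\espace d B}$ in a punctured neighborhood of $x$; by the Picard--Lefschetz formula, the local monodromy at an $I_1$ degeneration is unipotent with a single non-trivial Jordan block, so the invariants form a canonical rank-one submodule, which can again be identified with $\mu_n$ (the $n$-torsion of the toric part of the identity component of the N\'eron model). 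The adjunction unit, restricted to stalks at $x$, is exactly the specialization map from $H^1$ of the singular fiber into the monodromy invariants on a nearby smooth fiber, and the vanishing cycles triangle for a nodal degeneration of elliptic curves identifies this specialization with the canonical isomorphism between the two copies of $\mu_n$ above.

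Once the sheaf-level isomorphism is in hand, applying $R^1 g_*$ produces the claimed isomorphism on $\sfespace d B$, and base change compatibility is automatic: the source $\ssheaf n d B$ commutes with base change because both $R^1 f_* \mu_n$ and $R^1 g_*$ applied to any constructible sheaf commute with base change by proper base change (for the proper morphisms $f$ and $g$ respectively), while the target commutes with base change over $\sfespace d B$ by \autoref{lemma:psheaf-basechange}. \textbf{The main obstacle} will be making the stalk identification at the $I_1$ fibers fully canonical --- specifically, verifying that the adjunction-unit map and the specialization/vanishing-cycles map induce the same isomorphism $\mu_n \to \mu_n$ --- which ultimately reduces to the explicit local structure of the nearby cycles sheaf for a nodal degeneration of elliptic curves.
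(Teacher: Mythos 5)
Your proposal is correct in outline but takes a genuinely different route for the key verification, so a comparison is worthwhile.

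\textbf{Where you agree.} Like the paper, you reduce to showing that the \emph{sheaf-level} map $R^1 f_* \mu_n \to j_*\esheaf n {\espace d B}$ is an isomorphism over the squarefree-discriminant locus. Your observation that the map should be the adjunction unit $R^1 f_*\mu_n \to j_* j^* R^1 f_*\mu_n$ is a nice simplification: it makes the restriction to $U$ tautologically the identity, whereas the paper constructs the map as a chain of Leray edge maps
\[
R^1 f_*\mu_n \simeq R^1 f_*(j'_*\mu_n) \to R^1(f j')_*\mu_n = R^1(jf')_*\mu_n \to j_* R^1 f'_*\mu_n
\]
and then has to spend the final paragraph of the proof verifying that this composite restricted to $U$ is the base-change map. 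Your choice of map avoids that bookkeeping.

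\textbf{Where you diverge.} For the stalkwise isomorphism at the $I_1$ fibers, you propose Picard--Lefschetz and vanishing cycles, while the paper takes a structurally different path: it first base-changes to a geometric point $x$ of $\sfespace d B$ (using that both $R^1 g_*(R^1 f_*\mu_n)$ and $R^1 g_*(j_*\esheaf n {\espace d B})$ commute with base change, the latter by \autoref{lemma:psheaf-basechange}), and then over the curve $\bp^1_x$ identifies \emph{both} $R^1 f^x_*\mu_n$ and $j^x_*\esheaf n x$ with the N\'eron model of $E_x[n]$. The N\'eron mapping property then upgrades a generic isomorphism (on $U$) to a global one for free. This is the decisive advantage of the paper's approach: it completely sidesteps the computation you flag as ``the main obstacle,'' namely matching the adjunction unit with the cospecialization-into-invariants map coming from the vanishing cycles triangle.

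\textbf{The gap.} You explicitly acknowledge but do not close the verification that the unit map on stalks agrees with the vanishing cycles specialization. This is a real gap in your proposal: without it, you have two abstractly isomorphic rank-one $\mu_n$'s but no argument that your specific map between them is an isomorphism rather than, say, zero. There is also a secondary issue you don't address: your stalkwise argument implicitly assumes that the local (tame) fundamental group of the punctured strict henselization at a geometric point of the discriminant divisor is pro-cyclic, so that ``monodromy invariants'' means ``invariants of a single Picard--Lefschetz transvection.'' This is automatic when $\bp^1_B\times_B\sfespace d B$ is regular and the discriminant is a regular divisor, but the statement allows an arbitrary noetherian base $B$ with $2n$ invertible. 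The paper handles this by first reducing to a geometric fiber (where $\bp^1_x$ is a smooth curve over an algebraically closed field and N\'eron models are available), and you would need a similar reduction, or a more careful local analysis, to make your argument watertight.
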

\begin{proof}
Retaining notation from \autoref{notation:sspace-notation},
	define the maps $j'$ and $f'$ as in the fiber square
	\begin{equation}
		\label{equation:}
		\begin{tikzcd} 
			W_U \ar {r}{j'} \ar {d}{f'} & \usmespace d B  \ar {d}{f} \\
			U \ar {r}{j} & \bp^1_B \times_B \smespace d B.
		\end{tikzcd}\end{equation}
	We have canonical maps coming from Leray spectral sequences 
	\begin{align}
		\label{equation:pushforward-switch}
		\begin{split}
	R^1 f_* (\mu_{n}) &\simeq R^1 f_* (j'_* \mu_{n}) \\
	& \ra R^1 (f \circ j')_* \mu_n \\
		& = R^1(j \circ f')_* \mu_n \\
		& \ra j_* R^1 f'_* \mu_n.
	\end{split}
	\end{align}
	Using the Kummer exact sequence (possible since $n$ is invertible by \autoref{notation:sspace-notation}) and the assumption that the fibers of $f'$ are smooth connected elliptic curves
	so \cite[\S9.5, Theorem 1]{BoschLR:Neron} applies, we obtain isomorphisms
	\begin{align}
		\label{equation:j-pic-to-sheaf}
j_* R^1 f'_* \mu_n
\simeq j_* \pic_{W_U/U} [n] \simeq j_* \pic^0_{W_U/U}[n] \simeq j_* \esheaf n {\smespace d B}.
	\end{align}
	Composing \eqref{equation:pushforward-switch} with \eqref{equation:j-pic-to-sheaf}, we obtain the desired map $R^1 f_* (\mu_{n,W}) \ra j_* \esheaf n {\smespace d B}$.

	We show this map induces an isomorphism
$R^1 g_*(R^1 f_* \mu_{n}))|_{\sfespace d B} \ra R^1 g_*(j_* \esheaf n {\sfespace d B})$.
	To verify this is an isomorphism, it suffices to do so on stalks.
	As the formation of both sides commutes with base change by proper base change and \autoref{lemma:psheaf-basechange},
	we can check this is an isomorphism in the case that the base is a geometric point.

	Thus, it suffices to show that if $f^x: W_x \ra \bp^1_x$ is a smooth minimal Weierstrass model corresponding to a point $x \in \smespace d B$,
	$j^x$ is the restriction of $j$ to $x$, and $g^x$ is the restriction of $g$ to $x$,
	then the map on stalks $\phi_x: R^1 g^x_* (R^1 f^x_* \mu_n) \ra R^1 g^x_* (j^x_* (\esheaf n x))$ is an isomorphism.
	It suffices to check the map $R^1 f^x_* \mu_n \ra j^x_* (\esheaf n x)$ inducing $\phi_x$ under $R^1g^x_*$ is an isomorphism.
	To this end, 
	by
	\cite[Lemma 3.7]{landesman:geometric-average-selmer},
the \'{e}tale sheaf $R^1 f^x_* \mu_n$ is represented by the N\'eron model of $E_x[n]$ on the small \'etale site of $\bp^1_x$,
while $j^x_* (\esheaf n x)$ is also represented by the N\'eron model of $E_x[n]$  by the N\'eron mapping property.
The N\'eron mapping property implies that to check the map 
$R^1 f^x_* \mu_n \ra j^x_* (\esheaf n x)$ 
constructed 
in
\eqref{equation:pushforward-switch}
is an isomorphism, it suffices to check
its restriction to $U$ is an isomorphism.
That is, we want to show
the base change of
$j^* R^1 f_* (\mu_{n}) \ra j^* j_* \esheaf n {\smespace d B} \simeq R^1 f'_* j'^* \mu_n$ 
to $x$
is an isomorphism.
If we could show this is the natural base change map, it would indeed be an isomorphism by proper base change.

So, to conclude the proof, we only need to check the constructed map
$j^* R^1 f_* (\mu_{n}) \ra R^1 f'_* j'^* \mu_n$, coming from
pulling back \eqref{equation:pushforward-switch} along $j$,
is the base change map.
Indeed, this follows from the definitions.
In more detail, recall that for $\scf$ a sheaf on $\usmespace d B$,
the base change map is given as the map of $\delta$-functors
$j^* \circ (R^\bullet f_*)  \scf \ra (R^\bullet f'_*) \circ j'^* \scf$
induced via the degree $0$ composition $j^* f_* \scf \ra j^* f_* j'_* j'^* \scf \ra j^* j_* f'_* j'^* \scf \ra f'_* j'^* \scf$,
see
\cite[\S6, p. 60-61]{FreitagK:lectures-etale}.
However, pulling back the map of 
\eqref{equation:pushforward-switch}
along $j$ is given by the composition
$j^* R^1 f_* \mu_n \ra j^* R^1f_* (j'_* j'^* \mu_n) \ra j^* R^1(j\circ f')_* (j'^* \mu_n) \ra R^1 f'_* (j'^* \mu_n)$.
This is precisely the resulting map on degree $1$ $\delta$-functors, and hence is the natural base change map.
\end{proof}

\section{The precise monodromy of Selmer spaces}
\label{section:monodromy}

The main result of this section is \autoref{theorem:monodromy} where we compute precisely the monodromy group associated to the cover $\smsstack n d B \ra \smestack n B$.
In order to state the theorem, we first introduce some various notation relating to orthogonal groups and the monodromy representation.
Following this, we recall a general result on equidistribution of Frobenius elements in \autoref{subsection:equidistribution}.
The remainder of the section is devoted to proving
\autoref{theorem:monodromy}, whose proof is outlined at the end of \autoref{subsection:determining-image-of-monodromy}.

\subsection{Adelic notation}

For $R$ an integral noetherian ring with fraction field $\Frac(R)$ such that $\chr(\Frac(R)) = p$, let 
\[
\zprime p := \lim_{\gcd(n,p)=1} \bz/n\bz \simeq \prod_{\substack{\ell \text{ prime} \\ r \neq p}} \bz_\ell.
\]
We allow $p = 0$, in which case $\zprime 0 = \widehat{\bz}$.

\subsection{Notation for orthogonal groups}
\label{subsection:orthogonal-notation}

\subsubsection{Notation for quadratic forms}\label{subsubsection:quadratic-forms-base-change}
Let $R$ be a ring.
A {\em quadratic space} over $R$ is a pair $(V, Q)$ 
where $V$ is a free module over $R$ and $Q: V \ra R$ is a quadratic form. We say a quadratic space $(V,Q)$ is {\em nondegenerate} if the hypersurface defined by the vanishing of $Q$
in $\bp V^\vee$ is smooth over $\spec R$. When $2$ is invertible or $\rk V$ is even, this is equivalent to the discriminant of $Q$ being a unit on $\spec R$, see \cite[Remark C.1.1]{conrad:reductive-group-schemes}. See \cite[C.1]{conrad:reductive-group-schemes} for a characterization in terms of non-degeneracy of the associated bilinear form on fibers.
Let $\o(Q)$ the corresponding orthogonal group. 
Note that we will use $\o(Q)$ to denote both the group and the group scheme. We will primarily consider it as a group, and whenever we use it to denote the group scheme $\o(Q)$,
we refer to it as ``the algebraic group $\o(Q)$.''

For $\phi: R \ra S$ a map of rings, we denote $(V_\phi, Q_\phi):= (V \otimes_R S, Q \otimes_R S)$.
When the map $\phi$ is understood, we notate this as $(V_S, Q_S) := (V_\phi, Q_\phi)$.
In the special case that $S = \bz/n\bz$, we will also use $(V_n, S_n) := (V_{\bz/n\bz}, Q_{\bz/n\bz})$.

\begin{definition}
	\label{definition:selmer-space-quadratic-form}
	For $d \geq 1$, define the quadratic space $(\vsel {\bz} d k, \qsel {\bz} d k)$
to be the rank $12d-4$ free $\bz$ module 
associated to
$U^{\oplus (2d-2)} \oplus (-E_8)^{\oplus d}$, for $U$ a hyperbolic plane and $-E_8$ the
$E_8$ lattice with the negative of its usual pairing.
Then $(\vsel n d k, \qsel n d k)$ denotes the reduction of this quadratic space modulo $n$.
\end{definition}

For $Q$ a quadratic form on a free module $V$ over a ring $R$, the \emph{associated bilinear form} $B_Q: V \times V \rightarrow R$ is defined by 
\[
B_Q(x, y) := Q(x+y) - Q(x) - Q(y).
\]
In what follows, we assume the quadratic form $Q$ is nondegenerate.

For $v \in V$, with $Q(v) \in R^\times$ invertible, 
denote the {\em reflection about $v$} (sometimes also called an \emph{orthogonal transvection}, cf. \cite[3.8.1]{wilson:the-finite-simple-groups})
\begin{align*}
	r_v: V & \rightarrow V \\
	w & \mapsto w - \frac{B_Q(w, v)}{Q(v)} v.
\end{align*}
\begin{remark}
	\label{remark:reflections-generate}
	When $R$ is a field, $\o(Q)$ is generated by these reflections so long as $(R, \rk V) \neq (\mathbb F_2, 4)$
\cite[I.5.1]{chevalley:the-algebraic-theory-of-spinors}.
\end{remark}

\subsubsection{The spinor norm}
\label{subsubsection:spinor}

For completeness, we briefly recall the formal definition of the $-1$-spinor norm.
We follow \cite[p. 349]{conrad:reductive-group-schemes} which gives the definition in the more general 
context of algebraic groups. 
Let $(V, Q)$ be a quadratic space over $R$, and suppose that either $\rk V$ is even or $2$ is invertible on $R$.
The $+1$-spinor norm is then defined as the boundary map on cohomology 
\[
	\mathrm{sp}^+_Q: \o(Q) \ra H^1(\spec R, \mu_2) \simeq R^\times/\left( R^\times \right)^2
\]
induced by the sequence of algebraic groups $\mu_2 \ra \mrm{Pin(Q)} \ra \o(Q)$. 
Then the {\em $-1$-spinor norm} on $\o(Q)$ is the $+1$-spinor norm for $\o(-Q)$ composed with the identification $\o(Q) \xrightarrow{\sim} \o(-Q)$ \cite[Remark C.4.9, Remark C.5.4, and p. 348]{conrad:reductive-group-schemes}.\footnote{Although it will not be relevant to this paper, as we shall ultimately only be interested in the even rank quadratic space of \autoref{definition:selmer-space-quadratic-form},
one can define the spinor norm on $\o(Q)$ in the case that $R$ is a field of characteristic $2$ and $\rk V$ is odd.
This can be done using the equality $\o(Q) = \so(Q)$ as abstract groups (even though the corresponding {\em group schemes} are not isomorphic) since the group scheme $\so(Q)$ is the underlying reduced subscheme of the group scheme $\o(Q)$, see \cite[Remark C.5.12]{conrad:reductive-group-schemes}.}

In the case $Q(v) \in R^\times$, the reflection $r_v$ satisfies $\sp{Q}(r_v) = [-Q(v)]$, the coset represented by $-Q(v)$ in $R^\times/\left( R^\times \right)^2$.
Note that the spinor norm is trivial in the case $R = \mathbb F_2$.
When $R = k$ is a field with $k \neq \mathbb F_2$, then $\o(Q)$ is generated by reflections (cf. \autoref{remark:reflections-generate}),
and $\sp{Q}$ is then characterized by $\sp{Q}(r_v) = [-Q(v)]$. 

\begin{definition}
	\label{definition:}
	For $(V,Q)$ a nondegenerate quadratic space over a ring $R$, 
define $\osp(Q)  := \ker \sp{Q} \subset \o(Q)$ to be the kernel of the $-1$-spinor norm. 
\end{definition}

\subsubsection{The adelic spinor map}
\label{subsubsection:adelic-spinor}

We now spell out some notation to describe the spinor map for a quadratic form over $\zprime p$.
Let $p$ either be a prime or $p =0$.
Let $(V,Q)$ be a nondegenerate quadratic space over $\zprime p$.
Let 
\begin{align*}
	\sp{Q}: \o(Q) \ra \left(\zprime p \right)^\times/\left( \left(\zprime p\right)^\times \right)^2 \simeq (\bz/2\bz)^2 \times \prod_{\text{odd primes } \ell \neq p} \bz/2\bz,
\end{align*}
where the first copy of $(\bz/2\bz)^2$ comes from $(\bz/2\bz)^2 \cong \bz_2^\times/\left( \bz_2^\times \right)^2 \simeq \left( \bz/8\bz \right)^\times / \left(\left( \bz/8\bz \right)^\times \right)^2$ and the copy of $\bz/2\bz$ indexed by an odd prime $\ell$ comes from $\bz_\ell^\times/\left( \bz_\ell^\times \right)^2 \simeq \left( \bz/\ell\bz \right)^\times / \left(\left( \bz/\ell\bz \right)^\times \right)^2$.
When $p \neq 0$ and $q$ is a power of $p$,
we let 
\begin{align*}
[q] \in \left(\zprime p \right)^\times/\left( \left(\zprime p\right)^\times \right)^2 \simeq (\bz/2\bz)^2 \times \prod_{\text{odd primes } \ell \neq p} \bz/2\bz
\end{align*}
denote the element induced by multiplication by $q$ on $\zprime p$.

\subsubsection{The Dickson invariant}
\label{subsubsection:dickson}
Next, for $(Q, V)$ a quadratic space over a ring $R$ with $\spec R$ connected, the {\em Dickson invariant} is a map
\[
	\dickson{Q}: \o(Q) \ra \bz/2\bz,
\]
as defined in \cite[(C.2.2) and Remark C.2.5]{conrad:reductive-group-schemes}.
In the case $(Q, V)$ is a quadratic space over a ring $R$ such that $\spec R$ is a disjoint union of finitely many connected components, such as when $R = \bz/n\bz$, we define the Dickson invariant
as the resulting map
\[
\dickson{Q}: \o(Q) \ra \left( \bz/2\bz \right)^{\#\pi_0(\spec R)},
\]
obtained by restricting to a given connected component of $\spec R$ and then applying the Dickson invariant on that component.

In the case $R = \zprime p$, we define the Dickson invariant as the resulting composition
\begin{align*}
	\dickson{Q}: \o(Q) \ra \prod_{\text{primes }\ell \neq p} \o(Q|_{\bz_\ell}) \xra{\prod_{\text{primes }\ell \neq p} \dickson{Q|_{\bz_\ell}}} \prod_{\text{primes }\ell \neq p} \bz/2\bz.
\end{align*}
In all cases above, for $\dickson{Q}: \o(Q) \ra \prod_{s \in S} \bz/2\bz$ for an appropriate set $S$, we let $\Delta_{\bz/2\bz}: \bz/2\bz \ra \prod_{s \in S} \bz/2\bz$ denote
the diagonal inclusion sending $1 \mapsto \left( 1,1,\ldots, 1 \right)$.

\begin{warn}
	\label{warning:}
	Our definition of the Dickson invariant for a quadratic space over $\zprime p$ may differ from the more general scheme theoretic
	definition
	given in \cite[(C.2.2) and Remark C.2.5]{conrad:reductive-group-schemes}.
	There, it is defined as a map to $\left( \bz/2\bz \right)(\spec R)$,
	the global sections of the locally constant sheaf $\bz/2\bz$ on $\spec R$.
	However, there is a natural map $\left( \bz/2\bz \right)(\spec \zprime p) \ra \prod_{\text{primes }\ell \neq p} \bz/2\bz$,
	and our definition of the Dickson invariant is the composition of the 
	Dickson invariant as in \cite[(C.2.2) and Remark C.2.5]{conrad:reductive-group-schemes} with this natural map.
\end{warn}

\begin{remark}In the case that $2$ is invertible on $R$ with $\spec R$ connected, the Dickson invariant agrees with the determinant \cite[Corollary C.3.2]{conrad:reductive-group-schemes}.
However, over a field $k$ of characteristic $2$, 
the determinant is trivial while the Dickson invariant is nontrivial 
(and it is nontrivial on $k$-points when the rank of the quadratic
space is even) \cite[Proposition C.2.8]{conrad:reductive-group-schemes}.

Over a field of characteristic 2, the Dickson invariant is sometimes also called the pseudodeterminant,
and the following explicit description, which follows from the fact that reflections always have nontrivial Dickson invariant, will be useful:
For any $T \in \o(Q)$, and any expression of $T$ as a product of reflections $T = r_{v_1} \cdots r_{v_s}$, (which exists so long as $(k, \rk V) \neq (\mathbb F_2, 4)$ by \autoref{remark:reflections-generate},)
the Dickson invariant is given by the map $\o(Q) \ra \bz/2\bz$ which sends $T \mapsto s \bmod 2.$
\end{remark}

\subsubsection{The Joint Kernel}
\label{subsubsection:omega}

\begin{definition}
	\label{definition:}
Define $\Omega(Q) \subset \o(Q)$ as $\Omega(Q) := \ker \dickson{Q} \cap \ker \sp{Q}$.
\end{definition}

Because the $-1$-spinor norm agrees with the $+1$-spinor norm when restricted to $\so(Q)$,
it follows that $\Omega(Q)$ is also the joint kernel of the Dickson map and the $+1$-spinor norm.

\subsection{Notation for the monodromy representation}
\label{subsection:notation-monodromy}

When $d > 0$, the map $\pi: \smsspace n d B \ra \smespace d B$ is finite \'etale, representing a locally constant constructible sheaf of rank $12d-4$ free $\bz/n\bz$ modules by 
\cite[Corollary 3.22]{landesman:geometric-average-selmer}.
For $B$ an integral noetherian $\bz[1/2n]$ scheme, letting $\vsel n d B$ denote the rank $12d-4$ free $\bz/n\bz$ module corresponding to the geometric generic fiber of $\pi$, 
we obtain a monodromy representation $\mono n d B : \pi_1(\smespace d B) \ra \gl(\vsel n d B)$ \cite[Definitions 4.1 and 4.2]{landesman:geometric-average-selmer}.

\begin{remark}
	\label{remark:}
	Strictly speaking, we should keep track of base points in our
	fundamental groups.
	However, as we will ultimately be concerned with integral base schemes $B$,
	changing basepoint only changes the map $\mono n d k$ by conjugation on the domain.
	Since
	we will only care about the image of $\mono n d k$,
	we will often omit the basepoint from our notation.
\end{remark}

For $R$ a ring, we use $\mono n d R$ to denote $\mono n d {\spec R}$.

\subsubsection{The adelic monodromy map}
For $n' \mid n$ both prime to $\chr(k)$, 
we obtain a map $\smsspace n d R \ra \smsspace {n'} d R$ over $\smespace d R$ induced by the corresponding map $\phi_{n,n'}: \mu_n \ra \mu_{n'}$ sending $y  \mapsto y^{n/n'}$
in the definition of
$\sspace n d R$ from \autoref{subsubsection:selmer-space}.
Because $\phi_{n, n''} = \phi_{n', n''} \circ\phi_{n,n'}$, the monodromy maps $\mono n d R: \pi_1(\smespace d R) \ra \gl(\vsel n d R)$
fit together compatibly to define a monodromy representation $\mono {\zprime p} d R : \pi_1(\smespace d R) \ra \gl(\vsel {\zprime p} d R)$.
For $n$ prime to $p$, we have a natural reduction $\bmod n$ map $r_n: \gl(\vsel {\zprime p} d R) \ra \gl(\vsel n d R)$
and $\mono {\zprime p} d R$ is uniquely characterized by the property that for all $n$ prime to $p$, $r_n\left( \mono {\zprime p} d R \right) = \mono n d R$.

\subsection{An equidistribution result}
\label{subsection:equidistribution}

For $x \in \espace d {\bz[1/2]}$ let $\frob_x$ be the conjugacy class of (geometric) Frobenius at $x$ in $\pi_1(\espace d {\bz[1/2]})$. In this section we prove an equidistribution result for Frobenius classes in the monodromy group, in the large $q$ limit.
To state the proposition, we define the ``mult'' map.

\begin{definition}
	\label{definition:mult}
	Let $X$ be a geometrically connected finite type scheme over $\mathbb F_q$, let $G$ be a profinite group, and let $\lambda: \pi_1(X) \ra G$ be a group homomorphism.
	Let $G_0$ denote the image of the composition $\pi_1^{\mrm{geom}}(X) := \pi_1(X_{\overline{\mathbb F}_q}) \ra \pi_1(X) \ra G$ and let $\Gamma := G/G_0$. Then, we define
	$\mult : G \ra \Gamma$ as the natural projection.
	Because $\pi_1(\spec \mathbb F_q) = \pi_1(X)/\pi_1^{\mrm{geom}}(X)$, we obtain a resulting map $\pi_1(\spec \mathbb F_q) \ra \Gamma$.
	We let $\gamma_q$ denote the image in $\Gamma$ of geometric Frobenius. 
\end{definition}
The following is an equidistribution result for Frobenii in a monodromy group, which is a generalization of \cite[Theorem 1]{kowalski:on-the-rank-of-quadratic-twists}.

\begin{proposition}
	\label{proposition:chavdarov}
Let $\Cal{X}$ be a smooth affine scheme of finite type over $\Cal{O}[1/S]$, where $\Cal{O}$ is a ring of integers in a number field, with geometrically irreducible fibers. For $\mf{q}$ a maximal ideal of $\Cal{O}[1/S]$ with residue field $\F_q$, write $X := \Cal{X}|_{\Cal{O}/\mf{q}}$. Assume that we have a commutative diagram 
\begin{equation}
	\label{equation:monodromy-and-mult}
\begin{tikzcd}
1 \ar[r] &  \pi_1^{\mrm{geom}}(X) \ar[d, "\lambda_0"] \ar[r] & \pi_1(X) \ar[r, "\deg"] \ar[d, "\lambda"] & \wh{\Z} \ar[r] \ar[d, "1 \mapsto \gamma_q^{-1}"] & 1 \\
1 \ar[r] & G_0 \ar[r] & G \ar[r, "\mult"] & \Gamma \ar[r] & 1 
\end{tikzcd}
\end{equation}
with $\lambda_0$ tamely ramified and surjective, $G$ a finite group, and $\Gamma$ abelian. Suppose $C \subset G$ is a conjugacy-invariant subset. Then 
\[
	\mrm{Prob}\{x \in X(\F_{q^n}) \co \lambda(\Frob_x) \in C \} = \frac{\# C \cap G^{\mult \gamma_q^n}}{\# G_0}  + O_{\Cal{X}}\left(\# G \sqrt{ \frac {\#C \cap G^{\mult \gamma_q^n}}{q^n}}\right).
\]
where $G^{\mult \gamma_q^n} := \mult^{-1}(\gamma_q^n)$. 
Here the constant in the error term $O_{\Cal{X}}\left(\# G \sqrt{ \frac {\#C\cap G^{\mult \gamma_q^n}}{q^n}}\right)$ is independent of $\mf{q}$, the choice of $G$, and the choice of $\lambda$, so long as $\lambda_0$ is tamely ramified and surjective. 
\end{proposition}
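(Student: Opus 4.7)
The plan is a Chebotarev-style argument via the Grothendieck--Lefschetz trace formula applied to sheaves built from irreducible characters of $G$, in the spirit of Chavdarov and \cite{kowalski:on-the-rank-of-quadratic-twists}. Fix $c \in C$. By Schur orthogonality on $G$,
$$\mathbf{1}_C(g) \;=\; \frac{\#C}{\#G} \sum_{\rho \in \widehat{G}} \overline{\chi_\rho(c)}\, \chi_\rho(g).$$
For each irreducible $\rho: G \to \GL(V_\rho)$ (over $\overline{\mathbb Q}_\ell$, some $\ell \nmid q$), the composition $\rho\circ\lambda$ gives a lisse $\overline{\mathbb Q}_\ell$-sheaf $\mathcal F_\rho$ on $X$ whose trace of geometric Frobenius at $x \in X(\mathbb F_{q^n})$ equals $\chi_\rho(\lambda(\Frob_x))$. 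Applying Grothendieck--Lefschetz:
$$\sum_{x \in X(\mathbb F_{q^n})} \chi_\rho(\lambda(\Frob_x)) \;=\; \sum_{i} (-1)^i\, \tr\!\left(\Frob^n \,\big|\, H^i_c(X_{\overline{\mathbb F}_q}, \mathcal F_\rho)\right).$$

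For the main term, analyze the top cohomology $H^{2\dim X}_c(X_{\overline{\mathbb F}_q}, \mathcal F_\rho)$. Since $X$ is geometrically irreducible, this is $(V_\rho)_{\pi_1^{\mathrm{geom}}(X)}(-\dim X)$. Because $\lambda_0$ surjects onto $G_0$, the coinvariants are nonzero precisely when $\rho|_{G_0}$ is trivial, i.e.\ when $\rho$ factors through $\Gamma$. For such $\rho$ the top cohomology is one-dimensional, with Frobenius eigenvalue $q^{n\dim X}\rho(\gamma_q^n)$ (the diagram \eqref{equation:monodromy-and-mult} identifies the Frobenius action on the geometric coinvariants with $\rho(\gamma_q^n)$). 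Substituting back and applying Pontryagin orthogonality on the abelian group $\Gamma$,
$$\frac{\#C}{\#G}\sum_{\rho \in \widehat{\Gamma}} \overline{\rho(c)}\rho(\gamma_q^n) \;=\; \frac{\#C\cdot |\Gamma|}{\#G}\,\mathbf 1_{\mathrm{mult}(c)=\gamma_q^n} \;=\; \frac{\#(C \cap G^{\mathrm{mult}\, \gamma_q^n})}{\#G_0},$$
which multiplied by $q^{n\dim X}$ gives the main term once we divide by $\#X(\mathbb F_{q^n}) = q^{n\dim X}(1+O(q^{-n/2}))$.

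The error comes from representations $\rho$ not factoring through $\Gamma$ (where the top cohomology vanishes) and from lower cohomologies of the remaining $\rho$. By Deligne's Weil II, the Frobenius eigenvalues on $H^i_c(X_{\overline{\mathbb F}_q}, \mathcal F_\rho)$ for $i<2\dim X$ are bounded in absolute value by $q^{n(\dim X - 1/2)}$. Tameness of $\lambda_0$ forces every $\mathcal F_\rho$ to be a tame lisse sheaf, so the Grothendieck--Ogg--Shafarevich / Euler--Poincar\'e formula of Deligne--Laumon yields a uniform Betti-number bound $\sum_i \dim H^i_c(X_{\overline{\mathbb F}_q}, \mathcal F_\rho) \leq c(\mathcal X)\dim\rho$, with $c(\mathcal X)$ depending only on $\mathcal X$ (not on $\mathfrak q$, $G$, or $\lambda$, provided $\lambda_0$ is tame). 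Bounding $|\chi_\rho(c)|\leq\dim\rho$ and using $\sum_\rho (\dim\rho)^2=\#G$ gives
$$\Bigl|\mathrm{error on the count}\Bigr| \leq \frac{\#C}{\#G}\cdot c(\mathcal X)\cdot \#G \cdot q^{n(\dim X-1/2)} \leq c(\mathcal X)\cdot \#C\cdot q^{n(\dim X - 1/2)}.$$
Dividing by $\#X(\mathbb F_{q^n}) \sim q^{n\dim X}$ and using $\#C\leq \#G$ yields the claimed error bound $O_{\mathcal X}(\#G\sqrt{\#C/q^n})$ in probability.

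\textbf{Main obstacle.} The delicate point is the uniform, $(\mathfrak q,G,\lambda)$-independent control of the Betti numbers of $\mathcal F_\rho$, which is exactly what the tameness hypothesis on $\lambda_0$ is engineered to provide via Deligne--Laumon. A secondary subtlety is correctly identifying the Frobenius action on the geometric coinvariants of $\mathcal F_\rho$ with $\rho(\gamma_q^n)$ --- this is where the sign convention ``$1 \mapsto \gamma_q^{-1}$'' in the diagram \eqref{equation:monodromy-and-mult} enters, and one must be careful that arithmetic vs.\ geometric Frobenius conventions align so that the orthogonality computation produces $\mathbf 1_{\mathrm{mult}(c)=\gamma_q^n}$ rather than its inverse.
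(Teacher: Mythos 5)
Your proposal is a from-scratch reconstruction of Kowalski's Theorem 1, which is exactly what the paper's proof cites; the underlying machinery (Lefschetz trace formula, character orthogonality on $G$, Weil II, tameness to control Betti numbers) is the same, so this is essentially the same approach, and the orthogonality bookkeeping in the main term is correct. Note that your direct error bound $O_{\mathcal{X}}(\#C/\sqrt{q^n})$ is in fact slightly sharper than the stated $O_{\mathcal{X}}(\#G\sqrt{\#C/q^n})$ (since $\sqrt{\#C}\le \#G$), so you do recover the claim.

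Two points are glossed where the paper adds content beyond the raw citation, and one of them is a genuine gap. First, the Betti-number bound $\sum_i \dim H^i_c(X_{\overline{\mathbb{F}}_q},\mathcal{F}_\rho)\le c(\mathcal{X})\dim\rho$ for a higher-dimensional affine $X$ is not a single application of the Deligne--Laumon Euler--Poincar\'e formula (which is a curve statement); it needs the inductive fibration/hyperplane-section argument of Kowalski (his Propositions 4.5--4.7), together with the observation --- which the paper makes explicit --- that restricting a tame cover to a generic hyperplane section stays tame (via Bertini). Second, and more substantively, your constant $c(\mathcal{X})$ is asserted to be independent of $\mathfrak{q}$ but you give no argument for this; a priori the Betti numbers of the special fibers $X = \mathcal{X}|_{\mathcal{O}/\mathfrak{q}}$ could jump as $\mathfrak{q}$ varies. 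This $\mathfrak{q}$-uniformity is precisely the second modification the paper makes to Kowalski's statement, and it is established there by observing that the relevant Betti numbers are constant in the family via Ehresmann's theorem and proper base change for compactly supported \'etale cohomology; without this step the proposition as stated (with the error constant $O_{\mathcal{X}}(\cdot)$ independent of $\mathfrak{q}$) is not proved.
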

\begin{proof}
	By the Lang--Weil bound, we have $\# \Cal{X}(\mathbb F_q) = q^{\dim \Cal{X}_{\mathbb F_q}}+ O_{\Cal{X}}(q^{\dim \Cal{X}_{\mathbb F_q} -1/2})$ 
	and so
	after multiplying both sides by $\# \Cal{X}(\mathbb F_q)$
	(see also \cite[Remark 2]{kowalski:on-the-rank-of-quadratic-twists}),
	this statement nearly appears in 
	\cite[Theorem 1]{kowalski:on-the-rank-of-quadratic-twists}.
	There are two differences however: First, Kowalski 
	assumes that $\#G$ is prime to $q$ instead of only that $\lambda_0$ is tamely ramified.
	Second, Kowalski works over a field instead of over $\Cal{O}[1/S]$.
	The proof of \autoref{proposition:chavdarov} is the same as that given in \cite[Theorem 1]{kowalski:on-the-rank-of-quadratic-twists}, once these two differences are addressed.

	First we address the tamely ramified constraint. 
	Indeed, a careful examination of the proof of \cite[Theorem 1]{kowalski:on-the-rank-of-quadratic-twists}, shows that
the only reason for assuming $\#G$ is prime to $q$ appears in the reference to \cite[Proposition 4.7]{kowalski2006large},
	which in turn only uses this assumption in its reference to
	\cite[Proposition 4.5]{kowalski2006large},
	which in turn only uses this assumption in \cite[(4.13)]{kowalski2006large}. 
	However,
	\cite[(4.13)]{kowalski2006large}
	 holds whenever
	$\lambda_0$, or the associated map labeled $\phi$ in \cite{kowalski2006large}, is tamely ramified,
	see \cite[2.6, Cor 2.8]{Illusie:bookTheEuler}.
	We note that a generic hyperplane section of a tamely ramified cover remains tamely ramified, using Bertini's theorem to ensure that the hyperplane intersects the divisor of ramification
	generically. Hence, \cite[Proposition 4.6]{kowalski2006large}, used in the proof of
	\cite[Proposition 4.5]{kowalski2006large},
	can be suitably generalized to include the assumption that the restriction of $\phi$ to the hyperplane is tamely ramified.

	Second, we address the issue of working over $\Cal{O}[1/S]$ in place of a finite field. 
	The proof in \cite{kowalski:on-the-rank-of-quadratic-twists} shows that if $X$ comes as the reduction of a smooth $\Cal{X}$ over $\Cal{O}[1/S]$, then the constant in the error term $O_{\Cal{X}}\left(\# G \sqrt{ \frac {\#C}{q^n}}\right)$ of \autoref{proposition:chavdarov} can be taken to be a sum of (compactly supported) Betti numbers of $\Cal{X}$, which is uniform in $\mf{q}$ by Ehresmann's Theorem and proper base change for compactly supported \'{e}tale cohomology. This applies in particular to the Selmer spaces, as they are smooth over $\Z[1/2]$. 
\end{proof}

In computing the image of the monodromy representation associated to the Selmer space, the following criterion for when an irreducible cover is geometrically connected will be crucial. 

\begin{corollary}
	\label{corollary:geometric-monodromy-criterion}
	Let $Y$ be a geometrically irreducible finite type $\mathbb F_q$ scheme and let $\pi: X \ra Y$ be a finite \'etale connected Galois $G$ cover
	corresponding to a surjective map $\rho: \pi_1(Y) \ra G$ which is tamely ramified.
	Then, $X$ is geometrically disconnected if and only if there exist infinitely many positive integers
	$i$ such that for all $y \in Y(\mathbb F_{q^i})$,
	$\rho(\frob_y) \neq \id \in G$.
\end{corollary}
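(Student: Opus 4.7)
The plan is to analyze both directions through the finite cyclic quotient $\Gamma := G/G_0$, where $G_0 \subseteq G$ is the image of $\pi_1^{\mathrm{geom}}(Y)$ under $\rho$, as in the diagram \eqref{equation:monodromy-and-mult} with $\lambda = \rho$. Then $\Gamma$ is a finite cyclic quotient of $\hat{\bz}$, generated by the image $\gamma_q$ of geometric Frobenius, and $X$ is geometrically connected precisely when $\Gamma$ is trivial. The key compatibility I will use is that for every $y \in Y(\F_{q^i})$, the image $\mult(\rho(\frob_y)) \in \Gamma$ equals $\gamma_q^i$, since the degree of $\frob_y$ in $\hat{\bz}$ is $i$.

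For the ``only if'' direction, assume $X$ is geometrically disconnected, so $k := |\Gamma| > 1$. For every positive integer $i$ not divisible by $k$ one has $\gamma_q^i \neq 1$ in $\Gamma$, so every $y \in Y(\F_{q^i})$ satisfies $\rho(\frob_y) \notin G_0$, and in particular $\rho(\frob_y) \neq \id$. Since infinitely many $i$ are not divisible by $k$, this direction is immediate.

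For the ``if'' direction, I would first observe that the existence of some $y \in Y(\F_{q^i})$ with $\rho(\frob_y) = \id$ is equivalent to $X(\F_{q^i}) \neq \emptyset$: if $\rho(\frob_y) = \id$ then $\frob_y$ acts trivially on the $G$-torsor fiber $\pi^{-1}(y)$, making every geometric point in the fiber $\F_{q^i}$-rational; conversely, for $x \in X(\F_{q^i})$ with $y := \pi(x)$, $\frob_y$ fixes $x$, and the simply transitive $G$-action on the fiber forces $\rho(\frob_y) = \id$. Now suppose $X$ is geometrically connected, so $G_0 = G$. Because $X$ is a connected finite étale cover of the geometrically irreducible scheme $Y$ and the geometric components of $X$ are indexed by $G_0$-cosets in $G$, the base change $X_{\overline{\F_q}}$ is itself irreducible, and the Lang-Weil estimate gives $X(\F_{q^i}) \neq \emptyset$ for all sufficiently large $i$, establishing the contrapositive.

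An alternative would be to invoke \autoref{proposition:chavdarov} directly with $C = \{\id\}$: when $X$ is geometrically connected the main term $1/\#G$ dominates the error $O(\#G \cdot q^{-i/2})$ for $i$ large, while when $X$ is geometrically disconnected and $k \nmid i$, the set $G^{\mult \gamma_q^i}$ does not contain $\id$, so the exact count is zero. No real obstacle arises; the only substantive step is the reformulation of ``$\rho(\frob_y) = \id$ for some $y$'' as ``$X(\F_{q^i}) \neq \emptyset$''.
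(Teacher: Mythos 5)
Your proof is correct and follows the same basic route as the paper: reduce everything to the finite cyclic quotient $\Gamma = G/G_0$ and the commutativity of the right-hand square in \autoref{proposition:chavdarov}, then split into the two directions. Your ``only if'' direction (geometric disconnectedness implies the condition) is stated cleanly: since $\Gamma$ is cyclic of order $k>1$, for every $i$ with $k \nmid i$ one has $\gamma_q^i \neq 1$, so all Frobenius images at $\F_{q^i}$-points leave $G_0$ and in particular are non-identity. The paper's ``converse'' paragraph is aiming at exactly this implication but as written states the implication in the reverse direction, so your formulation is actually the tidier one and makes explicit the role of cyclicity that the paper leaves implicit. Your ``if'' direction matches the paper's use of equidistribution; your Lang--Weil phrasing is a legitimate unwinding of that, together with the clean observation that ``$\exists y$ with $\rho(\frob_y) = \id$'' is equivalent to ``$X(\F_{q^i})\neq\emptyset$.'' The one thing I would flag in the Lang--Weil version: you should make sure $X_{\overline{\F}_q}$ is not merely connected but geometrically irreducible before quoting Lang--Weil for a positive point count (this holds here because a connected finite \'etale cover of an irreducible scheme is irreducible, but it is worth saying); your fallback via \autoref{proposition:chavdarov} with $C = \{\id\}$ sidesteps this and is closer to the paper's argument.
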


\begin{proof}
	If $X$ is geometrically connected, then once $i$ is sufficiently large,
	there do exist $y \in Y(\F_{q^i})$ with $\rho(\frob_y) = \id$, using
the equidistribution of Frobenius elements in $G$ resulting from \autoref{proposition:chavdarov} (using that $G = G_0$ in that statement). 

We next show the converse.
Suppose $X$ is geometrically disconnected and let $j$ denote the number of components of $X_{\overline{\mathbb F}_q}$.
We claim that for any
$i$ relatively prime to $j$, $X_{\mathbb F_{q^i}}$ is connected.
Indeed, if 
$X_{\mathbb F_{q^i}}$ is disconnected, $\gal(\mathbb F_{q^i}/\mathbb F_q) \simeq \mathbb Z/i\mathbb Z$ would act nontrivially on the components of
$X_{\mathbb F_{q^i}}$, implying that $\gcd(j,i) > 1$.

To conclude the proof, it suffices to show that for any such $i$ relatively prime to $j$, 
and any
$y \in Y(\mathbb F_{q^i})$,
$\rho(\frob_y) \neq \id \in G$.
Indeed, if $\rho(\frob_y) = \id \in G$,
the fiber of $\pi: X \to Y$ over $y$ would necessarily be $\deg \pi$ copies of $y$, so in particular, $X$ would have some $\mathbb F_{q^i}$ point.
However, since $X_{\mathbb F_{q^i}}$ is connected but geometrically disconnected,
the $j$ geometric components of $X_{\overline{\mathbb F}_q}$ must be nontrivially permuted by the action of
$\gal(\overline{\mathbb F}_q/\mathbb F_{q^i})$.
In particular, this Galois action on the fiber $X_y$ over $y$ must be nontrivial, and so $X$ cannot have any $\mathbb F_{q^i}$ points.
\end{proof}

\begin{corollary}
	\label{corollary:selmer-equidistribution}
Retain the notation of \autoref{definition:mult}.
	For any $n \geq 1$ and $C \subset \im \mono n d {\bz[1/2n]}$ a conjugacy class and $\mathbb F_q$ a finite field of characteristic $p$ with $\gcd(p,2n) = 1$, we have
\[\Scale[0.9]{
\begin{aligned}
\frac{\# \left\{ x \in \smespace d {\bz[1/2n]}(\mathbb F_q): \mono n d {\bz[1/2n]}(\frob_x) \in C \right\} }{\# \smespace d {\bz[1/2n]}(\mathbb F_q) } = 
\begin{cases}
	\frac{\# C}{\# \im \mono n d {\overline{\mathbb F}_p}} + O_{n,d}\left( q^{-1/2} \right)  & \text{ if } \mult (C)= \gamma_q, \\
	0 & \text{ if } \mult (C) \neq \gamma_q.	  \\
\end{cases}
\end{aligned}}
\]
The same statement holds true with $\smestack d k$ in place of $\smespace d k.$ 
\end{corollary}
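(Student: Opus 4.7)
The corollary should follow directly from \autoref{proposition:chavdarov} applied with $\mathcal X = \smespace d {\bz[1/2n]}$, $\lambda = \mono n d {\bz[1/2n]}$, $G = \im \mono n d {\bz[1/2n]}$, and $G_0 = \im \mono n d {\overline{\mathbb F}_p}$. With this choice, $G_0$ is normal in $G$ by construction, $\Gamma = G/G_0$ is procyclic (hence abelian) since it is a quotient of $\pi_1(\spec \mathbb F_q)/\pi_1(\spec \overline{\mathbb F}_q) \simeq \wh\bz$, and $G$ is finite because it embeds into $\gl(\vsel n d {})$. The smoothness, affineness, finite type, and geometric irreducibility of fibers of $\mathcal X$ are immediate from its construction as a fiberwise dense open subscheme of the affine space $\aff d {\bz[1/2n]}$ (\autoref{subsection:selmer-space-review}), and surjectivity of $\lambda_0$ onto $G_0$ holds by the very definition of the geometric monodromy group. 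The one nontrivial hypothesis to check is tame ramification of $\lambda_0$ along the boundary of $\mathcal X|_{\overline{\mathbb F}_p}$; this should follow from the construction of the Selmer sheaf via $\mu_n$-cohomology of the universal elliptic fibration (\autoref{subsubsection:selmer-space}) with $\gcd(n,p) = 1$.

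Granting those hypotheses, the output of \autoref{proposition:chavdarov}, combined with the Lang--Weil bound for the denominator $\#\mathcal X(\mathbb F_q)$, yields the claimed formula. For the first case $\mult(C) = \gamma_q$: since $\mult$ is a homomorphism to an abelian group, it is constant on the conjugacy class $C$, so $\#(C \cap G^{\mult \gamma_q}) = \#C$; moreover $\#G$ and $\#C$ are bounded by $|\o(\qsel n d {})|$ and the compactly supported Betti numbers of $\mathcal X$ are likewise bounded in terms of $n$ and $d$ alone, so the error term $O_{\mathcal X}(\#G \sqrt{\#C/q})$ reduces to $O_{n,d}(q^{-1/2})$. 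For the second case $\mult(C) \neq \gamma_q$: for every $x \in \mathcal X(\mathbb F_q)$ the commutative diagram in \autoref{proposition:chavdarov} forces $\mult(\lambda(\frob_x)) = \gamma_q$, so $\lambda(\frob_x) \notin C$ and the probability is literally zero, with no error term needed.

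The analogous statement for $\smestack d k$ in place of $\smespace d k$ follows from the scheme version by observing that $\smestack d k = [\smespace d k / (\bg_a^{2d+1} \rtimes \bg_m)]$ and that $\mono n d {\bz[1/2n]}$ is defined at the stack level (\cite[\S 4]{landesman:geometric-average-selmer}), so that any lift $x$ of a stack point has the same image in $G$ as its orbit in $\smestack d k(\mathbb F_q)$. Away from the negligible locus of curves with nontrivial automorphisms (which contributes $O(q^{-1/2})$ as in \autoref{remark:distribution-definition-variations}), the action of $\bg_a^{2d+1} \rtimes \bg_m$ is free modulo the hyperelliptic involution, so every stack fiber has the same cardinality and the common factor cancels between numerator and denominator. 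The main obstacle in the whole argument is verifying the tameness hypothesis: because $|\o(\qsel n d {})|$ can be divisible by $p$, the simpler Kowalski-type statement requiring $\gcd(\#G, q) = 1$ is insufficient, and one must genuinely invoke the tame-ramification extension proved in \autoref{proposition:chavdarov}.
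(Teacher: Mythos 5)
Your proof is correct and takes essentially the same approach as the paper's: invoke \autoref{proposition:chavdarov} with $G = \im \mono n d {\bz[1/2n]}$ and $G_0$ the geometric monodromy image, absorb the bounded quantities $\#G$, $\#C$, and the relevant Betti numbers into the $O_{n,d}(q^{-1/2})$ constant, use that $\mult$ is constant on conjugacy classes to split into the two cases, and then relate $\smestack d k$ to $\smespace d k$ via the quotient presentation $\smestack d k = [\smespace d k / \bg_a^{2d+1}\rtimes\bg_m]$ and the fact that $\mono n d k$ factors through $\pi_1(\smestack d k)$. The only divergence is in the stack reduction: where you excise the extra-automorphism locus and argue the remaining action is free modulo $\mu_2$, the paper instead applies Lang's theorem to the connected group $\bg_a^{2d+1}\rtimes\bg_m$ directly to get that every stacky point, counted with weight $1/\#\Aut$, has exactly $\#(\bg_a^{2d+1}\rtimes\bg_m)(\F_q)$ scheme points lying over it, so the two distributions match with no excision and no need to track the locus of special automorphisms.
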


\begin{proof}
	Note that in this setting, the tameness assumption on $\mono n d {\overline k}$
	was verified in the proof of \cite[Proposition 4.9]{landesman:geometric-average-selmer},
	see especially the end of the first paragraph of \cite[p. 702]{landesman:geometric-average-selmer}.
	The first statement follows immediately from \autoref{proposition:chavdarov}.
	Note here that $G$ and $C$ as in the statement of \autoref{proposition:chavdarov} are fixed, and so we may absorb their orders into the constant in the error term $O_{n,d}(q^{-1/2})$.

	To deduce the equidistribution statement for $\smestack d k$ from $\smespace d k$, note that the monodromy representation for $\smestack d k$ is induced by the cover
	$\smsstack n d k \ra \smestack d k$. Further $\smsspace n d k$ is the pullback of $\smsstack n d k$ along $\smespace d k \ra \smestack d k$, i.e. the diagram
	\[
	\begin{tikzcd}
\smsspace n d k \ar[d] \ar[r] &    	\smsstack n d k \ar[d]   \\
\smespace d k  \ar[r] & 	   \smestack d k
	\end{tikzcd}
	\]
	is cartesian.
In other words, the monodromy representation associated to $\smsspace n d k \ra \smespace d k$ factors through $\pi_1(\smespace d k) \surj \pi_1(\smestack d k) $.
	This implies that if $x, y \in \smespace d k$ map to the same point in $\smestack d k$ then $\mono n d k(\frob_x) = \mono n d k(\frob_y)$.
	Because $\smestack d k = [\smespace d k/\bg_a^{2d+1} \rtimes \bg_m]$, Lang's theorem applied to the group $\bg_a^{2d+1} \rtimes \bg_m$
	shows that each $z \in  \smestack d k(\mathbb F_q)$ (counted with multiplicity according to automorphisms)
	has precisely $\bg_a^{2d+1} \rtimes \bg_m(\mathbb F_q)$ points
	lying over it in $\smespace d k(\mathbb F_q)$, all mapping to the same conjugacy class under $\mono n d k$.
	Therefore, the distribution of $\mono n d k(\frob_x)$ for $x \in \smespace d k(\mathbb F_q)$ agrees with the distribution
	$\mono n d k(\frob_z)$ for $z \in \smestack d k(\mathbb F_q)$.
\end{proof}

\subsection{Determining the image of monodromy}
\label{subsection:determining-image-of-monodromy}
In \cite[Theorem 4.4]{landesman:geometric-average-selmer}, a partial description of $\im \mono n d k$ was given for $k$ a field.
The goal of this section is to precisely compute $\im \mono n d k$.
First, we recall the description from \cite[Theorem 4.4]{landesman:geometric-average-selmer}.
Keeping notation as in \autoref{subsubsection:quadratic-forms-base-change},
for $(V, Q)$ a quadratic space over a ring $R$ 
with a map $R \ra \bz/n\bz$, we let $(V_n, Q_n) := (V_{\bz/n\bz},Q_{\bz/n\bz})$
and let $r_n: \o(Q) \ra \o(Q_n)$ denote the induced reduction $\bmod n$ map of orthogonal groups.
We will be most concerned with the case $R = \bz$ or $R = {\zprime p}$.

In \cite[Theorem 4.4]{landesman:geometric-average-selmer} 
a quadratic space $(\vsel \bz d k, \qsel \bz d k)$ over $\bz$ is defined.
This agrees with that defined in \autoref{definition:selmer-space-quadratic-form} by
\cite[Remark 4.5]{landesman:geometric-average-selmer}.
With these definitions, \cite[Theorem 4.4]{landesman:geometric-average-selmer} states
\[
r_n(\osp(\qsel \bz d \bz)) \subset \im \mono n d {\ol k} \subset  \im \mono n d {k} \subset \o(\qsel n d {\ol k}).
\]

We next recall a slight generalization of the usual cyclotomic character, which we shall need to characterize $\im \mono n d k$.
\begin{definition}
	\label{definition:cyclotomic-character}
	For $k$ a field of characteristic $p$, allowing $p = 0$, we define the {\em cyclotomic character} as the map
	$\chi_{\cyc}: \gal(\ol{k}/k) \ra \left( \zprime p \right)^\times$ defined as follows:
	For $\nu$ a positive integer with $(\nu, p) = 1$ when $p > 0$ and $\nu$ arbitrary when $p = 0$,
	let $\zeta_\nu$ be a primitive $\nu$th root of unity.
	For $\sigma \in \gal(\ol{k}/k)$,
	suppose	$\sigma(\zeta_\nu) = \zeta_\nu^{a_{\nu,\sigma}}$.
	Then, define
	$\chi_{\cyc}(\sigma) := (a_{\nu,\sigma})_{\nu}$, considered as an element of $\left( \zprime p \right)^\times$.
\end{definition}
\begin{remark}
	\label{remark:cyclotomic-character}
	Note that $\chi_{\cyc}$ of \autoref{definition:cyclotomic-character} is the usual cyclotomic character when $\chr(k) = 0$.
	Further, from the definition,
in the case $p \neq 0$, $k = \mathbb F_p$, and $q$ is a power of $p$, we have $\chi_{\cyc}(\frob_q) = q \in \left( \zprime p \right)^\times$.
\end{remark}

For the statement of \autoref{theorem:monodromy}, recall the notation for the spinor norm and Dickson invariant from
\autoref{subsection:orthogonal-notation}.
Also, 
let $\Delta_{\bz/2\bz} : \bz/2 \bz \ra \prod_{\text{primes }\ell \neq p} \bz/2\bz$ the diagonal inclusion.
For $k$ a field of characteristic $p$ and $d \in \bz_{\geq 2}$, let $\chi^{d-1}$ denote the composition
	\[
		\gal(\ol{k}/k) \xra{\chi_{\cyc}^{d-1}} \left(\zprime p \right)^\times \ra  \left(\zprime p\right)^\times / \left( \left(\zprime p \right)^\times  \right)^2.
	\]
\begin{theorem}
	\label{theorem:monodromy}
	Let $k$ be a field of characteristic $p$, allowing $p =0$,
	and let
	$d \in \bz_{\geq 2}$.
	With $\Delta_{\bz/2\bz}$ and $\chi^{d-1}$ defined above, 
	\begin{align*}
		\im \mono n d k = \dickson{\qsel {\zprime p} d k}^{-1}(\im \Delta_{\bz/2\bz}) \cap \left(\sp{\qsel {\zprime p} d k}\right)^{-1}(\im \chi^{d-1}).
	\end{align*}
	\end{theorem}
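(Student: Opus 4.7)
The plan is to establish the two inclusions separately. The outer containment $\im \mono n d k \subset \o(\qsel n d {\ol k})$ is immediate, and \cite[Theorem 4.4]{landesman:geometric-average-selmer} provides a substantial lower bound $r_n(\osp(\qsel \bz d \bz)) \subset \im \mono n d {\ol k}$. What remains is (i) to cut down the image by matching both $\dickson{\qsel {\zprime p} d k}$ and $\sp{\qsel {\zprime p} d k}$ to the predicted constraints, and (ii) to exhibit enough additional monodromy elements to realize the predicted subgroup entirely.

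For the upper bound I would argue the Dickson and spinor constraints separately. Because the finite \'etale covers $\smsstack n d k \to \smestack d k$ for varying $n$ assemble compatibly into a single pro-cover, the adelic monodromy $\mono {\zprime p} d k$ is really valued in the base change of the integral orthogonal group $\o(\qsel \bz d \bz)$, where the Dickson invariant is a single $\bz/2\bz$-valued map. Its image in the product $\prod_{\ell\ne p} \bz/2\bz$ therefore automatically lies in the diagonal $\im \Delta_{\bz/2\bz}$. For the spinor-norm part, the quadratic form on the Selmer sheaf is produced from the cup product together with the Weil pairing $\sce[n] \otimes \sce[n] \to \mu_n$ and a trace $H^2(\PP^1_{\ol{\mathbb F}_q}, \mu_n) \to \bz/n\bz$ (as in the proof of \autoref{lemma:mu-n-pushforward-to-selmer-space}), and the resulting quadratic structure differs from the one on $\vsel {\zprime p} d k$ by a Tate twist whose weight depends on $d$. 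Tracking this twist yields a cyclotomic factor of $\chi_{\cyc}^{d-1}$ in the spinor norm of $\mono n d k(\Frob_q)$, giving the containment in $(\sp{\qsel {\zprime p} d k})^{-1}(\im \chi^{d-1})$.

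For the lower bound I would first pin down the geometric monodromy $\im \mono n d {\ol k} = \dickson{\qsel {\zprime p} d k}^{-1}(\im \Delta_{\bz/2\bz}) \cap \ker(\sp{\qsel {\zprime p} d k})$. Combined with the Theorem~4.4 inclusion, this reduces to producing geometric elements with any prescribed Dickson value along the diagonal but trivial $-1$-spinor norm. A natural source is local monodromy around a nodal degeneration: a vanishing cycle $v$ yields a reflection $r_v$, and by choosing the degenerating family one can make $-Q(v)$ a square in the residue ring, forcing $r_v$ to have non-trivial Dickson invariant and trivial spinor norm simultaneously. Varying the nodal fibre inside $\smespace d {\ol k}$ and combining enough such reflections then generates the missing coset of $r_n(\osp)$. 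Having determined the geometric part, the arithmetic extension is governed by the $\mult$ map of \autoref{definition:mult}: the upper-bound computation identifies the image of $\Frob_q$ in the quotient by the geometric monodromy with $\chi^{d-1}(\Frob_q)$, and \autoref{proposition:chavdarov} then forces Frobenius equidistribution in the resulting coset, giving the matching lower bound.

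The main obstacle I expect is the precise bookkeeping of the cyclotomic twist in the spinor-norm computation. Tracking the powers of $\chi_{\cyc}$ through the identification of the Selmer sheaf with $R^1 g_*(R^1 f_* \mu_n)$, the Poincar\'e duality pairing on the elliptic surface, and the comparison of the resulting quadratic form with the lattice form on $U^{\oplus(2d-2)} \oplus (-E_8)^{\oplus d}$ is delicate; an off-by-one in the Tate twist would change $\chi^{d-1}$ into $\chi^d$ or $\chi^0$. I would verify the exponent by cross-checking against small-$d$ examples and against the explicit orthogonal monodromy constructions in \cite{zywina:inverse-orthogonal}.
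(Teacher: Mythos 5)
Your proposal takes a genuinely different route from the paper, and one of the two upper-bound arguments has a gap that I do not think can be repaired as stated.

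The step that fails is the claim that the adelic monodromy $\mono {\zprime p} d k$ ``is really valued in the base change of the integral orthogonal group $\o(\qsel \bz d \bz)$,'' from which you deduce that the Dickson invariant is automatically diagonal. The pro-cover structure gives a representation into $\o(\qsel {\zprime p} d k) = \prod_{\ell \neq p} \o(\qsel {\bz_\ell} d k)$, but there is no \emph{a priori} reason for it to factor through the integral orthogonal group or its closure in this product. Worse, the reduction maps $r_n \colon \o(\qsel \bz d k) \to \o(\qsel n d k)$ are surjective for all $n$ prime to $p$ (this is exactly what Kneser's theorem gives, and is used in \autoref{lemma:orthogonal-reduction}), so the closure of $\o(\qsel \bz d k)$ in the adelic orthogonal group is the whole group; the claim ``the image is in the closure of $\o(\qsel \bz d k)$'' is therefore vacuous and imposes no diagonality constraint whatsoever. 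The fact that the Dickson invariant of monodromy lies on the diagonal is a genuine arithmetic theorem: in the paper it is proved (\autoref{lemma:dickson-monodromy} via \autoref{proposition:zywina}(1)) by identifying $\dickson{}(\Frob_x)$ with the root number $\varepsilon_{E_x} \in \{\pm 1\}$ using the functional equation of the $L$-function, a single sign independent of $\ell$, and then spreading this to the whole monodromy group by Chebotarev (\autoref{proposition:chavdarov}, \autoref{corollary:geometric-monodromy-criterion}).

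Your spinor-norm argument via the Weil-pairing/Tate-twist bookkeeping is at least morally plausible, but it is a quite different route from the paper's, which is worth flagging. The paper does not attempt to compute the twist in the cohomological pairing; instead \autoref{proposition:zywina}(2) identifies $\sp{}(\Frob_x)$ with the class of $L(1/q,E_x)$ via Zassenhaus' formula, and then invokes the function-field BSD formula (applicable when $\anrk = 0$, which holds off a subset controlled by \autoref{proposition:eigenvalues}) to evaluate $L(1/q,E_x)$ up to squares as $q^{d-1}$. If your cohomological twist argument could be carried out it would be an attractive alternative that avoids BSD, but as you observe yourself the exponent bookkeeping is delicate and you have not pinned it down; the paper's route avoids this entirely.

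For the lower bound, your plan to produce reflections with prescribed Dickson value and trivial spinor norm from vanishing cycles at nodal degenerations is essentially re-deriving the content of \cite[Theorem 4.4]{landesman:geometric-average-selmer}, which the present paper takes as a black box. Given that input, the paper's \autoref{lemma:orthogonal-reduction} upgrades $r_n(\osp(\qsel \bz d k))$ to contain $\Omega(\qsel n d k)$ purely by group theory (Kneser's two Sätze on generation by reflections of unimodular lattices), with no further geometry. Your final step, using the $\mult$ map and \autoref{proposition:chavdarov} to pass from the geometric image to the arithmetic image, does match the paper's strategy.
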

\begin{example}
	Let's explicate what \autoref{theorem:monodromy} says in the cases of interest to this paper.
\begin{itemize}
	\item If $k$ is algebraically closed or $d$ is odd, then 
	\[
	\im \mono {\zprime p} d {\ol k} = \dickson{\qsel {\zprime p} d k}^{-1}(\im \Delta_{\bz/2\bz}) \cap \ker \left(\sp{\qsel {\zprime p} d k}\right).
	\]
\item If $d$ is even and $k = \mathbb F_q$ has characteristic $p>0$, 
	using \autoref{remark:cyclotomic-character},
	we have 
	\begin{align*}
		\im \mono {\zprime p} d k = \dickson{\qsel {\zprime p} d k}^{-1}(\im \Delta_{\bz/2\bz}) \cap (\sp{\qsel {\zprime p} d k})^{-1}(\langle \left[ q \right] \rangle)
	\end{align*}
	where $\langle \left[ q \right] \rangle$ is the group generated by the class of $q$.
\end{itemize}
\end{example}
	
	We will prove \autoref{theorem:monodromy}
	at the end of this section in \autoref{subsection:monodromy-proof}.
	The general outline of the proof is as follows.
	First, in \autoref{subsection:big-monodromy}, we show the image of the monodromy representation contains $\Omega(\qsel {\zprime p} d k)$.
	Next, in \autoref{subsection:monodromy-tools}, we explain how to compute the spinor norm and Dickson invariant of images of Frobenius, in certain cases.
	Then, in \autoref{subsection:controlling-dickson} and \autoref{subsection:controlling-spinor} we compute the spinor norm and Dickson invariants on
	$\im \mono {\zprime p} d k$, for $k$ a finite field.
	Finally, we piece these parts together in \autoref{subsection:monodromy-proof}.

\subsection{Showing the monodromy is big}
\label{subsection:big-monodromy}

We next explain how to deduce $\Omega(\qsel {\zprime p} d k) \subset \im \mono {\zprime p} d {\ol k}$
by combining \cite[Theorem 4.4]{landesman:geometric-average-selmer} with some group theory.
\begin{lemma}
	\label{lemma:orthogonal-reduction}
	For $d \geq 2$ and $n \geq 1$, 
	we have $r_n(\osp(\qsel \bz d k)) \supset \Omega(\qsel n d k)$.
	In particular, combining this with \cite[Theorem 4.4]{landesman:geometric-average-selmer} gives $\Omega(\qsel n d k) \subset \im \mono n d {\ol k}$
	and so $\Omega(\qsel {\zprime p} d k) \subset \im \mono {\zprime p} d {\ol k}$.
\end{lemma}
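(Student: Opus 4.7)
The plan is to reduce the claim to a strong approximation statement for the spin group of the lattice $L = U^{\oplus(2d-2)} \oplus (-E_8)^{\oplus d}$ underlying $(\vsel \bz d k, \qsel \bz d k)$. For $d \geq 2$, $L$ is an even unimodular $\bz$-lattice of rank $12d-4 \geq 20$ and of indefinite signature $(2d-2,\,10d-2)$. Unimodularity ensures that $\mathrm{Spin}(\qsel \bz d k)$ extends to a smooth simply connected semisimple group scheme over $\bz$, and indefiniteness makes $\mathrm{Spin}(\R)$ noncompact, so classical strong approximation is available.

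First, I would invoke Kneser's theorem \cite{kneser:erzeugung-ganzzahliger-orthogonaler} to obtain surjectivity of the reduction map
\[
\mathrm{Spin}(\qsel \bz d k) \twoheadrightarrow \mathrm{Spin}(\qsel n d k)
\]
for every $n \geq 1$. Second, I would identify the image of $\mathrm{Spin}(Q_R) \to \o(Q_R)$ with $\Omega(Q_R)$ for $R = \bz$ and $R = \bz/n\bz$. Using the spin exact sequence
\[
1 \to \mu_2 \to \mathrm{Spin}(Q) \to \so(Q) \to 1,
\]
whose connecting map is the $+1$-spinor norm $\mathrm{sp}^+_Q$, this image is $\ker(\mathrm{sp}^+_{Q_R}|_{\so}) \subset \so(Q_R) = \ker \dickson{Q_R}$. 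On $\so$, however, the $+1$- and $-1$-spinor norms agree: they differ by $[-1]$ on each reflection, and elements of $\so$ are products of an even number of reflections. Hence this image equals $\Omega(Q_R)$, and in particular lies in $\osp(Q_R)$.

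Combining the two steps, the composition
\[
\mathrm{Spin}(\qsel \bz d k) \twoheadrightarrow \mathrm{Spin}(\qsel n d k) \twoheadrightarrow \Omega(\qsel n d k)
\]
is surjective, and it factors through $\osp(\qsel \bz d k) \xrightarrow{r_n} \o(\qsel n d k)$, which gives $r_n(\osp(\qsel \bz d k)) \supset \Omega(\qsel n d k)$. The ``In particular'' assertions then follow by combining this with \cite[Theorem 4.4]{landesman:geometric-average-selmer} and, for the $\zprime p$ version, taking an inverse limit over $n$.

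The main obstacle I foresee is carefully handling the prime $2$: $\mathrm{Spin}$, the Dickson invariant, and the spinor norm are all more delicate in residue characteristic $2$, so one must verify that Kneser's argument yields a surjection onto all of $\mathrm{Spin}(Q_{\bz/2^e})$, and that the $2$-adic factor of $\mathrm{sp}^+$ relates to the $-1$-spinor norm convention of \autoref{subsection:orthogonal-notation} exactly as in the odd-prime case. Even $2$-adic unimodularity of $L$ should make all of this go through uniformly, but aligning conventions is where the genuine technical work lies.
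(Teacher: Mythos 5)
Your route is correct but genuinely different from the paper's proof. The paper works with reflections: it quotes Kneser's Satz~2 and Satz~3 from \cite{kneser:erzeugung-ganzzahliger-orthogonaler} to show that, for $n$ a prime power, $\o(\qsel n d k)$ is generated by reflections and $\Omega(\qsel n d k)$ by even products $r_v r_w$ with $\qsel n d k(v)=\qsel n d k(w)=-1$, lifts such $v,w$ to $\bz$-vectors of norm $-1$ via a result of de Jong--Farb, and then handles composite $n$ by a Chinese-remainder gluing of the lifted generators prime by prime. By contrast, you package everything into the surjection $\mathrm{Spin}(\qsel \bz d k) \twoheadrightarrow \mathrm{Spin}(\qsel n d k) \twoheadrightarrow \Omega(\qsel n d k)$; strong approximation makes the argument uniform in $n$ and removes the need for explicit generators and a separate prime-power reduction. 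Two fixable issues. First, your citation is off: \cite{kneser:erzeugung-ganzzahliger-orthogonaler} is Kneser's \emph{reflections} paper (the very one the paper uses for Satz~2 and Satz~3), not his strong approximation theorem. You should instead invoke strong approximation for simply connected semisimple groups over $\bz$ with noncompact real points (Kneser's \emph{Starke Approximation} papers, or Platonov--Rapinchuk), together with smoothness of $\mathrm{Spin}(\qsel \bz d k)$ over $\bz$ so that $\mathrm{Spin}(\bz_\ell) \to \mathrm{Spin}(\bz/\ell^e\bz)$ is surjective by formal smoothness and Hensel's lemma. Second, your justification that $\mathrm{sp}^+_{Q_R} = \sp{Q_R}$ on $\so(Q_R)$ appeals to elements of $\so(Q_R)$ being even products of reflections; over $R = \bz/n\bz$ that generation statement is not free --- it is essentially Kneser's Satz~2 again, the input your approach is designed to avoid. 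It is cleaner simply to cite \autoref{subsubsection:omega}, which already records that $\Omega(Q)$ equals the joint kernel of the Dickson invariant and the $+1$-spinor norm; that is exactly the identification of the image of $\mathrm{Spin}$ your argument needs.
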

\begin{proof}
	The last sentence follows from the first by \cite[Theorem 4.4]{landesman:geometric-average-selmer},
	which says $\osp(\qsel \bz d k) \subset \im \mono {\zprime p} d {\ol k}$.

	We turn our attention to proving the first statement.
	For every $v \in \vsel n d k$, with $Q_n^d(v) = -1$, there exists a lift $\widetilde{v} \in \vsel {\bz} d k$ with $Q_{\Z}^d(\wt{v}) = -1$, as is shown in the proof of \cite[Lemma 4.13]{de-jongF:on-the-geometry-of-principal-homogeneous-spaces} (which implicitly assumes $d \geq 2$ so that $(\vsel {\bz} d k, \qsel {\bz} d k)$ contains summands isomorphic to the hyperbolic plane).
	Let $R(\qsel n d k)$ denote the subgroup of $\o(\qsel n d k)$
	generated by elements of the form $r_w$ for 
	$v \in \vsel n d k$
	and 
	let $R'(\qsel n d k)$ denote the subgroup of $\o(\qsel n d k)$
	generated by elements of the form $r_v \circ r_w$ for 
	$v, w \in \vsel n d k$ with $\qsel n d k(v) = \qsel n d k(w) = -1$.
	We next show $R(\qsel n d k) = \o(\qsel n d k)$ and $R'(\qsel n d k) = \Omega(\qsel n d k)$.

Recall a quadratic space $(V,Q)$ over $\bz$ is \emph{unimodular} if $B_Q$ is invertible as a linear transformation over $\bz$ or equivalently the natural map induced by $B_Q$ from $V$ to $V^\vee$, the dual lattice, is an isomorphism.

	In the case that $n$ is a prime power,
	since $(\vsel \bz d k, \qsel \bz d k)$ is unimodular and nondegenerate
	of rank more than $5$ (see \cite[Remark 4.5]{landesman:geometric-average-selmer}), 
	it follows from \cite[Satz 2]{kneser:erzeugung-ganzzahliger-orthogonaler}
	that $R(\qsel n d k) = \o(\qsel n d k)$.
	By \cite[Satz 3]{kneser:erzeugung-ganzzahliger-orthogonaler} 
	it follows $R'(\qsel n d k) = \Omega(\qsel n d k)$. 
	Note that \cite[Satz 3]{kneser:erzeugung-ganzzahliger-orthogonaler}
	is stated for $R'(\qsel n d k)$ generated by 
	elements of the form $r_v \circ r_w$ for 
	$v, w \in \vsel n d k$ with $\qsel n d k(v) = \qsel n d k(w) = 1$,
	instead of $\qsel n d k(v) = \qsel n d k(w) = -1$.
	However, we may arrange the latter by applying
	\cite[Satz 3]{kneser:erzeugung-ganzzahliger-orthogonaler}
	to 
	$-\qsel n d k$
	in place of $\qsel n d k$.
	Therefore, $\Omega(\qsel n d k) = R'(\qsel n d k) \subset r_n(\o(\qsel \bz d k))$.

	For the general case, write $n = \prod_{i=1}^t p_i^{a_i}$ for pairwise
	distinct primes $p_i$.
	Since $\Omega(\qsel n d k) = \prod_{i=1}^t \Omega(\qsel {p_i^{a_i}} d k)$,
	it suffices to show the image of $\Omega(\qsel {p_i^{a_i}} d k) \ra \prod_{i=1}^t \Omega(\qsel {p_i^{a_i}} d k),$ included as the $i$th component,
	is contained in $r_n(\osp(q))$.
	For this, choose $v, w \in \vsel {p_i^{a_i}} d k$ with $\qsel {p_i^{a_i}} d k (v) = \qsel {p_i^{a_i}} d k(w) = -1$
	and choose lifts $\widetilde{v}, \widetilde{w}$ to $\vsel n d k$
	so that $\widetilde{v} \equiv \widetilde{w} \mod \prod_{1 \leq j \leq n, j \neq i} p_j^{a_j}$ and $\qsel n d k(\widetilde{v}) = \qsel n d k(\widetilde{w}) = -1$.
	We then find that $r_{\widetilde{v}} \circ r_{\widetilde{w}}$ agrees with
	$r_v \circ r_w$ when reduced $\bmod p_i^{a_i}$ and is the identity
	when reduced $\mod p_j^{a_j}$ for any $j \neq i$.
	It follows that 
	$r_n(\osp(q)) \supset \im(\Omega(\qsel {p_i^{a_i}} d k) \ra \prod_{i=1}^t \Omega(\qsel {p_i^{a_i}} d k)),$ as desired.
\end{proof}

\subsection{Tools to compute the Dickson invariant and spinor norm of Frobenius}
\label{subsection:monodromy-tools}
In this section, we prove \autoref{proposition:zywina} which allows us to compute the spinor norm and Dickson invariants of the images of Frobenius elements under the monodromy representation.
The following result essentially appears as \cite[Proposition 2.9]{zywina:inverse-orthogonal}, where an analog is stated over $\bz/\ell \bz$ in place of
$\zprime p$.
The following generalization has essentially the same proof, using that $L$-functions associated to elliptic curves
are power series with coefficients in $\bz$.
Slight care must be taken to deal with the fact that the determinant disagrees with the Dickson invariant over fields of characteristic $2$.

For $E$ an elliptic curve over $\mathbb F_q(t)$, we let $L(T, E)$ denote the {\em $L$-function} associated to $E$ and let $\varepsilon_E \in \{\pm 1\}$ denote {\em root number} associated to $E$, see 
\cite[\S 2.3]{zywina:inverse-orthogonal} and 
\cite[\S 2.2]{zywina:inverse-orthogonal} respectively for a definitions.
The only property of root numbers we will use is that they appear in the functional equation of the $L$ function associated to $E$.
Recall our notation $[E_x] = x \in \espace d k$ where $E_x$ is the elliptic curve corresponding to $x$ as in \autoref{remark:corresponding-elliptic-curve}.

\begin{proposition}[Mild generalization of \protect{\cite[Proposition 2.9]{zywina:inverse-orthogonal}}]
	\label{proposition:zywina}
	Let $d \geq 1$.
	\begin{enumerate}
		\item For $[E_x] = x \in \smespace d {\mathbb F_p}(\mathbb F_q)$,
			$\dickson{\qsel {\zprime p} d k}(\mono {\zprime p} d k (\frob_x)) = \Delta_{\bz/2\bz}((1-\varepsilon_{E_x})/2)$.
\item For $[E_x] = x \in \sfespace d {\mathbb F_p}(\mathbb F_q)$,
	whenever $\det(\id - \mono {\zprime p} d k (\frob_x)) \neq 0,$ we have
	\[
	\sp{\qsel {\zprime p} d k}(\mono {\zprime p} d k (\frob_x)) = [q^{d-1}],
	\]
where $[q]$ is the class of the integer $q$ in $\left( \zprime p \right)^\times / \left(\left( \zprime p \right)^\times  \right)^2$.
	\end{enumerate}
\end{proposition}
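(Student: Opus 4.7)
The plan is to adapt Zywina's argument in \cite[Proposition 2.9]{zywina:inverse-orthogonal}, which proves the analogous statement with $\bz/\ell\bz$ in place of $\zprime p$ (for odd primes $\ell$). Both the Dickson invariant and the spinor norm on $\o(\qsel{\zprime p}{d}{k})$ factor componentwise as products over primes $\ell \neq p$ of the corresponding invariants on $\o(\qsel{\bz_\ell}{d}{k})$, so the task reduces to a prime-by-prime analysis of an integral orthogonal representation.

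For part (1), the monodromy representation on $\vsel{\zprime p}{d}{k}$ is identified, via \autoref{lemma:mu-n-pushforward-to-selmer-space}, with (a Tate twist of) the cohomology $H^1$ of the middle-extension sheaf computing $\psheaf{n}{d}{\mathbb F_q}$, so by the Grothendieck-Lefschetz trace formula the $L$-function $L(T, E_x) \in \bz[T]$ of degree $12d - 4$ records the characteristic polynomial of $\mono{\zprime p}{d}{k}(\frob_x)$. The functional equation for $L(T, E_x)$, in which the root number $\varepsilon_{E_x}$ appears, implies after comparing constant coefficients that the integral determinant of $\mono{\zprime p}{d}{k}(\frob_x)$ equals $\varepsilon_{E_x} \in \{\pm 1\} \subset (\zprime p)^\times$. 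For odd primes $\ell$, the Dickson invariant coincides with the determinant by \cite[Corollary C.3.2]{conrad:reductive-group-schemes}, giving the claim in the $\ell$-component. For the prime $\ell = 2$, the Dickson invariant on $\o(\qsel{\mathbb F_2}{d}{k})$ is \emph{not} the determinant, but since Frobenius acts via an integral orthogonal matrix with determinant $\pm 1 \in \bz_2^\times$, a determinant of $-1$ forces an odd-length expression as a product of reflections after reduction modulo $2$, which then has Dickson invariant $1$; this is the ``slight care'' alluded to in the statement.

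For part (2), my plan is to invoke a Zassenhaus-Wall style formula computing the spinor norm of an orthogonal element $g$ with $\det(\id - g) \neq 0$: in this setting the class $\sp{\qsel{\zprime p}{d}{k}}(g)$ is given, up to a fixed factor, by $\det((\id - g)/2) \bmod ((\zprime p)^\times)^2$. Applied to $g = \mono{\zprime p}{d}{k}(\frob_x)$, this determinant is related via the Grothendieck-Lefschetz trace formula to the value $L(1/q, E_x)$ together with local correction factors at the bad fibers. The restriction to $\sfespace{d}{\mathbb F_p}$ ensures through \autoref{lemma:psheaf-basechange} and Tate's algorithm that all bad fibers are of multiplicative type $I_1$ with a uniform, explicitly controlled local contribution that is visibly a square. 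Combining this with the functional equation then gives that the class of $\det(\id - \mono{\zprime p}{d}{k}(\frob_x))$ modulo squares equals $[q^{d-1}]$, where the exponent $d-1$ arises from the Tate twist in the functional equation for a family of height $d$.

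The principal obstacle will be handling the prime $\ell = 2$ in both parts. For the Dickson invariant, the integral-lifting argument above replaces the direct $\mathbb F_2$-computation, which would give no information since determinants are trivial in characteristic $2$. For the spinor norm, the target at $\ell = 2$ is the larger group $\bz_2^\times/(\bz_2^\times)^2 \simeq (\bz/2\bz)^2$ rather than $\mathbb F_\ell^\times/(\mathbb F_\ell^\times)^2$, so the Zassenhaus-Wall formula must hold with full $2$-adic precision, not merely modulo $2$; this again relies on the integrality of the monodromy representation, a feature absent from Zywina's mod-$\ell$ setup.
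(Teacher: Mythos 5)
Part (1) of your proposal is essentially correct and tracks the paper closely: like the paper, you use the functional equation to get $\det(\mono {\zprime p} d k(\frob_x)) = \varepsilon_{E_x}$ over $\bz_\ell$ for each prime $\ell \neq p$, and then translate determinant into Dickson invariant. For $\ell = 2$ you count reflections after reducing the integral matrix mod $2$; the paper instead observes that the Dickson invariant of a $\bz_2$-point of the group scheme $\o(Q)$ is already determined by its generic ($\Q_2$) fiber, where Dickson equals determinant by \cite[Corollary C.3.2]{conrad:reductive-group-schemes}. Both routes work; the paper's is a bit shorter, yours is more hands-on.

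Part (2) has a genuine gap. After the Zassenhaus-type identity $\sp{Q}(g) \equiv \det(\id - g) \bmod (\bz_\ell^\times)^2$ (which is fine once one first checks $g \in \SO$, a step you should make explicit: $\det(\id - g)\neq 0$ forces a zero-dimensional $1$-eigenspace, hence trivial Dickson invariant, so the $\pm 1$-spinor norms agree) and the clean identity $\det(\id - g\,T) = L(T/q, E_x)$ from \autoref{lemma:generalized-eigenspace-is-rank} (which holds on $\sfespace d {\F_p}$ with \emph{no} local correction factors, because the Selmer sheaf is the middle extension $j_*$, not $j_!$), one must determine the square class of the central $L$-value $L(1/q, E_x)$. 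Your proposal says this follows from "the functional equation" plus the $I_1$ geometry of $\sfespace$, but the functional equation evaluated at $T = 1/q$ only yields the tautology $L(1/q, E_x) = \varepsilon_{E_x} L(1/q, E_x)$, and the reduction type tells you nothing about the square class of a global central value. The paper's actual input here is \cite[Corollary 2.6]{zywina:inverse-orthogonal}, a Birch--Swinnerton-Dyer-type special-value formula (a theorem in the present situation because analytic and algebraic rank are both $0$) giving $L(1/q, E_x) = q^{d-1}\, c_{E_x} \cdot (\bq^\times)^2$ with $c_{E_x}$ the Tamagawa number, together with the observation that $c_{E_x} = 1$ for $x$ in the smooth (in particular squarefree-discriminant) locus. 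Without invoking BSD or some equivalent special-value formula, the exponent $d-1$ does not appear; this is the missing ingredient.
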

In order to prove \autoref{proposition:zywina} we will need the following Lemma, which is essentially shown in \cite[p. 10]{zywina:inverse-orthogonal}.

\begin{lemma}
	\label{lemma:generalized-eigenspace-is-rank}
	Let $d \geq 1$, $p$ an odd prime, $\ell$ a prime with $\ell \neq p$, 
	and $[E_x] = x \in \sfespace d {\mathbb F_p}(\mathbb F_q)$. 
	Then, letting $L(T, E_x)$ be the $L$-function associated to $E_x$, we have
	\[
	\det(\id -\mono {\bz_\ell}d {\mathbb F_p} (\frob_x) T | \vsel {\bz_\ell} d {\ol{\mathbb F}_q} ) = L(T/q, E_x),
	\]
	viewed as an equality of polynomials with coefficients in $\bz_\ell$.
	In particular, the analytic rank of $E_x$ is equal to the $\bz_\ell$-rank 
	of the generalized $1$-eigenspace of $\mono {\bz_\ell}d {\mathbb F_p} (\frob_x)$ on $\vsel {\bz_\ell} d {\ol{\mathbb F}_q} $.
	\end{lemma}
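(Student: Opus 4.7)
The plan is to leverage the comparison between the Selmer sheaf and the middle-convolution sheaf established in \autoref{lemma:mu-n-pushforward-to-selmer-space}, which will convert the statement into the standard cohomological formula for the $L$-function of an elliptic curve over a function field. First, I would use \autoref{lemma:mu-n-pushforward-to-selmer-space} together with the base change along $\spec \ol{\mathbb F}_q \to \sfespace d {\mathbb F_p}$ corresponding to $x$ to identify the Frobenius module $\vsel{\bz_\ell}{d}{\ol{\mathbb F}_q}$ with $\varprojlim_k H^1(\mathbb P^1_{\ol{\mathbb F}_q}, j^x_{*} \esheaf{\ell^k}{x})$. The key input here is that $x$ lies in $\sfespace d {\mathbb F_p}$, so the proposition applies and commutes with base change; passing to the inverse limit over $k$ yields $H^1(\mathbb P^1_{\ol{\mathbb F}_q}, j^x_{*} T_\ell(E_x))$, where the sheaf is the N\'eron model of $T_\ell(E_x)$ on the smooth locus extended by middle extension across the bad fibers.

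Next, I would invoke Grothendieck's cohomological formula for $L$-functions of constructible $\ell$-adic sheaves on curves over finite fields. Applied to the lisse sheaf $R^1 f_* \mu_{\ell^k}$ on the smooth locus $U^x \subset \mathbb P^1_{\mathbb F_q}$, it expresses $\det(\id - \frob_x T \mid H^1(\mathbb P^1_{\ol{\mathbb F}_q}, j^x_{*} R^1 f_* \mu_{\ell^k}))$ as an alternating product of characteristic polynomials over $H^0, H^1, H^2$, which by standard Euler-product unpacking reproduces the Euler product defining $L(T, E_x)$ after accounting for the Weil-pairing identification $R^1 f_* \mu_{\ell^k} \simeq E_x[\ell^k](1)$. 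The Tate twist $(1)$ is precisely what converts the variable $T$ into $T/q$ in the final formula. One must also verify that the contributions of $H^0$ and $H^2$ to the characteristic polynomial are trivial: this follows since, for $x \in \sfespace d {\mathbb F_p}$, the elliptic curve $E_x$ has multiplicative reduction at every bad place (by Tate's algorithm, as noted in the proof of \autoref{lemma:psheaf-basechange}), and the generic monodromy has no non-zero invariants or coinvariants.

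The analytic-rank statement is then a formal consequence: by definition the analytic rank of $E_x$ is the order of vanishing of $L(T, E_x)$ at $T = q^{-1}$, which translates under $T \mapsto T/q$ into the order of vanishing of $\det(\id - \mono{\bz_\ell}{d}{\mathbb F_p}(\frob_x) T)$ at $T = 1$. This order of vanishing equals the $\bz_\ell$-rank of the generalized $1$-eigenspace of $\mono{\bz_\ell}{d}{\mathbb F_p}(\frob_x)$ acting on $\vsel{\bz_\ell}{d}{\ol{\mathbb F}_q}$, as required.

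The main obstacle I anticipate is tracking the normalizations carefully, especially the Tate twist that accounts for the passage from $\mu_{\ell^k}$-coefficients to $\bz/\ell^k\bz$-coefficients via the Weil pairing, and ensuring that the local factors at bad places coming from the stalks of $j^x_{*}$ (which compute $I_v$-invariants of the Tate module) match precisely the local $L$-factors in the Euler product. Since $E_x$ has squarefree discriminant, all bad reduction is of type $I_1$, and the computation of inertia invariants is straightforward, but care is still required to account for the tame nature of the ramification and the absence of higher direct images.
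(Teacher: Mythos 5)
Your proposal reaches the same main identity as the paper, but it takes a genuinely different route to one of the ingredients. The paper establishes the identification $\vsel{\bz_\ell}{d}{\ol{\mathbb F}_q} \cong H^1(\bp^1_{\ol{\mathbb F}_q}, j_* T_\ell E_{\ol x})$ exactly as you describe, by combining \autoref{lemma:mu-n-pushforward-to-selmer-space} with compatibility over $k$ in the tower and the freeness statement from \cite[Corollary 3.19]{landesman:geometric-average-selmer}. But for the cohomological expression of the $L$-function, $L(T/q, E_x) = \det(\id - \Frob_x T \mid H^1(\bp^1_{\ol{\mathbb F}_q}, j_* T_\ell E_{\ol x}))$, the paper simply cites the discussion on p.~10 of \cite{zywina:inverse-orthogonal} rather than re-deriving it. You instead reprove this via Grothendieck's cohomological formula together with vanishing of $H^0$ and $H^2$. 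That is a valid alternative, and it makes the argument more self-contained, at the cost of having to supply details the paper delegates to Zywina.

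Two points deserve more care in your version. First, the vanishing of $H^0(\bp^1_{\ol{\mathbb F}_q}, j_* T_\ell E_{\ol x})$ and $H^2$ is not automatic from the fact that all bad fibers are of type $I_1$; each local monodromy is a single unipotent transvection with a one-dimensional fixed line, so one genuinely needs that the global geometric monodromy of $T_\ell E_x$ has no nonzero invariants (equivalently, that the $j$-invariant is nonconstant and one then invokes, e.g., Igusa's theorem that the geometric monodromy of a non-isotrivial elliptic surface is Zariski dense in $\SL_2$). The assertion ``the generic monodromy has no non-zero invariants or coinvariants'' is correct but should be attributed or argued, since it is precisely what makes $H^0 = H^2 = 0$. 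Second, your explanation of the shift $T \mapsto T/q$ via a Tate twist is imprecise: the paper's \autoref{lemma:mu-n-pushforward-to-selmer-space} identifies $R^1 f_* \mu_{\ell^k}$ with $E_x[\ell^k]$ directly via the Kummer sequence and Abel--Jacobi, not with $E_x[\ell^k](1)$. The twist that appears when you write $R^1 f_* \mu_{\ell^k} = (R^1 f_* \bz/\ell^k\bz)(1) \cong E_x[\ell^k]^\vee(1)$ is cancelled by the Weil-pairing self-duality $E_x[\ell^k]^\vee(1) \cong E_x[\ell^k]$, so there is no free-standing $(1)$ in the final coefficient sheaf. The $T \mapsto T/q$ shift is instead a consequence of the particular normalization of $L(T, E)$ adopted in \cite{zywina:inverse-orthogonal}; if you want to re-derive the identity from Grothendieck's formula you must pin down exactly how Zywina's $L(T,E)$ compares to the Euler product $\prod_v \det(1 - \Frob_v T^{\deg v} \mid (T_\ell E)^{I_v})^{-1}$, since it is that comparison (and not the Kummer-sequence Tate twist) that produces the factor of $q$. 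Your own final paragraph flags this as the main obstacle, and rightly so.

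The deduction of the analytic-rank statement from the polynomial identity is identical in your proposal and in the paper, and is correct.
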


\begin{proof}
	Let $L(T, E_x)$ denote the $L$-function of $E_x$, which is in fact a polynomial of degree $12d-4$ with integral coefficients \cite[Theorem 2.2]{zywina:inverse-orthogonal}.
	Define $g_{x,\ell} := \mono {\bz_\ell} d {\mathbb F_p} (\frob_x)$.
	It suffices to show that 
	\[
	\det(\id - g_{x,\ell} T | \vsel {\bz_\ell} d {\ol{\mathbb F}_q} \otimes_{\bz_\ell} \bq_\ell ) = L(T/q, E_x)
	\]
	viewed as an equality with coefficients in $\bq_\ell$. As explained in \cite[p. 10]{zywina:inverse-orthogonal}, we have 
	\[
	L(T/q, E_x)  = \det(\id - \frob_x T | H^1(\bp^1_{\ol{\mathbb F}_q}, j_* T_\ell(E_{\ol x})) \otimes_{\bz_\ell} \bq_\ell)
	\]	
	where $j_* T_\ell(E_{\ol x})$ is defined as follows. Let $U$ denote the open subscheme of $\bp^1_{\ol{\mathbb F}_q}$ over which the minimal proper regular model of $E_x$ is smooth.
	Let $j: U \ra \bp^1_{\ol {\mathbb F}_q}$ denote the inclusion morphism.
	Let $E_{\ol x}[\ell^k]$ denote the rank $2$ locally free sheaf of $\bz/\ell^k \bz$ modules parameterizing the $\ell^k$ torsion of the smooth minimal proper regular model of $E_{\ol x}$ over $U$
with $j_* E_x[\ell^j]$ the pushforward sheaf on $\bp^1_{\ol{\mathbb F}_q}$.
Define $j_* T_\ell(E_{\ol x}):= \varprojlim_k j_* E_{\ol x}[\ell^k]$ with transition maps $j_* E_x[\ell^{k+1}] \ra j_* E_x[\ell^k]$ given by multiplication by $\ell$.

We next identify $H^1(\bp^1_{\ol{\mathbb F}_q}, j_* T_\ell(E_{\ol x}))$ with $\vsel {\bz_\ell} d {\mathbb F_p}$ so as to compare this representation with $\mono {\bz_\ell} d {\mathbb F_p}$.
By \autoref{lemma:mu-n-pushforward-to-selmer-space},
there is a natural identification between the geometric fiber of the Selmer space over $x$, $\sfsspace {\ell^k} d {\mathbb F_p} \times_{\sfespace d {\mathbb F_p}, x} \spec \ol{\mathbb F}_q \simeq H^1(\bp^1_{\ol{\mathbb F}_q}, j_* E_{\ol x}[\ell^k])$. Further, these are both free  $\bz/\ell^k \bz$ modules of rank $12d-4$ by \cite[Corollary 3.19]{landesman:geometric-average-selmer}.
	By compatibility of these isomorphisms with the maps $E[\ell^{k+1}] \ra E[\ell^k]$ we obtain the equality
	$\det(\id - g_{x,\ell} T | \vsel {\bz_\ell} d {\ol{\mathbb F}_q} \otimes_{\bz_\ell} \bq_\ell ) = L(T/q, E_x)$, viewed as an equality of polynomials with coefficients in $\bq_\ell$.

	To conclude the proof, it remains to explain why the final statement regarding analytic rank follows from the equality
	$\det(\id - g_{x,\ell} T) = L(T/q, E_x)$.
	The analytic rank is the largest power of $T-1$ dividing $L(T/q, E_x)= \det(\id - g_{x,\ell} T).$ 
	This agrees with the largest power of $T-1$ dividing $\det \left( g_{x,\ell}^{-1} - T \right)$, which is the characteristic polynomial of
	$g_{x,\ell}^{-1}$.
	Hence, the analytic rank agrees with the dimension of the generalized $1$-eigenspace of $g_{x,\ell}^{-1},$ 
	which is the same as the dimension of the generalized $1$-eigenspace of $g_{x,\ell}$.
	\end{proof}

\begin{proof}[Proof of \autoref{proposition:zywina}]
	Define $g_{x,\ell} := \mono {\bz_\ell} d {\mathbb F_p} (\frob_x)$.
	First, we verify (1) regarding the Dickson invariant.
	From the definition of the Dickson invariant from \autoref{subsubsection:dickson},
	to compute the $\dickson{\qsel {\zprime p} d {\mathbb F_p}}(\mono {\zprime p} d {\mathbb F_p} (\frob_x))$, it is equivalent to compute
	$\dickson{\qsel {\bz_\ell} d {\mathbb F_p}}(\mono {\bz_\ell} d {\mathbb F_p} (\frob_x))$ for each prime $\ell \neq p$ separately and show this is equal to $(1-\varepsilon_{E_x})/2$. 

	Next, observe that $\det(T-g_{x,\ell}) = \det(T- g_{x,\ell}^{-1})$.
	Indeed, for any nondegenerate quadratic space $(V, Q)$ and $M \in \o(Q)$, and for $M^t$ the transpose of $M$, we have $M^t B_Q M = B_Q \implies M^t = B_Q^{-1}M^{-1} B_Q$.
	Hence, the characteristic polynomial of $M$ agrees with that of $M^t$ which agrees with that of $M^{-1}$.
	Therefore, the characteristic polynomial of $g_{x,\ell}$ agrees with that of $g_{x,\ell}^{-1}$ using
	$g_{x,\ell} \in \o(\qsel{\bz_\ell} d k)$ by the easier containment of \cite[Theorem 4.4]{landesman:geometric-average-selmer}. 

	Therefore, we have
	\begin{align*}
	T^{12d-4} \det(\id - g_{x,\ell}T^{-1}) &= \det(T-g_{x,\ell}) = \det (T-g_{x,\ell}^{-1}) = \det(-g_{x,\ell}^{-1}) \det(\id - g_{x,\ell}T) \\
	& = (-1)^{12d-4} \det(g_{x,\ell}) \det(\id - g_{x,\ell}T) = \det(g_{x,\ell}) \det(\id - g_{x,\ell}T).
	\end{align*}
	By \cite[Theorem 2.2]{zywina:inverse-orthogonal} in conjunction with \autoref{lemma:generalized-eigenspace-is-rank}, we also have 
	\[
	T^{12d-4} \det(\id - g_{x,\ell}T^{-1}) = \varepsilon_{E_x} \det(\id - g_{x,\ell}T),
	\]
	implying $\det(g_{x,\ell}) =\varepsilon_{E_x}$. 
Note that in the case $\ell = 2$, we are using 
	crucially that we are working over $\bz_2$ which does not have characteristic $2$.
	The relation between the Dickson invariant and the determinant for matrices over $\bz_2$ given in
	\cite[Corollary C.3.2]{conrad:reductive-group-schemes}
	implies
	$(1)$.

	We next verify (2). It suffices to verify 
	$\sp{\qsel {\bz_\ell} d k}(\mono {\bz_\ell} d k (\frob_x)) = [q^{d-1}],$ for every prime $\ell \neq p$. As in the previous part, let $g_{x,\ell} := \mono {\bz_\ell} d {\mathbb F_p} (\frob_x)$.
	First, observe that as $\det(\id - g_{x,\ell}) \neq 0$, it follows that $g_{x,\ell}$ has trivial $1$-eigenspace. 
	Because the Dickson invariant for an orthogonal group over a nondegenerate
	free module of even rank is congruent to the rank of the $1$-eigenspace $\mod 2$ by
	\cite[p. 160]{taylor:the-geometry-of-the-classical-groups},
	we find $g_{x, \ell} \in \so(\qsel {\bz_\ell}d {\mathbb F_p})$.
	Therefore, $\sp{\qsel {\bz_\ell} d k}(g_{x,\ell})= \mrm{sp}^+_{\qsel {\bz_\ell} d k}(g_{x,\ell})$.
	By \cite[\S 2, Cor.]{zassenhaus:on-the-spinor-norm} (see also \cite[Theorem C.5.7]{conrad:reductive-group-schemes}),
	and $\sp{\qsel {\bz_\ell} d k}(-1) = \mrm{disc}(\qsel {\bz_\ell} d k)$ \cite[Lemma C.5.8]{conrad:reductive-group-schemes},
	one can compute the spinor norm of $g_{x,\ell}$ as 
	\begin{align*}
\sp{\qsel {\bz_\ell} d k}(g_{x,\ell}) &= \mrm{sp}^+_{Q_{\mathbb Z_\ell}^d}(g_{x,\ell}) = 
		\mrm{sp}^+_{Q_{\mathbb Z_\ell}^d}(-\id) \mrm{sp}^+_{Q_{\mathbb Z_\ell}^d}(-g_{x,\ell})  \\
	&= \mrm{disc}(\qsel {\bz_\ell} d k) \cdot \det\left(\frac{1-g_{x,\ell}}{2}\right)  \cdot (\bz_\ell^\times)^2 =
	2^{\rk \vsel {\bz_\ell} d k} \det (1-g_{x,\ell}))  \cdot (\bz_\ell^\times)^2 \\
	&= \det(\id - g_{x,\ell}) \cdot (\bz_\ell^\times)^2.
	\end{align*}
Then, using the identification
	$\det(\id - g_{x,\ell} T | \vsel {\bz_\ell} d {\ol{\mathbb F}_q} ) = L(T/q, E_x)$ of \autoref{lemma:generalized-eigenspace-is-rank},
	\begin{align*}
		\sp{\qsel {\bz_\ell} d k}(g_{x,\ell}) = \det(\id - g_{x,\ell}) \cdot (\bz_\ell^\times)^2 = L(1/q, E_x) \cdot (\bz_\ell^\times)^2.
	\end{align*}

	To conclude the proof, we only need check $L(1/q,E) \in q^{d-1} (\bz_\ell^\times)^2$.
	In fact, considering $L(T, E)$ as a polynomial with integer coefficients, we will verify $L(1/q, E) \in q^{d-1} (\bq^\times)^2$, and the fact that both $L(1/q,E)$ and $q^{d-1}$ lie in
	$\bz_\ell^\times$ will imply they agree up to a square in $\bz_\ell^\times$.
	Since 
	$\det(\id - g_{x,\ell}) = L(1/q, E_x)$
	and $\det(\id - g_{x,\ell}) \neq 0$,
we find that the $L$ function of $E_x$ has analytic rank $0$, meaning that $\ord_{T=1/q} L(T, E_x) = 0$ or equivalently $L(1/q, E_x) \neq 0$.
It follows from \cite[Corollary 2.6]{zywina:inverse-orthogonal} 
(as is deduced from the Birch and Swinnerton Dyer conjecture, applicable because the analytic rank and algebraic rank are both $0$)
that $L(1/q, E_x) = q^{0-1 + d} c_{E_x}\cdot \left( \bq^\times \right)^2$, for $c_{E_x}$
the Tamagawa number of $E_x$.
Observing that
$c_{E_x} = 1$ as $x \in \smespace d k$, we find $L(1/q,E_x) = q^{-1+d} \cdot \left( \bq^\times \right)^2$, as desired. 
\end{proof}

\subsection{Controlling the Dickson invariant}
\label{subsection:controlling-dickson}

Using \autoref{proposition:zywina}, we next compute the image of $\im \mono {\zprime p} d k$ under the Dickson invariant map.

\begin{lemma}
	\label{lemma:dickson-monodromy}
	For any field $k$ of characteristic $p \neq 2$ (allowing $p = 0$) and any height $d \geq 2$, 
the image of the map 
\[
\dickson{\qsel {\zprime p} d k} \circ \mono {\zprime p} d k: \pi_1(\smespace d k) \ra \prod_{\substack{\text{primes }\ell \neq p}}  \bz/2\bz
\]
is $\im (\Delta_{\bz/2\bz})$.
\end{lemma}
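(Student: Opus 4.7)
The plan is to establish both containments $\im(\dickson{\qsel{\zprime p} d k} \circ \mono{\zprime p} d k) \subseteq \im \Delta_{\bz/2\bz}$ and $\supseteq \im \Delta_{\bz/2\bz}$. Since $\im \Delta_{\bz/2\bz}$ is a subgroup of order two in $\prod_{\ell \neq p} \bz/2\bz$ and the image is a subgroup, this reduces to (a) showing every element of the image lies in the diagonal, and (b) producing a single non-trivial element.

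For the upper bound (a), I would first reduce to the case $k = \F_q$ a finite field: the representation $\mono{\zprime p} d k$ factors through $\mono{\zprime p} d {\F_p}$ via the base change $\smespace d k = \smespace d {\F_p} \times_{\F_p} k$, with a further reduction in the characteristic-zero case by spreading out $\smespace d k$ to a finitely generated $\bz[1/2]$-subalgebra of $k$ and then reducing modulo a maximal ideal. Over $\F_q$, the composite $\dickson{\qsel{\zprime p} d k} \circ \mono{\zprime p} d {\F_q}$ factors through a finite quotient of $\pi_1(\smespace d {\F_q})$. By \autoref{corollary:selmer-equidistribution}, Frobenius classes $\mono{\zprime p} d {\F_q}(\frob_x)$ at closed points of $\smespace d {\F_q}$ equidistribute in, and in particular topologically generate, the image of $\mono{\zprime p} d {\F_q}$. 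By \autoref{proposition:zywina}(1), each such Frobenius has Dickson invariant $\Delta_{\bz/2\bz}((1 - \varepsilon_{E_x})/2) \in \im \Delta_{\bz/2\bz}$, so the full image lies in $\im \Delta_{\bz/2\bz}$.

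For the lower bound (b), the key is to exhibit two points $x, y \in \smespace d {\F_p}(\F_q)$ with opposite root numbers $\varepsilon_{E_x} = -1$ and $\varepsilon_{E_y} = +1$. Given such a pair, \autoref{proposition:zywina}(1) gives $\dickson{\qsel{\zprime p} d k}(\mono{\zprime p} d {\F_q}(\frob_x \frob_y^{-1})) = \Delta_{\bz/2\bz}(1)$, the non-trivial diagonal element. Moreover $\frob_x$ and $\frob_y$ project to the same coset in the quotient $G/G_0$ of arithmetic by geometric monodromy (both arise from $\F_q$-rational points), so $\frob_x \frob_y^{-1}$ in fact lies in the geometric monodromy $G_0 = \im \mono{\zprime p} d {\overline{\F_p}}$. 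Since the geometric fundamental group $\pi_1(\smespace d {\overline k})$ is canonically isomorphic to $\pi_1(\smespace d {\overline{\F_p}})$ for any algebraically closed field of characteristic $p > 0$ and embeds as a normal subgroup of $\pi_1(\smespace d k)$, this yields the desired non-trivial Dickson class over any $k$ of characteristic $p > 0$. The characteristic-zero case follows via the specialization map $\pi_1(\smespace d {\overline \Q}) \twoheadrightarrow \pi_1(\smespace d {\overline{\F_p}})$ for odd $p$, which is surjective and compatible with the monodromy representations, so non-triviality of Dickson descends from $\overline{\F_p}$ to $\overline \Q$.

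The main obstacle is the existence assertion in step (b): producing, for each $d \geq 2$, a smooth height-$d$ elliptic surface over $\F_q(t)$ with global root number $-1$. This amounts to showing the root number function on $\smespace d {\F_p}(\F_q)$ is non-constant. The root number factors as a product of local factors at the $12d$ places of $I_1$ reduction, and non-constancy can be verified by an explicit local computation showing that local root numbers at $I_1$ places take both signs as the Weierstrass coefficients vary.
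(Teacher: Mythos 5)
Your upper bound argument for positive characteristic and the finite-field argument via \autoref{corollary:selmer-equidistribution} plus \autoref{proposition:zywina}(1) are essentially what the paper does. However, there are two genuine gaps.

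First, your characteristic-zero reduction does not work as stated. Spreading out $\smespace d k$ to $\smespace d A$ for $A\subset k$ a finitely generated $\bz[1/2]$-algebra gives a map $\pi_1(\smespace d k)\to\pi_1(\smespace d A)$, and a closed point of $\spec A$ gives a map $\pi_1(\smespace d{\F_q})\to\pi_1(\smespace d A)$, but there is no map $\pi_1(\smespace d k)\to\pi_1(\smespace d{\F_q})$ through which the monodromy could factor. To make this reduction rigorous you need either the observation that $\pi_1(\smespace d {\bz[1/2n]})$ is topologically generated by Frobenii of its closed points (Chebotarev for arithmetic schemes), or the paper's argument via constructibility of geometric connectedness \cite[Corollaire 9.7.9]{EGAIV.3} and \autoref{corollary:geometric-monodromy-criterion}. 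As written your reduction would prove nothing for $k=\bq$.

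Second, and more seriously, the lower bound argument is incomplete. The paper obtains the nontrivial diagonal element for free: \cite[Theorem 4.4]{landesman:geometric-average-selmer} already gives $r_n(\osp(\qsel\bz d k))\subset\im\mono n d{\ol k}$, and $\osp$ contains reflections, which have trivial spinor norm but nontrivial Dickson invariant. Your proposed alternative---exhibiting height-$d$ elliptic surfaces over $\F_q(t)$ with root number $-1$ (and $+1$) and then taking a ratio $\frob_x\frob_y^{-1}$---is a genuinely different route, but the crucial existence step is only asserted. ``Non-constancy can be verified by an explicit local computation'' is not a proof: the global root number is a product of local root numbers over $12d$ places of $I_1$ reduction, and showing this product is not identically $\pm1$ on $\smespace d{\F_p}(\F_q)$ requires real work (controlling split versus non-split multiplicative reduction while keeping the discriminant squarefree). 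This is precisely the kind of computation the paper avoids, and which \cite{zywina:inverse-orthogonal} only carries out in restricted settings. Without it, your proof has no lower bound.

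In short: the upper bound over finite fields matches the paper, but the characteristic-zero reduction needs the Chebotarev/constructibility step, and the lower bound needs to be replaced either by the paper's citation of big geometric monodromy or by an actual root-number computation.
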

\begin{proof}
	First, because $r_n(\osp(\qsel \bz d \bc)) \subset \mono n d {\ol k}$ by \cite[Theorem 4.4]{landesman:geometric-average-selmer},
	the Dickson invariant must be nontrivial on $\im \mono n d {\ol k}$, as it is nontrivial on $\osp(\qsel \bz d \bc)$.
	Therefore, it is similarly nontrivial on $\im \mono {\zprime p} d {\ol k}$.
Therefore, to conclude the proof, it suffices to show $\im \dickson{\qsel {\zprime p} d k} \circ \mono {\zprime p} d k \subset \im \Delta_{\bz/2\bz}$.
Further, from the definition of profinite groups as a limit of finite groups, it suffices to show that for any integer $n$ of the form $n = \ell_1 \cdots \ell_t$,
for primes $\ell_1,\ldots, \ell_t$
with no $\ell_i =p$,
$\im \dickson{\qsel {n} d k} \circ \mono {n} d k$ is contained in $\im \Delta_{\bz/2\bz}$.

By base change, it suffices to establish the containment
$\im \dickson{\qsel {n} d k} \circ \mono {n} d k \subset \im \Delta_{\bz/2\bz}$ when $k$ is either $\bq$ or a finite field of odd characteristic.
If the composition $\dickson{\qsel n d k} \circ \mono n d k$ defines a surjective map $\pi_1(\smespace d k) \ra G$, we obtain a resulting
finite \'etale Galois $G$-cover $U_{G,n,d,k} \ra \smespace d k$.
By Chebotarev density, for example as in \cite[Lemma 1.2]{ekedahl1988effective},
it suffices to establish that $U_{G,n,d,\bq}$ is geometrically connected and to establish
the claim for all finite fields $k$ of odd characteristic.
Further, geometric irreducibility for $U_{G,n,d,\bq}$ follows from geometric irreducibility of
$U_{G,n,d,\mathbb F_p}$ for all but finitely many primes $p$, because $U_{G,n,d,k} \ra \smespace d k \ra \spec k$ is in fact the base change of a map
$U_{G,n,d,\mathbb Z[1/2]} \ra \smespace d {\bz[1/2]} \ra \spec \bz[1/2]$, and the set of fibers on which a map is geometrically connected is constructible
\cite[Corollaire 9.7.9]{EGAIV.3}.
Hence, it suffices to demonstrate that for each finite field $k$ of odd characteristic, 
$\im \dickson{\qsel {n} d k} \circ \mono {n} d k$ is contained in $\im \Delta_{\bz/2\bz}$
and $U_{G,n,d,k}$ is geometrically connected.

For all finite fields $k$ of odd characteristic and all $x \in \smespace d k(k)$,
by \autoref{proposition:zywina} we have
$\dickson{\qsel n d k} \circ \mono n d k(\frob_x) \subset \im \Delta_{\bz/2\bz}$.
For all sufficiently large finite fields of odd characteristic, it follows from
\autoref{proposition:chavdarov} applied to the $G$-cover $U_{G,n,d,k} \ra \smespace d k$ constructed above
that $\im \dickson{\qsel n d k} \circ \mono n d k \subset \im \Delta_{\bz/2\bz}$.
Since the reverse containment also holds, we have equality for all sufficiently large (in the sense of divisibility of cardinality) finite fields.

We claim that the cover $U_{G,n,d,k} \ra \smespace d k$ is tamely ramified.
Indeed, this holds because we are assuming
$k$ does not have characteristic $2$, while the cover
$U_{G,n,d,k} \ra \smespace d k$
has degree which is a power of $2$ because the Dickson invariant takes values in a $2$-group.

It follows from \autoref{corollary:geometric-monodromy-criterion} that 
over any finite field $k$, the resulting $G$-cover is geometrically connected,
and so the containment 
$\dickson{\qsel n d k} \circ \mono n d k(\frob_x) \subset \im \Delta_{\bz/2\bz}$
in fact holds for all finite fields of odd characteristic.
\end{proof}

\subsection{Controlling the spinor norm}
\label{subsection:controlling-spinor}
We next use \autoref{proposition:zywina}(2) to analyze the spinor norm applied to $\im \mono {\zprime p} d k$.
The general strategy in what follows will be to compute the image of the spinor norm restricted to the kernel 
of the Dickson invariant, and then use this to deduce the joint image of the spinor norm and Dickson invariant.

For this proof, we will need to know there are many elliptic curves $[E_x] \in \smespace d k$
with trivial $1$-eigenspace.
This will follow from the group theoretic statement soon established in \autoref{proposition:eigenvalues}.
In order to state this precisely, we recall a relevant distribution on the $\ell$-adic points of a 
finite type scheme from \cite{bhargavaKLPR:modeling-the-distribution-of-ranks-selmer-groups}.
All but the last statement appears in 
\cite[Lemma 2.1(b)]{bhargavaKLPR:modeling-the-distribution-of-ranks-selmer-groups}, while the last statement appears in
\cite[Corollaire, p. 146]{Serre:quelquesCheb}.

\begin{lemma}
	\label{lemma:subscheme-measure-0}
	Let $X$ be a finite type $\mathbb Z_\ell$ scheme of dimension $d$ and equip $X(\mathbb Z_\ell)$ with the $\ell$-adic topology.
	There exists a unique bounded $\br_{\geq 0}$-valued measure $\mu_X$ on the Borel
	$\sigma$-algebra of $X(\bz_\ell)$ such that for any open and closed subset
	$S$ of $X(\bz_\ell)$, we have
	\begin{align*}
		\mu_X(S) = \lim_{e \ra \infty} \frac{\#\left( \text{image of $S$ in }X(\bz/\ell^e \bz) \right)}{\left( \ell^e \right)^d}.
	\end{align*}
	If $Y \subset X$ is a subscheme of dimension $<d$, $\mu_X(Y(\mathbb Z_\ell))  = 0$
	and 
	\begin{align*}
		\#\left( \im \left(Y(\bz/\ell^e\bz) \ra X(\bz/\ell^e \bz) \right)\right) = O_{Y}(\ell^{e(d-1)}).
	\end{align*}
	
\end{lemma}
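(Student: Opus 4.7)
The plan is to construct $\mu_X$ from the compatible system of finite sets $\{X(\mathbb Z/\ell^e\mathbb Z)\}_e$ and then deduce both remaining statements from the uniform point-count bound $\#Z(\mathbb Z/\ell^k\mathbb Z) = O_Z(\ell^{ke})$ for any locally closed subscheme $Z \subset X$ of dimension $e$. The first two assertions are already in \cite[Lemma 2.1(b)]{bhargavaKLPR:modeling-the-distribution-of-ranks-selmer-groups} and the last follows from \cite[Corollaire, p.~146]{Serre:quelquesCheb}, but I record the shape of the argument below.

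First I would construct $\mu_X$. Finite-typeness forces each $X(\mathbb Z/\ell^e\mathbb Z)$ to be a finite set and $X(\mathbb Z_\ell) = \varprojlim_e X(\mathbb Z/\ell^e\mathbb Z)$ is a compact profinite space. The clopen cylinder sets (preimages of subsets of some $X(\mathbb Z/\ell^e\mathbb Z)$) form a Boolean algebra generating the Borel $\sigma$-algebra. Assign to each such cylinder $S$ the value $\#(\text{image in }X(\mathbb Z/\ell^e\mathbb Z))/\ell^{ed}$; the compatibility check needed to show this is well-defined across different $e$, and the countable-additivity check on cylinders, both reduce to the uniform point-count bound mentioned above applied to suitable strata. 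Carathéodory extension then produces a bounded Borel measure, and uniqueness is automatic because clopen cylinders form a $\pi$-system generating the $\sigma$-algebra.

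The technical core is the bound $\#Z(\mathbb Z/\ell^k\mathbb Z) = O_Z(\ell^{ke})$ for $Z \subset X$ locally closed of dimension $e$. I would prove this by Noetherian induction on $Z$. After passing to reduced affine pieces, stratify $Z$ into finitely many locally closed strata which are either contained in the special fiber $Z \cap \{\ell = 0\}$ (a scheme of dimension $\leq e$ over $\mathbb F_\ell$) or smooth of relative dimension $\leq e-1$ over an open of $\Spec \mathbb Z_\ell$. On a smooth piece of relative dimension $e-1$, Hensel's lemma gives an \emph{exact} count $\ell^{k(e-1)}\cdot\#Z^{\mathrm{sm}}(\mathbb F_\ell)$; on a piece inside the special fiber, the count is bounded by $\ell^{k\dim Z}\cdot\#Z(\overline{\mathbb F}_\ell)$ via the same smoothness argument applied to $\mathbb F_\ell$-strata. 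Summing over the finitely many strata yields the claimed $O_Z(\ell^{ke})$ bound.

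Once this bound is in hand, both remaining assertions are immediate. Applying it with $Z = Y$ of dimension $< d$ gives $\#(\im(Y(\mathbb Z/\ell^e\mathbb Z)\to X(\mathbb Z/\ell^e\mathbb Z))) \leq \#Y(\mathbb Z/\ell^e\mathbb Z) = O_Y(\ell^{e(d-1)})$. For $\mu_X(Y(\mathbb Z_\ell)) = 0$, note $Y(\mathbb Z_\ell)$ is closed, hence contained in the clopen cylinder over $\im(Y(\mathbb Z/\ell^k\mathbb Z)\to X(\mathbb Z/\ell^k\mathbb Z))$ for every $k$; monotonicity of $\mu_X$ then yields $\mu_X(Y(\mathbb Z_\ell)) \leq O_Y(\ell^{k(d-1)})/\ell^{kd} = O_Y(\ell^{-k}) \to 0$. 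The main obstacle is the stratification argument for the point-count, particularly keeping the special fiber of $Z$ and its relative-dimension-lowering strata on equal footing; everything else in the lemma is a formal consequence.
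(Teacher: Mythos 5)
Your proof hinges on the claim that $\#Z(\bz/\ell^k\bz) = O_Z(\ell^{ke})$ for $Z \subset X$ locally closed of (relative) dimension $e$, and this is false for non-reduced $Z$. Take $Z = \spec \bz_\ell[x]/(x^2)$ inside $X = \ba^1_{\bz_\ell}$, of relative dimension $0$: then $\#Z(\bz/\ell^k\bz)$ counts the $a \in \bz/\ell^k\bz$ with $a^2 = 0$, which is $\ell^{\lfloor k/2 \rfloor}$, unbounded in $k$. The gap enters at ``after passing to reduced affine pieces'': the set $Z(\bz/\ell^k\bz)$ is \emph{not} computed on $Z_{\mathrm{red}}$, because $\bz/\ell^k\bz$ is not a reduced ring and a map $\spec\bz/\ell^k\bz \to Z$ need not factor through $Z_{\mathrm{red}}$. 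The paper records exactly this pitfall (with the same counterexample) in the remark immediately following the lemma, noting that the argument in \cite[Proposition 2.1(b)]{bhargavaKLPR:modeling-the-distribution-of-ranks-selmer-groups} has the same defect.

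The estimate that is actually true, and which suffices for the two remaining conclusions, is $\#\im\bigl(Z(\bz_\ell) \to Z(\bz/\ell^k\bz)\bigr) = O_Z(\ell^{ke})$. This one \emph{does} reduce to $Z_{\mathrm{red}}$: a $\bz_\ell$-point of $Z$ factors through $Z_{\mathrm{red}}$ since $\bz_\ell$ is reduced, and $Z_{\mathrm{red}}(\bz/\ell^k\bz) \hookrightarrow Z(\bz/\ell^k\bz)$ is injective, so $\im(Z(\bz_\ell) \to Z(\bz/\ell^k\bz))$ lands in $Z_{\mathrm{red}}(\bz/\ell^k\bz)$. Your stratification-and-Hensel argument then applies to the reduced scheme and yields the required bound. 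For $\mu_X(Y(\bz_\ell)) = 0$ this weaker input is still enough, because $Y(\bz_\ell)$ is contained in the clopen cylinder over $\im(Y(\bz_\ell) \to X(\bz/\ell^k\bz))$, which is the set you now control. Be aware also that, read literally, the lemma's expression $\im(Y(\bz/\ell^e\bz) \to X(\bz/\ell^e\bz))$ equals $\#Y(\bz/\ell^e\bz)$ (a locally closed immersion is a monomorphism), so it reproduces the false bound; the remark after the lemma and the application in \autoref{proposition:component-and-rank}, where the relevant $\bz/\ell^e\bz$-points come from Frobenii and hence from $\bz_\ell$-points, make clear that the intended quantity is $\im(Y(\bz_\ell) \to \cdot)$.
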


\begin{remark}
	\label{remark:}
	\autoref{lemma:subscheme-measure-0} is correct as stated, but the proof in \cite[Proposition 2.1(b)]{bhargavaKLPR:modeling-the-distribution-of-ranks-selmer-groups}
	has a minor error.
	There, it is stated that $\#Y(\bz/\ell^e \bz) = O\left( (\ell^e)^{d-1} \right)$, which is not in general true.
	The correct statement is that $\im \left( Y(\bz_\ell) \ra Y(\bz/\ell^e \bz) \right) = O\left( (\ell^e)^{d-1} \right)$. 
	A counterexample to the incorrect statement is provided by the subscheme $Y = \spec \bz[x]/(x^2)$ and $X = \ba^1_{\bz_\ell}$.
	In this case, we easily see that $\# Y(\bz_\ell) = 1$ because $\bz_\ell$ is reduced, but $\# Y(\bz/\ell^e\bz) = \ell^{\lfloor e/2 \rfloor}$
	as such points are in bijection with elements of $\bz/\ell^e \bz$ which square to $0$.
\end{remark}

In the following proposition only, we use $\o(Q)$ and $\so(Q)$ to denote the algebraic groups
associated to a quadratic form $Q$, and $\o(Q)(R)$ to denote its $\spec R$ points, for $R$ a ring.
\begin{proposition}
	\label{proposition:eigenvalues}
	Let $(V, Q)$ be a nondegenerate quadratic space of even rank at least $4$ over $\bz_\ell$.
	There is a Zariski closed pure codimension $1$ subscheme $Z \subset \o(Q)$,
	such that $g \in Z$ if and only if $g$ has a generalized $1$-eigenspace of dimension at least $2$.
	
Further, any $g \in (\o(Q) - Z)(\bz_\ell)$
	has a zero dimensional generalized $1$-eigenspace and zero dimensional $1$-eigenspace when
	$g \in \so(Q)(\bz_\ell)$
	and a one dimensional generalized $1$-eigenspace and one dimensional $1$-eigenspace when 
	$g \notin \so(Q)(\bz_\ell)$.

	In particular, $Z(\bz_\ell)$ has measure $0$
	with respect to the distribution of \autoref{lemma:subscheme-measure-0}.
\end{proposition}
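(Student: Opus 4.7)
The heart of the argument is the palindromic structure of the characteristic polynomial $p_g(T) := \det(T\cdot\id - g)$ of an orthogonal element. Since $g \in \o(Q)$ satisfies $g^{-1} = B_Q^{-1} g^T B_Q$ with $B_Q$ the (invertible) Gram matrix, $g$ and $g^{-1}$ have the same characteristic polynomial, and a short calculation yields the universal scheme-theoretic identity
\[
p_g(T) = \det(g)\cdot T^{2m}\cdot p_g(1/T), \qquad 2m := \rk V.
\]
Evaluating at $T=1$ gives $(1-\det g)\, p_g(1) = 0$, and differentiating then evaluating at $T=1$ gives $(1+\det g)\, p'_g(1) = 2m\det(g)\,p_g(1)$. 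Since the coordinate rings of $\so(Q)$ and its non-identity coset are $\bz_\ell$-flat (so $2$ is a nonzerodivisor), the first relation forces $p_g(1)\equiv 0$ on the coset $\o(Q)\setminus\so(Q)$, while on $\so(Q)$ the second relation reduces to the identity $p'_g(1) = m\cdot p_g(1)$.

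Define $Z\subset\o(Q)$ to be the closed subscheme cut out by $p_g(1)=p'_g(1)=0$. Linear algebra identifies $Z$ with the locus where the generalized $1$-eigenspace has dimension $\geq 2$, since this is precisely the condition $(T-1)^2\mid p_g(T)$. The crucial observation, and the main obstacle of the proof, is that naively $Z$ would be cut out by two independent equations and hence have codimension $2$; the palindromic identities above collapse this to a single equation on each component of $\o(Q)$. On $\so(Q)$, we have $Z\cap\so(Q) = \{p_g(1)=0\}$ (the vanishing of $p_g(1)$ automatically forces $p'_g(1) = m\cdot p_g(1) = 0$), while on $\o(Q)\setminus\so(Q)$ the identity $p_g(1)\equiv 0$ is automatic so $Z\cap(\o(Q)\setminus\so(Q)) = \{p'_g(1)=0\}$.

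To conclude that each single equation cuts out a pure codimension-$1$ subscheme, I would invoke Krull's Hauptidealsatz on the smooth integral $\bz_\ell$-schemes $\so(Q)$ and $\o(Q)\setminus\so(Q)$, and verify that each defining function is not identically zero. On $\so(Q)$, the function $p_g(1)=\det(\id-g)$ takes the nonzero value $2^{2m}$ at $g=-\id\in\so(Q)$. On the non-identity coset, after base changing to the algebraic closure of $\Frac\bz_\ell$, where $V$ decomposes as a sum of hyperbolic planes, I take $g$ to act by the swap $e\leftrightarrow f$ on one hyperbolic plane and by $-\id$ on the remaining $2m-2$ basis pairs: then $\det g=-1$, $p_g(T)=(T-1)(T+1)^{2m-1}$, and $p'_g(1)=2^{2m-1}\neq 0$.

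Finally, the dimension claims for $g\in(\o(Q)\setminus Z)(\bz_\ell)$ follow by tracking the multiplicity $a$ of $T=1$ as a root of $p_g(T)\in\bz_\ell[T]$. On $\so(Q)\setminus Z$, the identity $p'_g(1) = m\cdot p_g(1)$ rules out $a=1$ (which would force $p_g(1)=0$ but $p'_g(1)\neq 0$), and $a<2$ since $g\notin Z$, so $a=0$: the generalized and ordinary $1$-eigenspaces both vanish. On $(\o(Q)\setminus\so(Q))\setminus Z$, $p_g(1)\equiv 0$ forces $a\geq 1$ while $g\notin Z$ forces $a<2$, so $a=1$; a simple root yields a $1\times 1$ Jordan block, making the generalized and ordinary $1$-eigenspaces both one-dimensional. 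The measure-zero conclusion is then immediate from \autoref{lemma:subscheme-measure-0} applied to the codimension-$1$ subscheme $Z$.
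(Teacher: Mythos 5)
Your proof is correct. The overall skeleton — reducing $Z$ on each connected component of $\o(Q)$ to the vanishing of a single non-trivial function, applying Krull's Hauptidealsatz for pure codimension one, then invoking the measure-zero lemma — is the same as the paper's. The genuine difference is how you obtain the parity constraint that makes the two equations $p_g(1)=p'_g(1)=0$ collapse to one on each coset: you derive it from first principles via the palindromic identity $p_g(T)=\det(g)\,T^{2m}p_g(1/T)$, whereas the paper instead cites the formula (from Taylor) identifying the Dickson invariant with $\dim\ker(g-\id)\bmod 2$. Your route is more self-contained and, usefully, makes explicit something the paper passes over with a ``similarly'': on the non-identity coset, $p_g(1)$ vanishes automatically (the $2\cdot p_g(1)=0$ relation plus $\bz_\ell$-flatness), so the a priori two-equation locus $(T-1)^2\mid p_g$ is cut out by $p'_g(1)$ alone. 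The cost is that you need the slightly fussy nonzerodivisor bookkeeping to cancel $2$ and $m$ over $\bz_2$, whereas the Dickson-invariant route packages that away; both are fine, and the remainder (the explicit test points $-\id$ and the swap-on-one-hyperbolic-plane element, and the Jordan-block bookkeeping for the final dimension claims) matches the paper in spirit.
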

\begin{proof}
	For $V_L$ an even dimensional
	free module over a field $L$ and $g: V_L \ra V_L$, let 
	$V_L^{g=\lambda}$ denote the $\lambda$-eigenspace and 
	$V_L^{[g=\lambda]}$ denote the generalized $\lambda$-eigenspace. 
	Let $Q_L$ be a nondegenerate quadratic form on $V_L$.
	Recall that the Dickson invariant agrees with $\dim V_L^{g=1} \mod 2$, using that $\dim V_L$ is even and \cite[p. 160]{taylor:the-geometry-of-the-classical-groups}.
	(In \cite[p. 160]{taylor:the-geometry-of-the-classical-groups} 
	the notation $[V,f]$ is used for $\im (1-f)$, whose rank taken $\mod 2$ agrees with $\dim V_L^{g=1} \mod 2$ since $\dim V_L$ is even.)

	In particular, every element in $(\o(Q_L) - \so(Q_L))(L)$ has odd dimensional $1$-eigenspace while every element
	of $\so(Q_L)(L)$ has even dimensional $1$-eigenspace.
	Now, let $(V,Q)$ be a nondegenerate even rank quadratic space over $\bz_\ell$ as in the statement of the proposition.
	We may apply the above discussion to the base change
	$(V_{\bq_\ell}, Q_{\bq_\ell})$ to deduce that any element $g \in \so(Q)(\bz_\ell)$
	has $\rk V_L^{g=1} \equiv 0 \mod 2$ and any element of $g \in (\o(Q) - \so(Q))(\bz_\ell)$ has $\rk V_{\bq_\ell}^{g=1} \equiv 1 \mod 2$.

	Further, the condition that an element $g \in \so(Q)(\bz_\ell)$ has $\rk V_{\bq_\ell}^{[g = 1]} > 0$ is Zariski closed and nonempty in the algebraic group $\so(Q)$ over $\bz_\ell$; it is Zariski closed because
	this condition can be expressed as $T-1$ dividing the characteristic polynomial of $g$ and	
	it is nonempty because there are elements in a maximal
	torus with $\dim V_{\bq_\ell}^{g = 1} = 0$.
	Similarly, the condition that an element $g \in (\o(Q) - \so(Q))(\bz_\ell)$ has $\rk V_{\bq_\ell}^{[g=1]} > 1$ is Zariski closed and nonempty. 
	(This uses that $\chr \mathbb Z_\ell = 0 \neq 2$, as in characteristic $2$ every element of $\o(Q) - \so(Q)$ would have generalized $1$ eigenspace of dimension at least $2$.)
	Therefore, to establish the statement regarding generalized $1$-eigenspaces,
	it suffices to show that a proper Zariski closed subscheme of an integral scheme over $\bz_\ell$ parameterizes a measure $0$ subset,
	which is the content of \autoref{lemma:subscheme-measure-0}.

	The statement for generalized $1$-eigenspaces established above implies the corresponding statement for $1$-eigenspaces because when the generalized
	$1$-eigenspace is at most $1$ dimensional, it is equal to the $1$-eigenspace.
	The final statement that $Z(\bz_\ell)$ has measure $0$ follows from \autoref{lemma:subscheme-measure-0}.
\end{proof}

We next define a double cover
$\scz^d_k \ra \smespace d k$ so that the Dickson invariant is trivial on $\pi_1(\scz^d_k)$.

\begin{definition}
	\label{definition:dickson-double-cover}
	Let $n \geq 1, d\geq 2$, and let $k$ be an integral domain (not necessarily a field) on which $2n$ is invertible. By \autoref{lemma:dickson-monodromy}, the Dickson invariant defines a surjective map $\pi_1(\smespace d k) \ra \bz/2\bz$ and hence corresponds to a
finite \'etale $\bz/2\bz$ cover $\scz^d_k \ra \smespace d k$.
This yields a map $\pi_1(\scz^d_k) \ra \so(\qsel n d k)$ which is identified with the restriction of $\mono n d k$ to the kernel of the Dickson invariant.
\end{definition}

In the case $k$ is a field, by abuse of notation, we have a map $\chi_{\cyc}: \pi_1(\spec k) \ra \left( \bz/n\bz \right)^\times$
(induced by the cyclotomic character $\chi_{\cyc}$ to $\left( \zprime p \right)^\times$ from \autoref{definition:cyclotomic-character}).
In the general case where $k$ is just an integral domain, we also obtain a map $\chi_{\cyc}: \pi_1(\spec k) \ra \left( \bz/n\bz \right)^\times$
which can be defined as the unique map making the diagram below commute: 
\begin{equation}
	\label{equation:}
	\begin{tikzcd}
		\pi_1(\Frac(k)) \ar {rr} \ar[dr, "\chi_{\cyc}"'] && \pi_1(\spec k) \ar {ld}{\chi_{\cyc}} \\
		& \left( \bz/n\bz \right)^\times & 
	\end{tikzcd}
\end{equation}
We have a diagram
\begin{equation}
	\label{equation:spinor-map}
	\begin{tikzcd} 
		\pi_1(\scz^d_k) \ar {rr} \ar {d} && \so(\qsel n d k) \ar {dd}{\sp{\qsel n d k}} \\
		\pi_1(\smespace d k) \ar {d} && \\
		\pi_1(\spec k) \ar {r}{\chi_{\cyc}^{d-1}} & \left( \bz/n\bz \right)^\times \ar{r} & \left( \bz/n\bz \right)^\times / \left( \left( \bz/n\bz \right)^\times \right)^2.
\end{tikzcd}\end{equation}
\begin{lemma}
	\label{lemma:spinor-norm-commutes}
	The square \eqref{equation:spinor-map} commutes when $k$ is a field of characteristic prime to $2n$.
\end{lemma}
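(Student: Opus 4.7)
The plan is to reduce the commutativity of \eqref{equation:spinor-map} via Chebotarev density to an explicit computation of both composites on Frobenius elements at closed points, and then invoke \autoref{proposition:zywina}(2) together with \autoref{remark:cyclotomic-character}.

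First, I would observe that the diagram \eqref{equation:spinor-map} spreads out over $\spec \bz[1/2n]$: the cover $\scz^d_{\bz[1/2n]} \to \smespace d {\bz[1/2n]}$ is the Dickson-trivial cover, the composite $\sp{\qsel n d {\bz[1/2n]}} \circ \mono n d {\bz[1/2n]}|_{\scz^d}$ gives one continuous homomorphism $\pi_1(\scz^d_{\bz[1/2n]}) \to (\bz/n\bz)^\times / ((\bz/n\bz)^\times)^2$, and $\chi_{\cyc}^{d-1}$ applied to the projection to $\pi_1(\spec \bz[1/2n])$ gives the other. Commutativity over any field $k$ of characteristic prime to $2n$ then follows from commutativity over $\bz[1/2n]$ by pullback along $\spec k \to \spec \bz[1/2n]$. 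By Chebotarev density and continuity, the latter equality may be checked on Frobenius elements $\frob_y$ at closed points $y$ of $\scz^d_{\bz[1/2n]}$, each of which lies over some finite field $\mathbb F_q$ with $\gcd(q, 2n) = 1$.

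Second, I would restrict further to closed points $y$ lying over points $x \in \sfespace d {\mathbb F_q}$ with $\det(\id - \mono n d {\mathbb F_q}(\frob_x)) \neq 0$, since exactly these Frobenii fit the hypothesis of \autoref{proposition:zywina}(2). By \autoref{proposition:eigenvalues}, the condition $\det(\id - g) \neq 0$ cuts out a Zariski open dense, conjugation-invariant subscheme of the algebraic group $\o(\qsel n d {\bz[1/2n]})$. By \autoref{corollary:selmer-equidistribution}, as we vary $q$ and $x$, the resulting Frobenius conjugacy classes $\mono n d {\mathbb F_q}(\frob_x)$ equidistribute in the appropriate fibers of $\mrm{mult}$, so in particular they hit every conjugacy class of $\im \mono n d {\bz[1/2n]}$ meeting this open locus. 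Since Frobenii at closed points of any dense open subscheme of $\scz^d_{\bz[1/2n]}$ topologically generate $\pi_1(\scz^d_{\bz[1/2n]})$, this reduces commutativity to an equality on these ``generic'' Frobenii.

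Finally, for $y$ lying over such an $x$, writing $q_y := \# \kappa(y)$, on the one hand \autoref{proposition:zywina}(2) gives $\sp{\qsel n d {\kappa(y)}}(\mono n d {\kappa(y)}(\frob_y)) = [q_y^{d-1}]$; on the other hand, the image of $\frob_y$ in $\pi_1(\spec \bz[1/2n])$ is the geometric Frobenius associated to $\kappa(y)$, and by \autoref{remark:cyclotomic-character} $\chi_{\cyc}^{d-1}$ sends this to $[q_y^{d-1}]$ as well. Both composites therefore agree on our dense set of Frobenii, so they coincide, establishing the commutativity of \eqref{equation:spinor-map}. The main obstacle is the density assertion in the second paragraph, which requires combining the Zariski open-density provided by \autoref{proposition:eigenvalues} with the equidistribution output of \autoref{proposition:chavdarov}/\autoref{corollary:selmer-equidistribution} to ensure that the restricted class of Frobenii accessible to \autoref{proposition:zywina}(2) suffices to pin down a continuous homomorphism to a finite abelian group.
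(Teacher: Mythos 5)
Your proposal has the right high-level structure and is genuinely similar in spirit to the paper's argument: both reduce the commutativity of \eqref{equation:spinor-map} to an equality on Frobenius elements, both invoke \autoref{proposition:zywina}(2) for the explicit spinor-norm computation, and both appeal to equidistribution. Your unified approach via arithmetic Chebotarev over $\bz[1/2n]$ is arguably cleaner than the paper's two-case treatment (separate arguments for $k$ a finite field and for $k = \bq$, with the latter involving a ``difference cover'' $T$ and an appeal to \cite[Proposition 9.7.8]{EGAIV.3}), and if filled in would be an acceptable alternative route to the same endpoint.

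However, there is a genuine gap in the density step, and you correctly flagged it as ``the main obstacle'' without resolving it. The hypothesis of \autoref{proposition:zywina}(2) is $\det(\id - \mono {\zprime p} d k(\frob_x)) \neq 0$, which is an \emph{adelic} (equivalently $\bz_\ell$-level) condition, whereas the equidistribution you invoke via \autoref{corollary:selmer-equidistribution} lives at the fixed finite level $\bz/n\bz$. Moreover, the appeal to \autoref{proposition:eigenvalues} as producing ``a Zariski open dense subscheme of the algebraic group $\o(\qsel n d {\bz[1/2n]})$'' does not parse: $\qsel n d k$ has $\bz/n\bz$ coefficients, so $\o(\qsel n d k)$ is a finite group, and \autoref{proposition:eigenvalues} is a statement about $\o(Q)$ over $\bz_\ell$. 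For a small fixed $n$, there is no a priori reason that the ``good'' conjugacy classes (those in the image of the open locus where \autoref{proposition:zywina}(2) applies) meet enough classes in $\im \mono n d k$ to pin down the homomorphism. The paper bridges this mismatch with a crucial step you omit: first assume $8 \mid n$ (so the spinor norm is genuinely visible at the $n$-level, using that the spinor norm mod $n$ factors through the spinor norm mod $n^j$), and then replace $n$ by $n^j$ for $j$ large, applying \autoref{proposition:eigenvalues} together with \autoref{lemma:subscheme-measure-0} to ensure the density of $g \in \im \mono {n^j} d k$ with a nontrivial $1$-eigenspace tends to $0$ as $j \to \infty$. Without this level-refinement, the combination of ``Zariski open dense'' with finite-level equidistribution does not actually establish that the restricted Frobenii topologically generate, so the argument is incomplete.
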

\begin{proof}
	Because commutativity of \eqref{equation:spinor-map} is compatible with base change on the integral domain $k$, it suffices to verify it in the cases that $k = \bq$ and that $k$ is a finite field of characteristic prime to $2n$.
	
	First, we verify the claim when $k$ is a finite field of characteristic prime to $2n$.
	It suffices to establish the claim for all sufficiently divisible $n$. Hence, to simplify matters latter, we make the further harmless assumption that $8 \mid n$.
	Using that $\left( \bz/n\bz \right)^\times / \left( \left( \bz/n\bz \right)^\times \right)^2$ has even order,	
	it suffices to verify commutativity of \eqref{equation:spinor-map} for all sufficiently large finite fields of characteristic $p$ with $\gcd(p, 2n) = 1$, and odd degree over $\mathbb F_p$.

	Now, for such sufficiently large finite fields, we only need verify that that for varying $x \in \scz(k)$, 
	$\sp{\qsel n d k}(\mono n d k(\frob_x))$ is always equal to $\left[ q^{d-1} \right]$.
	By 
	\autoref{proposition:chavdarov},
	Frobenius elements are equidistributed in a coset of the geometric monodromy group
	and so it suffices to establish 
	$\sp{\qsel n d k}\mono n d k(\frob_x) = \left[ q^{d-1} \right]$
	for a subset of $x \in \smespace d k(k)$ with density in $\smespace d k(k)$ tending to $1$
	as $\# k \ra \infty$.
	Further, we note that the spinor norm is unchanged upon replacing $n$ with $n^j$ for any $j \geq 1$.
	Note that here we are using the assumption $8 \mid n$, as, for example,
	$\sp{\qsel 2 d k}$ maps to the trivial group while $\sp{\qsel 4 d k}$ maps to a nontrivial group.
	By replacing $n$ with a sufficiently large power we can ensure that the density of $g \in \im \mono n d k$ with a $0$-dimensional $1$ eigenspace is arbitrarily
	close to $1$ by \autoref{proposition:eigenvalues}.
	Recall that, by the Lang-Weil estimates, 
	if $X$ is a scheme over $\spec \mathbb Z$ with geometrically irreducible fibers and $U \subset X$ a fiberwise dense open subscheme
	$\lim_{\#k \to \infty} \frac{\# U(k)}{\# X (k)} = 1$.
	Since $\sfespace d {\spec \bz[1/2]} \subset \smespace d  {\spec \bz[1/2]}$ is a fiberwise dense open subscheme
	by \cite[Lemma 3.14]{landesman:geometric-average-selmer},
	we find that
	$\sfespace d k(k)$ has density $1$ in $\smespace d k(k)$ as $\# k \to \infty$.

	and so it suffices to verify the above when $x \in \sfespace d k(k)$.
	Hence, we want to verify commutativity of \eqref{equation:spinor-map} for all $x \in \sfespace d k(k)$ with a $0$-dimensional $1$ eigenspace,
	which is the content of \autoref{proposition:zywina}(2).

	So, to finish the proof, it only remains to deal with the case $k = \bq$.
	Since \eqref{equation:spinor-map} is in fact defined over the integral domain $k = \bz[1/2]$, and is compatible with base change along $\spec \bq \ra \spec \bz[1/2n]$, it suffices
	to verify commutativity when $k = \spec \bz[1/2n]$.
	Via the bijection between maps $\pi_1(\scz^d_{\bz[1/2n]}) \ra G$ and $G$-covers of $\scz^d_{\bz[1/2n]}$, call $X$ and $Y$ the two 
	induced $\left( \bz/n\bz \right)^\times / \left( \left( \bz/n\bz \right)^\times \right)^2$-covers of $\scz^d_{\bz[1/2n]}$ 
	obtained by traversing the diagram
	\eqref{equation:spinor-map} in the two different paths.
	We wish to show $X$ and $Y$ are isomorphic.
	We obtain a 
	$\left( \bz/n\bz \right)^\times / \left( \left( \bz/n\bz \right)^\times \right)^2$-cover $T \ra \scz^d_{\bz[1/2n]}$ induced by the ``difference''
	of $X$ and $Y$; that is, if $X$ and $Y$ correspond to maps $f, g : \pi_1(\scz^d_{\bz[1/2n]}) \ra \left( \bz/n\bz \right)^\times / \left( \left( \bz/n\bz \right)^\times \right)^2$
	then $T$ corresponds 
	to the homomorphism $t(\alpha) = f(\alpha) g(\alpha^{-1})$. 
	To conclude the proof, it suffices to show $T$ is trivial.

	We first verify $T \times_{\spec \bz[1/2n]} \spec \bq \ra \scz^d_{\bq}$ is the pullback of a cover $S \ra \spec \bq$ along the structure map $\scz^d_\bq \ra \spec \bq$.
	By the established case of finite fields and compatibility with base change, we know $T$ becomes trivial after base change of $T \ra \scz^d_{\bz[1/2n]} \ra \spec \bz[1/2n]$ along any closed point $\spec \mathbb F_p \ra \spec \bz[1/2n]$.
	We now apply \cite[Proposition 9.7.8]{EGAIV.3},
	which states that the number of geometric components of a morphism is constant on some open set,
	to the map $T \ra \spec \bz[1/2n]$.
	It follows that the cover $T \ra \scz^d_{\bz[1/2n]}$ is trivial when restricted to $\spec \overline{\bq} \ra \spec \bz[1/2n]$.
	This implies that the composite morphism $\pi_1(\scz^d_{\ol{\bq}}) \ra \pi_1(\scz^d_{\bq}) \ra \left( \bz/n\bz \right)^\times / \left( \left( \bz/n\bz \right)^\times \right)^2$
	is trivial.
	From the exact sequence \cite[Expos\'e IX, Th\'eor\`eme 6.1]{sga1}
\begin{equation}
	\label{equation:}
	\begin{tikzcd}
		0 \ar {r} & \pi_1(\scz^d_{\ol{\bq}})\ar {r} & \pi_1(\scz^d_{\bq}) \ar {r} & \pi_1(\spec \bq) \ar {r} & 0 
\end{tikzcd}\end{equation}
we obtain that the cover $T \times_{\spec \bz[1/2n]} \spec \bq \ra \scz^d_\bq$ is the pullback of a cover $S \ra \spec \bq$ along the structure map $\scz^d_\bz \ra \spec \bq$.

To conclude, we wish to show $S$ is a trivial cover of $\spec \bq$.
By Chebotarev density, it suffices to show that the normalization of $\spec \bz$ in $S$ is the trivial cover over a density $1$ subset of primes.
Since $S$ pulls back to $T \times_{\spec \bz[1/2n]} \spec \bq$ along the map $\scz^d_{\bz[1/2n]} \ra \spec \bq$, it suffices to show that $T \ra \scz^d_{\bz[1/2n]}$ is the trivial cover over a density $1$ subset of primes. Indeed, this triviality holds by the previously established commutativity of \eqref{equation:spinor-map} when $\chr(k)$ is positive.
\end{proof}

Recall in \autoref{definition:dickson-double-cover},
we defined $\scz^d_k$ as the double cover of $\smespace d k$ corresponding to the kernel of the Dickson invariant.
That is, $\pi_1(\scz^d_k) = \ker(\dickson{\qsel {\zprime p} d k}) : \pi_1(\smespace d k) \to \bz/2\bz$.
\begin{lemma}
	\label{lemma:spinor-monodromy}
	For a field $k$ of characteristic $p \neq 2$ (allowing $p = 0$) and any height $d \geq 2$, 
	the image of the spinor norm map restricted to 
	$\ker(\dickson{\qsel {\zprime p} d k})$
\[
	\sp{\qsel {\zprime p} d k} \circ \mono {\zprime p} d k|_{\ker(\dickson{\qsel {\zprime p} d k})}
		: \pi_1(\scz^d_k) \ra \left( \zprime p \right)^\times / \left( \left( \zprime p \right)^\times \right)^2
	\]
	is identified with the image of the composition
\begin{align}
	\label{equation:cyclotomic}
	\gal(\ol{k}/k) \xra{\chi_{\cyc}^{d-1}} \left(\zprime p \right)^\times \ra  \left(\zprime p\right)^\times / \left( \left(\zprime p \right)^\times  \right)^2.
\end{align}
\end{lemma}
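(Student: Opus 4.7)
The plan is to prove both containments in the claimed equality separately, working at each finite level $n$ prime to $\chr(k)$ (the full $\zprime p$ statement then follows by passing to the inverse limit).

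For the containment $\im \chi^{d-1} \subseteq \im(\sp{\qsel n d k} \circ \mono n d k)$, I will exploit that the proof of \autoref{lemma:dickson-monodromy} actually shows the Dickson invariant is nontrivial already on the geometric monodromy $\im \mono n d {\ol k}$. Consequently the $\bz/2\bz$-cover $\scz^d_k \to \smespace d k$ is geometrically connected, so the structural map $\pi_1(\scz^d_k) \to \pi_1(\spec k)$ is surjective. Combining this with the commutativity of \eqref{equation:spinor-map} from \autoref{lemma:spinor-norm-commutes} shows that the restriction of $\sp{\qsel n d k} \circ \mono n d k$ to $\pi_1(\scz^d_k)$ has image exactly $\im \chi^{d-1}$.

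For the reverse containment, the crux is to exhibit a single monodromy element with nontrivial Dickson invariant and trivial spinor norm. Because $d \geq 2$, the lattice $\vsel \bz d k$ contains a hyperbolic plane summand, and the vector $v = e - f$ (for a hyperbolic basis $\{e,f\}$) satisfies $\qsel \bz d k(v) = -1$. The associated reflection $r_v$ then has $\sp{\qsel \bz d k}(r_v) = [-\qsel \bz d k(v)] = 1$, so $r_v \in \osp(\qsel \bz d k)$, and $\dickson{\qsel \bz d k}(r_v) = 1$. By \cite[Theorem 4.4]{landesman:geometric-average-selmer}, invoked in the proof of \autoref{lemma:orthogonal-reduction}, the image of $\osp(\qsel \bz d k)$ under reduction modulo $n$ is contained in $\im \mono n d {\ol k}$; in particular $r_v \bmod n$ lies in $\im \mono n d k$. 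Pick any $g_0 \in \pi_1(\smespace d k)$ with $\mono n d k(g_0) = r_v \bmod n$. Then for an arbitrary $g \in \pi_1(\smespace d k)$, \autoref{lemma:dickson-monodromy} forces $\dickson{\qsel n d k}(\mono n d k(g))$ to lie in $\im \Delta_{\bz/2\bz}$. If it is trivial, then $g \in \pi_1(\scz^d_k)$ and the previous paragraph gives $\sp{\qsel n d k}(\mono n d k(g)) \in \im \chi^{d-1}$. Otherwise it agrees with $\dickson{\qsel n d k}(r_v)$, so $g g_0^{-1} \in \pi_1(\scz^d_k)$, and multiplicativity of the spinor norm gives
\[
\sp{\qsel n d k}(\mono n d k(g)) \;=\; \sp{\qsel n d k}(\mono n d k(g g_0^{-1})) \cdot \sp{\qsel n d k}(r_v) \;=\; \sp{\qsel n d k}(\mono n d k(g g_0^{-1})) \;\in\; \im \chi^{d-1}.
\]

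The main obstacle is producing the element $g_0$: what is needed is a monodromy element with nontrivial Dickson but trivial spinor, which is strictly more than \autoref{lemma:orthogonal-reduction} alone guarantees (that result produces only $\Omega(\qsel n d k) \subset \im \mono n d {\ol k}$, i.e. elements whose Dickson is also trivial). The extra input is the stronger inclusion $r_n(\osp(\qsel \bz d k)) \subset \im \mono n d {\ol k}$ from \cite[Theorem 4.4]{landesman:geometric-average-selmer}, together with the observation that $\vsel \bz d k$ admits a vector of norm $-1$.
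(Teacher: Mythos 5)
Your proof is correct and takes the same basic route as the paper — reduce to finite level $n$ and apply \autoref{lemma:spinor-norm-commutes} — but it supplies two nontrivial steps that the paper's two-sentence proof leaves entirely implicit, and it is valuable to make them explicit. The paper's Lemma \ref{lemma:spinor-norm-commutes} only concerns the restriction of $\sp{\qsel n d k}\circ\mono n d k$ to the subgroup $\pi_1(\scz^d_k) = \ker(\dickson{\qsel n d k}\circ \mono n d k)$, yet \autoref{lemma:spinor-monodromy} asserts something about the full $\pi_1(\smespace d k)$. You correctly fill this gap on both sides: for the lower bound you observe that the Dickson map is already nontrivial on geometric monodromy (as is shown in the proof of \autoref{lemma:dickson-monodromy}), so that $\scz^d_k$ is geometrically connected over $k$ and $\pi_1(\scz^d_k) \to \pi_1(\spec k)$ is onto, whence the restricted image already exhausts $\im\chi^{d-1}$; and for the upper bound you produce a coset representative $g_0$ mapping to the reflection $r_v$ with $\qsel\bz d k(v)=-1$, which has nontrivial Dickson invariant but trivial spinor norm (here $d\ge 2$ is exactly what guarantees a hyperbolic-plane summand), so that the spinor image on the nontrivial Dickson coset equals that on $\pi_1(\scz^d_k)$. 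Both of these ingredients do appear in the paper, but in the proofs of \autoref{lemma:dickson-monodromy} and \autoref{theorem:monodromy} respectively rather than in the proof of \autoref{lemma:spinor-monodromy} itself; you have correctly reconstructed the argument the authors evidently intended but did not spell out.
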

\begin{remark}
	\label{remark:}
	In the case $k$ is algebraically closed or $d$ is odd, \autoref{lemma:spinor-monodromy} says the image of the spinor norm map
	$\sp{\qsel {\zprime p} d k} \circ \mono {\zprime p} d k$, when restricted to the kernel of the Dickson invariant, is trivial.
\end{remark}
\begin{proof}
	It suffices to establish the claim for 
	all finite $n$, with no prime factor of $n$ equal to $p$,
	in place of $\zprime p$. 
	The result then follows from \autoref{lemma:spinor-norm-commutes}.
\end{proof}

\subsection{Proving \autoref{theorem:monodromy}}
\label{subsection:monodromy-proof}
Combining the results of the preceding subsections, we are ready to complete our monodromy computation.
\begin{proof}[Proof of \autoref{theorem:monodromy}]
	First, by \autoref{lemma:orthogonal-reduction}, we find $\Omega(\qsel {\zprime p}d k) \subset \im \mono {\zprime p} d {\ol k}$.
	As 
	\begin{align*}
		\Omega(\qsel {\zprime p}d k) = 
\ker\left( \o\left( \qsel {\zprime p} d k \right) \xra{(\dickson{\qsel {\zprime p} d k}, \sp{\qsel {\zprime p} d k})} \left( \prod_{\substack{\text{primes }\ell,\\ \ell \neq p}}  \bz/2\bz \right) \times \left(  \widehat{\bz}^\times /(\widehat{\bz}^\times)^2\right) \right),
	\end{align*}
determining $\im \mono {\zprime p} d k$ is equivalent to determining the image of $(\dickson{\qsel {\zprime p} d k}, \sp{\qsel {\zprime p} d k})\circ\im \mono {\zprime p} d k$.

First, because $r_n(\osp(\qsel \bz d \bc)) \subset \mono n d {\ol k}$ for every $n \geq 1$ and prime to $p$, by \cite[Theorem 4.4]{landesman:geometric-average-selmer},
$\mono n d {\ol k}$ does contain elements with trivial spinor norm and nontrivial Dickson invariant.
Therefore, since we know the image of the Dickson invariant map is $\Delta_{\bz/2\bz}(\bz/2\bz)$ by \autoref{lemma:dickson-monodromy},
it follows that $\im \mono {\zprime p} d k$ contains $\ker \sp {\qsel {\zprime p} d k} \cap (\dickson {\qsel {\zprime p} d k})^{-1}(\Delta_{\bz/2\bz}(\bz/2\bz))$.

Therefore, the image of the joint map $(\dickson{\qsel {\zprime p} d k}, \sp{\qsel {\zprime p} d k})\circ\im \mono {\zprime p} d k$
is generated by $\Delta_{\bz/2\bz}(\bz/2\bz) \times \id$ together with the image of the spinor norm when restricted to the kernel of the Dickson invariant.
The latter image is given in the theorem statement by \autoref{lemma:spinor-monodromy}.
Therefore, 
the joint map 
$(\dickson{\qsel {\zprime p} d k}, \sp{\qsel {\zprime p} d k})$ has image
as claimed in the statement of
\autoref{theorem:monodromy}.
\end{proof}

\section{The distribution of $\Sel_\ell$}\label{sec: prime distribution}

In this section we will prove the key results towards showing that the BKLPR heuristic agrees with the geometric distribution of $\Sel_{\ell}$, for prime $\ell$. The psychology of the problem is as follows: one would like to ``understand'' the distributions by computing numerical invariants such as moments, but the distributions in question are not determined by their moments, since these moments grow too quickly. However, both distributions are the limit as a certain ``height'' parameter tends to infinity, and at finite height they are distributions on finite sets, hence obviously determined by their moments. We can then verify that the two limiting distributions agree by showing that the ``finite height'' distributions are very close, which we can then do by computing enough moments. 

The key point that makes this computation feasible is that \emph{the moments stabilize very quickly as the height grows}. It was already observed in \cite[Theorem 1.2]{landesman:geometric-average-selmer} that the \emph{first} moment (i.e., average) size of $\Sel_{\ell}$ for height $d$ elliptic curves (in the large $q$ limit) is already equal to its limiting value as soon as the height $d$ is at least $2$. In this section we go much further, computing the first $6d-2$ moments for the large $q$ limit of families of elliptic curves with height $d$ (in the large $q$ limit), and showing that they are all already equal to their limiting values. Even computing one fewer moment would be insufficient for our purposes, and it seems that computing one more moment in closed form would be quite difficult, as the next moment is \emph{not} equal to its limiting value! 

We caution, however, that the distribution at finite height depends quite delicately on the monodromy group; for example, the large $q$ limit does not literally exist because of small fluctuations among the monodromy groups, but the difference between its $\liminf_{q \rightarrow \infty}$ and $\limsup_{q \rightarrow \infty}$ will tend to 0 as the height tends to infinity. 

We now give an outline of the contents of this section.
In \autoref{subsection:prime-distribution}, we introduce the random kernel model, which is our model for Selmer groups that directly connects to points of the Selmer space.
This model will be defined in terms of kernels of random elements of subgroups of an orthogonal group, and so in
\autoref{subsec: dist 1-eigenspaces} we compute the probability distributions of the dimensions of these kernels.
In 
\autoref{subsection:proof-of-orbit-count}
we show how to determine compute the moments of the above mentioned random kernels, and then how to determine their distribution in terms of these moments,
which is used in
\autoref{subsection:tv-distance}
to bound the total variation distance between the random kernel model and the BKLPR model. We emphasize that these results a priori concern the random kernel model rather than $\Sel_n$, but later in \S \ref{section:proofs} it will be spelled out how to relate the two.

\subsection{The random kernel model}
\label{subsection:prime-distribution}

We introduce another probabilistic model which is closely related to the distribution of Selmer elements. We will continue to use the notation introduced earlier, especially from \autoref{subsubsection:quadratic-forms-base-change}.

\begin{definition}[Random 1-eigenspace for an element of $H$]\label{definition:random 1-eigenspace} Let $n$ and $d$ be positive integers. Let $H \subset \o(\qsel n d k)$ be a subset, where $ \o(\qsel n d k)$ is the orthogonal group for the quadratic form of \autoref{definition:selmer-space-quadratic-form}. We define $\rvcoset {V_{n}^d} H$ to be the random variable $\ker (g-\id)$, valued in isomorphism classes of $\bz/n\bz$-modules, for $g$ drawn uniformly at random from $H$.
\end{definition}

In this section, we will primarily be concerned with the
case of 
\autoref{definition:random 1-eigenspace} where $n = \ell$ is prime, but in \autoref{section:markov}, we will crucially use the case that $n = \ell^e$ is a prime power.
Now we will define the precise random variable that we end up relating to the distribution of ranks and Selmer groups of elliptic curves for our universal family. 

\begin{definition}[Random kernel model]\label{definition:kernel-distribution}
	For $n \in \bz_{\geq 1}, d \in \bz_{\geq 2}$ and $k$ a finite field of cardinality $q$ with $\gcd(q,2n) = 1$, 
	let $[q] \in \left(\bz/n\bz\right)^\times /\left( \left(\bz/n\bz\right)^\times \right)^2$ denote the class of $q$.
	Define
	\begin{align*}
	H_{n,k}^{d} := \left(\dickson{\qsel n d k}\right)^{-1}\left( \Delta_{\bz/2\bz}(\bz/2\bz)\right) \cap \left(\sp{\qsel n d k}\right)^{-1}([q^{d-1}]) \subset \o\left( \qsel n d k \right).
	\end{align*}
Define $\rvker n d k$ as the distribution on $\ab n$ given by
\begin{align*}
	\prob(\rvker n d k = G) := \frac{\#\{g \in H_{n,k}^d : \ker(g-\id) \simeq G\}}{\# H_{n,k}^d}.
\end{align*}
	Define $\flr n d k$ as the distribution on $\bz_{\geq 0} \times \ab n$ given by
\begin{align*}
	\prob(\flr n d k = (r, G) ) := 
	\begin{cases}
		\frac{\#\{g \in \so(\qsel n d k) \cap H_{n,k}^{d} : \ker(g-\id) \simeq G\}}{\# H_{n,k}^d} & \text{ if } r = 0, \\
		\frac{\#\{g \in \left( \o(\qsel n d k) - \so(\qsel n d k) \right) \cap H_{n,k}^{d} : \ker(g-\id) \simeq G\}}{\# H_{n,k}^d} & \text{ if } r = 1, \\
		0 & \text{ if } r \geq 2.\\
	\end{cases}
\end{align*}
\end{definition}

\autoref{theorem:monodromy}, adapted to the case of finite fields, gives: 

\begin{corollary}
	\label{corollary:finite-field-distribution}
	For $q$ ranging over all prime powers with $\gcd(q, 2n) = 1$ and $d \geq 2$ an integer,
	the distribution of $\im \mono n d {\bz[1/2n]}(\frob_x)$ ranging over $x \in \smestack d {\bz[1/2]} (\mathbb F_q)$,
	up to an error of $O_{n,d}(q^{-1/2}),$ agrees 
	with the distribution $\rvker n d {\mathbb F_q}$.
\end{corollary}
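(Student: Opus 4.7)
The plan is to identify $H^d_{n,\mathbb F_q}$ with the Frobenius coset $\mult^{-1}(\gamma_q)$ (in the notation of Definition \ref{definition:mult}) inside $\im \mono n d {\mathbb F_q}$, and then to apply the Chebotarev-style equidistribution of Corollary \ref{corollary:selmer-equidistribution}. All of the real work has already been done: Theorem \ref{theorem:monodromy} pins down the arithmetic and geometric monodromy images precisely, and Corollary \ref{corollary:selmer-equidistribution} yields a uniform distribution of Frobenius classes inside the correct coset of the geometric monodromy.

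To carry out the identification, I would first apply Theorem \ref{theorem:monodromy} with $k = \overline{\mathbb F}_q$, where the cyclotomic character is trivial, to obtain $\im \mono n d {\overline{\mathbb F}_q} = \dickson{\qsel n d k}^{-1}(\im \Delta_{\bz/2\bz}) \cap \ker \sp{\qsel n d k}$. Applying the theorem next with $k = \mathbb F_q$, and using $\chi_{\cyc}(\frob_q) = q$ (Remark \ref{remark:cyclotomic-character}) so that $\im \chi^{d-1} = \langle [q^{d-1}]\rangle$, I find
\[
\im \mono n d {\mathbb F_q} = \dickson{\qsel n d k}^{-1}(\im \Delta_{\bz/2\bz}) \cap \sp{\qsel n d k}^{-1}(\langle [q^{d-1}]\rangle).
\]
Comparing with Definition \ref{definition:kernel-distribution}, $H^d_{n,\mathbb F_q}$ is precisely the spinor-norm fiber over the single element $[q^{d-1}]$ inside $\im \mono n d {\mathbb F_q}$, hence exactly one coset of $\im \mono n d {\overline{\mathbb F}_q}$; in particular $\#H^d_{n,\mathbb F_q} = \#\im \mono n d {\overline{\mathbb F}_q}$. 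Since $\dickson{}$ and $\sp{}$ are conjugation invariant, $H^d_{n,\mathbb F_q}$ is a union of conjugacy classes in $\im \mono n d {\mathbb F_q}$, and unwinding Definition \ref{definition:mult} identifies this coset with $\mult^{-1}(\gamma_q)$.

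Finally, I would invoke Corollary \ref{corollary:selmer-equidistribution}. No Frobenius image $\mono n d {\mathbb F_q}(\frob_x)$ lies in a conjugacy class $C$ with $\mult(C) \neq \gamma_q$, while for $C \subset H^d_{n,\mathbb F_q}$ the proportion of $x \in \smestack d {\bz[1/2]}(\mathbb F_q)$ whose Frobenius class lies in $C$ equals $\#C/\#\im \mono n d {\overline{\mathbb F}_q} + O_{n,d}(q^{-1/2}) = \#C/\#H^d_{n,\mathbb F_q} + O_{n,d}(q^{-1/2})$, matching the uniform distribution on $H^d_{n,\mathbb F_q}$. Pushing forward along the conjugation-invariant map $g \mapsto \ker(g-\id)$ yields the distribution $\rvker n d {\mathbb F_q}$ up to $O_{n,d}(q^{-1/2})$, and summing the per-class errors over the $O_{n,d}(1)$ conjugacy classes in $\im \mono n d {\mathbb F_q}$ (bounded in terms of $|\o(\qsel n d k)|$) preserves this error bound. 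The argument presents no serious obstacle; the substantive content has been supplied by Theorem \ref{theorem:monodromy} and Corollary \ref{corollary:selmer-equidistribution}, and what remains is the combinatorial identification above.
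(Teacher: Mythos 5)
Your proposal is correct and follows essentially the same route as the paper: identify $H^d_{n,\mathbb F_q}$ with the Frobenius coset $\mult^{-1}(\gamma_q)$ of the geometric monodromy inside the arithmetic monodromy via Theorem~\ref{theorem:monodromy}, then apply the equidistribution of Corollary~\ref{corollary:selmer-equidistribution} (itself a direct consequence of Proposition~\ref{proposition:chavdarov}). The only cosmetic difference is that the paper splits explicitly into the cases where the arithmetic and geometric monodromy coincide versus differ by an index-$2$ spinor-norm coset, whereas you phrase the identification uniformly; the mathematical content is the same.
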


\begin{proof}
	First, by \autoref{corollary:selmer-equidistribution} to determine the distribution of Frobenius elements, it makes no difference whether we work with
	$\smestack d {\bz[1/2]}$ or $\smespace d {\bz[1/2]}$, so we choose to work with the latter.
	Observe that the monodromy agrees with the geometric monodromy (i.e., 
	$\im \mono n d {\mathbb F_q} = \im \mono n d {\ol{\mathbb F}_q}$)
	when $q$ is a square or $d$ is odd or $n \leq 2$, and has index $2$ in the geometric monodromy when $q$ is a square and $d$ is even and $n > 2$
	by \autoref{theorem:monodromy}.
	Therefore, in the former case, it is equidistributed in the monodromy group, which is $H_{n,k}^d$ in this case, 
	up to an error of $O_{n,d}(q^{-1/2})$
	by \autoref{proposition:chavdarov}.
	On the other hand, when $q$ is not a square and $d$ is even and $n > 2$, $\gamma_q$ as in \autoref{definition:mult} is nontrivial since the geometric
	monodromy is not equal to the monodromy. Hence, by \autoref{proposition:chavdarov}, $\frob_x$ is equidistributed in the nontrivial coset of 
	$\mono n d {\mathbb F_q} \subset \mono n d {\ol{\mathbb F}_q}$, which is precisely $\im \mono n d {\mathbb F_q} - \im \mono n d {\ol{\mathbb F}_q} = H_{n,k}^{d}$.
	
	The statement regarding the concrete characterization of the Dickson invariant and spinor norm is merely a restatement of \autoref{theorem:monodromy}.
\end{proof}

In \S \ref{section:proofs}, we will use the results from \S \ref{ssec: relation between selmer space and group} and \S \ref{subsection:equidistribution} to relate the random kernel model to the distribution of Selmer groups. For the rest of this section, we focus on analyzing the random kernel model. 

\subsection{Distribution of random 1-eigenspaces}\label{subsec: dist 1-eigenspaces}

We now focus on the case where $n = \ell$ is prime. 

\subsubsection{Some notation}

We will use \autoref{theorem:fulman-stanton} in conjunction with \autoref{lemma:same-orbits} to deduce the probability generating function for $\ker(g-\id)$ for $g$ drawn uniformly at random from a coset of $\Omega(\qsel \ell d k) \subset \o(\qsel \ell d k)$. Now we will take $H \subset \o(\qsel \ell d k)$ to be a coset of $\Omega(\qsel \ell d k)$ in $\o(\qsel \ell d k)$.
\begin{itemize}
	\item Note that when $\ell = 2$, the spinor norm is trivial on $\o(\qsel 2 d k)$ and hence $\Omega(\qsel 2 d k) = \so(\qsel 2 d k)$ and there are two
possibilities for the coset $H$, determined by the Dickson invariant.

\item When $\ell$ is odd, there are four cosets of $\Omega(\qsel \ell d k)$ given by the pair $(\sp {\qsel \ell d k}, \dickson {\qsel \ell d k})$. We label these cosets as in the following table.
\begin{center}
\begin{tabular}{|c|cc|}
\hline 
\diagbox{$\dickson {\qsel \ell d k}$}{$\sp {\qsel \ell d k}$}
       & trivial & non-trivial \\ 
       \hline
trivial  & $\Omega$ & $A$\\
non-trivial  & $B$  & $C$ \\
\hline 
\end{tabular}
\end{center}
\end{itemize}

For $Z$ a random variable valued in isomorphism classes of finite-dimensional $\F_{\ell}$-vector spaces, define the {\em probability generating function} of $Z$ to be the polynomial in $t$ given by
\[
G_Z(t) := \mathbb E(t^{\dim Z}) = \sum_{i \in \N} \prob(\dim Z = i) t^i.
\]
For a polynomial $f(t) = \sum_{i \in \N} a_i t^i$, introduce the notation $[f(t)]_r := a_r$ to denote the coefficient of $t^r$ in $f(t)$.

\subsubsection{The probability generating functions} We will now work towards the proof of:

\begin{theorem}\label{theorem: coset generating functions}
Let $\ell>2$ be an odd prime and $d \geq 1$ a positive integer. Then we have $G_{\rvcoset {V_{\ell}^d} B} = G_{\rvcoset {V_{\ell}^d} C}$ and 
\[
	G_{\rvcoset {V_{\ell}^d} \Omega} = G_{\rvcoset {V_{\ell}^d} A}  + \frac{1}{\# \Omega(\qsel \ell d k)}\prod_{i=0}^{6d-3} \left( t^2-\ell^{2i} \right).
\]
\end{theorem}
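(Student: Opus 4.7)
The plan is to apply the Fulman--Stanton cycle-index formula (\autoref{theorem:fulman-stanton}) to obtain closed-form expressions for $\sum_{g \in \o(\qsel \ell d k)} t^{\dim \ker(g - \id)}$ and $\sum_{g \in \so(\qsel \ell d k)} t^{\dim \ker(g - \id)}$, and then to use \autoref{lemma:same-orbits} to refine these by spinor norm. Since $\Omega \sqcup A = \so(\qsel \ell d k)$ and $B \sqcup C = \o(\qsel \ell d k) \setminus \so(\qsel \ell d k)$, with all four cosets of equal cardinality, the Fulman--Stanton sums determine $G_{\rvcoset{V_\ell^d}{\Omega}} + G_{\rvcoset{V_\ell^d}{A}}$ and $G_{\rvcoset{V_\ell^d}{B}} + G_{\rvcoset{V_\ell^d}{C}}$ immediately; the nontrivial content is in the asymmetric refinements. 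A useful parity observation is that since $\ell > 2$, the Dickson invariant equals the determinant, so $\dickson{\qsel \ell d k}(g) = (-1)^{\dim V^{g=-1}}$; combined with $\dim V = 12d-4$ being even, this forces $\dim \ker(g-\id)$ to be even on $\Omega \cup A$ and odd on $B \cup C$, consistent with the correction polynomial $\prod_{i=0}^{6d-3}(t^2 - \ell^{2i})$ being even in $t$.

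For the spinor-norm refinement, I would exploit the orthogonal decomposition $V = \ker(g - \id) \perp \mathrm{im}(g - \id)$, valid in odd characteristic since $g \in \o(\qsel \ell d k)$ forces $\ker(g-\id) \perp \mathrm{im}(g-\id)$ and these spaces have complementary dimension. This associates to each $g$ a pair $(U, h)$ with $U = \ker(g-\id) \subset V$ a nondegenerate subspace and $h = g|_{U^\perp} \in \o(\qsel \ell d k|_{U^\perp})$ fixed-point-free. Crucially, $g|_U = \id$ contributes trivially to both invariants, so the spinor norm and Dickson invariant of $g$ coincide with those of $h$. The Zassenhaus formula (exactly as in the proof of \autoref{proposition:zywina}(2)) then gives $\mathrm{sp}(h) = [\det(\id - h)\cdot \mathrm{disc}(\qsel \ell d k|_{U^\perp})]$ in $\F_\ell^\times / (\F_\ell^\times)^2$, expressing the spinor norm as an explicit arithmetic invariant of the conjugacy class of $h$. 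The lemma \autoref{lemma:same-orbits} should then package an orbit-by-orbit pairing between conjugacy classes of trivial and nontrivial spinor norm within a given Dickson stratum, preserving $\dim \ker(g - \id)$, except on one distinguished family of classes where the pairing must fail.

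The expected outcome is that on the nontrivial-Dickson stratum the pairing is total, giving $G_{\rvcoset{V_\ell^d}{B}} = G_{\rvcoset{V_\ell^d}{C}}$; the extra degree of freedom needed for the pairing is supplied by the fact that $\dim \ker(g - \id)$ is odd and hence positive, so $\ker(g - \id)$ contains vectors of every square class of $Q$-value. On the $\so$ stratum the unpaired classes are those with $\ker(g - \id) = 0$ and spinor invariant rigidly determined by the Zassenhaus formula, producing a surplus of elements in $\Omega$ over $A$. The principal obstacle will be identifying this surplus as exactly $\prod_{i=0}^{6d-3}(t^2 - \ell^{2i})$. I would verify this polynomial identity by forcing the $6d-2$ pairs of roots $t = \pm \ell^i$ for $0 \leq i \leq 6d-3$ via orbit counts on regular semisimple conjugacy classes stratified by characteristic polynomial, and matching the leading coefficient $1$, which comes from the unique contribution of $g = \id \in \Omega$ to the $t^{12d-4}$ coefficient (with no analog in $A$ since $\id \notin A$).
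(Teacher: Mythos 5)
Your proposed orthogonal decomposition $V = \ker(g - \id) \perp \mathrm{im}(g - \id)$ does not hold for all $g \in \o(\qsel \ell d k)$. It is true that $\ker(g-\id) \perp \mathrm{im}(g-\id)$ and that the dimensions are complementary, but the sum is direct only when $Q$ restricted to $\ker(g-\id)$ is nondegenerate, equivalently when $\ker(g-\id) \cap \mathrm{im}(g-\id) = 0$. This fails for every nontrivial unipotent $g$ (more generally whenever the generalized $1$-eigenspace strictly contains the $1$-eigenspace): if $(g-\id)v \neq 0$ but $(g-\id)^2 v = 0$, then $(g-\id)v$ is a nonzero vector in the intersection. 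For such $g$ the fixed subspace $U$ is degenerate and there is no ``fixed-point-free $h$ on $U^\perp$'' to extract, so the parametrization by pairs $(U,h)$ and the conjugacy-class pairing you build on it break down. Since these are precisely the $g$ with low-dimensional kernel, the gap is not peripheral. You also misread \autoref{lemma:same-orbits}: it concerns orbits of subgroups acting diagonally on $V^m$, not conjugacy classes of the group, and it does not furnish the orbit-by-orbit pairing you invoke.

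The paper's actual argument sidesteps all of this. Via Burnside, \autoref{lemma:same-orbits} gives \autoref{lem: agree at positive points}: the four coset generating functions agree at $t = \ell^i$ for $0 \le i \le 6d-3$. Combined with the parity constraint (odd polynomials for $B, C$; even for $\Omega, A$, coming from the Dickson invariant equaling $\dim\ker(g-\id) \bmod 2$ on an even-dimensional space) and the degree bound $12d-4$, this forces $G_{\rvcoset{V_\ell^d}{B}} - G_{\rvcoset{V_\ell^d}{C}}$ to vanish identically and pins $G_{\rvcoset{V_\ell^d}{\Omega}} - G_{\rvcoset{V_\ell^d}{A}}$ down to a scalar multiple of $\prod_{i=0}^{6d-3}(t^2 - \ell^{2i})$; the scalar is then fixed by the $t^{12d-4}$ coefficient exactly as in your last paragraph. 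So your endgame is right, but the detour through Zassenhaus and conjugacy classes is both flawed and unnecessary — the proof is pure polynomial interpolation, with no case analysis on $g$ at all. One further small inaccuracy: the parity statement used is $\dim\ker(g-\id) \equiv \dim\mathrm{im}(g-\id) \pmod{2}$ (valid because $\dim V$ is even); your reformulation via $\dim V^{g=-1}$ would need a separate argument for non-semisimple $g$.
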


\subsubsection{Some lemmas} We begin with some preliminary results. For $(V, Q)$ a quadratic space and $k \in \Z_{\geq 0}$, we will abbreviate 
\[
V^{k} := \underbrace{V \times V \times \cdots \times V}_{s \text{ times}}
\]
and consider the diagonal action of $\o(Q)$ on $V^{k}$. This induces a diagonal action of the subgroup $\Omega(Q) \subset \o(Q)$ on $V^k$.

\begin{lemma}\label{lemma:same-orbits}
Let $m \in \bz_{\geq 0}$ and let $(V, Q)$ be a nondegenerate quadratic space over a finite field $L$ with $\dim_L V  = r$.
If $r \geq 2m+2$, then the orbits of $\o(Q)$ and $\Omega(Q)$ on $V^m$ coincide. Hence, the orbits of $\o(Q)$ on $V^m$ agree with the orbits of any subgroup $H \supset \Omega(Q)$ on $V^m$. 
\end{lemma}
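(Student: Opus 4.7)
The plan is to reduce, via Witt's extension theorem, to a question about the pointwise stabilizer of $W := \Span(v_1,\ldots,v_m)$ in $\o(Q)$. By Witt extension the $\o(Q)$-orbits on $V^m$ are classified by Gram matrices, and since $\Omega(Q)$ is normal in $\o(Q)$ (being the kernel of $(\dickson Q, \sp Q)$), the $\Omega(Q)$-orbit of $(v_1,\ldots,v_m)$ coincides with its $\o(Q)$-orbit iff $\o(Q) = \Omega(Q)\cdot \Stab_{\o(Q)}(v_1,\ldots,v_m)$, i.e.\ iff the image of the stabilizer in $\o(Q)/\Omega(Q)$ is surjective.

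The geometric core of the argument is the construction of a subspace $U \subseteq W^\perp$ with $B_Q|_U$ nondegenerate and $\dim U \geq 2$. The radical of $B_Q|_{W^\perp}$ equals $W \cap W^\perp$ (using $(W^\perp)^\perp = W$), which has dimension at most $\dim W \leq m$; taking $U$ to be any complement of $W \cap W^\perp$ inside $W^\perp$ then makes $B_Q|_U$ nondegenerate and gives
\[
\dim U = \dim W^\perp - \dim(W\cap W^\perp) \geq (r-m) - m = r - 2m \geq 2.
\]
Nondegeneracy of $B_Q|_U$ yields a decomposition $V = U \oplus U^\perp$ with $W \subseteq U^\perp$, so extending by the identity on $U^\perp$ embeds $\o(U) \hookrightarrow \Stab_{\o(Q)}(v_1,\ldots,v_m)$; the restrictions of $\dickson Q$ and $\sp Q$ to this copy of $\o(U)$ coincide with the intrinsic Dickson invariant and $-1$-spinor norm of $(U, Q|_U)$.

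It therefore suffices to verify that for any such $U$ of dimension $\geq 2$ over the finite field $L$, the joint map $(\dickson{Q|_U}, \sp{Q|_U}): \o(U) \to \bz/2\bz \times L^\times/(L^\times)^2$ is surjective (which in particular covers $\o(Q)/\Omega(Q)$). In odd characteristic, classical theory of quadratic forms over finite fields shows that $Q|_U$ represents both square classes in $L^\times$ (either $U$ is a hyperbolic plane, in which case $Q|_U$ takes every value in $L$, or $U$ is anisotropic of dimension $2$ and $Q|_U$ is a norm form surjecting onto $L^\times$, or $\dim U \geq 3$ and $Q|_U$ is universal). Hence the reflection formula $\sp Q(r_v) = [-Q(v)]$ combined with $\dickson Q(r_v) = 1$ produces reflections of both spinor norms, and products of two reflections whose spinor norms differ give elements with trivial Dickson and nontrivial spinor norm---exhausting all four classes together with the identity. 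In characteristic $2$ the quotient $L^\times/(L^\times)^2$ is trivial, so $\o(Q)/\Omega(Q) \subseteq \bz/2\bz$ is detected by the Dickson invariant alone and is realized by any reflection $r_v$ for $v \in U$ with $Q(v) \neq 0$ (such $v$ exists because $B_Q|_U$ nondegenerate forces $Q|_U$ to be nonzero).

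Finally, the assertion for an intermediate subgroup $\Omega(Q) \subseteq H \subseteq \o(Q)$ is immediate by sandwiching orbits: $\Omega(Q)\cdot x \subseteq H\cdot x \subseteq \o(Q)\cdot x$, and the outer two coincide by what has just been shown. I expect the main obstacle to be the construction of $U$ in the case where $W$ is degenerate, so that $V$ does not split as $W \oplus W^\perp$; the rank hypothesis $r \geq 2m+2$ is used precisely to guarantee $\dim U \geq 2$, which is the sharp threshold for $Q|_U$ over a finite field to represent both square classes of $L^\times$ and hence produce reflections of both spinor norms.
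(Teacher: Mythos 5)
Your proof is correct and follows essentially the same route as the paper's: both reduce to producing, inside $W^\perp$ where $W=\Span(v_1,\dots,v_m)$, reflections $r_w$ with $Q(w)$ hitting both square classes of $L^\times$ (or, in characteristic $2$, a single nonisotropic $w$), and both extract this from a rank count $\dim W^\perp - \dim(W\cap W^\perp)\geq r-2m\geq 2$. The only presentational difference is that you package the rank computation as the construction of a nondegenerate subspace $U\subseteq W^\perp$ with $\dim U\geq 2$ and then argue via the embedding $\o(U)\hookrightarrow\Stab_{\o(Q)}(v_1,\dots,v_m)$, while the paper phrases the same content as a representability statement for $Q|_{W^\perp}$ using its rank; your reformulation via $\o(Q)=\Omega(Q)\cdot\Stab$ is a clean way to organize the cosets but amounts to the same element-chasing with products of reflections.
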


\begin{proof}
	It suffices to show that $\Omega( Q)$ acts transitively on any orbit of $\o(Q)$. 
Fix an arbitrary tuple of vectors $(v_1, \ldots, v_m) \in V^m$. Let $W := \Span(v_1, \ldots, v_m)$.
We claim that if $\dim_L V \geq 2m + 2$, 
for every $a \in L,$ there is some $w \in W^\perp$ with $Q(w) = a$.

Assuming this claim, let us show that the orbits of $\o(Q)$ and $\Omega(Q)$ coincide.
First, we tackle the case $\chr(L) \neq 2$. 
In this case, it suffices to show that for each $(\alpha, \beta) \in \bz/2\bz \times \bz/2\bz$,
there is some $h \in \o(Q)$ fixing $(v_1, \ldots, v_m)$ with $\sp{Q}(h) = \alpha$ and $\det(h)= \beta$.
To see such an $h$ exists, let $w$ be an element in $W^\perp$ with $-Q(w)$ a square in $L$, and let $w'$ be an element with $-Q(w')$ a non-square in $L$.
Then the four elements $\id, r_w, r_{w'},r_w \circ r_{w'} \in \o(Q)$ attain all four possible values of $(\sp{Q}, \det)$ and fix $(v_1, \ldots, v_m)$.
This implies that $\Omega(Q)$ acts transitively on the $\o(Q)$-orbit of $(v_1, \ldots, v_m)$. 

The case $\chr(L) = 2$ is similar, but easier. 
To show $\Omega(Q)$ has the same orbits as $\o(Q)$, it suffices to exhibit an element of nontrivial Dickson invariant 
fixing $(v_1,\ldots, v_m)$.
Indeed, for any $v \in W^\perp$, $r_v$ is such an element.

We now conclude the proof by verifying the claim. If $(V, Q)$ is any nondegenerate quadratic space of dimension at least $2$ over a finite field $L$, then for every $a \in L$ there is some $v \in V$ with $Q(v) = a$.
Recall that the {\em rank} of a quadratic space $(V, Q)$ is defined to be $\rk(V, Q) := \dim V - \dim \rad(V, Q)$, where $\rad(V, Q)$ the {\em radical} of $(V, Q)$, i.e., the set of  $x \in V$ with $B_Q(x,y) = 0$ for all $y \in V$.
Therefore, it suffices to show that $\rk(Q|_{W^{\perp}}, W^\perp) \geq 2$. Note that $\rad(Q|_{W^\perp}, W^\perp)  = W \cap W^{\perp}$. Hence 
\begin{equation}\label{eq:rank equation}
	\rk(Q|_{W^{\perp}}, W^\perp)=   \dim W^\perp - \dim (W \cap W^{\perp}) .
\end{equation}
Since $\dim V \geq 2\dim W +2$, we have $\dim W^\perp - \dim (W \cap W^{\perp})  \geq \dim W^\perp - \dim W \geq 2$. 
\end{proof}

It will also be useful later to have a result on the case when $\dim V = 2m$. 

\begin{lemma}\label{lem: orbits edge case}
Let $(V, Q)$ be a nondegenerate quadratic space over a finite field $L$ with $\dim_L V  = r$. If $r = 2m$ is even, then the orbits of $\o(Q)$ and $\SO(Q)$ on $V^m$ agree except on $m$-tuples $(v_1, \ldots, v_m) \in V^m$ that span a maximal isotropic subspace of $V$. 
\end{lemma}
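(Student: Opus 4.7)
The plan is to adapt the reflection argument from the proof of \autoref{lemma:same-orbits} to the borderline dimension $\dim V = 2m$. It suffices to produce, for each tuple $(v_1, \ldots, v_m) \in V^m$ not spanning a maximal isotropic subspace, an element $h \in \o(Q) \setminus \SO(Q)$ fixing the tuple; any $g \in \o(Q)$ sending $(v_1, \ldots, v_m)$ to a prescribed target can then be replaced by $gh$ if necessary to land in $\SO(Q)$, forcing the $\o(Q)$- and $\SO(Q)$-orbits of $(v_1, \ldots, v_m)$ to coincide.

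I would produce $h$ as a reflection $r_w$ for some $w \in W^\perp$ with $Q(w) \neq 0$, where $W := \Span(v_1, \ldots, v_m)$. Such an $r_w$ automatically fixes each $v_i$, and every reflection has non-trivial Dickson invariant in every characteristic---in characteristic $\neq 2$ via $\dickson{Q} = \det$ and $\det(r_w) = -1$, and in characteristic $2$ via the paper's explicit description of $\dickson{Q}$ as counting reflection factors modulo $2$. Hence $r_w \in \o(Q) \setminus \SO(Q)$.

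The existence of such a $w$ reduces to showing that $(W^\perp, Q|_{W^\perp})$ has positive rank, since any nondegenerate quadratic space of positive rank over a field contains a non-isotropic vector. Repeating the computation \eqref{eq:rank equation} from \autoref{lemma:same-orbits} gives
\[
\rk(Q|_{W^\perp}, W^\perp) = \dim W^\perp - \dim(W \cap W^\perp) = 2m - \dim W - \dim(W \cap W^\perp).
\]
If $\dim W < m$, this is at least $2m - 2(m-1) = 2$. If $\dim W = m$, then $W$ is maximal isotropic precisely when $W \subseteq W^\perp$, i.e.\ $\dim(W \cap W^\perp) = m$; so the hypothesis that $(v_1, \ldots, v_m)$ does not span a maximal isotropic subspace forces $\dim(W \cap W^\perp) \leq m-1$ and hence $\rk(Q|_{W^\perp}, W^\perp) \geq 1$. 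Either way a non-isotropic $w \in W^\perp$ exists.

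The main substantive difference from \autoref{lemma:same-orbits} is that in the second case the rank bound is tight at $\geq 1$ rather than $\geq 2$, so only a single reflection is available rather than the four independent $(\sp{Q}, \det)$-adjusters needed to get from $\o(Q)$ down to $\Omega(Q)$. This is enough here because $\SO(Q) \subset \o(Q)$ has index two, cut out by the single invariant $\dickson{Q}$. The excluded case is genuine: when $W$ is totally isotropic of dimension $m$ one has $W = W^\perp$, so $Q|_{W^\perp} \equiv 0$ and the construction breaks down---consistent with the well-known fact that on a split quadratic space of even dimension $\o(Q)$ swaps the two rulings of maximal isotropic subspaces while $\SO(Q)$ preserves them, so the orbits genuinely do differ on maximal isotropic spans.
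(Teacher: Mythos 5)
Your proof is correct and follows essentially the same approach as the paper's: exhibit a reflection $r_w$ with $w \in W^\perp$ non-isotropic, and reduce existence of such $w$ to positivity of $\rk(Q|_{W^\perp}, W^\perp)$ via \eqref{eq:rank equation}, which fails only when $W$ is maximal isotropic. The paper's version is terser (it states directly that the rank is positive unless $W$ is maximal isotropic, without the case split on $\dim W$), but the underlying argument is identical.
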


\begin{proof}
It suffices to exhibit an element of $\o(Q) - \SO(Q)$ that stabilizes $(v_1, \ldots, v_m)$. Let $W := \Span(v_1, \ldots, v_m)$ as in the proof of \autoref{lemma:same-orbits}. If we can find $w \in W^{\perp}$ such that $Q(w) \neq 0$, then $r_w$ does the job. 

To see that such $w$ exists, it suffices to show that $\rk(Q|_{W^{\perp}}, W^{\perp}) > 0$. But by \eqref{eq:rank equation}, this holds as long as $W$ is not maximal isotropic. 
\end{proof}

\begin{lemma}\label{lem: agree at positive points}
For $\ell$ a prime and $d \geq 1$, any coset $H \subset \o(\qsel \ell d i)$ of $\Omega(\qsel \ell d i)$, we have 
\[
G_{\rvcoset {V_{\ell}^d} H}(\ell^i) = G_{\rvcoset {V_{\ell}^d} \Omega}(\ell^i) \quad \text{ for } i = 0, 1, \ldots, 6d-3.
\]
\end{lemma}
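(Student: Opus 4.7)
The plan is to rewrite $G_{\rvcoset{V_\ell^d}{H}}(\ell^i)$ as an orbit-counting expression to which \autoref{lemma:same-orbits} applies directly.

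\textbf{Step 1: Interpret the generating function as a fixed-point count.} Since $\ker(g-\id)$ is an $\F_\ell$-vector space,
\[
G_{\rvcoset{V_\ell^d}{H}}(\ell^i) = \frac{1}{\#H}\sum_{g \in H} \ell^{i \dim \ker(g-\id)} = \frac{1}{\#H}\sum_{g \in H} \#\{(v_1,\ldots,v_i) \in (V_\ell^d)^i : g v_j = v_j \ \forall j\}.
\]
Swapping the order of summation gives
\[
G_{\rvcoset{V_\ell^d}{H}}(\ell^i) = \frac{1}{\#H}\sum_{v \in (V_\ell^d)^i}\#\{g \in H : g \cdot v = v\},
\]
where $H$ acts diagonally on $(V_\ell^d)^i$. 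The $i=0$ case is trivial, so we may assume $1 \leq i \leq 6d-3$.

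\textbf{Step 2: Apply \autoref{lemma:same-orbits}.} Because $\dim_{\F_\ell} V_\ell^d = 12d-4 \geq 2i+2$ whenever $i \leq 6d-3$, \autoref{lemma:same-orbits} asserts that $\Omega(\qsel \ell d k)$ and $\o(\qsel \ell d k)$ have the same orbits on $(V_\ell^d)^i$. In particular, for the coset $H = h_0\,\Omega(\qsel \ell d k)$ and any $v \in (V_\ell^d)^i$, the element $h_0^{-1} v$ lies in the same $\Omega$-orbit as $v$. Hence there exists $\omega_0 \in \Omega(\qsel \ell d k)$ with $\omega_0 v = h_0^{-1}v$, and
\[
\{g \in H : g v = v\} \;=\; h_0 \cdot \{\omega \in \Omega(\qsel \ell d k) : \omega v = h_0^{-1} v\} \;=\; h_0\omega_0 \cdot \Stab_{\Omega(\qsel \ell d k)}(v),
\]
which has cardinality $\#\Stab_{\Omega(\qsel \ell d k)}(v)$. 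The same calculation with $H$ replaced by $\Omega(\qsel \ell d k)$ (and $h_0 = \id$) gives the same count.

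\textbf{Step 3: Conclude.} Since $\#H = \#\Omega(\qsel \ell d k)$ (as $H$ is a coset), the two averages
\[
\frac{1}{\#H}\sum_{v \in (V_\ell^d)^i} \#\Stab_{\Omega(\qsel \ell d k)}(v) \quad \text{and} \quad \frac{1}{\#\Omega(\qsel \ell d k)}\sum_{v \in (V_\ell^d)^i}\#\Stab_{\Omega(\qsel \ell d k)}(v)
\]
coincide, yielding $G_{\rvcoset{V_\ell^d}{H}}(\ell^i) = G_{\rvcoset{V_\ell^d}{\Omega}}(\ell^i)$ for $0 \leq i \leq 6d-3$.

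The only nontrivial ingredient is the equality of $\Omega$- and $\o$-orbits on $(V_\ell^d)^i$ in the stated range, which is exactly \autoref{lemma:same-orbits}; everything else is a standard Burnside-style double-counting manipulation. The sharpness of the cutoff $i \leq 6d-3$ reflects precisely the dimension hypothesis $\dim V \geq 2i+2$ of that lemma, which is the heart of the argument.
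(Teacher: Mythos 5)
Your proof is correct and rests on the same key ingredient as the paper's: the expansion of $G_{\rvcoset{V_\ell^d}H}(\ell^i)$ as a Burnside-style double sum, followed by an appeal to \autoref{lemma:same-orbits} in the range $\dim V_\ell^d = 12d-4 \geq 2i+2$. The only difference is in how cosets are handled at the end: the paper evaluates the sum for the four \emph{subgroups} $\Omega$, $\ker\sp{}$, $\ker\dickson{}$, $\o$ via Burnside's orbit count, and then extracts the coset case by writing each coset generating function as an algebraic combination of subgroup ones (e.g.\ $G_{\rvcoset{V_\ell^d}B} = 2 G_{\rvcoset{V_\ell^d}{\so}} - G_{\rvcoset{V_\ell^d}\Omega}$). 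You instead stop at the stabilizer sum and observe directly that, because $v$ and $h_0^{-1}v$ lie in the same $\Omega$-orbit, the fiber $\{g \in H : gv = v\}$ is a translate of $\Stab_{\Omega}(v)$ and hence has the same cardinality as $\Stab_\Omega(v)$. That handles all cosets uniformly in one stroke and is a mild streamlining of the paper's argument; it buys you slightly less bookkeeping at the cost of a slightly less standard invocation (you use the orbit-equality to produce the translating element $\omega_0$ rather than to count orbits).
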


\begin{proof}
For $g \in G$, let $V^{g=1}$ denote the $1$-eigenspace of $g$ acting on $V$.
Let $G' \subset G$ be a subgroup. 
By definition, we have 
\[
	G_{\rvcoset {V_{\ell}^d} {G'}} (t) = \frac{1}{\# G'}  \sum_{g \in G'} t^{\dim \ker(g-\id)} 
\]
so that 
\begin{equation}\label{eq: pgf 1}
	G_{\rvcoset {V_{\ell}^d} {G'}} (\ell^i) =  \frac{1}{\# G'}  \sum_{g \in G'} (\# V^{g=1})^i.
\end{equation}
Note that  $   (V^{g=1})^i = (V^i)^{g=1} $ where $g \in G$ acts diagonally on $V^i$, so that $(\# V^{g=1})^i = \# (V^i)^{g=1}$.  Putting this into \eqref{eq: pgf 1} gives 
\begin{equation}\label{eq: pgf 2}
G_{\rvcoset {V_{\ell}^d} {G'}} (\ell^i) =  \frac{1}{\# G'}  \sum_{g \in G'} (\# V^i)^{g=1}.
\end{equation}

By Burnside's Lemma, we have
\begin{equation}\label{eq: burnside}
\sum_{g \in G'} \# (V^i)^{g=1} = \# \{\text{orbits of $G'$ on $V^i$}\}. 
\end{equation}
By \autoref{lemma:same-orbits}, the right hand side of \eqref{eq: burnside} has the same value when we take $G'$ to be any of $\Omega(\qsel \ell d k)$, $\ker (\sp {\qsel \ell d k})$, $\ker( \dickson {\qsel \ell d k})$, and $\o(\qsel \ell d k)$ for $i \leq 6d-3$. Hence we have 
\[
\Scale[0.9]{G_{\rvcoset {V_{\ell}^d} \Omega}(\ell^i) = G_{\rvcoset {V_{\ell}^d} {\mrm{O}_-^*(V_{\ell}^d)}}(\ell^i) = G_{\rvcoset {V_{\ell}^d} {\mrm{SO}(V_{\ell}^d)}}(\ell^i) = G_{\rvcoset {V_{\ell}^d} {\mrm{O}(V_{\ell}^d)}}(\ell^i),  \quad i = 1 , \ldots, 6d-3.}
\]
We then obtain the result by noting that any coset can be expressed in terms of differences of the above subgroups.
For example, we can obtain the result for $H = B$ by writing
\[
G_{\rvcoset {V_{\ell}^d} {\mrm{SO}(V_{\ell}^d)}}(\ell^i) = \frac{1}{2} G_{\rvcoset {V_{\ell}^d} \Omega}(\ell^i)  + \frac{1}{2} G_{\rvcoset {V_{\ell}^d} B}(\ell^i).
\]
\end{proof}

\begin{proof}[Proof of \autoref{theorem: coset generating functions}]
Recall that the Dickson invariant of any element $g \in \o(\qsel n d k)$ agrees with the dimension of its $1$-eigenspace $\mod 2$.	Indeed, in general, the Dickson invariant of $g$ agrees with $\dim\im(1-g)$, 
	by
	\cite[p. 160]{taylor:the-geometry-of-the-classical-groups}, where the notation $[V,f]$ is used for $\im (1-f)$.
	Since $\dim \vsel n d k$ is even, it follows that $\dim\ker(1-g) \equiv \dim \im(1-g) \mod 2$.

Because of this, only \emph{odd} powers of $t$ can appear in $G_{\rvcoset {V_{\ell}^d} B}(t)$ and $G_{\rvcoset {V_{\ell}^d} C}(t)$. Furthermore, they have degree at most $12d-5$ since $\dim V = 12d-4$. 
By \autoref{lem: agree at positive points}, these functions agree at the $6d-2$ points $1, \ell, \ldots, \ell^{6d-3}$. Since they are both odd functions, they must agree as well at $0, -1, -\ell, \ldots, -\ell^{6d-3}$. But two polynomials of degree at most $12d-5$ agreeing at $12d-3$ points must be the same. 

Similarly, $G_{\rvcoset {V_{\ell}^d} \Omega}(t)$ and $G_{\rvcoset {V_{\ell}^d} A}$ are \emph{even} polynomials of degree at most $12d-4$, and they agree at the $12d-4$ points $\pm 1, \pm \ell, \ldots, \pm \ell^{6d-3}$. The difference $G_{\rvcoset {V_{\ell}^d} \Omega}(t)-G_{\rvcoset {V_{\ell}^d} A}(t)$ must therefore be proportional to $\prod_{i=1}^{6d-3}(t^2-\ell^{2i})$. To find the constant of proportionality, note that the coefficient of $t^{12d-4}$ in $G_{\rvcoset {V_{\ell}^d} H}(t)$ is the probability that $g \in H$ fixes all of $V$, i.e. is the identity. This happens with probability $\frac{1}{\# \Omega(\qsel \ell d k)}$ for $H = \Omega(\qsel \ell d k)$, and probability $0$ for any other coset. This completes the proof. 
\end{proof}

\subsubsection{Formulas for the generating functions}\label{sssec: generating function formulas}

Let 
$\o(12d-4, \mathbb F_\ell)$ denote the orthogonal group
associated to the standard quadratic form $\sum_{i=1}^{6d-2} x_{2i-1}x_{2i}$ on a $12d-4$ dimensional vector space over $\F_{\ell}$.  

\begin{lemma}\label{lem: same orthogonal group}
The group $\o(12d-4, \mathbb F_\ell)$ is isomorphic to $\o(\qsel \ell d k)$. 
\end{lemma}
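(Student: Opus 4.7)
The plan is to exhibit an isomorphism of quadratic spaces between $(\vsel \ell d k, \qsel \ell d k)$ and the standard split hyperbolic form on $\mathbb F_\ell^{12d-4}$; this will immediately induce an isomorphism of orthogonal groups. First, I would verify the ranks match: $\rk \vsel \ell d k = 2(2d-2) + 8d = 12d-4$, as for the standard form. Next, a hyperbolic pair over $\bz$ reduces to a hyperbolic pair over $\mathbb F_\ell$ (the Gram matrix $\left(\begin{smallmatrix} 0 & 1 \\ 1 & 0 \end{smallmatrix}\right)$ is unchanged), so $U^{\oplus(2d-2)} \otimes \mathbb F_\ell$ is already a direct sum of $2d-2$ hyperbolic planes. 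Hence it suffices to show that $(-E_8) \otimes \mathbb F_\ell$ is the split rank-$8$ quadratic form over $\mathbb F_\ell$.

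For $\ell$ odd, I would invoke the classification of nondegenerate quadratic forms over $\mathbb F_\ell$ by rank and discriminant modulo squares. Since $E_8$ is unimodular, its Gram matrix has determinant $1$, so $(-E_8)$ has discriminant $(-1)^8 \cdot 1 = 1$. On the other hand, the split rank-$8$ form has discriminant $(-1)^4 = 1$. Both discriminants coincide in $\mathbb F_\ell^\times/(\mathbb F_\ell^\times)^2$, so the two forms are equivalent over $\mathbb F_\ell$.

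For $\ell = 2$, nondegenerate quadratic forms of even rank over $\mathbb F_2$ are classified (up to isometry) by the Arf invariant, so it remains to verify that $(-E_8) \otimes \mathbb F_2$ has trivial Arf invariant. This follows, for example, from a direct count: the number of $v \in E_8/2E_8$ with $Q(v) \equiv 0 \pmod 2$ equals $136 = 2^{n-1}(2^n+1)$ for $n=4$, matching the count for the split rank-$8$ form over $\mathbb F_2$ (rather than $120 = 2^{n-1}(2^n-1)$ for the non-split form). Equivalently, one can exhibit an explicit decomposition of $E_8/2E_8$ into four hyperbolic planes over $\mathbb F_2$ using that $E_8$ contains a sublattice of type $D_4 \oplus D_4$ (or any convenient hyperbolic-friendly sublattice) which reduces nicely. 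Negating the quadratic form is irrelevant in characteristic $2$.

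I expect the characteristic-$2$ case to be the main obstacle, since over $\mathbb F_2$ the discriminant is automatic and one truly needs to compute the Arf invariant of $E_8 \bmod 2$; the odd-characteristic case reduces cleanly to checking that the discriminant is a square. Once $(-E_8)\otimes \mathbb F_\ell$ is identified with the split rank-$8$ form, the desired isomorphism $\qsel \ell d k \cong \sum_{i=1}^{6d-2} x_{2i-1}x_{2i}$ follows by taking direct sums, and $\o(\qsel \ell d k) \cong \o(12d-4, \mathbb F_\ell)$ is immediate.
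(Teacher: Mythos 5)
Your proof is correct and takes essentially the same approach as the paper's: both reduce to checking that the quadratic form becomes split over $\F_\ell$, using the discriminant classification for odd $\ell$ and a hyperbolicity check for $E_8 \bmod 2$ when $\ell=2$. The one small divergence is how $E_8 \bmod 2$ is shown to be split: you propose computing the Arf invariant by counting isotropic vectors (the count of $136$ is correct) or by finding a convenient sublattice, while the paper instead exhibits an explicit $4$-dimensional isotropic subspace spanned by four basis vectors in a standard presentation of $E_8$. Both routes are valid and of comparable difficulty.
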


\begin{proof}
	We begin by showing the quadratic form $\qsel n d k$ has discriminant $1$ over $\bz/n \bz$.
	Indeed, it is the reduction $\mod n$ of a quadratic form
	$\qsel {\bz} d k$ over $\bz$ which has discriminant $1$ over $\bz$ by \cite[Theorem 4.4 and Remark 4.5]{landesman:geometric-average-selmer}.
	Indeed, \cite[Remark 4.5]{landesman:geometric-average-selmer} explains that
	$\qsel{\bz} d k = U^{\oplus (2d -2)} \bigoplus (-E_8)^{\oplus d}$, where $U$ denotes the hyperbolic plane and $-E_8$ denotes the quadratic form associated to the $E_8$ lattice with negative
	its usual pairing. Since $U$ has discriminant $-1$ while $-E_8$ has discriminant $1$, the discriminant of $\qsel {\bz} d k$ is $(-1)^{2d-2} \cdot 1^d = 1$.
	We deduce that, $\o(\qsel \ell d k) = \o(12d-4, \mathbb F_\ell)$ has rank $12d-4$ and discriminant $1$. 
	When $\ell > 2$, there is a unique orthogonal group over $\mathbb F_\ell$ of discriminant $1$
	\cite[3.4.6]{wilson:the-finite-simple-groups}, 
	and so $\o(\qsel \ell d k) \simeq \o(12d-4, \mathbb F_\ell)$ in this case.
	When $\ell = 2$, there are two nonisomorphic quadratic forms of discriminant $1$ and rank $12d-4$, but $\o(12d-4, \mathbb F_\ell)$ is the unique hyperbolic such
	quadratic form, so we only need check $\o(\qsel \ell d k)$ is hyperbolic. To this end, it suffices to check the quadratic form associated to $E_8$ is hyperbolic when reduced modulo $2$.
	A nondegenerate even dimensional quadratic form over a field is hyperbolic if and only if it contains an isotropic subspace of half the dimension of the quadratic space \cite[III, Lemma 1.2]{MH73}.
	For the $E_8$ lattice, one can explicitly construct such a subspace, such as the space spanned by the first, third, sixth and eighth basis vectors, when the $E_8$ lattice
	is written as in
	\cite[Chapter 14, 0.3(iii)]{huybrechts:lectures-on-k3-surfaces}.
	\end{proof}
	
By \autoref{lem: same orthogonal group}, the generating function $\rvo \ell d$ agrees with the generating function $\rvcoset {V_{\ell}^d} H$ from \autoref{definition:random 1-eigenspace} with $H = \o(12d-4, \mathbb F_\ell)$ the full orthogonal group, so we may use these notations interchangeably. The following theorem, which completely characterizes $\rvo \ell d$, is proved in an unpublished manuscript of Rudvalis-Shinoda, cf. \cite{fulmanS:distribution-number-fixed}. 
We will give an independent proof of this theorem in \autoref{sssec: rudnalis-shinoda proof}.

For $Z$ a random variable we let $\mathbb E (Z^m)$ denote the {\bf $m$th moment} of $Z$, which is the expected value of the random variable $Z^m$.

\begin{theorem}[Rudvalis-Shinoda, \protect{\cite[Theorem 2.5 and 4.7]{fulmanS:distribution-number-fixed}}]\label{thm: rudvalis-shinoda}
	\label{theorem:fulman-stanton} 	We have 	
	\[\Scale[0.8]{
	\begin{aligned}
		\label{equation:p-orthogonal-distribution}		
	\prob(\dim \rvo \ell d = v)=
	\begin{cases}
			\frac{\ell^z}{2 \left | \gl_z(\mathbb F_{\ell^2}) \right | }\sum_{i=0}^{6d-2-z} \frac{(-1)^i}{\ell^{(2z-1)i}\left( \ell^{2i}-1 \right)\cdots \left( \ell^4-1 \right)\left( \ell^2-1 \right)} \\
			\hspace{1cm} + \frac{1}{2} \frac{\left( -1 \right)^{6d-2-z}}{\ell^{2z(6d-2-z)} \left | \gl_z(\mathbb F_{\ell^2}) \right | \left( \ell^{2(6d-2-z)} - 1 \right) \cdots \left( \ell^4 - 1 \right)\left( \ell^2 -1 \right)}
				& \text{ if } v=2z \\
			\frac{1}{2 \ell^z\left | \gl_z(\mathbb F_{\ell^2}) \right | }\sum_{i=0}^{6d-2-z} \frac{(-1)^i}{\ell^{i^2 + 2(z+1)i}\left( 1-q^{-2} \right) \left( 1-q^{-4} \right) \cdots \left( 1-q^{-2i} \right)} 
			& \text{ if } v= 2z+1.\\
		\end{cases}
		\end{aligned}}
		\]
	Furthermore, we have
	\begin{equation}
		\label{equation:limit-orthogonal-distribution}\Scale[0.9]{
		\lim_{d \ra \infty} \left( \prob(\dim \rvo \ell d = v) \right)
		= \prod_{j \geq 0}\left( 1 + \ell^{-j} \right)^{-1} \frac{1}{\ell^{(v^2-v)/2}\left( 1-\ell^{-1} \right)\left( 1-\ell^{-2} \right) \cdots \left( 1-\ell^{-v} \right) }.}
	\end{equation}
	Additionally, for $0 \leq m \leq 6d-2$, the moments of $\# \rvo \ell d$ are computed as 
	\begin{align*}
		\mathbb E (\# \rvo \ell d)^m = \prod_{i=1}^m \left( \ell^i + 1 \right).
	\end{align*}
\end{theorem}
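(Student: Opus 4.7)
The plan is to compute the moments $\EE\bigl[(\#\rvo \ell d)^m\bigr]$ for $0 \leq m \leq 6d-2$ via orbit counting, then recover the probability distribution by polynomial interpolation within the coset structure of $\Omega(\qsel \ell d k)$ in $\o(\qsel \ell d k)$, and finally pass to the limit $d \to \infty$ by routine manipulation of $\ell$-series.

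For the moments, Burnside's lemma applied to the diagonal action on $V^m$ (with $V = \vsel \ell d k$ of dimension $12d-4$) gives $\EE\bigl[(\#\rvo \ell d)^m\bigr] = \#\{\text{orbits of } \o(V) \text{ on } V^m\}$. By Witt's extension theorem, these orbits are in bijection with pairs $(K, \bar Q)$, where $K \subseteq \F_\ell^m$ is a subspace and $\bar Q$ is a quadratic form on $\F_\ell^m/K$, provided every such pair is realized in $V$. For $m \leq 6d-2$ one has $\dim V \geq 2m$, and a short argument using the Witt index $6d-2$ of $V$ (decomposing $\bar Q$ into nondegenerate and radical parts) shows every pair embeds. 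The count reduces to
\[
\sum_{r=0}^m \binom{m}{r}_\ell \ell^{r(r+1)/2} = \prod_{i=1}^m(\ell^i+1),
\]
the second equality being the $q$-binomial theorem.

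To extract the distribution, regard the moments as evaluations of $G_{\rvo \ell d}(t) = \sum_v \prob(\dim = v) t^v$ at $t = \ell^0, \ldots, \ell^{6d-2}$; these $6d-1$ values do not pin down the degree-$(12d-4)$ polynomial $G_{\rvo \ell d}$, so we refine to each coset of $\Omega$. By \autoref{lem: agree at positive points}, each coset generating function takes the value $\prod_{j=1}^i(\ell^j+1)$ at $t = \ell^i$ for $i = 0, \ldots, 6d-3$. The generating function $G_{\rvcoset {V_\ell^d} \Omega}$ is an even polynomial of degree $\leq 12d-4$ with top coefficient $1/|\Omega|$ (from the identity element), and Lagrange interpolation on the $6d-2$ nodes $\ell^{2i}$ determines it uniquely; similarly $G_{\rvcoset {V_\ell^d} B}$ is odd of degree $\leq 12d-5$ and pinned down by its $6d-2$ known values. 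Averaging the four cosets using the relations of \autoref{theorem: coset generating functions} yields
\[
G_{\rvo \ell d} = \tfrac{1}{2}\bigl(G_{\rvcoset {V_\ell^d} \Omega} + G_{\rvcoset {V_\ell^d} B}\bigr) - \tfrac{1}{4|\Omega|}\prod_{i=0}^{6d-3}(t^2 - \ell^{2i}),
\]
and the coefficient of $t^v$ produces the explicit formulas in the theorem statement. The two summands in the even-case formula correspond respectively to the Lagrange interpolation term and the boundary correction from the top coefficient.

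The main obstacle is the interpolation bookkeeping: organizing the raw Lagrange sum on the $6d-2$ nodes into the specific alternating-series form displayed in the theorem, with its Gaussian $\ell$-binomial-like denominators. Once the finite-$d$ formulas are in hand, the $d \to \infty$ limit is straightforward: with $v$ fixed, the boundary correction vanishes (its ratio to $|\Omega|$ tends to zero super-polynomially in $d$), and the interpolation sum converges absolutely to the claimed closed form $\prod_{j \geq 0}(1+\ell^{-j})^{-1} \cdot \ell^{-v(v-1)/2} \prod_{k=1}^v(1-\ell^{-k})^{-1}$ by a standard $\ell$-product identity.
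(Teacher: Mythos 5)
Your moment computation (the third assertion of the theorem) is correct and essentially matches the paper's Theorem~\ref{thm: orbit count}, though you parameterize orbits directly by pairs $(K,\bar Q)$ and invoke the $q$-binomial identity $\sum_{r=0}^m \binom{m}{r}_\ell \ell^{r(r+1)/2}=\prod_{i=1}^m(\ell^i+1)$, whereas the paper sets up the recursion $f(n,i)$ and the $\Sigma^{(s)}$ bookkeeping of \autoref{lemma:recursion}--\autoref{lem: sigma recursion}; both routes are valid and yours is arguably more direct. Your coset decomposition $G_{\rvo \ell d} = \tfrac12\bigl(G_{\rvcoset{V_\ell^d}\Omega} + G_{\rvcoset{V_\ell^d}B}\bigr) - \tfrac{1}{4|\Omega|}\prod_{i=0}^{6d-3}(t^2-\ell^{2i})$ also follows correctly from \autoref{theorem: coset generating functions}, and the dimension counts for determining the even and odd coset generating functions by interpolation plus top coefficient are consistent.

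The genuine gap is exactly the step you flag as "the main obstacle'': converting the Lagrange interpolant into the specific alternating-series closed form appearing in the theorem statement. You do not carry this out, and it is not routine. Worth knowing: the paper does not attempt a derivation here either. Instead it runs a \emph{verification}: writing $\wt G$ for the displayed Rudvalis--Shinoda polynomial, it cites \cite{fulmanS:distribution-number-fixed} for the algebraic identity $\wt G^{\mrm{odd}}(\ell^m) = \wt G^{\mrm{even}}(\ell^m)$ ($0 \le m \le 6d-3$), uses \autoref{lem: agree at positive points} to obtain the same constraint on $G_{\rvo\ell d}^{\mrm{odd}}$ and $G_{\rvo\ell d}^{\mrm{even}}$, and then argues that an odd (resp.\ even) polynomial of bounded degree agreeing with $\wt G^{\mrm{odd}}$ (resp.\ $\wt G^{\mrm{even}}$) at sufficiently many points must coincide with it. That approach sidesteps the interpolation algebra entirely at the cost of taking the target formula as given. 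Your plan, if completed, would be more self-contained, but as written the reduction from moments to the explicit displayed formulas is missing; and the $d\to\infty$ limit (the second assertion) is asserted to be "straightforward'' rather than proved, which again presupposes the unestablished finite-$d$ formula.
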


From \autoref{theorem:fulman-stanton} and \autoref{theorem: coset generating functions}, it is fairly straightforward to deduce explicit formulas for the probability generating functions $G_{\rvcoset {V_{\ell}^d} \Omega}(t), G_{\rvcoset {V_{\ell}^d} A}(t), G_{\rvcoset {V_{\ell}^d} B}(t), G_{\rvcoset {V_{\ell}^d} C}(t)$. However, we omit the answers as we will not need them.

\subsection{Direct computation of the moments}\label{ssec: moment computations}

In this subsection we give an alternate computation of the moments of $\dim \ker(g-\id)$ for $g \in \o(Q)$, for $Q$ a quadratic form over $\mathbb F_\ell$
of sufficiently large rank
without using the unpublished results of Rudvalis and Shinoda. We will explain that this gives an alternate proof of \autoref{thm: rudvalis-shinoda}. In addition, the analysis here is used later to get better control on the convergence of the random kernel model. 

As already mentioned above, \cite{fulmanS:distribution-number-fixed} computed an explicit formula for the moments of $\dim \ker(g-\id)$ for $g \in \o(Q)$, using the probability distribution obtained in unpublished work of Rudvalis-Shinoda. The calculation of Rudvalis-Shinoda rests on intricate combinatorial analysis. We learned of this work after we had already found an independent computation of the probability distribution, which we will explain in this subsection. Our logic in this subsection runs in the opposite direction: we directly compute the moments, and deduce the probability distribution from it. (The advantage of this approach is that it also gives the distribution for $g$ drawn from subgroups of $\o(Q)$, such as $\Omega$.)

\begin{theorem}\label{thm: orbit count} 
	Fix $m \in\Z_{\geq 0}$, let $n$ be squarefree, and let $(V,Q)$ be a nondegenerate quadratic space over $\bz/n\bz$. 
	For $\rk_{\bz/n\bz} V \geq 2m + 2$, then:
\begin{enumerate}
\item The number of orbits of $\o(Q)$ acting diagonally on $V^{m}$ is 
\begin{align}
	\label{equation:average-selmer-value}
\prod_{\ell \,  \mrm{ prime } \mid n} (1+\ell)(1+\ell^2) \cdots (1+\ell^m).
\end{align}

\item The orbits of $\Omega(Q)$ acting diagonally on $V^{m}$ coincide with those of $\o(Q)$ acting diagonally on $V^m$. 
\end{enumerate}

For the next part (which is about getting slightly sharper results in the ``edge case'' $r=2m$), we let $n = \ell$ be prime and ask that $(V,Q)$ be a split\footnote{For the definition of this, see \cite[I, \S 6]{MH73}.} quadratic space of dimension $r$ over $\F_{\ell}$.
\begin{enumerate}
	\item[(3)] For $r = 2m$, the number of orbits of $\o(Q)$ acting diagonally on $V^m$ is also given by \eqref{equation:average-selmer-value}.
\item[(4)] For $r=2m$, 
\[
 \#\{\text{orbits of $\SO(Q)$ on $V^m$}\}  =  \#\{\text{orbits of $\o(Q)$ on $V^m$}\}  +  1.
\]
\end{enumerate}
\end{theorem}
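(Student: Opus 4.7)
My approach is to prove parts (1) and (3) by parametrizing orbits via Witt's extension theorem and then evaluating via the $q$-binomial theorem; part (2) will follow immediately from \autoref{lemma:same-orbits}; and part (4) reduces to analyzing the stabilizer of a maximal isotropic basis. Since $n$ is squarefree, the Chinese remainder theorem gives $\bz/n\bz \cong \prod_{\ell \mid n} \F_\ell$, and $(V,Q)$, $\o(Q)$, and $\Omega(Q)$ all factor as corresponding products; hence orbits on $V^m$ factor across the primes dividing $n$, and it suffices to treat $n = \ell$ prime. Part (2) then follows directly from \autoref{lemma:same-orbits}, whose hypothesis $\rk V \geq 2m + 2$ matches ours.

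For parts (1) and (3), to each tuple $v = (v_1,\ldots,v_m)\in V^m$ I would attach the map $\phi_v\colon \F_\ell^m \to V$, $e_i\mapsto v_i$, together with the pair $(K_v,q_v)$ consisting of $K_v := \ker\phi_v$ and the quadratic form $q_v$ on $\F_\ell^m/K_v$ obtained by pulling back $Q$ along the induced injection $\F_\ell^m/K_v \hookrightarrow V$. Witt's extension theorem shows that two tuples $v, w$ are in the same $\o(Q)$-orbit if and only if $(K_v, q_v) = (K_w, q_w)$, and conversely every pair $(K, q)$ whose $q$ embeds as a sub-quadratic-module of $V$ arises in this way. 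Whenever $r \geq 2m$, every quadratic form on an $\F_\ell$-vector space of dimension at most $m$ does embed into $V$: the radical embeds into a maximal isotropic subspace, and the nondegenerate quotient, being Witt-equivalent to a subform of its hyperbolic double, embeds into split $V$. Therefore
\begin{equation*}
\#\{\text{orbits of $\o(Q)$ on $V^m$}\} = \sum_{j=0}^m \binom{m}{j}_\ell \cdot \ell^{j(j+1)/2},
\end{equation*}
since $\binom{m}{j}_\ell$ counts codimension-$j$ subspaces of $\F_\ell^m$ and $\ell^{j(j+1)/2}$ counts quadratic forms on $\F_\ell^j$. The $q$-binomial identity $\sum_{j=0}^m \binom{m}{j}_\ell \, \ell^{j(j+1)/2} = \prod_{i=1}^m(1+\ell^i)$ finishes (1) and (3).

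For part (4), which is a nontrivial statement only for $\ell$ odd (as $\SO(Q) = \o(Q)$ in characteristic $2$), I would show that exactly one $\o(Q)$-orbit splits upon restriction to $\SO(Q)$: namely the orbit corresponding to $(K, q) = (0, 0)$, consisting of tuples $(v_1,\ldots,v_m)$ that form a basis of some maximal isotropic subspace of $V$. For any other orbit, the rank computation in the proof of \autoref{lemma:same-orbits} produces a vector $w$ orthogonal to the span with $Q(w) \neq 0$; the reflection $r_w$ fixes the tuple and has determinant $-1$, so lies outside $\SO(Q)$, and the orbit does not split. For the exceptional orbit, the stabilizer of a maximal isotropic basis in $\o(Q)$ is unipotent---isomorphic to the group of alternating bilinear forms on any chosen complementary maximal isotropic subspace---and therefore contained in $\SO(Q)$, so this orbit splits into exactly two $\SO(Q)$-orbits, corresponding to the two families of maximal isotropic subspaces in a split $2m$-dimensional quadratic space (interchanged by elements of $\o(Q) \setminus \SO(Q)$). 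The main obstacle I anticipate is carefully verifying the Witt-extension parametrization for possibly-degenerate induced forms on $\F_\ell^m/K$, though this reduces to standard Witt-theoretic inputs.
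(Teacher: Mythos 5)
Your proof is correct but takes a genuinely different route through the combinatorial core than the paper does.

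For (1) and (3), the paper sets up a recursion $f(n,i) = \bigl(f(n-1,i-1) + f(n-1,i)\bigr)\ell^{i}$ for the number of orbits with span dimension $i$, forms the auxiliary generating functions $\Sigma^{(s)}(m) = \sum_{i} f(m,i)\ell^{is}$, and telescopes $\Sigma^{(0)}(m) = (1+\ell)\Sigma^{(1)}(m-1) = \cdots = \prod_{i=1}^{m}(1+\ell^{i})$. You instead parametrize the orbits outright: Witt's extension theorem for arbitrary (possibly degenerate) subspaces of a nondegenerate ambient space shows the invariant $(K_v, q_v)$ is complete, realizability of every pair $(K,q)$ in the relevant range follows from \autoref{cor: can find subspace} (or \autoref{lemma:isometric-embedding} in the edge case $r=2m$), and the resulting closed form $\sum_{j=0}^{m}\binom{m}{j}_{\ell}\,\ell^{j(j+1)/2}$ collapses to the product by the $q$-binomial theorem evaluated at $t=\ell$. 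The two arguments are equivalent in content --- the paper's $f(m,j)$ is implicitly $\binom{m}{j}_{\ell}\ell^{j(j+1)/2}$, and the telescoping recursion is essentially a proof of the same $q$-binomial identity --- but your version makes the orbit structure transparent at the cost of importing a known identity, while the paper's is more self-contained. I'd suggest simply citing \autoref{lemma:isometric-embedding} rather than the "radical plus nondegenerate quotient" sketch, since in characteristic $2$ the quadratic form need not vanish on the radical of the bilinear form and the $W\oplus W^{*}$ construction in that lemma is cleaner.

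For (4), the paper invokes \autoref{lem: orbits edge case} plus a reference to Conrad for the count of $\SO$-orbits of maximal isotropics; your stabilizer argument proves the same split-into-two conclusion more directly. One caution: your parenthetical that (4) is "nontrivial only for $\ell$ odd since $\SO(Q)=\o(Q)$ in characteristic $2$" conflicts with the paper's convention. Here $\SO$ follows Conrad and is the kernel of the Dickson invariant, which has index $2$ in $\o(Q)$ over $\F_{2}$ for even rank, so part (4) is genuinely nontrivial for $\ell = 2$ as well (indeed this case is what feeds into \autoref{lem: various moments} and the $\ell = 2$ Markov analysis). Fortunately your argument is agnostic to this: the reflection $r_{w}$ has nontrivial Dickson invariant, and the stabilizer of a maximal isotropic basis still lies in the Dickson kernel since the two families of maximal isotropics are interchanged by any element with nontrivial Dickson invariant. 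So your proof goes through unchanged once you drop the parenthetical.
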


\subsubsection{Proof of \autoref{thm: rudvalis-shinoda}, assuming \autoref{thm: orbit count}}\label{sssec: rudnalis-shinoda proof}
Let $\wt{G}(t)$ be the generating function of the distribution in \autoref{thm: rudvalis-shinoda}. This is a polynomial of degree $12d-4$; write 
\[
\wt{G}(t) = \wt{G}^{\mrm{odd}}(t) + \wt{G}^{\mrm{even}}(t)
\]
where $\wt{G}^{\mrm{odd}}(t)$ is an odd polynomial and $\wt{G}^{\mrm{even}}(t)$ is an even polynomial. The computation in \cite{fulmanS:distribution-number-fixed} shows that the moments of the even and odd parts of the distributions coincide, so that 
\[
\wt{G}^{\mrm{odd}}(\ell^m) = \wt{G}^{\mrm{even}}(\ell^m), \quad 0 \leq m \leq 6d-3.
\]

As explained Lemma \ref{lem: agree at positive points}, the orbit counts in \autoref{thm: orbit count} are the moments of $\# \rvo \ell d$, so \autoref{thm: orbit count} shows that the $m$th moment of $\# \rvo \ell d$ is as claimed in \autoref{thm: rudvalis-shinoda} for $0 \leq m \leq 6d-3$. Writing
\[
G_{\rvo \ell d}(t) = G_{\rvo \ell d}^{\mrm{odd}}(t) + G_{\rvo \ell d}^{\mrm{even}}(t)
\]
for the decomposition into odd and even parts, \autoref{lem: agree at positive points} implies also that 
\[
G_{\rvo \ell d}^{\mrm{odd}}(\ell^m) = G_{\rvo \ell d}^{\mrm{even}}(\ell^m), \quad \text{ for } 0 \leq m \leq 6d-3.
\]

Hence $\wt{G}^{\mrm{odd}}(\ell^m) = G_{\rvo \ell d}^{\mrm{odd}}(\ell^m) $ for $0 \leq m \leq 6d-2$. Since they are both odd polynomials, they also agree at $-\ell^m$ for $ 0 \leq m \leq 6d-3$. But since they both have degree at most $12d-5$, and they agree at $12d-4$ points, they must be equal. 

Similarly, $\wt{G}^{\mrm{even}}(\ell^m) = G_{\rvo \ell d}^{\mrm{even}}(\ell^m) $ for $0 \leq m \leq 6d-2$. Since they are both odd polynomials, they also agree at $-\ell^m$ for $ 0 \leq m \leq 6d-2$. Hence there difference is a polynomial of degree at most $12d-4$ vanishing at the $12d-4$ points $\pm \ell^m$ for $0 \leq m \leq 6d-3$, and must therefore a multiple of $\prod_{m=0}^{6d-2} (t^2 - \ell^{2m})$. But the coefficients of $t^{12d-4}$ in both $\wt{G}^{\mrm{even}}(t)$ and $G_{\rvo \ell d}^{\mrm{even}}(t)$ are both $\frac{2}{\# \mrm{O}(12d-4, \F_{\ell})}$, so the constant of proportionality must be $0$. 
\qed

The rest of this subsection is devoted towards proving \autoref{thm: orbit count}.

\subsubsection{Counting orbits of independent vectors}

Recall that a quadratic space is \emph{hyperbolic} if it has the form $W\oplus W^\vee$ 
with form $Q(w,\lambda)=\lambda(w)$; over a field, this is equivalent to the condition that it be \emph{metabolic}, i.e., that it is nondegenerate and contains an isotropic subspace of half the dimension \cite[III, Lemma 1.2]{MH73}.

\begin{lemma} 
	\label{lemma:isometric-embedding}	
	Let $(V, Q)$ be a metabolic quadratic space over a field. Then any (possibly degenerate) quadratic space $(W, Q')$ of dimension $\dim(W)\le \dim(V)/2$ embeds isometrically in $V$.
\end{lemma}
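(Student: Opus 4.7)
The plan is to exploit the explicit hyperbolic structure of $V$ to directly write down the desired embedding. Since $V$ is metabolic, we may write $V = L \oplus L^\vee$ with $Q(w,\lambda) = \lambda(w)$, where $\dim L = n := \dim V / 2$. Fix a basis $e_1, \ldots, e_n$ of $L$ with dual basis $e_1^\vee, \ldots, e_n^\vee$ of $L^\vee$, and choose any basis $w_1, \ldots, w_k$ of $W$ with $k \leq n$. I will construct vectors $v_1, \ldots, v_k \in V$ of the form $v_i = e_i + \lambda_i$ for suitable $\lambda_i \in L^\vee$, and define the embedding by $w_i \mapsto v_i$.

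To determine the $\lambda_i$, a direct computation gives $Q(v_i) = \lambda_i(e_i)$ and $B_Q(v_i, v_j) = \lambda_i(e_j) + \lambda_j(e_i)$ for $i \neq j$. Writing $\lambda_i = \sum_j a_{ij} e_j^\vee$, the isometry conditions reduce to the linear constraints $a_{ii} = Q'(w_i)$ together with $a_{ij} + a_{ji} = B_{Q'}(w_i, w_j)$ for $i < j \leq k$. Since we have $n \geq k$ coefficients available in each $\lambda_i$, these can be satisfied trivially, e.g.\ by setting $a_{ij} = B_{Q'}(w_i, w_j)$ and $a_{ji} = 0$ for $i < j \leq k$, and $a_{ij} = 0$ whenever $j > k$.

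With this choice, the linear map $\phi \co W \to V$ sending $w_i \mapsto v_i$ is injective because the projections of the $v_i$ onto $L$ are the linearly independent $e_i$. It is isometric because a quadratic form is determined on a subspace by its values on a basis together with the associated bilinear form pairings between basis vectors (via the general identity $Q(\sum c_i v_i) = \sum c_i^2 Q(v_i) + \sum_{i<j} c_i c_j B_Q(v_i, v_j)$), and these all match by construction.

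There is no real obstacle to this argument — the only delicate point is verifying that the construction works uniformly in all characteristics, including characteristic $2$ and for possibly degenerate $Q'$; but the hyperbolic model makes both issues transparent, since the relevant equations are always linear in the coefficients $a_{ij}$ and we have strictly more free parameters than constraints.
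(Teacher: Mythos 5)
Your argument is correct, and it takes a genuinely different route from the paper's. The paper first enlarges $(W,Q')$ to have exactly half the dimension of $V$, then embeds the enlarged space into $(W\oplus W^*, Q'')$ with $Q''(w,\lambda) = Q'(w)+\lambda(w)$ via $w\mapsto (w,0)$, observes that $(W\oplus W^*, Q'')$ is metabolic (hence hyperbolic), and concludes by invoking the uniqueness of hyperbolic quadratic spaces of a given dimension to identify $(W\oplus W^*, Q'')$ with $(V,Q)$. You instead fix the hyperbolic model $V = L\oplus L^\vee$ up front and write down the embedding $w_i\mapsto e_i + \lambda_i$ explicitly, reducing the isometry conditions to the manifestly solvable linear system $a_{ii} = Q'(w_i)$, $a_{ij}+a_{ji} = B_{Q'}(w_i,w_j)$. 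Both arguments depend on the same background fact (over a field, metabolic implies hyperbolic, so $V$ has a hyperbolic model), but after that the paper's approach is structural — it offloads the work to the classification theorem for hyperbolic forms — while yours is constructive and avoids both the classification theorem and the enlarge-by-a-trivial-summand step. Your computation of $Q(v_i) = a_{ii}$ and $B_Q(v_i,v_j) = a_{ij}+a_{ji}$ is correct, the polarization identity you cite holds in all characteristics including $2$, and injectivity via projection to $L$ is clean, so there is no gap.
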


\begin{proof}  If $\dim(W)<\dim(V)/2$, we can always enlarge W by taking the direct sum with a trivial quadratic space of dimension $\dim(V)/2 - \dim(W)$, so we may as well assume that $\dim(W)=\dim(V)/2$. Let $Q''$ be the quadratic form on $W\oplus W^*$ given by $Q''(w,\lambda) = Q'(w)+\lambda(w)$. Then $(W, Q')$ embeds isometrically in the metabolic (thus hyperbolic) quadratic space $(W\oplus W^*,Q'')$. Since two hyperbolic quadratic spaces of the same dimension are isomorphic, there is an isometry $(W\oplus W^*,Q'')\cong (V,Q)$, and thus $(W,Q')$ embeds in $(V,Q)$ as required.
\end{proof}

\begin{corollary}\label{cor: can find subspace} Let $(V,Q)$ be a nondegenerate quadratic space over a finite field. Then any (possibly degenerate) quadratic space $(W,Q')$ of dimension $\dim(W)\le (\dim(V)-2)/2$ 
embeds isometrically in $(V,Q)$.\end{corollary}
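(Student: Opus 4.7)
The plan is to reduce to Lemma \ref{lemma:isometric-embedding} by exhibiting a large metabolic (equivalently, hyperbolic) subspace of $(V,Q)$. Concretely, I would invoke the Witt decomposition $V = H \oplus V_0$, where $H$ is a maximal hyperbolic (hence metabolic) subspace and $V_0$ is the anisotropic kernel. The key classical input is that over a finite field, an anisotropic nondegenerate quadratic form has dimension at most $2$: this follows from the Chevalley-Warning theorem (any quadratic form in $\geq 3$ variables over a finite field has a nontrivial zero). Consequently $\dim H \geq \dim V - 2$.

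Given the hypothesis $\dim W \leq (\dim V - 2)/2 \leq \dim H/2$, Lemma \ref{lemma:isometric-embedding} applied to the metabolic space $(H, Q|_H)$ produces an isometric embedding $(W, Q') \hookrightarrow (H, Q|_H)$. Composing with the inclusion $H \hookrightarrow V$ gives the required isometric embedding into $(V, Q)$.

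The only potential subtlety is making sure the argument applies uniformly regardless of the parity of $\dim V$ or the characteristic of the base field; but the Witt decomposition and the bound $\dim V_0 \leq 2$ hold in all cases over finite fields (in characteristic $2$ one uses the analogous statement for quadratic, not just symmetric bilinear, forms, which is again Chevalley-Warning). Thus there is no real obstacle: the corollary is essentially a one-line deduction from Lemma \ref{lemma:isometric-embedding} combined with the standard structure theory of quadratic forms over finite fields.
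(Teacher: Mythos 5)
Your proof is essentially identical to the paper's: both decompose $(V,Q)$ as a hyperbolic space plus an anisotropic (nondegenerate) part of dimension at most $2$, observe that $\dim W \le (\dim V - 2)/2$ guarantees $W$ fits inside the hyperbolic summand, and then invoke Lemma~\ref{lemma:isometric-embedding}. The only difference is that you additionally spell out the Chevalley--Warning justification for the dimension bound on the anisotropic kernel, which the paper takes as known.
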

\begin{proof}Any nondegenerate quadratic space over a finite field is isomorphic to the direct sum of a hyperbolic quadratic space and a nondegenerate quadratic space of dimension at most $2$, and \autoref{lemma:isometric-embedding} shows that $(W,Q')$ embeds in the former.
\end{proof}

The key technical ingredient in the proof of \autoref{thm: orbit count} is the following Proposition. 

\begin{proposition}\label{proposition:independent-orbits} 
	Fix $m \in\Z_{\geq 0}$ and let $(V, Q)$ be a nondegenerate quadratic space over $\F_\ell$ of dimension $r \geq 2m + 2$. Then, the number of orbits of $\o(Q)$ in $V^{ m}$ consisting of a tuple of independent vectors $(x_1, \ldots, x_m)$ is $\ell^{m(m+1)/2}$. More precisely, the orbits consisting of independent vectors are in bijection with $\F_{\ell}^{m(m+1)/2}$ via the map sending
\begin{equation}\label{eq: invariant map}
	(x_1, \ldots, x_m) \mapsto (Q(x_1), \ldots, Q(x_m), B_Q( x_i, x_j) \co {1 \leq i<j \leq m}).
\end{equation}
If $(V,Q)$ is metabolic, then the result still holds if $r = 2m$. 
\end{proposition}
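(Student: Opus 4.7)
The plan is to identify the set of $\o(Q)$-orbits of independent $m$-tuples with the set of possible Gram matrices via the map in \eqref{eq: invariant map}. Since $\#\F_\ell^{m(m+1)/2} = \ell^{m(m+1)/2}$, establishing the stated bijection will automatically give the orbit count. Well-definedness on orbits is immediate because $Q$ and $B_Q$ are by definition preserved by $\o(Q)$.

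For surjectivity, given any prescribed values $q_i \in \F_\ell$ ($1 \le i \le m$) and $b_{ij} \in \F_\ell$ ($1 \le i < j \le m$), I would form the (a priori possibly degenerate) quadratic space $(W, Q')$ with $W = \F_\ell^m$, basis $e_1,\ldots,e_m$, and $Q'(e_i) = q_i$, $B_{Q'}(e_i,e_j)=b_{ij}$. Since $\dim W = m \le (r-2)/2$ in the general case, \autoref{cor: can find subspace} produces an isometric (hence injective) embedding $\iota\colon W \hookrightarrow V$. Setting $x_i := \iota(e_i)$ gives an independent tuple realizing the prescribed invariants. In the metabolic case with $r=2m$, the bound $\dim W \le \dim V/2$ is met, and \autoref{lemma:isometric-embedding} supplies the needed embedding directly.

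For injectivity on orbits, suppose $(x_1,\ldots,x_m)$ and $(y_1,\ldots,y_m)$ are independent tuples with the same Gram data. Let $W_x = \Span(x_i)$ and $W_y = \Span(y_i)$, both of dimension $m$. The linear map $x_i \mapsto y_i$ is a well-defined isometry $W_x \to W_y$ precisely because the Gram matrices agree. By Witt's extension theorem for nondegenerate quadratic spaces over a field (which holds in all characteristics, including characteristic $2$), this partial isometry extends to an element of $\o(Q)$, placing the two tuples in the same orbit.

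The step requiring the most care is the application of Witt's extension theorem, since in characteristic $2$ Witt extension for quadratic (as opposed to merely bilinear) forms needs the ambient quadratic space $(V,Q)$ to be nondegenerate in the stronger quadratic sense — which is precisely our hypothesis. The subspace $W_x$ may itself be degenerate, but Witt extension still applies to any isometric embedding of an arbitrary subspace into a nondegenerate quadratic space. The only other subtlety is the edge case $r=2m$: here \autoref{cor: can find subspace} is too weak, but the metabolic hypothesis lets us invoke \autoref{lemma:isometric-embedding} directly, while injectivity of the orbit map is unaffected by this case distinction.
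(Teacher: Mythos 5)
Your proof is correct and follows essentially the same route as the paper: injectivity on orbits via Witt's extension theorem applied to the Gram-preserving map $x_i \mapsto y_i$, and surjectivity by constructing the abstract quadratic space $(W,Q')$ with prescribed invariants and embedding it via Corollary~\ref{cor: can find subspace} (or Lemma~\ref{lemma:isometric-embedding} in the edge case). Your explicit remark about Witt extension in characteristic $2$ requiring nondegeneracy of the ambient quadratic space is a welcome clarification but does not change the argument.
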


\begin{proof}[Proof of \autoref{proposition:independent-orbits}]
First we argue that \eqref{eq: invariant map} is injective. If
$(x_1, \ldots, x_m)$ and $(x_1', \ldots, x_m')$ have the same image under \eqref{eq: invariant map}, $\spn ( x_1, \ldots, x_m )$ is
isomorphic as a quadratic subspace of $(V, Q)$ to $\spn (x_1', \ldots, x_m')$ by the map sending $x_i \mapsto x_i'$. Therefore,
by Witt's theorem 
\cite[I.4.1, p. 80]{chevalley:the-algebraic-theory-of-spinors},
there is an element of $\o(Q)$ sending $x_i \mapsto x_i'$.
Hence, if $(x_1, \ldots, x_m)$ and $(x_1', \ldots, x_m')$ have the same image under \eqref{eq: invariant map}, they lie in the same $\o(Q)$ orbit.

It remains to show that \eqref{eq: invariant map} is surjective. Suppose $(c_1, \ldots, c_m, c_{ij} \co 1 \leq i < j \leq m) \subset \F_{\ell}^{m(m+1)/2}$ are arbitrary. Let $(W,Q')$ be the quadratic space on basis vectors $(y_1, \ldots, y_m)$ with $Q'(y_i) = c_i$ and $B_{Q'}(y_i, y_j) = c_{ij}$. The surjectivity amounts to showing that we can find an embedding $(W,Q') \rightarrow (V,Q)$ which is an isometry onto its image. 
But this is exactly the content of \autoref{cor: can find subspace} if $r \geq 2m+2$, and \autoref{lemma:isometric-embedding} if $r \geq 2m$ and $(V,Q)$ is metabolic. 
\end{proof}

\subsubsection{Orbits of dependent vectors} 
We aim to explain how to determine the orbits of tuples of vectors that are linearly dependent inductively using \autoref{proposition:independent-orbits}. 
The following lemma is key to counting these dependent orbits.
\begin{lemma}
	\label{lemma:dependent-orbits}
	Let $(V, Q)$ be a nondegenerate quadratic space over $\F_\ell$ and let $\o(Q)$ act on $V^m$. Fix $(x_1, \ldots, x_{m-1}) \in V^{m-1}$ and let $W := \spn \left( x_1, \ldots, x_{m-1} \right)$. The number of orbits of vectors of the form $(x_1, \ldots, x_{m-1}, y) \in V^m$ under the action of $\o(Q)$ with $y \in \Span(x_1, \ldots, x_{m-1})$ is $\ell^{\dim W}$. 
\end{lemma}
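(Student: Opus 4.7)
The plan is to observe that the statement reduces to a tautology once we notice that any $g \in \o(Q)$ that fixes the spanning vectors $x_1, \ldots, x_{m-1}$ of $W$ must fix $W$ pointwise, since $g$ is linear. So $g$'s action on $W$ is rigid, and the orbits in question should simply be indexed by elements of $W$.

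More precisely, I would argue as follows. Consider the map
\[
\Phi \colon W \longrightarrow \{\o(Q)\text{-orbits in }V^m\}, \qquad y \longmapsto \o(Q)\cdot (x_1, \ldots, x_{m-1}, y).
\]
Every orbit containing a tuple of the specified form $(x_1, \ldots, x_{m-1}, y)$ with $y \in W$ is in the image of $\Phi$ by definition, so it suffices to show $\Phi$ is injective. Suppose $\Phi(y) = \Phi(y')$, i.e., there exists $g \in \o(Q)$ with $g\cdot (x_1, \ldots, x_{m-1}, y) = (x_1, \ldots, x_{m-1}, y')$. Then $g x_i = x_i$ for $i = 1, \ldots, m-1$, and by linearity $g$ fixes the span $W$ pointwise. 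Since $y \in W$, we conclude $y' = gy = y$. Hence $\Phi$ is injective, and the number of orbits equals $\#W = \ell^{\dim W}$.

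There really is no main obstacle to this lemma: the only subtlety is a cosmetic one, namely that we are counting $\o(Q)$-orbits in $V^m$, not orbits under the stabilizer of $(x_1,\ldots,x_{m-1})$. But because any element of $\o(Q)$ sending one tuple of the given form to another must automatically lie in that stabilizer (as its action on the first $m-1$ coordinates is forced), the two viewpoints coincide and the argument above goes through unchanged.
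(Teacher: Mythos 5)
Your proof is correct and takes essentially the same approach as the paper's: you observe that any $g \in \o(Q)$ sending $(x_1,\ldots,x_{m-1},y)$ to $(x_1,\ldots,x_{m-1},y')$ must fix each $x_i$, hence acts as the identity on $W$ by linearity, so $y'=gy=y$; the paper phrases this as the orbit being uniquely determined by the coordinates of $y$ in a basis of $W$, which amounts to the same injectivity argument.
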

\begin{proof}
	Suppose that $(x_{i_1}, \ldots, x_{i_t})$ is a basis for $W$, so $\dim W = t$. Then for any $g \in \o(Q)$, $g\cdot(x_1, \ldots, x_{m-1}, y)$ is uniquely determined by $g \cdot (x_{i_1}, \ldots, x_{i_t})$. 

To count the number of orbits, we can express $y$ uniquely as 
\[
y = \sum_{j=1}^t a_j x_{i_j}.
\]
Then the orbit of $(x_1, \ldots, x_{m-1}, y)$
is uniquely determined by the scalars $(a_i \in \F_{\ell})_{1 \leq i \leq t}$, and so there are $\ell^{\dim W}$ such orbits.
\end{proof}

\subsubsection{A recursive formula}

\begin{definition}
	\label{definition:}
	Fix a quadratic space $(V,Q)$ over a finite field $k$.
	Let $f(n,i)$ be the number of orbits of $V^n$ under the action of $\o(Q)$ such that $\dim_k \mrm{Span}(x_1, \ldots, x_n) = i$. 
\end{definition}

We next explain a recursive formula for the $f(n,i)$.
\begin{lemma}
	\label{lemma:recursion}
	The functions $f(n,i)$ satisfy the recursion
	\begin{equation}\label{eq: recursion}
		f(n,i) = f(n-1, i-1) \ell^i
		+ f(n-1, i) \ell^i.
\end{equation}
\end{lemma}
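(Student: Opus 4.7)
The plan is to classify orbits of $\o(Q)$ on $V^n$ by fibering over orbits on $V^{n-1}$ via the forgetful map $\pi \colon V^n \to V^{n-1}$ that drops the last coordinate, and then splitting the preimage of each orbit according to whether $x_n$ lies in $W := \Span(x_1,\ldots,x_{n-1})$.

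First I would fix an $\o(Q)$-orbit $\mathcal O_{n-1}$ in $V^{n-1}$ with $\dim \Span = j$, pick a representative $(x_1,\ldots,x_{n-1})$, and note that the $\o(Q)$-orbits in $V^n$ mapping to $\mathcal O_{n-1}$ are in bijection with the orbits of $\Stab_{\o(Q)}(x_1,\ldots,x_{n-1})$ on $V$. If $x_n \in W$, then $\dim \Span(x_1,\ldots,x_n) = j$, so to count contributions to $f(n,i)$ I take $j = i$; \autoref{lemma:dependent-orbits} gives exactly $\ell^{\dim W} = \ell^i$ such stabilizer orbits. Summing over $\mathcal O_{n-1}$ with $\dim\Span = i$ yields a contribution of $\ell^i f(n-1,i)$ to $f(n,i)$.

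Second, if $x_n \notin W$, then $\dim\Span$ jumps by one, so for the total span to equal $i$ I need $j = i-1$. The key observation is that I can choose indices $j_1 < \cdots < j_{i-1}$ so that $(x_{j_1},\ldots,x_{j_{i-1}})$ is a basis of $W$; fixing $(x_1,\ldots,x_{n-1})$ pointwise is then equivalent to fixing $(x_{j_1},\ldots,x_{j_{i-1}})$ pointwise. Thus the $\Stab(x_1,\ldots,x_{n-1})$-orbits on $V \setminus W$ are in bijection with $\o(Q)$-orbits of linearly independent $i$-tuples $(x_{j_1},\ldots,x_{j_{i-1}}, y)$ in $V^i$ lying over the fixed orbit of $(x_{j_1},\ldots,x_{j_{i-1}})$. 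By \autoref{proposition:independent-orbits} these orbits are classified by the invariants $Q(y)$ and $B_Q(x_{j_t},y)$ for $t = 1,\ldots,i-1$, and each value in $\F_\ell^i$ is realized, for a total of $\ell^i$ orbits. This contributes $\ell^i f(n-1,i-1)$ to $f(n,i)$. Summing the two cases gives the recursion.

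The main obstacle is the surjectivity of the invariants map in the second case, i.e.\ realizing all $\ell^i$ possible values of the new invariants. This is exactly the content of the surjectivity half of \autoref{proposition:independent-orbits}, which requires $\dim V \geq 2i+2$ (or $\dim V = 2i$ in the metabolic case). Within the proof of \autoref{thm: orbit count} we have $r \geq 2m+2$ and $i \leq n \leq m$, so this hypothesis is automatic; in the edge case $r = 2m$ with $(V,Q)$ metabolic the same proof goes through using the metabolic clause of \autoref{proposition:independent-orbits}, which is how the recursion will later feed into part (3) of \autoref{thm: orbit count}.
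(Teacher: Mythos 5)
Your proposal is correct and follows essentially the same route as the paper's proof: fix a representative of an $\o(Q)$-orbit on $V^{n-1}$, split the fiber of the forgetful map by whether the new vector lies in the span, and invoke \autoref{lemma:dependent-orbits} for the dependent case and \autoref{proposition:independent-orbits} for the independent case. You are more explicit than the paper about the hypothesis $\dim V \geq 2i+2$ (or $\dim V = 2i$ metabolic) that \autoref{proposition:independent-orbits} requires for the surjectivity of the invariants map --- a hypothesis that \autoref{lemma:recursion} leaves implicit but which does hold in all cases where the recursion is applied in the proof of \autoref{thm: orbit count}.
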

\begin{proof}
	Fix a tuple $(x_1, \ldots, x_{n-1}) \in V^{n-1}$. We will count the number of orbits of the form $(x_1, \ldots, x_{n-1}, y) \in V^{n}$, by conditioning on whether or not $y \in \spn\left( x_1, \ldots, x_{n-1} \right)$.
	\begin{itemize}
	\item If $y \in \spn\left( x_1, \ldots, x_{n-1} \right)$, each choice of $y$ yields a different orbit and there are $\ell^i$ possible such orbits by \autoref{lemma:dependent-orbits}.
	\item 
	If $y \notin \spn\left( x_1, \ldots, x_{n-1} \right)$, let $\left( x_{s_1}, \ldots, x_{s_{i-1}} \right)$ be a basis for $\spn\left( x_1, \ldots, x_{n-1} \right)$.
	\autoref{proposition:independent-orbits} shows that there are $\ell^{i(i+1)/2 - (i-1)i/2} = \ell^i$ orbits of the form $(x_1, \ldots, x_{n-1}, y)$, parameterized by the possible values of the pairings	
	\[
	B_Q(y, x_{s_1}), \ldots, B_Q(y, x_{s_{i-1}}) , Q(y, y).
	\]
	\end{itemize}
	Adding these two contributions over varying vectors $\left( x_1, \ldots, x_{n-1} \right) \in V^{n-1}$ yields the result.
\end{proof}

\begin{remark}
	\label{remark:}
	We have the initial condition $f(0,i)=1$ for all $i \geq 0$. This together with the recursion of \autoref{lemma:recursion} determine the $f(n,i)$ uniquely.
	We extend $f(n,i)$ by $0$ to a function on $\Z \times \Z$.
\end{remark}

\begin{definition}
	\label{definition:}
	For every $j \in \Z_{\geq 0}$, 
	define
\[
\Sigma^{(s)}(m) := \sum_{i \in \Z} f(m,i) \ell^{is}.
\]
\end{definition}
\begin{remark}
	\label{remark: number of orbits}
	From the definitions, it follows that the total number of orbits of $\o(Q)$ on $V^{ m}$ is 
$\Sigma^{(0)}(m) = \sum_{i \in \Z} f(m,i).$
Also observe that for any $j$, $\Sigma^{(j)}(0) = 1$ by definition, since $f(0,i) = 0$ unless $i = 0$.
\end{remark}
By \autoref{remark: number of orbits}, we want to calculate $\Sigma^{(0)}(m)$. The following lemma relates this to
$\Sigma^{(m)}(0)$.

\begin{lemma}\label{lem: sigma recursion}
For $m > 0$ and $s \geq 0$, We have 
\[
\Sigma^{(s)}(m) = (1+\ell^{s+1}) \Sigma^{(s+1)}(m-1).
\]
\end{lemma}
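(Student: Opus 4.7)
The plan is a direct computation using the recursion for $f(n,i)$ established in Lemma \ref{lemma:recursion}. Substituting the recursion into the definition of $\Sigma^{(s)}(m)$ should immediately yield the desired identity, with a reindexing step pulling out the factor of $\ell^{s+1}$.

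More precisely, I would start from
\[
\Sigma^{(s)}(m) = \sum_{i \in \Z} f(m,i)\, \ell^{is}
\]
and apply $f(m,i) = \ell^i f(m-1, i-1) + \ell^i f(m-1, i)$ to split this into two sums:
\[
\Sigma^{(s)}(m) = \sum_{i \in \Z} f(m-1,i-1)\, \ell^{i(s+1)} + \sum_{i \in \Z} f(m-1,i)\, \ell^{i(s+1)}.
\]
The second sum is exactly $\Sigma^{(s+1)}(m-1)$ by definition. For the first sum, I would reindex $j = i - 1$ to obtain
\[
\sum_{j \in \Z} f(m-1, j)\, \ell^{(j+1)(s+1)} = \ell^{s+1} \sum_{j \in \Z} f(m-1, j)\, \ell^{j(s+1)} = \ell^{s+1}\, \Sigma^{(s+1)}(m-1).
\]
Adding the two contributions gives $\Sigma^{(s)}(m) = (1 + \ell^{s+1})\, \Sigma^{(s+1)}(m-1)$.

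There is no real obstacle here; the statement is a bookkeeping consequence of Lemma \ref{lemma:recursion}. The only mild care needed is to note that $f$ is extended by zero to all of $\Z \times \Z$, so the reindexing of the sum $\sum_{i} f(m-1,i-1) \ell^{i(s+1)}$ over all of $\Z$ is legitimate and does not introduce boundary terms.
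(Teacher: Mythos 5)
Your proposal is correct and follows essentially the same route as the paper: substitute the recursion $f(m,i)=\ell^i f(m-1,i-1)+\ell^i f(m-1,i)$ into the sum defining $\Sigma^{(s)}(m)$, recognize the second term as $\Sigma^{(s+1)}(m-1)$, and reindex the first term to pull out $\ell^{s+1}$. The observation that $f$ is extended by zero to $\Z\times\Z$, so the reindexing is clean, is the same implicit bookkeeping the paper uses.
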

\begin{proof}
	By \autoref{lemma:recursion}, we have 
\begin{align*}
\Sigma^{(s)}(m) &=  \sum_{i \in \Z} f(m-1, i-1) \ell^{i+is} + \sum_{i \in \Z} f(m-1, i) \ell^{i+is} \\
&=\ell^{s+1} \sum_{i \in \Z} f(m-1, i-1) \ell^{(i-1)(s+1)} + \sum_{i \in \Z} f(m-1, i) \ell^{i(s+1)}\\
&=\ell^{s+1} \sum_{i \in \Z} f(m-1, i) \ell^{i(s+1)} + \sum_{i \in \Z} f(m-1, i) \ell^{i(s+1)}\\
& = (\ell^{s+1}+1) \Sigma^{(s+1)}(m-1).\qedhere
\end{align*}
\end{proof}

Using \autoref{lem: sigma recursion}, we can compute $\Sigma^{(0)}(m)$, and hence prove \autoref{thm: orbit count}.

\subsubsection{Proof of \autoref{thm: orbit count}}
\label{subsection:proof-of-orbit-count}
First we focus on the situation in parts (1) and (2), where $\rank_{\Z/n\Z} V \geq 2m+2$. Since $n$ is squarefree, we may reduce to the case $n = \ell$ is a prime by the Chinese remainder theorem.
	Once the statement for $\o(Q)$ is established, the statement for $\Omega(Q)$ follows from \autoref{lemma:same-orbits}.
By \autoref{remark: number of orbits}, we just need to show that
\[
\Sigma^{(0)}(m) = (1+\ell)(1+\ell^2) \cdots  (1+\ell^m).
\]
Indeed, using \autoref{lem: sigma recursion}, we find
\begin{align*}
\pushQED{\qed} 
	\Sigma^{(0)}(m) &= (1+\ell) \Sigma^{(1)}(m-1) \\
	&=  (1+\ell)(1+\ell^2) \Sigma^{(2)}(m-2)  \\
	&\hspace{.2cm}\vdots  \\
	&= (1+\ell)(1+\ell^2) \cdots  (1+\ell^m)\Sigma^{(m)}(0) \\
	&= (1+\ell)(1+\ell^2) \cdots  (1+\ell^m). \qedhere \popQED
\end{align*}
This completes the proof of parts (1) and (2). Now we move onto parts (3) and (4). The argument for part (3) is the same as for the proof of \autoref{thm: orbit count}. For Part (4), we note by \autoref{lem: orbits edge case} that the orbits coincide except on vectors $(x_1, \ldots, x_m) \in V^m$ that span a maximal isotropic subspace of $V$. In this case there is only one orbit of such vectors under $\o(Q)$, but two orbits under $\SO(Q)$ 
\cite[Corollary T.3.4]{conrad:algebraic-groups-ii}.

\subsection{Bounding the TV distance}
\label{subsection:tv-distance}

We use the moment computations in \autoref{ssec: moment computations} to obtain certain useful expressions for the probability generating functions. 

In this section, let $(V_{r}, Q_{r})$ be the \emph{split} orthogonal space over $\F_{\ell}$ of rank $r$ (hence discriminant $1$). We denote $\o_{r} = \o(V_r, Q_r)$, $\SO_r = \SO(V_r, Q_r)$, $\Omega_r = \Omega(V_r, Q_r)$, etc. 

Let $H_{2r} \subset \o_{2r}$ denote the kernel of the Dickson invariant, i.e., $H_{2r} = \SO_{2r}$ when $\ell$ is odd, and $H_{2r} = \Omega_{2r} $ when $\ell$ is even. For $j \geq 0$, let $M_j$ be the limit as $r \rightarrow \infty$ of the $j$th moment of $\rvcoset {V_r} \SO$, which by \autoref{thm: rudvalis-shinoda} is $\prod_{i=1}^j (\ell^i+1)$. 

\begin{lemma}\label{lem: various moments}  We have the following values for the moments of $\# \ker(g-1)$ for $g$ drawn from $H_{2r}$ and its complement: 
\begin{align*}
\EE_{g\in H_{2r}}( \# \ker(g-1)^j) &= M_j, 0\le j<r \\
\EE_{g\in H_{2r}} (\#\ker(g-1)^r) &= M_r + 1\\
\EE_{g\notin H_{2r}} (\#\ker(g-1)^j )&= M_j, 0\le j<r \\
\EE_{g\notin H_{2r}} (\#\ker(g-1)^r ) &= M_r-1.
\end{align*}
\end{lemma}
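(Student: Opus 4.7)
The plan is to recognize that each moment $\EE_{g \in G'}(\#\ker(g-1)^j)$ is an orbit count, and then to read off the values from \autoref{thm: orbit count}. The key identity is Burnside's lemma applied to the diagonal action of $G'$ on $V_r^j$: for any $g$,
\[
\#\ker(g-1)^j = \#(V_r^{g=1})^j = \#(V_r^j)^{g=1},
\]
so averaging over $G'$ gives $\EE_{g \in G'}(\#\ker(g-1)^j) = \#\{\text{orbits of } G' \text{ on } V_r^j\}$. The same computation on the coset $\o_{2r}\setminus H_{2r}$ (where $g \mapsto \#(V_r^j)^{g=1}$ is still $H_{2r}$-invariant under conjugation) combines with the full-group count to yield
\[
2 \cdot \#\{\text{orbits of } \o_{2r} \text{ on } V_r^j\} \;=\; \EE_{g \in H_{2r}}(\#\ker(g-1)^j) \;+\; \EE_{g \notin H_{2r}}(\#\ker(g-1)^j),
\]
using $\#\o_{2r} = 2\#H_{2r}$.

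For $j < r$, the rank of $V_r$ is $2r \geq 2j+2$, so parts (1) and (2) of \autoref{thm: orbit count} apply: both $\o_{2r}$ and any subgroup squeezed between $\Omega_{2r}$ and $\o_{2r}$ (in particular $H_{2r}$) have exactly $M_j = \prod_{i=1}^j(1+\ell^i)$ orbits on $V_r^j$. Plugging into the identity above gives $\EE_{g \in H_{2r}}(\#\ker(g-1)^j) = \EE_{g \notin H_{2r}}(\#\ker(g-1)^j) = M_j$, which is the first and third assertions.

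For the edge case $j = r$, we are in exactly the situation of parts (3) and (4) of \autoref{thm: orbit count}, since $(V_r, Q_r)$ is split of rank $2r$. Part (3) gives $\#\{\text{orbits of } \o_{2r} \text{ on } V_r^r\} = M_r$, while part (4) identifies $\#\{\text{orbits of } H_{2r} \text{ on } V_r^r\} = M_r + 1$. (For odd $\ell$ this is part (4) as stated, since $H_{2r} = \SO_{2r}$; for $\ell = 2$, the same proof via \autoref{lem: orbits edge case} applies with $H_{2r} = \Omega_{2r} = \ker(\mrm{Dickson})$, because the two $\o_{2r}$-orbits of maximal-isotropic-spanning tuples are interchanged precisely by elements of nontrivial Dickson invariant.) Hence $\EE_{g \in H_{2r}}(\#\ker(g-1)^r) = M_r + 1$, and the coset identity gives
\[
\EE_{g \notin H_{2r}}(\#\ker(g-1)^r) = 2M_r - (M_r + 1) = M_r - 1.
\]

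The main obstacle is ensuring that the edge-case count $\#\{\text{orbits of } H_{2r} \text{ on } V_r^r\} = M_r + 1$ holds uniformly in the parity of $\ell$; this requires a small supplement to \autoref{thm: orbit count}(4) in characteristic 2, where $\SO$ and $\o$ agree as abstract groups and it is $\Omega = \ker(\mrm{Dickson})$ that plays the role of the index-2 subgroup distinguishing the two orbits of maximal isotropic $r$-tuples. Everything else is a clean Burnside-plus-orbit-count bookkeeping.
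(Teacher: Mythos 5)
Your proof is correct and takes essentially the same approach as the paper's (which is just a one-line citation to \autoref{lemma:same-orbits}, \autoref{lem: orbits edge case}, and \autoref{thm: orbit count}); you have simply spelled out the Burnside-plus-coset bookkeeping explicitly. The only remark worth adding: your worry about a ``small supplement'' at $\ell = 2$ is unnecessary, since the paper's $\SO(Q)$ follows Conrad's convention (the kernel of the Dickson invariant, equivalently the $\mathbb{F}_q$-points of the identity component of the group scheme), so $\SO_{2r}(\mathbb{F}_2) = \Omega_{2r}(\mathbb{F}_2) = H_{2r}$ and part (4) of \autoref{thm: orbit count} already covers $\ell = 2$ as stated --- exactly as your parenthetical remark observes, the proof of \autoref{lem: orbits edge case} produces a reflection $r_w$ of nontrivial Dickson invariant regardless of characteristic.
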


\begin{proof}
The claims for $j<r$ follow from \autoref{lemma:same-orbits} plus \autoref{thm: orbit count}. The claims for $j=r$ follow from \autoref{lem: orbits edge case} plus \autoref{thm: orbit count}
\end{proof}

Let $P_r(t)$ be the unique even polynomial of degree $2r$ such that $P_r(\ell^j)=M_j$ for all $0\le j\le r$, and let $P_r'(t)$ be the unique odd polynomial of degree $2r-1$ such that $P_r'(\ell^j)=M_j$ for $0\le j<r$ (not to be confused with the derivative of $P_r$). 

Define
\[
G_r(t)  := \EE_{g\in H_{2r}}[ t^{\dim\ker(g-1)}]
\]
to be the probability generating function for 1-eigenspaces of elements drawn randomly from $H_{2r}$, and 
\[
G_r'(t) := \EE_{g\in \o_{2r} - H_{2r}}[ t^{\dim\ker(g-1)}].
\]

\begin{lemma}\label{lem: G identities}
We have identities
\begin{equation}\label{eq: G r-1}
G_r(t) = P_{r-1}(t) + \frac{1}{\#H_{2r}} \prod_{0\le j<r} (t^2-\ell^{2j}),
\end{equation}
\begin{equation}\label{eq: G r}
G_r(t)  = 
P_r(t) + \prod_{0\le j<r} \frac{t^2-\ell^{2j}}{\ell^{2r}-\ell^{2j}},
\end{equation}
\begin{equation}\label{eq: G prime r-1}
	G'_{r+1}(t) = P_r'(t) + \ell^{-r}t\prod_{0\le j<r}\frac{t^2-\ell^{2j}}{\ell^{2r}-\ell^{2j}},
\end{equation}
\begin{equation}\label{eq: G prime r}
	G_{r+1}'(t)  = P_{r+1}'(t).
\end{equation}

\end{lemma}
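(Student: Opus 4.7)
The plan is to exploit the fact that $G_r(t)$ and $G_r'(t)$ have restricted parity and degree, so each is determined by relatively few interpolation values, and Lemma~\ref{lem: various moments} supplies precisely that many values at the points $\ell^0, \ell^1, \ldots$. Recall that the Dickson invariant of $g$ equals $\dim\ker(g-\id) \bmod 2$ (as used in the proof of Theorem~\ref{theorem: coset generating functions}), and $\dim V_{2r} = 2r$. Thus $G_r(t)$ is an even polynomial of degree at most $2r$, while $G_r'(t)$ is an odd polynomial of degree at most $2r-1$. An even (resp.\ odd) polynomial of degree at most $2r$ (resp.\ $2r-1$) is uniquely determined by its values at $r+1$ (resp.\ $r$) distinct positive real numbers.

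I would first prove \eqref{eq: G r}. The right-hand side is an even polynomial of degree at most $2r$; it agrees with $G_r$ at $t = \ell^j$ for $0 \le j < r$ (both equal $M_j$, as the product vanishes) and at $t = \ell^r$ (both equal $M_r + 1$, using $G_r(\ell^r) = M_r + 1$ from Lemma~\ref{lem: various moments} and the fact that the product evaluates to $1$). Since two even polynomials of degree at most $2r$ agreeing at $r+1$ distinct positive reals must coincide, this gives \eqref{eq: G r}. For \eqref{eq: G r-1}, the coefficient of $t^{2r}$ in $G_r(t)$ equals $\Pr_{g \in H_{2r}}[\dim\ker(g-\id) = 2r] = \Pr_{g \in H_{2r}}[g = \id] = 1/\#H_{2r}$, which matches the leading coefficient of the right-hand side. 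Subtracting, one obtains an even polynomial of degree at most $2r - 2$ that vanishes at $\ell^0, \ldots, \ell^{r-1}$; viewed as a polynomial in $t^2$ of degree at most $r-1$ with $r$ distinct roots, it is zero.

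For \eqref{eq: G prime r} and \eqref{eq: G prime r-1}, I would prove \eqref{eq: G prime r} first. Both $G_{r+1}'$ and $P_{r+1}'$ are odd of degree at most $2r+1$ and agree at $\ell^0, \ldots, \ell^r$ (both equal $M_0, \ldots, M_r$ by Lemma~\ref{lem: various moments} and the definition of $P_{r+1}'$), so by uniqueness of odd interpolants at $r+1$ positive points they coincide. Applying the identical argument with $r+1$ replaced by $r$ yields $G_r' = P_r'$, from which the crucial identity $P_r'(\ell^r) = G_r'(\ell^r) = M_r - 1$ follows directly from Lemma~\ref{lem: various moments}.

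Finally, to establish \eqref{eq: G prime r-1}, consider the difference $P_{r+1}'(t) - P_r'(t)$. It is an odd polynomial of degree at most $2r+1$ vanishing at $\pm\ell^j$ for $0 \le j < r$, that is, at $2r$ distinct points, so it has the form $c\, t \prod_{0 \le j < r}(t^2 - \ell^{2j})$ for some constant $c$. Evaluating at $t = \ell^r$ gives $M_r - (M_r - 1) = 1 = c \cdot \ell^r \prod_{0 \le j < r}(\ell^{2r} - \ell^{2j})$, so $c = \ell^{-r} \prod_{0 \le j < r}(\ell^{2r} - \ell^{2j})^{-1}$. Combining with $G_{r+1}' = P_{r+1}'$ yields \eqref{eq: G prime r-1}. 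There is no serious obstacle in this plan: once the parity, degree, and leading-coefficient observations are in place, everything reduces to polynomial interpolation and direct application of Lemma~\ref{lem: various moments}; the only mild subtlety is ordering the four identities so that \eqref{eq: G prime r} is proved before \eqref{eq: G prime r-1}, since the latter uses $G_r' = P_r'$.
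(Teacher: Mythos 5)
Your proof is correct and takes essentially the same route as the paper: all four identities are deduced by polynomial interpolation, using the parity and degree constraints on $G_r, G'_r$ together with the moment values from Lemma~\ref{lem: various moments} to pin down the interpolants, then fixing the remaining constant via the leading coefficient or by evaluating at $\ell^r$. The only differences are cosmetic — you prove \eqref{eq: G r} before \eqref{eq: G r-1}, and in the last step you analyze $P'_{r+1}-P'_r$ rather than $G'_{r+1}-P'_r$, but since $G'_{r+1}=P'_{r+1}$ by \eqref{eq: G prime r} these are the same polynomial.
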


\begin{proof}
	First, we check \eqref{eq: G r-1}.
By \autoref{lem: various moments}, $G_r(t) - P_{r-1}(t)$ vanishes at $t =  \pm \ell^j$ for $0 \leq j \leq r-1$, and is of degree $2r$, hence is proportional to $\prod_{0\le j<r} (t^2-\ell^{2j})$. Therefore, we can determine $G_r(t) $ completely by examining the coefficient of $t^{2r}$, which is $\# H_{2r}^{-1}$ because that is the probability of drawing the identity element. 

We next check \eqref{eq: G r}
Similarly, $G_r(t)  - P_r(t)$ is proportional to $\prod_{0\le j< r} (t^2-\ell^{2j})$, and it can be determined by evaluating at $\ell^r$, where the value is $1$
by \autoref{lem: various moments}.

Next, \eqref{eq: G prime r} holds because both $G'_{r+1}(t)$ and $P'_{r+1}(t)$ are polynomials of degree $2r+1$ vanishing at the $2r+3$ values $0, \pm 1, \pm \ell, \ldots, \pm \ell^r$.

Finally, we show \eqref{eq: G prime r-1}.
By \eqref{eq: G prime r} and \autoref{lem: various moments}, we see $P'_{r}(\ell^r) = M_r - 1$ while $G'_{r+1}(\ell^r) = M_r$.
Therefore, $G'_{r+1}(t) - P_r'(t)$ is a degree $2r+1$ polynomial vanishing at the $2r+1$ values $0, \pm 1, \pm \ell, \ldots \pm \ell^r$, and hence is determined up to a constant.
We can then determine its constant value by plugging in $t = \ell^r$, using $P'_{r}(\ell^r) = M_r - 1$ and $G'_{r+1}(\ell^r) = M_r$.
\end{proof}

\label{subsection:tv}
 Recall that the \emph{Total Variation distance} (TV) between two probability distributions $P$ and $P'$ is 
\[
\dTV(P,P') = \sup_{\text{events}\,A} |P(A) - P'(A)|.
\]
When $P$ and $P'$ are defined on a countable discrete probability space $X$, 
as shown in \cite[Proposition 4.2]{levinPW:markov-chains-and-mixing-times}
we can write this as 
\begin{equation}
\dTV(P, P') := \frac{1}{2} \sum_{x \in X} |P(x)-P'(x)|.
	\label{equation:tv-sum}
\end{equation}
\[
\]
In other words, conflating $P$ and $P'$ with functions on $X$, this is (up to the normalization factor $1/2$) the $L^1$-norm. Clearly, convergence in TV distance implies convergence as distributions (which is pointwise convergence in the case of distributions on a discrete space). We define the TV distance between two random variables to be the TV distance between their induced probability distributions. 

\begin{theorem}
	\label{theorem:p-selmer-distribution}
	For $\ell$ a prime, $d \geq 2$, and $q$ ranging over prime powers with $\gcd(q,2\ell)  =1$ We have 
	\[
	\limsup_{\substack{q \rightarrow \infty\\ \gcd(q,2n)=1}}
\dTV( \dim \rvker \ell d {\mathbb F_q} , \lim_{d \rightarrow \infty} \dim \rvo \ell d) = 	O(\ell^{-(6d-2)^2}).
\]
where the implicit constants are absolute in both cases.
\end{theorem}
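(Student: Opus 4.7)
The plan is a triangle inequality through the PGF of $\rvo \ell d$; set $r := 6d-2$. By \autoref{corollary:finite-field-distribution}, up to an additive error $O_{\ell,d}(q^{-1/2})$ vanishing as $q \to \infty$, $\dim \rvker \ell d {\mathbb F_q}$ has the same distribution as $\dim \ker(g - \id)$ for $g$ drawn uniformly from $H_{\ell, q}^d \subset \o(\qsel \ell d k)$. For odd $\ell$, $H_{\ell, q}^d$ is determined by $[q^{d-1}] \in \mathbb F_\ell^\times/(\mathbb F_\ell^\times)^2$ and is a union of two of the four cosets of $\Omega(\qsel \ell d k)$ in the notation preceding \autoref{theorem: coset generating functions}, namely either $\Omega \cup B$ or $A \cup C$; by \autoref{proposition:chavdarov} the $\limsup$ and $\liminf$ over $q$ are each achieved on arithmetic progressions of $q$, so they pick out single coset-union distributions. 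The case $\ell = 2$ is easier: the spinor norm is trivial, so $H_{2, q}^d = \o(\qsel 2 d k)$ and the coset step is vacuous.

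For the coset step, the identities $G^\Omega - G^A = \frac{1}{\#\Omega}\prod_{i=0}^{r-1}(t^2 - \ell^{2i})$ and $G^B = G^C$ from \autoref{theorem: coset generating functions} give
\[
	G^{H_{\ell, q}^d}(t) - G^{\o(\qsel \ell d k)}(t) = \pm \frac{1}{4\#\Omega(\qsel \ell d k)} \prod_{i=0}^{r-1}(t^2 - \ell^{2i}).
\]
The $L^1$-norm of the coefficients of $\prod_{i=0}^{r-1}(t^2 - \ell^{2i})$ is $\prod_{i=0}^{r-1}(1+\ell^{2i}) = \ell^{r(r-1)}\prod_{i=0}^{r-1}(1+\ell^{-2i}) = \Theta_\ell(\ell^{r^2-r})$, and the orthogonal group mass formula yields $\#\Omega(\qsel \ell d k) = \frac{1}{4}|\o(\qsel \ell d k)| = \Theta(\ell^{2r^2-r})$. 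Hence the $\dTV$ contribution from this step is $O(\ell^{-r^2}) = O(\ell^{-(6d-2)^2})$. For the second step, bounding $\dTV$ between the PGF $\frac{1}{2}(G_r + G_r')$ of $\rvo \ell d$ and its limit, I apply \autoref{lem: G identities}: \eqref{eq: G r} gives $\|G_r - P_r\|_1 = O(\ell^{-r^2-r})$, \eqref{eq: G prime r} gives $G_r' = P_r'$ exactly, and combining \eqref{eq: G r-1} with \eqref{eq: G r} (resp.\ \eqref{eq: G prime r-1} with \eqref{eq: G prime r}) yields bounds $\|P_{r'+1}-P_{r'}\|_1 = O(\ell^{-(r'+1)^2})$ and $\|P'_{r'+1}-P'_{r'}\|_1 = O(\ell^{-(r'+1)^2-(r'+1)})$. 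Summing the telescoping series over $r' \geq r$ gives $\dTV(\rvo \ell d, \lim_d \rvo \ell d) = O(\ell^{-r^2-r})$, strictly smaller than the coset step, and the triangle inequality completes the bound.

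The hard part is the sharp $L^1$-accounting: the naive estimate $\prod_{i=0}^{r-1}(1+\ell^{2i}) \leq \ell^{r^2}$ yields only $O(\ell^{-r^2+r})$, a factor of $\ell^r$ too weak. The crucial saving comes from factoring $\prod(1+\ell^{2i}) = \ell^{r(r-1)}\prod(1+\ell^{-2i})$ and noting that the residual product is bounded uniformly in $r$, combined with the precise leading $\Theta(\ell^{2r^2-r})$ asymptotic of $|\o_{2r}(\mathbb F_\ell)|$. A secondary subtlety is ensuring the $\limsup$/$\liminf$ over $q$ is literally realized by a single coset-union distribution; this is where \autoref{proposition:chavdarov}'s equidistribution in cosets of the geometric monodromy group is essential.
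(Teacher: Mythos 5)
Your proposal takes essentially the same route as the paper: a triangle inequality through $\rvo \ell d$, with \autoref{theorem: coset generating functions} controlling the coset step and \autoref{lem: G identities} controlling convergence in the rank parameter. The coset calculation matches the paper's, and your decomposition of the second step through the auxiliary polynomials $P_r, P_r'$ is a clean equivalent to the paper's direct comparison of consecutive $G_r$.

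There is one genuine gap, at the point where you write that the $\limsup$ and $\liminf$ over $q$ ``pick out single coset-union distributions.'' The invocation of \autoref{proposition:chavdarov} is unnecessary---$\rvker \ell d {\mathbb F_q}$ depends on $q$ only through $[q^{d-1}]$ by \autoref{definition:kernel-distribution}, so it takes at most two values $P_1, P_2$, no equidistribution input required---but more importantly the conclusion is false. The $\limsup$ in the theorem statement is the \emph{pointwise} $\limsup$ of probabilities, hence $\max(P_1, P_2)$ computed outcome-by-outcome. Since the coefficients of $G_{\rvcoset {V_{\ell}^d} \Omega} - G_{\rvcoset {V_{\ell}^d} A} = \frac{1}{\#\Omega}\prod_{i=0}^{r-1}(t^2-\ell^{2i})$ alternate in sign, $P_1 - P_2$ changes sign across outcomes, so $\max(P_1, P_2)$ is neither $P_1$ nor $P_2$ and is not even a probability measure. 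The fix, which the paper supplies at the very end of its proof, is the elementary bound $\dTV(\max(P_1,P_2), Y) \leq \dTV(P_1,Y) + \dTV(P_2,Y)$, so it suffices to run your argument against each $P_i$ separately and sum. Two smaller inaccuracies: the residual product $\prod_{i=0}^{r-1}(1+\ell^{-2i})$ is bounded between absolute constants, so you should write $\Theta(\ell^{r^2-r})$ rather than $\Theta_\ell(\ell^{r^2-r})$ (the theorem requires absolute constants); and combining \eqref{eq: G prime r-1} with \eqref{eq: G prime r} actually gives $\|P'_{r'+1}-P'_{r'}\|_1 = O(\ell^{-r'^2-2r'})$, weaker than your claimed $O(\ell^{-(r'+1)^2-(r'+1)})$ by a factor of $\ell^{r'+2}$, though this is harmless since the dominant term remains $\|G_r - P_r\|_1 = O(\ell^{-r^2-r})$.
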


\begin{proof}
We write the proof in the case where $\ell $ is odd; the case where $\ell = 2$ is even easier, as the analysis of the cosets simplifies because there are fewer cosets (cf. the discussion in \autoref{sssec: generating function formulas}).

We first compare the TV distance between $\dim \rvo \ell d $ and $\dim \rvker \ell d {\mathbb F_q}$. We have 
\[
 G_{\rvo \ell d}(t) = \frac{1}{4} G_{\rvcoset {V_{\ell}^d} \Omega} (t)+ \frac{1}{4} G_{\rvcoset {V_{\ell}^d} A}(t)  + \frac{1}{4} G_{\rvcoset {V_{\ell}^d} B} (t)+  \frac{1}{4}G_{\rvcoset {V_{\ell}^d} C}(t)
 \]
 and
 \[
G_{\rvker \ell d {\mathbb F_q}}(t) = \frac{1}{2} G_{\rvcoset {V_{\ell}^d} \Omega} (t) + \frac{1}{2} G_{\rvcoset {V_{\ell}^d} B}(t) \text{ or } \frac{1}{2} G_{\rvcoset {V_{\ell}^d} A}(t)  + \frac{1}{2} G_{\rvcoset {V_{\ell}^d} C}(t). 
\]
Note that the TV distance between random variables $Z$ and $Z'$ has a clean formulation in terms of the probability generating functions $G_Z(t)$ and $G_Z(t')$: it is half the sum of the absolute values of the differences of the coefficients, as follows from
\eqref{equation:tv-sum}.
Using this observation together with
\autoref{theorem: coset generating functions}, 
we have
\begin{align*}
\dTV(\dim \rvo \ell d, \dim \rvker \ell d {\mathbb F_q}) & \leq \frac{1}{4} \dTV(\dim \rvcoset {V_{\ell}^d} \Omega, \dim \rvcoset {V_{\ell}^d} A ) \\
& = \frac{1}{8} \cdot \frac{1}{\# \Omega(Q_{\ell}^d)} \prod_{i=0}^{6d-3}(1+\ell^{2i}).
\end{align*}
By examining the dimension of the orthogonal group, we find 
\[
	\# \Omega(Q_{\ell}^d) = \frac{1}{4} \#  O(Q_{\ell}^d) \asymp \ell^{(12d-4)(12d-5)/2}.
\]
On the other hand, we have
\[
 \prod_{i=0}^{6d-3}(1+\ell^{2i}) \asymp \ell^{(6d-2)(6d-3)}.
\]
Hence\footnote{The notation $A(d) \ll B(d)$ means $A(d) = O(B(d))$ as $d \rightarrow \infty$, where the implicit constant is absolute.}	 
\[
	\dTV(\dim \rvo \ell d, \dim \rvker \ell d {\mathbb F_q})  \ll  \ell^{-(6d-2)^2}.
\]

Next, we estimate $\dTV(\rvo \ell d, \lim_{r \ra \infty} \rvo \ell r)$. It suffices to show that 
\[
\dTV (\dim \rvcoset {V_{\ell}^{2r}} {\o_{2r}}, \dim \rvcoset {V_{\ell}^{2r+2}} {\o_{2r+2}} )  \ll \ell^{-r^2}.
\]
We compare the even and odd parts of their generating functions, using the computations of the preceding section. For the even part, using \autoref{lem: G identities} gives that the sum of the absolute values of the coefficients of $G_{r}(t) - G_{r-1}(t)$ is
\[
	\ll \ell^{-r}  \prod_{0 \leq j< r} \frac{1+\ell^{2j}}{\ell^{2r}-\ell^{2j}}
= 
\ell^{-r} \ell^{-r^2 + r}\prod_{0 \leq j < r}\frac{1+\ell^{-2j}}{1-\ell^{2j-2r}} \ll \ell^{-r^2}.
\]
This shows
\begin{align*}
	\limsup_{\substack{q \rightarrow \infty}}  \dTV( \dim \rvker \ell d {\mathbb F_q} ,  \lim_{d \rightarrow \infty} \rvo \ell d) = O(\ell^{-(6d-2)^2}).
\end{align*}

\end{proof}
\begin{corollary}\label{cor: TV estimate for prime case}
	Fix a prime $\ell$, an integer $d \geq 2$,
	and consider a sequence of prime powers $\{q_1, q_2, \ldots\}$ with
	$\gcd(q_i,2\ell)=1$, so that the $q_i$ lie in a fixed residue class mod $\ell$ if $\ell$ is odd, and lie in a fixed residue class mod $8$ if $\ell = 2$.
	Then, the TV distance between the BKLPR heuristic and
	$\lim_{i \to \infty} \dim \rvker \ell d {\mathbb F_{q_i}}$ is $O(\ell^{-(6d-2)^2})$.
\end{corollary}

\begin{proof}
First, we impose the assumption that the
the $q_i$ lie in a fixed residue class mod $\ell$ if $\ell$ is odd, and lie in a fixed residue class mod $8$ if $\ell = 2$, so that the distribution in \autoref{theorem:monodromy} is independent of the choice of $q_i$ in this sequence, since $\im \chi^{d-1}$ is independent of the choice of $q_i$.
Hence, $\lim_{i \to \infty} \dim \rvker \ell d {\mathbb F_{q_i}}$ exists.

Note that in the case where $\ell$ is prime, which we are currently considering, the ``BKLPR heuristic'' first appeared as the ``Poonen-Rains heuristic'' \cite{poonenR:random-maximal-isotropic-subspaces-and-selmer-groups}, whose explicit formula is given by \cite[Conjecture 1.1(a)]{poonenR:random-maximal-isotropic-subspaces-and-selmer-groups}. By inspection, this agrees with the distribution of $\lim_{d \rightarrow \infty} \rvo \ell d$ calculated in \autoref{thm: rudvalis-shinoda}. 
Hence the result follows from \autoref{theorem:p-selmer-distribution}.
\end{proof}

\section{Markov properties}
\label{section:markov}

In this section, we establish Markov properties satisfied by both the random kernel model and the BKLPR model, which will be used to identify their distributions for prime power order Selmer groups.
In \autoref{ssec: markov 1-eigenspace} we state the Markov property satisfied by the random kernel model, which we prove in
\autoref{subsection:proof-of-random-kernel-markov}.
We then recall the BKLPR model in \autoref{ssec: BKLPR}
and demonstrate the Markov property satisfied by the BKLPR model in
\autoref{subsection:markov-bklpr}.

\subsection{Markov property for random $1$-eigenspaces}\label{ssec: markov 1-eigenspace}

Let $(V,Q)$ be a nondegenerate quadratic space of rank $rm$ over $\Z/\ell^e \Z$. Recalling from \autoref{definition:random 1-eigenspace}, that for a subset $H \subset \o(V,Q)$ we let $\rvcoset V H$ be the random variable $\ker (g-\id)$, valued in isomorphism classes of finite abelian $\ell$-groups, for $g$ drawn uniformly at random from $H$. 
 
In this section only, we will use the notation $\o(V,Q), \Omega(V,Q),$ and $\so(V,Q)$ for various subgroups of orthogonal groups, because we will consider various coefficient changes and wish to emphasize this in the notation. Noting that $H$ acts on $V[\ell^j]$, we let $H_{j}$ be the image of $H$ in $\o(V[\ell^j], Q|_{V[\ell^j]})$.

\begin{theorem}\label{thm: 1-eigenspace markov}
Let $(V,Q)$ be a nondegenerate quadratic space of rank $2m$ over $\Z/\ell^e \Z$. For $j \leq e$, write $d_j(H) := \dim_{\F_{\ell}}( \ell^{j-1} \rvcoset  {V[\ell^{j}]} {H_{j}})$. 

If $H$ is a non-empty union of cosets of $\Omega(V,Q)$ in $\o(V,Q)$, then the sequence of random variables $d_1(H), d_2(H), \ldots, d_e(H)$ is Markov. 
If $\ell$ is odd or $d_i \ne 
2m$, then the distribution of $d_{i+1}(H)$ given $d_i(H)$ is the same as 
the dimension of the kernel of a uniform random alternating form on 
$\F_\ell^{d_i(H)}$. 
\end{theorem}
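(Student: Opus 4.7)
The plan is to reduce to the case $H = g_0\,\Omega(V,Q)$ a single coset: conditioning on coset membership is finer than knowledge of $d_j$, so the Markov property on each coset implies it for $H$. Throughout I use the canonical identifications of the $\mathbb{F}_\ell$-spaces $\ell^{j-1}V[\ell^j]$, $\ell^jV[\ell^{j+1}]$, and $V[\ell]$ given by multiplication by powers of $\ell$; writing $K_j := \ker(g - \mathrm{id}|_{V[\ell^j]})$, one has $\ell^{j-1}K_j = \bar K_j := \mathrm{image}(K_j \to V[\ell])$, so that $d_j = \dim_{\mathbb{F}_\ell}\bar K_j$, and $K_1 = \ker(\bar g - \mathrm{id}) \subseteq V[\ell]$ contains $\bar K_j$.

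For fixed $g_j$ and $\ell$ odd, the lifts $g_{j+1} \in H_{j+1}$ form a torsor for the kernel of $\o(V[\ell^{j+1}]) \to \o(V[\ell^j])$ intersected with $\Omega$, which is canonically the orthogonal Lie algebra $\mathfrak{o}(V[\ell],Q|_{V[\ell]})$: writing $g_{j+1} = g_j^0(\mathrm{id} + \ell^j T)$ for a fixed lift $g_j^0$, the parameter $T$ is uniform on $\mathfrak{o}(V[\ell],Q|_{V[\ell]})$ and determines an alternating form $\alpha_T(v,w) := B_Q(Tv,w)$ on $V[\ell]$. The technical core of the proof is the identification
\[
\bar K_{j+1} \;=\; \ker\bigl((\alpha_T + \beta)|_{\bar K_j}\bigr),
\]
for a fixed alternating form $\beta$ on $\bar K_j$ depending only on $g_j$. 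I would establish this by: (i) characterizing $\bar v \in \bar K_{j+1}$ as the Hensel condition that the obstruction $(g_{j+1}-\mathrm{id})v \in \ell^jV[\ell^{j+1}] \cong V[\ell]$ lie in $\mathrm{im}(\bar g - \mathrm{id})$, which equals $K_1^\perp$ by the orthogonality relation $K_1 \perp_{B_Q} \mathrm{im}(\bar g - \mathrm{id})$; (ii) decomposing the obstruction into a $T$-linear piece yielding $\alpha_T(\bar v,-)|_{K_1}$ and a $T$-independent ``self-obstruction'' $\beta(\bar v,-)|_{K_1}$ coming from $g_j^0$; and (iii) using the adjoint identity $Q((\hat g - \mathrm{id})\hat v,\hat w) = Q(\hat v,(\hat g^{-1}-\mathrm{id})\hat w)$ on $\ell$-adic lifts, together with the fact that both $(\hat g - \mathrm{id})\hat v$ and $(\hat g^{-1}-\mathrm{id})\hat w$ lie in $\ell^j \hat V$ for lifts of elements of $\bar K_j$, to deduce $Q((\hat g - \mathrm{id})\hat v,\hat w) + Q(\hat v,(\hat g - \mathrm{id})\hat w) \in \ell^{2j}\mathbb{Z}_\ell$; after dividing by $\ell^j$ and reducing mod $\ell$, this simultaneously yields that $\beta$ is alternating on $\bar K_j$ and that the obstruction descends from a functional on $K_1$ to a functional on the smaller space $\bar K_j$ (equivalently, the image of the self-obstruction in $K_1^*$ is the annihilator of $\bar K_j$).

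Granted this identification, the Markov property is immediate: as $T$ varies uniformly over $\mathfrak{o}(V[\ell],Q|_{V[\ell]})$, the restriction $\alpha_T|_{\bar K_j}$ is uniform on alternating forms on $\bar K_j$ (the restriction map onto $\bigwedge^2 \bar K_j^*$ is surjective), and shifting by the fixed $\beta$ preserves uniformity, so $d_{j+1}$ is distributed as the dimension of the kernel of a uniform random alternating form on $\mathbb{F}_\ell^{d_j}$---a distribution depending on the history only through $d_j$. The main obstacle is step (iii): the delicate bookkeeping between the three identifications with $V[\ell]$ and the $\ell^{2j}$-valuation estimate is precisely what forces $\beta$ to be both alternating and well-defined on $\bar K_j$ rather than merely on $K_1$. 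In the excluded case $\ell = 2$ with $d_j = 2m = \dim V[\ell]$, the kernel of reduction is an extension of the alternating Lie algebra by a piece encoding the quadratic (not merely bilinear) structure, and since $\bar K_j = V[\ell]$ this extra piece is visible; the Markov property still follows from the same conditioning argument, but the uniform-alternating transition description must be refined in precisely those cases.
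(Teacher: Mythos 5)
Your strategy matches the paper's: both proofs realize $\bar K_{j+1}$ (the paper's $V_{j+1}^g$) as the radical of an alternating form on $\bar K_j$ and then show the form is uniformly distributed as the lift of $g_j$ to $g_{j+1}$ varies. The paper works with $V/\ell^j V$ rather than $V[\ell^j]$, but these are canonically isomorphic, so that is cosmetic. The decomposition into a fixed piece $\beta$ plus a Lie-algebra-parameterized piece $\alpha_T$ is implicit in the paper (where a fixed reference $g$ is perturbed by $h\in\Omega_j$, with $\delta_h=\ell^{-j}(h-1)$ playing the role of your $T$).

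There are two genuine gaps, both concentrated around $\ell=2$.

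First, your argument for alternatingness is too weak. The adjoint identity gives $B((\hat g-\mathrm{id})\hat v,\hat w)+B(\hat v,(\hat g-\mathrm{id})\hat w)\in\ell^{2j}\Z_\ell$, i.e.\ skew-symmetry mod $\ell$ after dividing by $\ell^j$. Taking $\hat v=\hat w$ gives $2\,B((\hat g-\mathrm{id})\hat v,\hat v)\in\ell^{2j}\Z_\ell$; for $\ell$ odd you may cancel the $2$, but for $\ell=2$ you only get $B((\hat g-\mathrm{id})\hat v,\hat v)\in\ell^{2j-1}\Z_\ell$, which after dividing by $\ell^j$ lands in $\ell^{j-1}\Z_\ell$ and does \emph{not} vanish mod $\ell$. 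The paper avoids this by computing the diagonal directly through the quadratic form: $B((g-\mathrm{id})\tilde v,\tilde v)=Q(g\tilde v)-Q((g-\mathrm{id})\tilde v)-Q(\tilde v)=-Q((g-\mathrm{id})\tilde v)=-\ell^{2j}Q(\ell^{-j}(g-\mathrm{id})\tilde v)\in\ell^{2j}\Z_\ell$, with no factor of $2$ anywhere. You need this $Q$-level computation, not just the bilinear adjoint identity, to get the alternating property uniformly in $\ell$.

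Second, your premise that the lifts $g_{j+1}\in H_{j+1}$ of $g_j$ form a torsor for the full Lie algebra $\mathfrak o(V[\ell],Q)$ is correct for $\ell$ odd (where $\Omega_1$ is pro-$\ell$, so the log map identifies $\Omega_j/\Omega_{j+1}$ with $\mathrm{Lie}\,\o$), but \emph{fails} for $\ell=2$: $\Omega$ is cut out by the spinor norm and Dickson invariant, and the image of $\Omega_j/\Omega_{j+1}$ in $\mathrm{Lie}\,\o(V_{\F_2},Q)$ may be a proper subspace. This is precisely the content of the paper's $\ell=2$ analysis, which shows via a commutator computation that the image contains $\{v^*\wedge w^*:B(v,w)=0\}$, hence has codimension at most $1$, and that the restriction map to $\wedge^2 W^\vee$ for any \emph{proper} subspace $W\subsetneq V\otimes\F_\ell$ is still surjective; the failure mode is exactly $W=V\otimes\F_\ell$, i.e.\ $d_j=2m$. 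Your closing remark about ``a piece encoding the quadratic structure'' gestures at the right phenomenon but does not supply the needed surjectivity onto $\wedge^2\bar K_j^\vee$ when $\bar K_j\subsetneq V\otimes\F_\ell$ and $\ell=2$, which is exactly what the theorem is asserting in that case.
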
 

\begin{corollary}\label{cor: 1-eigenspace markov}
For $n$ a prime power, $d \geq 2$ and $k$ a finite field,
the statement of \autoref{thm: 1-eigenspace markov} holds with $H := \im \mono n d k \cap \mult^{-1}(\mult \gamma_q)$.
\end{corollary}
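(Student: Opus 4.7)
The plan is to reduce the corollary to Theorem \ref{thm: 1-eigenspace markov} by verifying that the set $H := (\im \mono n d k)^{\mult \gamma_q}$ is a non-empty union of cosets of $\Omega(\qsel n d k)$ in $\o(\qsel n d k)$. Once this is in hand, the Markov property follows immediately.

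To verify the coset structure, I would proceed as follows. By Lemma \ref{lemma:orthogonal-reduction}, the geometric monodromy $\im \mono n d {\ol k}$ contains $\Omega(\qsel n d k)$. Since $\Omega(\qsel n d k)$ is normal in $\o(\qsel n d k)$ (being the intersection of kernels of the Dickson invariant and the spinor norm, both of which are group homomorphisms on $\o(\qsel n d k)$), the subgroup $\im \mono n d {\ol k} \subset \o(\qsel n d k)$ is automatically a union of cosets of $\Omega(\qsel n d k)$. By the definition of $\mult$ in Definition \ref{definition:mult}, $H = (\im \mono n d k)^{\mult \gamma_q}$ is precisely the preimage in $\im \mono n d k$ of $\gamma_q \in \Gamma = \im \mono n d k / \im \mono n d {\ol k}$; in particular $H$ is a single coset of $\im \mono n d {\ol k}$ inside $\im \mono n d k$, and hence inside $\o(\qsel n d k)$. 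Writing $H = g_0 \cdot \im \mono n d {\ol k}$ for any $g_0 \in H$, and using that $\im \mono n d {\ol k}$ is a disjoint union of cosets of $\Omega(\qsel n d k)$, we conclude that $H$ itself is a disjoint union of cosets of $\Omega(\qsel n d k)$.

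For non-emptiness of $H$, I would note that $\gamma_q$ is the image of the geometric Frobenius under the canonical map $\pi_1(\spec \mathbb F_q) \to \Gamma$ arising from the short exact sequence in Definition \ref{definition:mult} applied to $\lambda = \mono n d k$. Since this map factors through a surjection $\pi_1(\smestack d k) \to \im \mono n d k$ composed with the projection to $\Gamma$, the element $\gamma_q$ is realized by $\mono n d k(\Frob_x)$ for (in fact, every) $x \in \smestack d k(\mathbb F_q)$; concretely, Corollary \ref{corollary:finite-field-distribution} identifies $H$ with the set $H_{n,k}^d$ of Definition \ref{definition:kernel-distribution}, which is manifestly non-empty.

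With $H$ identified as a non-empty union of $\Omega(\qsel n d k)$-cosets, the hypothesis of Theorem \ref{thm: 1-eigenspace markov} is satisfied (noting that $\qsel n d k$ has even rank $12d-4 \geq 20$ and is non-degenerate), and the Markov property for $d_1(H), \ldots, d_e(H)$ follows directly. I do not foresee any genuine obstacles: the only content beyond Theorem \ref{thm: 1-eigenspace markov} is the normality of $\Omega$, the containment $\Omega \subset \im \mono n d {\ol k}$ from Lemma \ref{lemma:orthogonal-reduction}, and the coset structure induced by $\mult$, all of which are essentially formal.
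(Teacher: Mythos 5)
Your proposal is correct and follows essentially the same route as the paper: identify $H$ as a coset of the geometric monodromy group inside the arithmetic monodromy group, use the containments $\Omega(\qsel n d k) \subset \im \mono n d {\ol k}$ and $\im \mono n d k \subset \o(\qsel n d k)$ (the paper cites \autoref{theorem:monodromy}, you cite \autoref{lemma:orthogonal-reduction}, which gives the same containment), conclude $H$ is a union of $\Omega$-cosets, and apply \autoref{thm: 1-eigenspace markov}. Your added remarks on the normality of $\Omega(\qsel n d k)$ and the non-emptiness of $H$ make explicit what the paper leaves implicit, but the argument is the same.
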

\begin{proof} 
	By definition, $\im \mono n d k \cap \mult^{-1}(\mult \gamma_q)$ is a coset of the geometric monodromy group in the monodromy group. By \autoref{theorem:monodromy}, the geometric monodromy group contains $\Omega(\vsel n d k, \qsel n d k)$ and the monodromy group is contained in $\o(\vsel n d k, \qsel n d k)$. 
	Hence $(\im \mono n d k)^{\mult \gamma_q}$ is a union of cosets of $\Omega(\vsel n d k,\qsel n d k)$ in $\o(\vsel n d k, \qsel n d k)$, and we can apply \autoref{thm: 1-eigenspace markov} to each of the cosets.
\end{proof}
 
We next reduce \autoref{thm: 1-eigenspace markov} to \autoref{theorem: eigenspace markov} below.
For any $1 \leq j \leq e$, consider $\ell^{e-j} V = V[\ell^{j}]$, which is a nondegenerate quadratic space of rank $2m$ over $\Z/\ell^{j} \Z$. The action of $g \in \o(V,Q)$ on $V[\ell^{j}]$ factors through the quotient $\o(V,Q) \surj \o(V[\ell^{j}] , Q|_{V[\ell^{j}]})$. Let $H$ be any coset of $\Omega(V, Q)$. If $g$ is drawn uniformly at random in $\o(V,Q)$, its image in $\o(V[\ell^{j}], Q|_{V[\ell^{j}]})$ will also be uniform in a coset of $\Omega(V_{\Z/\ell^j \Z}, Q_{\Z/\ell^j \Z})$. 
We now naturally generalize \autoref{definition:random 1-eigenspace} to the setting of quadratic space over $\bz_\ell$.

\begin{definition}
	\label{definition:}
	Let $(V,Q)$ be a quadratic space over $\Z_\ell$, and let $H \subset \o(V,Q)$ be a subset which is a union of cosets of $\Omega(V,Q)$ in $\o(V,Q)$.
Define the random variable
$\rvcoset {V \otimes \Q_\ell/\Z_\ell} H$ to be given by 
$\ker (g-\id \mid_{V \otimes \Q_\ell/\Z_\ell})$ for
$g \in H$ drawn from the Haar measure (normalized to be a probability measure) of \autoref{lemma:subscheme-measure-0}.
\end{definition}

By the compatibility with reduction modulo $\ell^j$ discussed above, \autoref{thm: 1-eigenspace markov} then follows from: 

\begin{theorem}\label{theorem: eigenspace markov}
Let $(V,Q)$ be a nondegenerate quadratic space of rank $2m$ over $\Z_\ell$. Let $H \subset \o(V,Q)$ be a union of cosets of $\Omega(V,Q)$. Define the random variable
\[
d_{j}(H) :=\dim_{\F_{\ell}}( \ell^{j-1} \rvcoset {V \otimes \frac{\Q_\ell}{\Z_\ell}[\ell^{j}]} H).
\]
Then the sequence $d_1(H), d_2(H), \ldots$ is Markov, and for $\ell$ odd or $d_i \ne 
2m$, the distribution of $d_{i+1}(H)$ given $d_i(H)$ is the same as 
the dimension of the kernel of a uniform random alternating form on 
$\F_\ell^{d_i(H)}$. 
\end{theorem}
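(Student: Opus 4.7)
The plan is to study the decreasing filtration $L_1 \supset L_2 \supset \cdots$ where $L_j \subset V/\ell V$ is the image of $\ell^{j-1}\ker((g-\id)|_{V/\ell^j V}) \subset \ell^{j-1}V/\ell^j V$ under $\ell^{j-1}V/\ell^j V \cong V/\ell V$, so that $d_j(H) = \dim_{\F_\ell} L_j$. I would rely on the key structural fact that the congruence kernel $U_j := \ker(\o(V,Q) \to \o(V/\ell^j V))$ is pro-$\ell$ (for $\ell$ odd with $j \geq 1$, and $\ell=2$ with $j \geq 2$) and hence sits inside $\Omega(V,Q)$: since $H$ is a union of $\Omega$-cosets, conditioning $g \in H$ on its reduction modulo $\ell^j$ yields a uniform distribution on the entire preimage coset $g_0 U_j$. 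Moreover $U_j/U_{j+1}$ is canonically identified with the Lie algebra $\mathfrak o(V/\ell V)$ via $1 + \ell^j X \leftrightarrow X$, defined in all characteristics by the infinitesimal orthogonality condition $B_Q(v, Xv) = 0$ for all $v$. Thus, conditional on $g \bmod \ell^j$, one may write $g \equiv g_0(1 + \ell^j X) \pmod{\ell^{j+1}}$ with $X$ uniformly distributed in $\mathfrak o(V/\ell V)$.

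Next, I would set $A_0 := g_0 - \id$ and verify by direct calculation that $v_0 \in L_j$ lies in $L_{j+1}$ if and only if
\[
Xv_0 + \nu_j(v_0) \in L_j^\perp,
\]
where $L_j^\perp$ denotes the $B_Q$-annihilator of $L_j$ in $V/\ell V$ and $\nu_j(v_0) := A_0 v/\ell^j \bmod \ell V$ for any lift $v \in V$ of $v_0$ with $A_0 v \in \ell^j V$. Well-definedness of $\nu_j$ modulo $L_j^\perp$ and the identification of $L_j^\perp$ with the relevant image space both follow from the orthogonality identity
\[
B_Q(A_0 v, w) + B_Q(v, A_0 w) + B_Q(A_0 v, A_0 w) = 0
\]
together with the fact that $B_Q(A_0 v, A_0 w) \in \ell^{2j}\bz_\ell$ for $v, w$ lifting elements of $L_j$. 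Pairing the displayed condition with a varying $w_0 \in L_j$ via $B_Q$ exhibits $L_{j+1}$ as the radical of the bilinear form $\beta_X + \gamma$ on $L_j$, where
\[
\beta_X(v_0, w_0) := B_Q(Xv_0, w_0), \qquad \gamma(v_0, w_0) := B_Q(\nu_j(v_0), w_0),
\]
and I would check that both forms are alternating modulo $\ell$: for $\beta_X$ from the Lie algebra relation satisfied by $X$, and for $\gamma$ from the orthogonality identity at $v = w$ together with $Q(A_0 v) \in \ell^{2j}\bz_\ell$.

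The decisive step will be to observe that the linear map $\mathfrak o(V/\ell V) \to \mathrm{Alt}(L_j, \F_\ell)$, $X \mapsto \beta_X|_{L_j}$, is surjective with constant-size fibers: the pairing $B_Q$ identifies $\mathfrak o(V/\ell V) \cong \mathrm{Alt}(V/\ell V, \F_\ell)$, and the restriction map $\mathrm{Alt}(V/\ell V) \twoheadrightarrow \mathrm{Alt}(L_j)$ is evidently surjective. Consequently $\beta_X$, and therefore $\beta_X + \gamma$ (with $\gamma$ determined by $g_0$), is uniformly distributed over $\mathrm{Alt}(L_j, \F_\ell)$. The conditional distribution of $d_{j+1} = \dim_{\F_\ell}\rad(\beta_X + \gamma)$ given $g \bmod \ell^j$ therefore matches the distribution of the kernel dimension of a uniform random alternating form on $\F_\ell^{d_j}$, and depends only on $d_j$. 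Averaging over $g \bmod \ell^j$ consistent with the conditioning $(d_1, \ldots, d_j)$ preserves this dependence, yielding both the Markov property and the claimed explicit transition.

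The hard part will be the edge case $\ell = 2$ with $d_i = 2m$. For $\ell = 2$ in general, the first transition $d_1 \to d_2$ is not directly captured by the above Lie-algebraic scheme since $U_1 \not\subset \Omega$, and will have to be analyzed by conditioning modulo $4$ (equivalently, on $U_2$-cosets) and tracking how $H$ intersects each such fiber. When additionally $d_i = 2m$, one has $L_j = V/\ell V$ and $L_j^\perp$ is the radical of $B_Q \bmod \ell$, which over $\F_2$ need not vanish; the Dickson invariant then imposes a parity constraint on $d_{j+1}$ not visible in the plain alternating-form model, which accounts for the hypothesis ``$\ell$ odd or $d_i \neq 2m$'' in the theorem's explicit description. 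The Markov property itself should nonetheless persist in all cases by a similar but more careful coset-by-coset bookkeeping.
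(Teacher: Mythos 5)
Your proposal matches the paper's proof for $\ell$ odd (congruence filtration, identification of successive quotients with the Lie algebra, alternating form whose radical is the next term), but your reduction for $\ell = 2$ contains a genuine error. You claim that the congruence kernel $U_j := \ker\bigl(\o(V,Q) \to \o(V/\ell^j V)\bigr)$, being pro-$\ell$, is contained in $\Omega(V,Q)$ once $j \geq 2$ when $\ell = 2$. That inference is valid for $\ell$ odd (a pro-$\ell$ group admits no nontrivial map to a $2$-group), but it fails for $\ell = 2$ precisely because $\bz_2^\times/(\bz_2^\times)^2$ is itself a $2$-group. Concretely, on a hyperbolic plane summand over $\bz_2$ the element $\mathrm{diag}(5, 5^{-1})$ lies in $U_2$ (it is $\equiv 1 \pmod 4$) but has spinor norm $[5] \neq 1$, so $U_2 \not\subset \Omega$, and the same phenomenon persists at every level $j$. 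Consequently, when $\ell = 2$, conditioning $g \in H$ on its residue modulo $\ell^j$ does \emph{not} give the uniform distribution on $g_0 U_j$: it gives the uniform distribution on $g_0\Omega_j$ with $\Omega_j := U_j \cap \Omega$, whose image in the Lie algebra can be a proper subspace. So your ``decisive step'' --- surjectivity of $X \mapsto \beta_X|_{L_j}$ onto $\mathrm{Alt}(L_j,\F_\ell)$ because all of $\mathfrak o(V/\ell V)$ is available --- is exactly what breaks, and it breaks at every transition, not only at $d_1 \to d_2$ as you suggest.

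The missing idea is the paper's commutator-subgroup argument. Since $\Omega(V,Q)$ contains $[\o(V,Q),\o(V,Q)]$, the image of $\Omega_j$ in $U_j/U_{j+1} \cong \wedge^2(V\otimes\F_2)^\vee$ contains all elements $(\Ad g - \Id)\alpha$ for $g \in \o(V,Q)$ and $\alpha \in \wedge^2(V\otimes\F_2)^\vee$. Taking $g$ to be a reflection and computing shows the image contains $\{v^* \wedge w^* : B(v,w)=0\}$, a subspace of codimension at most $1$. Restriction of that subspace to $\wedge^2 W^\vee$ remains surjective for every proper $W \subsetneq V\otimes\F_2$: choose $v \in W^\perp$ and $w$ with $B(v,w)\neq 0$, and note $v^*\wedge w^*$ restricts to zero on $W$. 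Surjectivity can only fail when $W$ is all of $V\otimes\F_2$. That, and not a nontrivial radical of $B_Q$ (which vanishes for nondegenerate $Q$ of even rank even over $\F_2$), is the actual source of the hypothesis ``$\ell$ odd or $d_i \neq 2m$.''
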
 

We prove 
\autoref{theorem: eigenspace markov} in
\autoref{subsection:proof-of-random-kernel-markov}.

\begin{remark}
Another way to think about the numbers $d_j(H)$ is as follows. Decomposing 
\[
\rvcoset V H :=  (\Z/\ell \Z)^{r_1(H)} \oplus (\Z/\ell^2 \Z)^{r_2(H)} \oplus  (\Z/\ell^3\Z)^{r_3(H)} \oplus \ldots 
\]
where the $r_i(H)$ are random variables, we have 
\begin{align*}
d_1(H) & = r_1(H)  + r_2(H) + r_3 (H) + \ldots  \\
d_2(H) &= r_2(H) + r_3(H)+ \ldots  \\
d_3(H) &= r_3(H) + \ldots  \\
&\hspace{.2cm}\vdots 
\end{align*}
\end{remark}

\subsection{Proving \autoref{theorem: eigenspace markov}}
\label{subsection:proof-of-random-kernel-markov}

We now embark on the proof of \autoref{theorem: eigenspace markov}. The proof encompasses this entire subsection, and notation is built cumulatively throughout the section.

We begin by giving one more interpretation of the sequences $d_{j}(H)$. 
Referring to notation of \autoref{theorem: eigenspace markov}, let $V_{j}^H$ 
be the random variable\footnote{We apologize for the similarity to the notation $\vsel n d k$; at least, the latter notation will not appear in this section.}, valued in isomorphism classes of $\F_{\ell}$-vector spaces, given by 
\[
(\ker(g-\id)|_{V/\ell^jV}+\ell V)/\ell V \subset V \otimes \F_\ell,
\]
for $g$ drawn from the Haar measure on $H$. For a fixed $g \in \o(V,Q)$ we write 
\[
V_{j}^g := \ker ((g-\id)|_{V/\ell^jV}).
\]

\begin{lemma}\label{lem: d_l} For a fixed $g \in \o(V,Q)$, the isomorphism $V \otimes_{\Z_{\ell}} \F_\ell \xrightarrow{\sim}V \otimes_{\Z_{\ell}} \frac{\Q_\ell}{\Z_\ell}[\ell]$ identifies
\[
V_j^g 	 \xrightarrow{\sim} \ell^{j-1} \ker \left(g-\id \mid_{V \otimes_{\Z_{\ell}} \frac{\Q_\ell}{\Z_\ell}[\ell^j]} \right).
\]
Hence $\dim V_j^H$ coincides with the random variable $d_j(H)$. 
\end{lemma}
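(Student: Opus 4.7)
My plan is to unwind canonical identifications; there is no real content beyond a one-line compatibility between reduction mod $\ell$ and multiplication by $\ell^{j-1}$, so I do not anticipate any genuine obstacle.

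First I would introduce, for each $k \geq 1$, the canonical $\Z_\ell$-linear isomorphism
\[
\phi_k \colon V/\ell^k V \xrightarrow{\sim} V \otimes_{\Z_\ell} (\Q_\ell/\Z_\ell)[\ell^k], \qquad v \bmod \ell^k V \longmapsto \ell^{-k} v \bmod V \otimes \Z_\ell.
\]
Note that $\phi_1$ is precisely the isomorphism $V \otimes_{\Z_\ell} \F_\ell \xrightarrow{\sim} V \otimes_{\Z_\ell} (\Q_\ell/\Z_\ell)[\ell]$ appearing in the statement of the lemma. Since $g$ acts $\Z_\ell$-linearly on $V$, each $\phi_k$ is $g$-equivariant, so $\phi_j$ restricts to an isomorphism
\[
V_j^g \;\xrightarrow{\sim}\; \ker\bigl((g-\id)\big|_{V \otimes_{\Z_\ell} (\Q_\ell/\Z_\ell)[\ell^j]}\bigr).
\]

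Next I would verify that the square
\[
\begin{tikzcd}
V/\ell^j V \ar[r] \ar[d, "\phi_j"'] & V/\ell V \ar[d, "\phi_1"] \\
V \otimes_{\Z_\ell} (\Q_\ell/\Z_\ell)[\ell^j] \ar[r, "\cdot\ell^{j-1}"'] & V \otimes_{\Z_\ell} (\Q_\ell/\Z_\ell)[\ell]
\end{tikzcd}
\]
commutes, where the top arrow is the natural reduction $v \bmod \ell^j V \mapsto v \bmod \ell V$. This reduces to the one-line identity $\ell^{j-1}(\ell^{-j} v) = \ell^{-1} v$. Combining the two observations, under $\phi_1$ the image of $V_j^g$ under reduction $V/\ell^j V \to V/\ell V$ coincides with $\ell^{j-1}\ker\bigl((g-\id)|_{V \otimes (\Q_\ell/\Z_\ell)[\ell^j]}\bigr) \subset V \otimes (\Q_\ell/\Z_\ell)[\ell]$. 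This is exactly the identification asserted in the lemma, with the arrow $V_j^g \xrightarrow{\sim}$ interpreted as the induced identification on the image in $V \otimes \F_\ell$ (strictly speaking, $V_j^g$ lives in $V/\ell^j V$ and can carry more $\Z/\ell^j$-module structure, but its image in $V\otimes \F_\ell$ is what the lemma uses).

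For the ``hence'' clause, observe that by definition, for a fixed $g \in H$ the subspace $V_j^H \subset V \otimes \F_\ell$ is precisely $(V_j^g + \ell V)/\ell V$, i.e.\ this same image. Taking $\F_\ell$-dimensions and viewing both sides as functions of random $g \in H$ (drawn from the Haar measure) yields
\[
\dim_{\F_\ell} V_j^H \;=\; \dim_{\F_\ell}\!\bigl(\ell^{j-1} \rvcoset{V \otimes (\Q_\ell/\Z_\ell)[\ell^j]}{H}\bigr) \;=\; d_j(H)
\]
as random variables. The entire argument is formal once $\phi_k$ is in hand; no step stands out as a real difficulty.
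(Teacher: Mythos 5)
Your argument is correct and takes the same route as the paper, whose entire proof is to assert the commutativity of the square relating $\times\ell^{j-1}$ on $(V\otimes\Q_\ell/\Z_\ell)[\ell^j]$ to reduction mod $\ell$ on $V/\ell^j V$ under the identifications you call $\phi_k$ (the paper writes the horizontal arrows in the opposite direction, as $\times \ell^k$, but this is the same square). Your parenthetical remark that the lemma's ``identification'' is really about the image of $V_j^g$ in $V\otimes\F_\ell$ --- i.e.\ the subspace denoted $V_j^H$ --- correctly pins down a mild abuse of notation that the paper leaves implicit.
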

 
\begin{proof}
This is a straightforward verification which follows from commutativity of 
\begin{equation}
 \begin{tikzcd}
 (V \otimes \Q_\ell/\Z_\ell)[\ell^{j}] \ar[r, "\sim"', "\times \ell^j"] \ar[d, "\times \ell^{j-1}"] & V \otimes \Z/\ell^j \Z \ar[d, "\mod{\ell}"] \\
 (V \otimes \Q_\ell/\Z_\ell)[\ell] \ar[r, "\sim"', "\times \ell"] & V \otimes \F_\ell
 \end{tikzcd}
 \end{equation}
\end{proof}

We set $V_0^H := V\otimes_{\Z_{\ell}} \F_\ell$ by convention. We claim that the sequence $V_1^H,V_2^H,\ldots$ of random subspaces is Markov, 
and more precisely that if $\ell$ is odd or $V_j^H \ne V_0^H$, then $V_{j+1}^H$ is the 
kernel of a uniformly distributed alternating form on $V_j^H$. In view of \autoref{lem: d_l}, this will complete the proof of \autoref{theorem: eigenspace markov}.

\begin{lemma}\label{lem: orthogonal complement} The orthogonal complement of $V_j^g \subset V \otimes \F_\ell$ with respect to the quadratic form induced by $Q$ 
is $(\ell^{1-j}(\im(g-\id)\cap 
\ell^{j-1}V))/\ell V \subset V \otimes \F_{\ell}$.
\end{lemma}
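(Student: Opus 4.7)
The plan is to combine a general orthogonality identity, coming from $g$ preserving $B_Q$, with a Smith normal form dimension count. Throughout, write $W := \{v \in V : (g-\id)v \in \ell^j V\}$, so that $V_j^g$ lifts to $W/\ell^j V$ and $\bar V_j^g := (V_j^g + \ell V)/\ell V \subset V \otimes \F_\ell$ equals $W/\ell V$ under the surjection $V/\ell^j V \twoheadrightarrow V/\ell V$.

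First I would verify the inclusion $\ell^{1-j}(\im(g-\id) \cap \ell^{j-1} V)/\ell V \subseteq (\bar V_j^g)^\perp$. Since $g \in \o(V,Q)$, we have $B_Q(gv, gw) = B_Q(v,w)$, hence $B_Q(gv, w) = B_Q(v, g^{-1} w)$. For any $u \in V$ and $w \in W$, this gives
\[
B_Q((g-\id) u, w) = B_Q(u,(g^{-1}-\id)w) = -B_Q(u, g^{-1}(g-\id)w) \in \ell^j \Z_\ell,
\]
because $(g-\id)w \in \ell^j V$ and $g^{-1}$ preserves $V$. If in addition $(g-\id)u = \ell^{j-1} x$, then $B_Q(\ell^{j-1} x, w) \in \ell^j \Z_\ell$ yields $B_Q(x,w) \in \ell \Z_\ell$, and the inclusion follows.

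For the reverse inclusion I would match dimensions. By the structure theorem for modules over the PID $\Z_\ell$ applied to $g-\id : V \to V$, choose $\Z_\ell$-bases $\{e_i\}, \{f_i\}$ of $V$ (not assumed equal) and integers $a_i \in \Z_{\geq 0} \cup \{\infty\}$ with $(g-\id)e_i = \ell^{a_i} f_i$ (where $a_i = \infty$ encodes $(g-\id)e_i = 0$). A direct calculation shows $V_j^g = \bigoplus_i (\ell^{\max(j-a_i,0)} \Z/\ell^j\Z)\, e_i$, so that $\bar V_j^g$ has $\F_\ell$-basis $\{\bar e_i : a_i \geq j\}$, of dimension $\#\{i : a_i \geq j\}$. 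On the other hand, $\im(g-\id)\cap \ell^{j-1} V = \bigoplus_i \ell^{\max(a_i, j-1)} \Z_\ell\, f_i$, and multiplying by $\ell^{1-j}$ and reducing modulo $\ell V$ yields the subspace with basis $\{\bar f_i : a_i \leq j-1\}$, of dimension $\#\{i : a_i \leq j-1\}$. The two dimensions sum to $2m$.

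Finally, since $(V,Q)$ is nondegenerate over $\Z_\ell$ (so $B_Q$ reduces to a nondegenerate bilinear form on $V \otimes \F_\ell$, regardless of whether $\ell = 2$), we have $\dim_{\F_\ell}(\bar V_j^g)^\perp = 2m - \dim_{\F_\ell} \bar V_j^g$. Combined with the inclusion above, the dimension count forces the two subspaces to coincide. The only real obstacle is the bookkeeping with the Smith normal form for the dimension computation; the orthogonality input is a short identity, and the characteristic-$2$ subtlety is handled uniformly by the nondegeneracy hypothesis built into the paper's definition of a quadratic space.
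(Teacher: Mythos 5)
Your proof is correct, and it takes a genuinely different route from the paper's. The paper works entirely inside $V/\ell^j V$: it invokes the identity $\ker((g-\id)|_{V/\ell^j V})^\perp = \im((g-\id)|_{V/\ell^j V})$ (which follows from $g$ being orthogonal), intersects with $\ell^{j-1}V/\ell^j V$, takes orthogonal complements to get $(\im(g-\id)\cap \ell^{j-1}V/\ell^j V)^\perp = \ker(g-\id) + \ell V/\ell^j V$, and then descends this equality to $V\otimes\F_\ell$. This argument is short, but it implicitly uses biduality $A^{\perp\perp}=A$ for submodules of $V/\ell^j V$ (equivalently, that $\bz/\ell^j\bz$ is self-injective) to pass from $(A+B)^\perp = A^\perp \cap B^\perp$ to $(A\cap B)^\perp = A^\perp + B^\perp$. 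You instead verify the inclusion $\ell^{1-j}(\im(g-\id)\cap\ell^{j-1}V)/\ell V\subseteq (\bar V_j^g)^\perp$ by the same orthogonality identity $B_Q((g-\id)u,w)=-B_Q(u,g^{-1}(g-\id)w)$, and then close the gap by a Smith normal form dimension count over the DVR $\Z_\ell$, finishing with nondegeneracy of $B_Q$ modulo $\ell$. Your route is longer but more elementary and self-contained, and it sidesteps the need to know that orthogonal complementation over $\Z/\ell^j\Z$ behaves well under intersection; the paper's route is more conceptual and buys a one-line descent from $\Z/\ell^j\Z$ to $\F_\ell$. One small stylistic point: your appeal to the structure theorem produces two bases $\{e_i\},\{f_i\}$ not related to $Q$ in any way, which is fine since you only use $Q$ in the inclusion step and in the final nondegeneracy step, but it is worth flagging explicitly so a reader does not look for a compatibility that isn't being claimed.
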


\begin{proof} Inside $V/\ell^j V$, we have $\ker((g-\id)|_{V/\ell^j V})^\perp = 
\im((g-\id)|_{V/\ell^j V})$, hence
\[
(\im((g-\id)|_{V/\ell^j V})\cap \ell^{j-1}V/\ell^j V)^\perp
=
\ker((g-\id)|_{V/\ell^j V}) + \ell V.
\]
This immediately induces the claim about orthogonal complements inside 
$V\otimes \F_\ell$. 
\end{proof}

Given $j$ and $g$, for $v \in V_j^g$, we use $\widetilde{v}$ to denote any choice of lift to $V$.

\begin{lemma}\label{lem: conditions to be in subspace} Keep the notation of the preceding discussion. The following are equivalent: 
\begin{enumerate}[(i)]
\item $v \in V_{j+1}^g$, 
\item  $\ell^{-j}(g-\id)\tilde{v}\in (\ell^{1-j}(\im(g-\id)\cap 
\ell^{j-1}V))/\ell V  = (V_j^g)^\perp$,
\item $B(\ell^{-j}(g-\id)\tilde{v},w)=0$ for all $w\in V_j^g$, where $B$ is the bilinear form associated to the quadratic form $Q$ on $V$. 
\end{enumerate}

\end{lemma}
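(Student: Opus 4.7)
My plan is to establish (i) $\Leftrightarrow$ (ii) $\Leftrightarrow$ (iii) by directly unpacking the definitions, using the explicit description of $(V_j^g)^\perp$ from Lemma~\ref{lem: orthogonal complement} when passing between (i) and (ii). First I would pin down the convention from the preceding discussion: $v$ is an element of $V_j^g$, so it admits a lift $\tilde v \in V$ with $(g-\id)\tilde v \in \ell^j V$, and therefore $\ell^{-j}(g-\id)\tilde v$ is an honest element of $V$. A short preliminary check (modifying $\tilde v$ by $\ell \tilde w$ with $\tilde w$ chosen so that the result is again a valid lift) shows that its image in $V \otimes \F_\ell$ is well-defined modulo $(V_j^g)^\perp$, so the conditions (ii) and (iii) are genuinely lift-independent.

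For (i) $\Rightarrow$ (ii), if $v \in V_{j+1}^g$ I would simply choose a lift $\tilde v$ with $(g-\id)\tilde v \in \ell^{j+1} V$; then $\ell^{-j}(g-\id)\tilde v \in \ell V$ reduces to $0 \in (V_j^g)^\perp$. For the reverse implication (ii) $\Rightarrow$ (i), the explicit description of $(V_j^g)^\perp$ lets me write
\[
\ell^{-j}(g-\id)\tilde v \;\equiv\; \ell^{1-j}(g-\id)\tilde y \pmod{\ell V}
\]
for some $\tilde y \in V$ with $(g-\id)\tilde y \in \ell^{j-1} V$ (so that $\ell^{1-j}(g-\id)\tilde y$ lies in $V$). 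Multiplying by $\ell^j$ and rearranging gives $(g-\id)(\tilde v - \ell \tilde y) \in \ell^{j+1} V$, so $\tilde v - \ell \tilde y$ is the lift of $v$ witnessing $v \in V_{j+1}^g$.

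Finally, (ii) $\Leftrightarrow$ (iii) is immediate from the definition of the orthogonal complement: an element $x \in V \otimes \F_\ell$ lies in $(V_j^g)^\perp$ precisely when $B(x, w) = 0$ for every $w \in V_j^g$, and we apply this to $x = \ell^{-j}(g-\id)\tilde v \bmod \ell V$. The only thing resembling an obstacle is careful bookkeeping: one must check that the various quantities $\ell^{-j}(g-\id)\tilde v$ and $\ell^{1-j}(g-\id)\tilde y$ really live in $V$ (rather than in $V \otimes \Q_\ell$) and that the conditions (ii) and (iii) do not depend on the choice of lift. Once these well-definedness points are verified, each of the three implications is a single manipulation.
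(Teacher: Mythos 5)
Your proof is correct and follows essentially the same route as the paper's: both reduce (i) $\Leftrightarrow$ (ii) to the observation that the allowed modifications of a lift $\tilde v$ (by $\ell\delta$ with $(g-\id)\delta \in \ell^{j-1}V$) sweep out exactly $(V_j^g)^\perp$ via \autoref{lem: orthogonal complement}, and then get (ii) $\Leftrightarrow$ (iii) for free from the definition of orthogonal complement. The only cosmetic difference is that you argue the two implications separately and make the lift-independence of (ii)--(iii) explicit up front, whereas the paper folds it into a single ``modify the lift'' computation.
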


\begin{proof}

Given $v\in V_j^g$, we want to know when it is in $V_{j+1}^g$. The condition that $v \in V_j^g$ is 
equivalent to there being a lift $\tilde{v}$ of $v$ to $V$ such that 
$(g-\id)\tilde{v}\in \ell^jV$. Fixing such a
lift $\wt{v}$, the question is whether we can modify it to another lift $\wt{v}'$ such that $(g-\id)\tilde{v}'\in \ell^{j+1}V$. The freedom for modification is that we can replace $\wt{v}$ by $\wt{v} + \ell \delta$ for some $\delta \in V$. So we want to know if $\delta$ can be chosen so that 
\[
(g-\id) (\wt{v} + \ell\delta) \in \ell^{j+1} V,
\]
or equivalently, so that 
\[
(g-\id) \wt{v} \equiv \ell (g-\id) \delta \mod{\ell^{j+1} V}.
\]
Since we know that $(g-\id) \wt{v} \in \ell^j V$ by assumption, we can rewrite this as 
\[
\ell^{-j} (g-\id) \wt{v} = \ell^{1-j} (g-\id) \delta \in V \otimes \F_\ell
\]
for $\delta$ such that $(g-\id) \delta \in \ell^{j-1}V$. This establishes the equivalence of (i) and (ii). 

The equivalence of (ii) and (iii) then follows from \autoref{lem: orthogonal complement}.
\end{proof}

The $\F_\ell$-linear functional $w \mapsto B(\ell^{-j}(g-\id)\tilde{v},w)$ on $V_j^g$ depends only on $v$, and expresses
$V_{j+1}^g$ as the kernel of a linear transformation $V_j^g\to (V_j^g)^\vee$,
or 
equivalently as the radical of a bilinear form. 

\begin{lemma}
	\label{lemma:alternating-and-radical}
Keep the notation of the preceding discussion. Define the bilinear form on $V_j^g$: 
\[
\langle v, w \rangle_j := B(\ell^{-j}(g-\id)\tilde{v},w).
\]
Then 
\begin{enumerate}[(i)]
\item $V_{j+1}^g$ is the radical of $\langle \cdot, \cdot \rangle_j$. 
\item $\langle \cdot, \cdot \rangle_j$ is alternating.
\end{enumerate}
\end{lemma}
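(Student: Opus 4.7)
My plan is to first check that $\langle \cdot, \cdot \rangle_j$ is a well-defined bilinear form on $V_j^g$, then establish (ii) by a direct computation exploiting the $g$-invariance of $Q$, and finally deduce (i) from Lemma~\ref{lem: conditions to be in subspace}.

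For well-definedness, I would observe that a lift $\tilde v \in V$ of $v \in V_j^g$ satisfying $(g-\id)\tilde v \in \ell^j V$ is unique up to adding $\ell \delta$ with $(g-\id)\delta \in \ell^{j-1} V$. Such a change alters $\ell^{-j}(g-\id)\tilde v$ by $\ell^{1-j}(g-\id)\delta$, which modulo $\ell V$ lies in $(\ell^{1-j}(\im(g-\id) \cap \ell^{j-1} V))/\ell V = (V_j^g)^\perp$ by Lemma~\ref{lem: orthogonal complement}, so the pairing with any $w \in V_j^g$ is unchanged. Independence from the lift $\tilde w$ of $w$ is immediate: replacing $\tilde w$ by $\tilde w + \ell \eta$ changes the pairing by $\ell \cdot B(\ell^{-j}(g-\id)\tilde v, \eta) \in \ell \Z_\ell$, which vanishes modulo $\ell$.

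To prove (ii), I would pick $v \in V_j^g$ with a lift $\tilde v$ satisfying $(g-\id)\tilde v \in \ell^j V$, write $(g-\id)\tilde v = \ell^j u$ with $u \in V$, and expand
\[
Q(\tilde v) = Q(g\tilde v) = Q(\tilde v + \ell^j u) = Q(\tilde v) + \ell^{2j} Q(u) + \ell^j B(\tilde v, u)
\]
to obtain $B(\tilde v, u) = -\ell^j Q(u)$. By symmetry of $B$, this gives $\langle v, v \rangle_j = B(u, \tilde v) = -\ell^j Q(u) \equiv 0 \pmod{\ell}$ for $j \geq 1$, establishing the alternating condition (which is the stronger ``$\langle v,v\rangle=0$'' form, needed also for $\ell=2$).

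Part (i) then follows easily: the alternating property from (ii) implies that the left and right radicals of $\langle \cdot, \cdot \rangle_j$ agree, so it suffices to identify its left radical with $V_{j+1}^g$. But the left radical is precisely the set of $v \in V_j^g$ with $B(\ell^{-j}(g-\id)\tilde v, w) = 0$ for all $w \in V_j^g$, which is condition (iii) of Lemma~\ref{lem: conditions to be in subspace}, equivalent to $v \in V_{j+1}^g$. No step in this argument should present serious difficulty; the only subtlety is keeping careful track of how different lifts affect the pairing modulo $\ell$, which is addressed by Lemma~\ref{lem: orthogonal complement} and the direct computation in (ii).
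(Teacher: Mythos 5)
Your proposal is correct and takes essentially the same approach as the paper's proof: part (ii) is established by the same key computation exploiting $Q(g\tilde v) = Q(\tilde v)$ (your expansion of $Q(\tilde v + \ell^j u)$ is a rearrangement of the paper's use of the identity $B(x,y) = Q(x+y)-Q(x)-Q(y)$ with $x = (g-\id)\tilde v$, $y = \tilde v$), and part (i) is deduced from Lemma~\ref{lem: conditions to be in subspace} exactly as in the paper. Your extra remarks on well-definedness of the pairing and on left versus right radical fill in details the paper leaves implicit, but do not change the structure of the argument.
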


\begin{proof} Part (i) follows from \autoref{lem: conditions to be in subspace}. For (ii), we need to show that 
\[
B((g-\id)\tilde{v},\tilde{v})\in \ell^{j+1}\Z_\ell.
\]
But this follows by observing:
\begin{align*}
B((g-\id)\tilde{v},\tilde{v}) &= Q(g\tilde{v}) - Q((g-\id)\tilde{v}) - Q(\tilde{v}) \\
& =-Q((g-\id)\tilde{v}) \\
& = -\ell^{2j} Q(\ell^{-j}(g-\id)\tilde{v})
\in
\ell^{2j}\Z_\ell.
\qedhere
\end{align*}
\end{proof}

 We thus find that $V_{j+1}^g$ is the kernel of an alternating form on $V_j^g$, 
so it remains only to show that as $g$ varies over elements with fixed 
sequence $(V_1^g,\dots,V_j^g)$, this alternating form is uniformly 
distributed. It suffices to show this when $g$ merely varies 
over elements of a fixed coset of $\Omega(V,Q) \subset \o(V,Q)$. 
Let $\Omega_j \subset \Omega(V,Q)$ be the subgroup consisting of elements which are $1 \mod{\ell^j}$. We will show that the uniform distribution holds already when drawing uniformly from the coset $H = \Omega_j g$. For fixed $v$, changing $g \mapsto hg$ with $h\in  \Omega_j$ changes the linear functional by
\[
w\mapsto B(\ell^{-j}(h-1)g\tilde{v},w)
     = B(\delta_h gv,w)
     = B(\delta_h v,g^{-1}w),
\]
where $\delta_h = \ell^{-j}(h-1)$. We view its reduction modulo as an element of the Lie algebra of the special fiber of $\o(V,Q)$: $\ol{\delta_h} \in \Lie(\o(V,Q)_{\F_{\ell}})$. 
To get equidistribution, it suffices for the induced homomorphism
from $\Omega_j/\Omega_{j+1}$ to the space $\wedge^2(V_j^g)^{\vee}$ of alternating forms 
on $V_j^g$, sending $h$ to the restriction of $\ol{\delta}_h$, to be surjective.

\subsubsection{The case $\ell>2$} If $\ell$ is odd, then $\Omega_1$ is a pro-$\ell$-group, and thus the spinor 
\label{subsubsection:ell>2}
norm vanishes on $\Omega_1$.  It immediately follows that the logarithm induces an isomorphism 
$\Omega_j/\Omega_{j+1} \xrightarrow{\sim} \Lie(\o(V,Q)_{\F_{\ell}}) \cong \wedge^2(V \otimes \F_\ell)^{\vee}$, hence the further projection map to $\wedge^2 (V_j^g)^{\vee}$ is surjective.

\subsubsection{The case $\ell=2$}
\label{subsubsection:ell=2}
For $\ell=2$, it may not be the case that 
$\Omega_j $ surjects on $\Lie \o(V,Q)$.  However, $\Omega(V,Q)$ contains the commutator subgroup of $\o(V,Q)$, and the image of the commutator subgroup in $\Lie(\o(V,Q)_{\F_{\ell}})$ contains the image of $\Ad g - \Id$ for all $g \in \o(V,Q)$. In particular, the image of $\Omega_j$ contains 
\[
(\Ad g - \Id) \cdot \alpha = \alpha\mapsto g \alpha g^t-\alpha
\]
for any $g\in \o(V,Q)$ and any alternating form $\alpha \in \wedge^2 (V \otimes \F_{\ell})^{\vee}$.

Take $g$ to be any lift of the 
reflection in a nonisotropic vector $v \in V_{\F_{\ell}}$  (i.e., a vector with $Q(v) \neq 0$). Denoting $v^* =B(v, \bullet) \in V^{\vee}$, $g \in V_{\F_{\ell}}^{\vee}  \otimes V_{\F_{\ell}} $ can be represented by $\Id + \frac{v^*}{Q(v)} v$ (the unusual expression because we are in characteristic $2$). Then 
\[
g\alpha g^t  - \alpha = \frac{1}{Q(v)}( v^*\otimes v \cdot  \alpha  + \alpha \cdot v^* \otimes v) - \frac{1}{Q(v)^2} (v^* \otimes v) \alpha (v^* \otimes v).
\]
A computation shows all $w^* \otimes v^*$ with $B(v,w) = 0$ are in the space generated by such expressions
\footnote{We spell out this computation in more detail. Let $x$ be such that $B(x,v)= 1$. Take $\alpha$ to be represented by $x^* \otimes w \in V_{\F_{\ell}}^* \otimes V_{\F_{\ell}}$, where we have used $B$ to identify $V$ with $V^*$. Then $g \alpha g^t - \alpha$ is represented by 
\[
\underbrace{(v^* \otimes v )(x^* \otimes w)}_{v^* \otimes w} + \underbrace{(x^* \otimes w)(v^* \otimes v)}_{0} + \underbrace{(v^* \otimes v) (x^* \otimes w) (v^* \otimes v)}_0.
\]
}

 Since for any $w$, $\langle w \rangle^\perp$ is spanned by nonisotropic vectors, the space $\log(\Omega_j)$ in fact contains 
\begin{equation}\label{eq: trace 0 alt forms}
\{ v^* \wedge w^* \co B(v,w)=0\},
\end{equation}
and thus has codimension at most $1$. The full Lie 
algebra $\Lie \o(V,Q)$ is generated over this space by any single element $v^*
\wedge w^*$ with $B(v,w)\ne 0$. If $W$ is any proper subspace of $V$, then we can pick $v \in W^{\perp}$ and $w \in V$ such that $B(v,w) \neq 0$. The image of $v^* \wedge w^*$ in $\wedge^2 W^{\vee}$ is zero, hence the restriction map from \eqref{eq: trace 0 alt forms} to $\wedge^2(W^\vee)$
is surjective for any proper subspace $W \subset V$. Thus the only case in which the 
alternating form may not be equidistributed is when $V_j=V_0$.
This completes the proof of \autoref{theorem: eigenspace markov}. \qed 

\subsection{The BKLPR heuristic}\label{ssec: BKLPR}

We summarize the model for the Selmer group described in \cite[\S 1.2]{bhargavaKLPR:modeling-the-distribution-of-ranks-selmer-groups}. 
\subsubsection{The $\ell^\infty$ rank and Selmer distribution from BKLPR}
\label{subsubsection:ell-infty-bklpr}
Let $m \in \Z$ and $V = \Z_\ell^{2m}$, with the quadratic form $Q \co V \rightarrow \Z_\ell$ given by 
\[
Q(x_1, \ldots, x_m, y_1, \ldots, y_m) = \sum_{i=1}^m x_i y_i.
\]
A $\Z_\ell$-submodule $Z \subset V$ is called \emph{isotropic} if $Q|_{Z}  = 0$. Let $\ogr_{(V,Q)}(\Z_\ell)$ be the set of maximal isotropic \emph{summands} of $V$, hence each $Z \in \ogr_{(V,Q)}(\Z_\ell)$ is a free $\bz_\ell$-module of rank $m$. 

There is a probability measure on $\ogr_{(V,Q)}(\Z_\ell)$ such that the distribution of $Z/\ell^eZ$ in $V/\ell^e V$ for each $e \geq 1$ is uniform
\cite[\S1.2, \S2, \S4]{bhargavaKLPR:modeling-the-distribution-of-ranks-selmer-groups}.
We define $\scr{Q}_{2m,\ell}$ 
(notated in \cite{bhargavaKLPR:modeling-the-distribution-of-ranks-selmer-groups} as $\scr{Q}_{2m}$)
to be the distribution associated to the random variable $S$, valued in isomorphism classes of
abelian groups, where $S$ obtained by drawing $Z$ and $W$ from $\ogr_{(V,Q)}(\Z_\ell)$ independently from this measure, and forming
\[
	S := \left(Z \otimes \frac{\Q_\ell}{\Z_\ell}  \right) \cap  \left(W \otimes \frac{\Q_\ell}{\Z_\ell} \right).
\]

\begin{remark}
	\label{remark:}
	In \cite{bhargavaKLPR:modeling-the-distribution-of-ranks-selmer-groups}, $\scr{Q}_{2m,\ell}$ and related distributions were defined on symplectic abelian groups,
	which are abelian groups together with a nondegenerate alternating pairing to $\bq/\bz$.
	Since two symplectic abelian groups are isomorphic \emph{if and only if} their underlying abelian groups are isomorphic
	\cite[\S3.2]{bhargavaKLPR:modeling-the-distribution-of-ranks-selmer-groups},	
	their distribution
can be regarded as a distribution on abelian groups (which takes probability
$0$ on any abelian group not admitting a symplectic structure). 
\end{remark}

As $m \rightarrow \infty$ the distributions $\scr{Q}_{2m,\ell}$ converge to a discrete probability distribution $\scr{Q}_\ell$ \cite[Theorem 1.2]{bhargavaKLPR:modeling-the-distribution-of-ranks-selmer-groups}, which is conjectured in \cite[Conjecture 1.3]{bhargavaKLPR:modeling-the-distribution-of-ranks-selmer-groups} to determine the asymptotic distribution of $\ell^\infty$-Selmer groups of elliptic curves ordered by height. 

Furthermore, $S$ fits naturally into a short exact sequence
\[
0 \rightarrow R \rightarrow S \rightarrow T \rightarrow 0
\]
where $R := (Z \cap W) \otimes \frac{\Q_\ell}{\Z_\ell}$ and $T$ is torsion. It is further conjectured that the joint distribution of $(R,S,T)$ models the
joint distribution of 
the rank of the elliptic curve (i.e., $R = (\Q_\ell/\Z_\ell)^r$ for $r$ modeling the rank),
the $\ell^\infty$-fSelmer group, and the $\ell$-primary part of the Tate-Shafarevich group, respectively \cite[Conjecture 1.3]{bhargavaKLPR:modeling-the-distribution-of-ranks-selmer-groups}. For example, the following proposition expresses the compatibility of these predictions with the Katz-Sarnak philosophy \cite{KS99}
that 50\% of elliptic curves should have rank $0$ and 50\% should have rank $1$. 

\begin{proposition}[{\cite[Proposition 5.6]{bhargavaKLPR:modeling-the-distribution-of-ranks-selmer-groups}}]
Let notation be as above. Fix $W \in \ogr_{(V,Q)}(\Z_\ell)$. If $Z$ is chosen randomly from $\ogr_{(V,Q)}(\Z_\ell)$ (according to the above measure), then $Z \cap W$ has rank $0$ with probability 1/2 and rank $1$ with probability 1/2. 
\end{proposition}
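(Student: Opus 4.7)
The plan is to decompose $\ogr_{(V,Q)}(\bz_\ell)$ into its two geometric connected components, compute the BKLPR measure of each, and within each component use \autoref{lemma:subscheme-measure-0} to show that the rank of $Z \cap W$ almost surely attains the minimum value allowed by a parity constraint. Write $\ogr_{(V,Q)} = \ogr^+ \sqcup \ogr^-$ for the decomposition over $\bz_\ell$ into the two connected components, labeled so that $W \in \ogr^+(\bz_\ell)$. The classical fact on maximal isotropics in a hyperbolic quadratic space of rank $2m$ is the parity constraint: for any field extension $K/\bq_\ell$ and any $Z \in \ogr^\epsilon(K)$, the integer $\dim_K(Z \cap W_K)$ has the same parity as $m$ if $\epsilon = +$ and opposite parity if $\epsilon = -$. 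Since the $\bz_\ell$-rank of $Z \cap W$ equals $\dim_{\bq_\ell}(Z_{\bq_\ell} \cap W_{\bq_\ell})$, this forces the $\bz_\ell$-rank to have the parity determined by the component in which $Z$ lies.

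First I would show that each of $\ogr^\pm(\bz_\ell)$ has BKLPR measure $1/2$. By the defining property of the measure, its pushforward to $\ogr_{(V,Q)}(\F_\ell)$ is uniform, so it suffices to check that the two subsets $\ogr^\pm(\F_\ell)$ have equal cardinality. These are the two orbits on the set of maximal isotropics of an index-$2$ subgroup of $\o(V \otimes \F_\ell)$ (the kernel of the determinant when $\ell$ is odd, the kernel of the Dickson invariant when $\ell = 2$), and they are interchanged by any reflection in a nonisotropic vector, hence have equal size. Smoothness of $\ogr^\pm$ over $\bz_\ell$ then lifts this to measure $1/2$ on each component of $\ogr_{(V,Q)}(\bz_\ell)$.

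Next I would show that on each component the rank of $Z \cap W$ attains its minimum value almost surely. For each integer $r$, the locus $S_r \subset \ogr^\epsilon$ where $\dim(Z_{\bq_\ell} \cap W_{\bq_\ell}) \geq r$ is a classical Schubert-type closed subscheme, and whenever $r$ exceeds the component's minimum (the smallest nonnegative integer of the correct parity, namely $0$ or $1$), the subscheme $S_r$ has dimension strictly less than that of $\ogr^\epsilon$. Since $\ogr^\epsilon$ is smooth over $\bz_\ell$, \autoref{lemma:subscheme-measure-0} applied to $S_r \hookrightarrow \ogr^\epsilon$ shows that $S_r(\bz_\ell)$ has BKLPR measure zero; hence the rank of $Z \cap W$ almost surely equals the minimum value on each component.

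Combining the two steps, one component contributes rank $0$ almost surely and the other contributes rank $1$ almost surely (with the assignment determined by the parity of $m$), yielding total probabilities of $1/2$ each. The main technical point is the codimension assertion for the Schubert stratification of $\ogr^\epsilon$ over $\bz_\ell$. The most efficient way to verify it is via an explicit affine chart: near $W$ the component $\ogr^\epsilon$ is identified with the affine space of alternating bilinear forms on $W$, and under this identification the condition $\dim(Z \cap W) \geq r$ corresponds to the corank of such a form being $\geq r$, which manifestly cuts out a closed subscheme of the expected codimension. Feeding this into \autoref{lemma:subscheme-measure-0} completes the argument.
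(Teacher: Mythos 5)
The paper cites this statement to \cite[Proposition 5.6]{bhargavaKLPR:modeling-the-distribution-of-ranks-selmer-groups} and does not reproduce a proof, so there is no argument here to compare against; I will evaluate your proposal on its own. The high-level structure is correct and natural: split $\ogr_{(V,Q)}$ into its two geometric connected components $\ogr^{\pm}$, compute that each carries measure $1/2$ from the uniform pushforward to $\ogr(\F_\ell)$ together with smoothness over $\bz_\ell$, and then show on each component that the closed locus where $\rank(Z\cap W)$ exceeds the parity minimum is a proper closed subscheme, hence has measure zero by \autoref{lemma:subscheme-measure-0}. The measure-$1/2$ step and the parity constraint are handled correctly.

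The one soft spot is the final chart argument. You write that ``near $W$ the component $\ogr^\epsilon$ is identified with the affine space of alternating bilinear forms on $W$,'' but this chart (Lagrangians transverse to a fixed complement $W'$ of $W$) covers a dense open of $\ogr^+$ only, the component containing $W$. It does not touch $\ogr^-$ at all, and for a chart centered at a point of $\ogr^-$ the coordinate you would use is no longer the corank that computes $\dim(Z\cap W)$, so the statement as written does not verify the codimension claim on the second component. The fix is easy and avoids the explicit Schubert chart entirely: $Z\mapsto\dim(Z_{\bq_\ell}\cap W_{\bq_\ell})$ is upper semicontinuous, and each $\ogr^\epsilon$ is integral over $\bz_\ell$ (smooth, proper, with geometrically connected fibers), so it suffices to exhibit a single point of each component attaining the parity minimum --- for instance a complementary Lagrangian $W'$ in one component and a Lagrangian meeting $W$ in a line in the other. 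Then $\{Z : \dim(Z_{\bq_\ell}\cap W_{\bq_\ell})\ge r\}$ for $r$ exceeding the minimum is automatically a proper closed subscheme of the component, and the rest of your argument, including the appeal to \autoref{lemma:subscheme-measure-0}, goes through unchanged.
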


\subsubsection{The $\ell^\infty$ Selmer distribution from BKLPR conditioned on rank}
\label{subsubsection:conditioned-bklpr}

Let $\scr{T}_{2m,r, \ell}$ be the distribution on finite abelian $\ell$-groups, 
(notated in \cite{bhargavaKLPR:modeling-the-distribution-of-ranks-selmer-groups} as $\scr{T}_{2m,r}$)
given by the above process in \autoref{subsubsection:ell-infty-bklpr} \emph{conditioned} on the assumption $\rank(Z \cap W) = r$. By \cite[Theorem 1.6]{bhargavaKLPR:modeling-the-distribution-of-ranks-selmer-groups}, these distributions converge as $m \rightarrow \infty$ to a discrete distribution $\scr{T}_{r, \ell}$, 
(notated in \cite{bhargavaKLPR:modeling-the-distribution-of-ranks-selmer-groups} as $\scr{T}_{r}$)
which agrees with Delaunay's conjecture for the distribution of $\Sha[\ell^{\infty}]$ of rank $r$ elliptic curves over $\Q$ \cite[p. 278]{bhargavaKLPR:modeling-the-distribution-of-ranks-selmer-groups}.

There is another characterization of the distribution $\mathscr T_{r,\ell}$. 
For non-negative integers $m,r$ with $m - r \in 2 \Z_{\geq 0}$, let $A$ be drawn randomly from the Haar probability measure on the set of \emph{alternating} $m \times m$-matrices over $\Z_{\ell}$ having rank $m-r$, and $\scr{A}_{m,r,\ell}$ be the distribution of $(\coker A)_{\tors}$. According to \cite[Theorem 1.10]{bhargavaKLPR:modeling-the-distribution-of-ranks-selmer-groups}, as $m \rightarrow \infty$ through integers with $m-r \in 2 \Z_{\geq 0}$, the distributions $\scr{A}_{m,r, \ell}$ converge to a limit $\scr{A}_{r,\ell}$, which coincides with $\scr{T}_{r,\ell}$. 

Finally, \cite[\S 5.6]{bhargavaKLPR:modeling-the-distribution-of-ranks-selmer-groups} predicts that, conditioned on elliptic curves having rank $r$, $\Sha$ is distributed as the direct sum over all primes $\ell$ of a finite abelian group drawn from $\scr{T}_{r, \ell}$. 

\subsubsection{The BKLPR $n$-Selmer distribution}
\label{subsubsection:bklpr-n-selmer}
We next review the model for $n$-Selmer elements described at the beginning of \cite[\S 5.7]{bhargavaKLPR:modeling-the-distribution-of-ranks-selmer-groups}.
Let $\mathscr T_{r,\ell}$ denote the random variable defined on isomorphism classes of finite abelian $\ell$ groups (notated $\mathscr T_r$ in \cite{bhargavaKLPR:modeling-the-distribution-of-ranks-selmer-groups})
defined in \cite[Theorem 1.6]{bhargavaKLPR:modeling-the-distribution-of-ranks-selmer-groups} and reviewed in \autoref{subsubsection:conditioned-bklpr}.
For $G$ an abelian group, we let $G[n]$ denote the $n$ torsion of $G$.
For $n \in \bz_{\geq 1}$ with prime factorization $n = \prod_{\ell \mid n} \ell^{a_\ell}$, 
define a distribution $\mathscr T_{r,\bz/n\bz}$ on finitely generated $\bz/n\bz$ modules
by choosing a collection of abelian groups $\{ T_\ell\}_{\ell \mid n}$, 
with $T_\ell$ drawn from $\mathscr T_{r,\ell}$, and defining the probability $\mathscr T_{r,\bz/n\bz} = G$ to be the probability that $\oplus_{\ell \mid n} T_\ell[n] \simeq G$.

Given the above predicted distribution for the $n$-Selmer group of elliptic curves of rank $r$,
the heuristic that $50\%$ of elliptic curves have rank $0$ and $50\%$ have rank $1$ leads to
the following predicted joint distribution of the $n$-Selmer group and rank:
\begin{definition}
	\label{definition:bklpr-rank-selmer}
	Let $\bklpr n$ be the joint distribution on $\bz_{\geq 0}  \times \ab n$ defined by
	\begin{align*}
		\prob(\bklpr n = (r,G)) =
		\begin{cases}
			\frac{1}{2} \mathscr T_{r, \bz/n\bz} & \text{ if } r \leq 1 \\
			0 & \text{ if } r \geq 2. \\
		\end{cases}
	\end{align*}
\end{definition}

\subsection{Markov property for the BKLPR model}
\label{subsection:markov-bklpr}

Fix $Z,W \in \ogr_{(V,Q)}(\Z_\ell)$ and set $S = (Z \otimes \frac{\Q_\ell}{\Z_\ell})  \cap (W \otimes \frac{\Q_\ell}{\Z_\ell})$. Define 
\begin{equation}\label{eq: S_j}
S_j := \left(\underbrace{W/\ell^j \cap Z/\ell^j}_{\subset V/\ell^j} + \frac{\ell V}{\ell^j V}\right)/\ell V,
\end{equation}
which are the analogues of the $V_j$ in \autoref{lem: d_l}. Although $S_j$ depends on $Z$ and $W$, and will be viewed as a random variable in the future, we suppress this dependence for notational convenience. 
The main result of this subsection is the following \autoref{thm: random intersection markov}, and the proof encompasses the remainder of this subsection.

\begin{theorem}\label{thm: random intersection markov}
Let $V,Z,$ and $W$ be as in \autoref{ssec: BKLPR}. Define random variables, valued in isomorphism classes of finite-dimensional $\F_{\ell}$-vector spaces, by $S_0 := V \otimes \F_\ell$, and $S_1, S_2, \ldots, S_j, \ldots$ as in \eqref{eq: S_j}. 
Then, the sequence $S_1, S_2, \ldots$ is Markov, and the distribution of $\dim S_{i+1}$ given $S_i$ coincides with the distribution of the dimension of the kernel of a uniformly random alternating form on 
$S_i$. 
\end{theorem}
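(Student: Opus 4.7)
My plan is to mirror the strategy used in the proof of Theorem \ref{theorem: eigenspace markov}, adapting the constructions from a single automorphism $g \in \o(V,Q)$ to the pair of Lagrangians $(Z, W)$. The proof divides into three main steps: constructing an alternating form on $S_j$, identifying its radical with $S_{j+1}$, and showing this form is uniformly distributed given $(S_1, \ldots, S_j)$.

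For the first step, given $v \in S_j$ I would choose a lift $\tilde v \in V$ together with representatives $z \in Z$, $w \in W$ satisfying $z \equiv w \equiv \tilde v \pmod{\ell^j V}$; since $z - w \in \ell^j V$, the element $p := (z-w)/\ell^j$ lies in $V$. I would define the bilinear form
\[
\langle v, v'\rangle_j := \ell^{-j} B(z-w, \tilde v') \pmod \ell
\]
on $S_j$, where $\tilde v'$ is any lift of $v' \in S_j$. Well-definedness modulo $\ell$ would follow from the isotropy of $Z$ and $W$ combined with $z - w \in \ell^j V$: changes in $z, w$ by elements of $\ell^j Z, \ell^j W$ pair trivially against lifts of $v' \in \bar Z \cap \bar W$. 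The alternating property follows by taking $\tilde v = z$ to compute $\langle v, v\rangle_j = -\ell^{-j} B(w, z)$, and then using $B(w, z) = -Q(z-w) = -\ell^{2j} Q(p) \in \ell^{2j} \Z_\ell$, which vanishes modulo $\ell$ for $j \geq 1$.

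For uniformity I would exploit the $\o(V, Q)$-invariance of the joint law of $(Z, W)$ under the action $g \cdot (Z, W) := (gZ, W)$. Restricting to the congruence subgroup $U_j := \ker(\o(V, Q) \to \o(V/\ell^j V))$ preserves $S_1, \ldots, S_j$, since $gZ \equiv Z \pmod{\ell^j V}$ for $g \in U_j$. Writing $\alpha := (g - 1)/\ell^j \bmod \ell \in \Lie \o(V, Q) \otimes \F_\ell$, a direct computation analogous to that preceding Section \ref{subsubsection:ell>2} would show that the form corresponding to $(gZ, W)$ differs from that for $(Z, W)$ by the restriction to $S_j$ of the alternating form $\tilde\alpha(x, y) := B(\alpha x, y)$ on $V/\ell V$. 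For $\ell$ odd, $\alpha$ ranges freely over $\Lie \o(V, Q) \otimes \F_\ell$, and the restriction map to alternating forms on $S_j$ is surjective; the case $\ell = 2$ proceeds as in Section \ref{subsubsection:ell=2}, with the proviso $S_j \neq S_0$. Thus conditional on $(S_1, \ldots, S_j)$ the form is uniformly distributed, which yields both the Markov property and the stated transition.

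The main obstacle will be the second step, verifying that the radical of $\langle\cdot,\cdot\rangle_j$ on $S_j$ is exactly $S_{j+1}$: a direct analysis of when a lift of $v \in S_j$ can be refined to an element of $Z/\ell^{j+1} \cap W/\ell^{j+1}$ naturally produces the criterion $\overline{(z-w)/\ell^j} \in \bar Z + \bar W = S_1^\perp$, which a priori is a weaker condition than $\overline{(z-w)/\ell^j} \in S_j^\perp$. Closing this gap — paralleling Lemmas \ref{lem: orthogonal complement} and \ref{lem: conditions to be in subspace} in the orthogonal setting — requires carefully leveraging that the class $\overline{(z-w)/\ell^j}$ is only defined modulo $S_1^\perp \subset S_j^\perp$, so the intrinsic datum is a well-defined element of $S_j^\vee$, and combining this with the uniformity step to match distributions.
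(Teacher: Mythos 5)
Your proposal reproduces the paper's skeleton (same form $A_j$, same alternating computation using isotropy of $Z,W$, same strategy of acting by the congruence subgroup to get uniformity), but there are two substantive gaps.

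First, the proviso you impose for $\ell=2$ is wrong: the theorem has no hypothesis $S_j \neq S_0$, and none is needed. The reason the $\ell=2$ complications from \autoref{subsubsection:ell=2} \emph{do not} recur here is that you are acting by the full congruence subgroup $\Gamma(\ell^j) = \ker(\o(V,Q) \to \o(V/\ell^j V))$. The map $\gamma \mapsto 1-\gamma$ induces an isomorphism $\Gamma(\ell^j)/\Gamma(\ell^{j+1}) \xrightarrow{\sim} \Lie \o(V_{\F_\ell},Q)$ for \emph{every} prime $\ell$, including $2$. The issue in \autoref{subsubsection:ell=2} arose only because the argument there was forced to work inside $\Omega_j$, a potentially index-$2$ subgroup of the congruence group, whose image in the Lie algebra could fall short. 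Here nothing forces you into that subgroup, and the paper notes precisely this point in the remark immediately following its proof. Importing the $S_j \neq S_0$ hypothesis weakens the statement and would actually break the downstream identification of the two Markov chains.

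Second, and more seriously, you have misidentified what the ``direct analysis'' gives for the radical. You claim it only produces the criterion $\overline{\epsilon} \in S_1^\perp = \bar Z + \bar W$ and then suggest closing the gap by ``combining this with the uniformity step to match distributions''--- but the radical of a fixed form $A_j$ for fixed $(Z,W)$ is a deterministic fact, and you cannot prove it by distributional matching (which would in any case be circular, since the transition law is exactly what you're trying to establish). The correct analysis, done carefully modulo $\ell^j$ rather than $\ell$, gives the stronger condition directly. The allowed modifications of the lifts are $w_v' = w_v + \ell\delta_W$, $z_v' = z_v + \ell\delta_Z$ with $\delta_W \in W$, $\delta_Z \in Z$ (not $\ell^j\delta$, because one only needs $\tilde v' \equiv v \bmod \ell$). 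Demanding $w_v' \equiv z_v' \bmod \ell^{j+1}$ forces
\[
\ell^{j-1}\epsilon \equiv \delta_Z - \delta_W \pmod{\ell^j},
\]
so the condition on $\epsilon$ is that $\ell^{j-1}\epsilon \bmod \ell^j$ lie in $(Z/\ell^j + W/\ell^j) \cap \ell^{j-1}V/\ell^j V$, and one needs a separate orthogonality lemma identifying $\ell^{1-j}\bigl((Z/\ell^j + W/\ell^j)\cap \ell^{j-1}V/\ell^j V\bigr)$ with $S_j^\perp$. This is exactly the missing lemma you allude to by comparison with \autoref{lem: orthogonal complement}, and it has to be proved on its own (using that $Z^\perp = Z$, $W^\perp = W$ inside $V/\ell^j$). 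Without it you do not have a proof of the radical identification, and the ``match distributions'' workaround does not substitute.
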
 

We omit the proof of the following lemma, which is similar to that of \autoref{lem: d_l}.
\begin{lemma}
Keep the notation above. Under the identification 
\[
\left(V \otimes \frac{\Q_\ell}{\Z_\ell } \right)[\ell] \xrightarrow{\sim} V \otimes \F_\ell,
\]
we have 
\[
\ell^{j-1} \cdot S[\ell^j]   \xrightarrow{\sim} S_j.
\] 
\end{lemma}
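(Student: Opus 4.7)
The plan is to prove the identification by a direct diagram chase, using the commutative diagram already set up in the proof of \autoref{lem: d_l}. The key observation is that the isomorphism $(V \otimes \Q_\ell/\Z_\ell)[\ell^j] \xrightarrow{\sim} V/\ell^j V$ given by multiplication by $\ell^j$ carries the multiplication-by-$\ell^{j-1}$ map to the reduction-mod-$\ell$ map $V/\ell^j V \to V/\ell V$, under the analogous identification of $(V \otimes \Q_\ell/\Z_\ell)[\ell]$ with $V\otimes\F_\ell = V/\ell V$. So it suffices to track the image of $S[\ell^j]$ through these identifications.

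First I would identify $(Z\otimes \Q_\ell/\Z_\ell)[\ell^j]$ with its image in $V/\ell^j V$. Since $Z$ is a $\Z_\ell$-module summand of $V$ (by definition of $\mathrm{OGr}_{(V,Q)}(\Z_\ell)$), tensoring with $\Q_\ell/\Z_\ell$ preserves injectivity and the $\ell^j$-torsion of $Z\otimes\Q_\ell/\Z_\ell$ is represented by elements of the form $z/\ell^j$ with $z \in Z$. Under the identification $(V \otimes \Q_\ell/\Z_\ell)[\ell^j] \cong V/\ell^j V$ via $v/\ell^j \leftrightarrow v \bmod \ell^j V$, this $\ell^j$-torsion corresponds to $(Z + \ell^j V)/\ell^j V = Z/\ell^j Z$ as a subgroup of $V/\ell^j V$ (the last equality using that $Z$ is a summand so $Z \cap \ell^j V = \ell^j Z$). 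The identical statement holds for $W$.

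Next I would use that taking $\ell^j$-torsion is left exact, so commutes with intersection of submodules of $V\otimes \Q_\ell/\Z_\ell$. This gives
\[
S[\ell^j] = \bigl((Z\otimes \Q_\ell/\Z_\ell) \cap (W\otimes \Q_\ell/\Z_\ell)\bigr)[\ell^j] = (Z\otimes \Q_\ell/\Z_\ell)[\ell^j] \cap (W\otimes \Q_\ell/\Z_\ell)[\ell^j],
\]
which under the identification of the previous paragraph corresponds to $Z/\ell^j Z \cap W/\ell^j Z$ as a subgroup of $V/\ell^j V$ (the notation $Z/\ell^j \cap W/\ell^j$ used in \eqref{eq: S_j}). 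Finally I would apply the $\mod \ell$ map $V/\ell^j V \to V/\ell V$ to this intersection: the image of any subgroup $A\subseteq V/\ell^j V$ is $(A+\ell V/\ell^j V)/(\ell V/\ell^j V)$, which, for $A = Z/\ell^j \cap W/\ell^j$, is exactly $S_j$ as defined in \eqref{eq: S_j}. Composing with the commutative diagram from \autoref{lem: d_l} gives $\ell^{j-1}\cdot S[\ell^j] \xrightarrow{\sim} S_j$.

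There is no real obstacle here; this is a mechanical bookkeeping argument paralleling the proof of \autoref{lem: d_l}. The only place requiring any care is the identification of $(Z\otimes\Q_\ell/\Z_\ell)[\ell^j]$ with $Z/\ell^j Z$ inside $V/\ell^j V$, where one uses the summand hypothesis to avoid spurious contributions from $Z\cap \ell^j V$; once that is in place, the rest is a routine chase through left exactness of $\ell^j$-torsion and the commutative square.
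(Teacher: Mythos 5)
Your proof is correct, and it is exactly the argument the paper intends: the paper explicitly omits this proof, remarking only that it is "similar to that of" the earlier lemma identifying $V_j^g$ with $\ell^{j-1}\ker(g-\id|_{V\otimes\frac{\Q_\ell}{\Z_\ell}[\ell^j]})$, and your diagram chase through the identification $(V\otimes\Q_\ell/\Z_\ell)[\ell^j]\cong V/\ell^j V$ is precisely that parallel argument. The one point genuinely needing care — that $(Z\otimes\Q_\ell/\Z_\ell)[\ell^j]$ maps onto $(Z+\ell^j V)/\ell^j V$ with $Z\cap\ell^j V=\ell^j Z$ because $Z$ is a summand — is correctly identified and handled.
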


The non-degenerate bilinear form $B$ on $V$ induces a non-degenerate bilinear form on $V \otimes \F_\ell$, that we denote by $\ol{B}$. We may sometimes abbreviate notation by using $\ol{B}(v,x)$, with $v \in V$ and $x \in V \otimes \F_\ell$, to denote $\ol{B}(v \pmod{\ell}, x)$. 

We will construct the sequence of alternating forms (one for each $S_j$, whose radical is $S_{j+1}$) referenced in \autoref{thm: random intersection markov}.

\begin{lemma}
	\label{lemma: ortho complement S_j}
Identifying $\ell^{1-j} (	\ell^{j-1}V/\ell^j V ) \xrightarrow{\sim} V \otimes \F_{\ell}$, the orthogonal complement of $S_j$ in $V \otimes \mathbb F_\ell$ is $\ell^{1-j}\left( (Z/\ell^j + W/\ell^j) \cap \ell^{j-1}V/\ell^j V \right)$.
\end{lemma}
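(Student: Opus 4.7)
The plan is to compute the orthogonal complement in two stages: first work inside $V/\ell^j V$ (where $Z/\ell^j$ and $W/\ell^j$ are maximal isotropic summands, so orthogonal complements are easy), then transport the answer to $V \otimes \mathbb{F}_\ell$ via the identification $V/\ell V \cong \ell^{1-j}(\ell^{j-1}V/\ell^j V)$.

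First, I would observe that since $Z$ and $W$ are maximal isotropic summands of $V$, the reductions $Z/\ell^j$ and $W/\ell^j$ are maximal isotropic summands of $V/\ell^j V$, each equal to its own orthogonal complement (with respect to the non-degenerate bilinear form $B$ reduced modulo $\ell^j$). Setting $C := Z/\ell^j \cap W/\ell^j$, the standard identity $(A \cap B)^\perp = A^\perp + B^\perp$ in the non-degenerate quadratic space $V/\ell^j V$ gives
\[
C^\perp = (Z/\ell^j)^\perp + (W/\ell^j)^\perp = Z/\ell^j + W/\ell^j \qquad \text{inside } V/\ell^j V.
\]

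Next, I would prove the following key transport lemma: for any $\bz/\ell^j\bz$-submodule $A \subset V/\ell^j V$, letting $\bar{A} \subset V/\ell V$ denote its reduction modulo $\ell$, the orthogonal complement $\bar{A}^\perp \subset V/\ell V$ equals $\ell^{1-j}(A^\perp \cap \ell^{j-1}V/\ell^j V)$ under the identification $V/\ell V \cong \ell^{1-j}(\ell^{j-1}V/\ell^j V)$. The proof runs as follows: an element $y \in \ell^{j-1}V$ can be written as $\ell^{j-1} y'$ with $y' \in V$, and for any lift $\tilde{a} \in V$ of $a \in A$, we have $B(y,\tilde{a}) = \ell^{j-1} B(y', \tilde{a})$, so $B(y,\tilde{a}) \equiv 0 \pmod{\ell^j}$ iff $B(y', \tilde{a}) \equiv 0 \pmod{\ell}$ iff $\bar{B}(\bar{y'}, \bar{a}) = 0$. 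This establishes the lemma after checking both inclusions.

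Applying the transport lemma with $A = C$: note that $\bar{C}$ is exactly $S_j$ by its definition in \eqref{eq: S_j} (adding $\ell V/\ell^j V$ and quotienting by $\ell V$ is the same as taking the mod-$\ell$ image of $C$). Hence
\[
S_j^\perp = \ell^{1-j}\bigl(C^\perp \cap \ell^{j-1}V/\ell^j V\bigr) = \ell^{1-j}\bigl((Z/\ell^j + W/\ell^j) \cap \ell^{j-1}V/\ell^j V\bigr),
\]
which is the desired formula. The main obstacle is purely bookkeeping: keeping straight the two different levels of reduction ($\ell$ versus $\ell^j$) and the rescaling that identifies $V/\ell V$ with $\ell^{1-j}(\ell^{j-1}V/\ell^j V)$; once these are set up cleanly, everything reduces to the standard duality for maximal isotropics plus a one-line calculation with the bilinear form.
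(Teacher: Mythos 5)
Your proposal is correct and follows essentially the same route as the paper: compute $(Z/\ell^j \cap W/\ell^j)^\perp = Z/\ell^j + W/\ell^j$ inside $V/\ell^j V$ using that $Z$ and $W$ are maximal isotropic, then transport the answer down to $V \otimes \F_\ell$. The only organizational difference is that the paper intersects with $(\ell V/\ell^j)^\perp = \ell^{j-1}V/\ell^j V$ before ``tensoring with $\F_\ell$,'' whereas you isolate that rescaling as an explicit transport lemma; the underlying calculation $B(\ell^{j-1}y',\tilde a) \equiv 0 \pmod{\ell^j} \iff \bar B(\bar y', \bar a) = 0$ is the same content the paper leaves implicit.
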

\begin{proof}
	Inside $V/\ell^j V$, we have 
	\begin{align*}
	\left( Z/\ell^j \cap W/\ell^j \right)^\perp = Z^\perp/\ell^j + W^\perp/\ell^j = Z/\ell^j + W/\ell^j
	\end{align*}
	using that $Z$ and $W$ are maximal isotropic.
	Therefore,
\begin{align*}
	\left( (Z/\ell^j \cap W/\ell^j) + \ell V/\ell^j \right)^\perp = (Z/\ell^j \cap W/\ell^j)^\perp \cap (\ell V/\ell^j)^\perp = (Z/\ell^j + W/\ell^j) \cap \ell^{j-1}V/\ell^{j}.
\end{align*}
The result then follows by tensoring with $\F_\ell$. 
\end{proof}

Next, given $v \in S_j$, we seek to characterize when $v \in S_{j+1}$. By definition, $v \in S_j$ is equivalent to the existence of a representative $\wt{v} \in W/\ell^j \cap  Z/\ell^j$ reducing to $v$ mod $\ell$, and lifts $w_v$ of $\wt{v}$ to $W$ and $z_v$ of $\wt{v}$ to $Z$ such that $w_v \equiv z_v \pmod{\ell^{j} V}$. Hence $w_v-z_v = \ell^{j} \epsilon$ for some $\epsilon \in V$. 

\begin{lemma}
	\label{lemma:equivalent-perp}
	With notation above, $v \in S_j$ lies in $S_{j+1}$ if and only if
	the associated $\epsilon$ as above 
	satisfies $\epsilon \in \ell^{1-j}\left( (Z/\ell^j + W/\ell^j) \cap \ell^{j-1}V/\ell^j V \right).$
\end{lemma}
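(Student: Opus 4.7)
The plan is to unwind the definition of $S_{j+1}$ and translate the condition for $v \in S_{j+1}$ into an explicit congruence on $\epsilon$. By definition of $S_{j+1}$ (cf.\ \eqref{eq: S_j}), the element $v \in S_j$ lies in $S_{j+1}$ if and only if there exist lifts $w'_v \in W$ and $z'_v \in Z$ of $v \bmod \ell V$ with $w'_v \equiv z'_v \pmod{\ell^{j+1}V}$. Starting from the given $w_v, z_v$ satisfying $w_v - z_v = \ell^j \epsilon$, any other such lifts are of the form $w'_v = w_v + \ell a$ with $a \in W$ and $z'_v = z_v + \ell b$ with $b \in Z$, since we only need to preserve the reduction mod $\ell V$.

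The plan is then to compute
\[
w'_v - z'_v \;=\; \ell^j \epsilon + \ell(a - b)
\]
and require this to lie in $\ell^{j+1}V$. First I would observe that this holds if and only if $\ell^{j-1}\epsilon + (a-b) \in \ell^j V$, i.e., if and only if
\[
\ell^{j-1}\epsilon \;\in\; (Z + W) + \ell^j V \;\subset\; V,
\]
where, trivially, $\ell^{j-1}\epsilon \in \ell^{j-1}V$ already. Hence the existence of such $a,b$ is equivalent to the image of $\ell^{j-1}\epsilon$ in $V/\ell^j V$ lying in
\[
\bigl(Z/\ell^j + W/\ell^j\bigr) \cap \bigl(\ell^{j-1}V/\ell^j V\bigr).
\]

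Next I would rescale by multiplication by $\ell^{1-j}$, which induces an isomorphism $\ell^{j-1}V/\ell^j V \xrightarrow{\sim} V/\ell V \cong V \otimes \mathbb F_\ell$ sending $\ell^{j-1}\epsilon \mapsto \epsilon \bmod \ell V$. Under this identification the condition above becomes precisely
\[
\epsilon \bmod \ell V \;\in\; \ell^{1-j}\bigl((Z/\ell^j + W/\ell^j) \cap \ell^{j-1}V/\ell^j V\bigr),
\]
which is the content of the lemma.

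The only potential obstacle is the bookkeeping among the various quotients $V/\ell^jV$, $\ell^{j-1}V/\ell^jV$, and $V \otimes \mathbb F_\ell$, and in particular checking that $\epsilon$ is well-defined modulo $\ell V$ independently of the choice of $w_v,z_v$ (replacing $w_v$ by $w_v + \ell a$ and $z_v$ by $z_v + \ell b$ shifts $\epsilon$ by $\ell^{1-j}(a-b)$, which already lies in the set on the right, so the condition is independent of choices). This last observation confirms that the equivalence is well-posed.
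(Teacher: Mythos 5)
Your argument is correct and follows essentially the same route as the paper: parametrize the allowed modifications of the lifts $w_v, z_v$ by $\ell a, \ell b$ with $a\in W$, $b\in Z$, and translate the condition $w'_v \equiv z'_v \pmod{\ell^{j+1}V}$ into $\ell^{j-1}\epsilon \in (Z/\ell^j + W/\ell^j)\cap \ell^{j-1}V/\ell^j V$ before rescaling by $\ell^{1-j}$. Your closing remark that the condition is independent of the choice of $\epsilon$ is a small but worthwhile addition not spelled out in the paper.
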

\begin{proof}
	For $v \in S_{j+1},$ if we can find other lifts $\wt{v}'$, $w_v'$, $z_v'$ satisfying the same conditions, but such that $w_v' \equiv z_v' \pmod{\ell^{j+1}}$. Such modifications are exactly of the form $w_v' = w_v + \ell \delta_W$ with $\delta_W \in W$ and $z_v' = z_v + \ell \delta_Z$ with $\delta_Z \in Z$. Hence $v \in S_{j+1}$ if and only if we can choose $\delta_W, \delta_Z$ such that  
\[
w_v + \ell \delta_W \stackrel{?}= z_v + \ell \delta_Z  + \ell^{j+1} \epsilon'.
\]
Since $w_v = z_v + \ell^{j} \epsilon$, this is equivalent to solving 
\[
\ell^{j-1} \epsilon \equiv \delta_W  - \delta_Z  \pmod{\ell^j} \quad \text{for some $\delta_W \in W/\ell^j, \delta_Z \in Z/\ell^j$}.
\]
which is equivalent to 
\[
\epsilon \in \ell^{1-j}\left( (Z/\ell^j + W/\ell^j) \cap \ell^{j-1}V/\ell^j V \right).
\qedhere
\]
\end{proof}

\begin{lemma}
	\label{lemma:}
	There is a well defined bilinear form 
	\[
	A_j : S_j \times S_j \rightarrow \Q_\ell/\Z_\ell
	\]
	given by 
	\begin{equation}\label{eq: bklpr form}
	A_j(v,x) := \ol{B}(\epsilon, x) = \ol{B} (\ell^{-j}(w_v - z_v), x ).
	\end{equation}
	
\end{lemma}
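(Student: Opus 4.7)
The plan is to verify that the value of $A_j(v,x)$ does not depend on the choice of $(w_v, z_v)$ used to compute $\epsilon$, and then to observe that bilinearity will follow routinely from the bilinearity of $B$.

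First, I would reorganize the data underlying the definition: a valid choice for $v$ is a pair $(w_v, z_v) \in W \times Z$ with $w_v \equiv v \pmod{\ell V}$ and $w_v - z_v \in \ell^j V$ (for $j \geq 1$, the congruence $z_v \equiv v \pmod{\ell V}$ is then automatic). Any two valid choices differ by a pair $(\alpha, \beta) \in W \times Z$ with $\alpha, \beta \in \ell V$ and $\alpha - \beta \in \ell^j V$. Because $W$ and $Z$ are free maximal isotropic summands of $V$, one has $\ell V \cap W = \ell W$ and $\ell V \cap Z = \ell Z$, so I can write $\alpha = \ell \alpha'$ and $\beta = \ell \beta'$ with $\alpha' \in W$ and $\beta' \in Z$; then $\alpha' - \beta' \in \ell^{j-1} V$, and since $V$ is $\Z_\ell$-torsion-free there is a unique $t \in V$ with $\alpha' - \beta' = \ell^{j-1} t$. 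The induced change in $\epsilon$ is exactly $t$, so independence of choices will reduce to the vanishing $\ol B(t, x) = 0$ in $\F_\ell$.

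The main step is this vanishing. Using that $x \in S_j$, I would pick a representative $\tilde x$ of $x$ in $W/\ell^j \cap Z/\ell^j$ and lift to $w_x \in W$, $z_x \in Z$ with $w_x - z_x = \ell^j a$ for some $a \in V$. Expanding,
\[
B(\ell^{j-1} t, w_x) = B(\alpha', w_x) - B(\beta', z_x) - \ell^j B(\beta', a),
\]
where the first term vanishes by maximal isotropy of $W$ (applied to $\alpha', w_x \in W$) and the second by maximal isotropy of $Z$ (applied to $\beta', z_x \in Z$). Thus $B(\ell^{j-1} t, w_x) \in \ell^j \Z_\ell$, so $B(t, w_x) \in \ell \Z_\ell$, and reducing mod $\ell$ yields $\ol B(t, x) = 0$ as required. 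Bilinearity of $A_j$ is then formal: the pair $(w_{v_1}+w_{v_2},\, z_{v_1}+z_{v_2})$ is a valid choice for $v_1+v_2$ and produces $\epsilon_{v_1+v_2} = \epsilon_{v_1} + \epsilon_{v_2}$ (and similarly for scalar multiples), while bilinearity in the second argument is inherited directly from $\ol B$.

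The only real obstacle will be organizational: one must be confident that every difference of valid choices of $(w_v, z_v)$ really has exactly the form $(\ell \alpha', \ell \beta')$ described above, so that the verification of well-definedness reduces cleanly to the single computation in which the two maximal isotropy conditions are each used precisely once.
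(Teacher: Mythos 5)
Your proof is correct and rests on the same underlying idea as the paper's---use maximal isotropy of $W$ and $Z$ to kill the terms in $\ol B(\epsilon, x)$ that depend on the choices---but it is executed more carefully, and the extra care is genuinely needed. The paper's proof asserts that any two allowable $w_v$ differ by an element of $\ell^j W$, which is true only while the representative $\wt v$ is held fixed; if $\wt v$ is also allowed to vary (which the paper's own statement of what needs checking says it is), the difference lies only in $\ell W$, and the same for $z_v$ in $\ell Z$. Your reorganization---discarding $\wt v$ as redundant data and observing that the group of modifications is $\{(\ell\alpha', \ell\beta') : \alpha' \in W,\ \beta' \in Z,\ \alpha' - \beta' \in \ell^{j-1}V\}$---handles the full range of choices, and the crossed term $\ell^j B(\beta', a)$ that appears after substituting $w_x = z_x + \ell^j a$ is exactly the term the paper's argument never has to confront. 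In short, you have a clean and complete version of the intended argument, filling a small gap in the paper's exposition of well-definedness; the remark on bilinearity (additivity of $\epsilon$ in $v$ via $(w_{v_1}+w_{v_2}, z_{v_1}+z_{v_2})$) is routine but worth including since the lemma does assert bilinearity.
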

\begin{proof}
We need to check that the value
	\begin{equation}\label{eq: proposed form}
	\ol{B}(\epsilon, x ) = \ol{B}(\ell^{-j} (w_v-z_v), x) \mod{\ell}.
	\end{equation}
	is independent of the choices of $\wt{v}$, $w_v$, and $z_v$. Indeed, any other allowable $w_v'$ differs from $w_v$ by an element of $\ell^j W$, say $\ell^j \delta$ with $\delta \in W$. But since $W/\ell$ is isotropic and $x$ lies in $S_j \subset W/\ell \subset V/\ell$, we have $B(\delta, x) \equiv 0 \pmod{\ell}$. Similarly, replacing $z_v$ with any other allowable $z_v'$ will not alter \eqref{eq: proposed form}.
\end{proof}

\begin{lemma}
\label{lemma:alternating-and-radical-bklpr}	
	Keep the notation of the preceding discussion. 
\begin{enumerate}[(i)]
\item The radical of $A_j$ is $S_{j+1}$. 
\item $A_j$ is alternating. 
\end{enumerate}
\end{lemma}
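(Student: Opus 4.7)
The plan is to prove both parts by direct computation, leveraging the isotropy of $Z$ and $W$ together with the identifications already established in the two preceding lemmas.

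For part (i), I would argue as follows. By the ``equivalent-perp'' lemma (\autoref{lemma:equivalent-perp}), an element $v \in S_j$ lies in $S_{j+1}$ if and only if the associated $\epsilon = \ell^{-j}(w_v - z_v)$ satisfies
\[
\epsilon \in \ell^{1-j}\left((Z/\ell^j + W/\ell^j) \cap \ell^{j-1}V/\ell^j V\right).
\]
But this subspace is precisely $S_j^\perp$ inside $V \otimes \mathbb{F}_\ell$ by \autoref{lemma: ortho complement S_j}. Therefore $v \in S_{j+1}$ iff $\bar{B}(\epsilon, x) = 0$ for all $x \in S_j$, which is exactly the condition that $v$ lies in the radical of $A_j$. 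Hence $\mathrm{rad}(A_j) = S_{j+1}$.

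For part (ii), I want to show $A_j(v, v) = \bar{B}(\epsilon, v) = 0$ for every $v \in S_j$, where $v \equiv w_v \equiv z_v \pmod{\ell}$. The key is to work with the quadratic form $Q$ directly to handle $\ell = 2$ uniformly. Since $W$ and $Z$ are isotropic we have $Q(w_v) = Q(z_v) = 0$, and by polarization
\[
Q(w_v - z_v) = Q(w_v) + Q(z_v) - B(w_v, z_v) = -B(w_v, z_v).
\]
On the other hand $w_v - z_v = \ell^j \epsilon$, giving $Q(w_v - z_v) = \ell^{2j} Q(\epsilon)$. Combining, $B(w_v, z_v) = -\ell^{2j} Q(\epsilon)$. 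Now compute
\[
B(\epsilon, w_v) = \ell^{-j}B(w_v - z_v, w_v) = \ell^{-j}(2 Q(w_v) - B(z_v, w_v)) = \ell^{j} Q(\epsilon),
\]
which lies in $\ell \mathbb{Z}_\ell$ since $j \geq 1$. Thus $A_j(v, v) = \bar{B}(\epsilon, w_v) = 0$, so $A_j$ is alternating.

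The only step requiring care is the computation in (ii), because of the well-known characteristic-$2$ subtleties in working with quadratic forms and their associated bilinear forms. I expect this to be the main (mild) obstacle, and my plan is to sidestep it by never dividing by $2$, instead using the polarization identity $Q(x-y) = Q(x) + Q(y) - B(x,y)$ and the vanishing $Q|_W = Q|_Z = 0$, so that the argument works uniformly in $\ell$. Everything else is a direct unpacking of the definitions and of the two lemmas that immediately precede the statement.
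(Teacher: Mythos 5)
Your proof is correct and follows essentially the same route as the paper. Part (i) is verbatim the paper's argument: combine \autoref{lemma: ortho complement S_j} with \autoref{lemma:equivalent-perp}. For part (ii), the paper pairs with $z_v$ and writes $B(w_v - z_v, z_v) = Q(w_v) - Q(w_v - z_v) - Q(z_v) = \pm Q(\ell^j\epsilon) = \pm\ell^{2j}Q(\epsilon)$, while you pair with $w_v$ and first isolate $B(w_v,z_v) = -\ell^{2j}Q(\epsilon)$; both are the same trick (use isotropy of $W,Z$ to turn the bilinear pairing into $\pm Q(\ell^j\epsilon)$, which carries the $\ell^{2j}$ factor, and never divide by $2$).
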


\begin{proof}
	By definition, $v \in S_j$ is in the radical of $A_j$ if and only if (following the notation above) $\epsilon_v := \ell^{-j}(w_v - z_v)$ lies in $S_j^{\perp}$. But by \autoref{lemma: ortho complement S_j}, $\epsilon \in S_j^{\perp}$ if and only if $\epsilon_v \in \ell^{1-j}\left( (Z/\ell^j + W/\ell^j) \cap \ell^{j-1}V / \ell^j V\right)$, which, as we proved in \autoref{lemma:equivalent-perp}, occurs if and only if $\epsilon \in S_{j+1}$. 

For (ii), since we can take $z_v$ as a lift of $v$ to $V$, it suffices to check $B(w_v-z_v  , z_v ) \in \ell^{j+1} \bz_\ell$.
	For this, write $w_v-z_v = \ell^j \epsilon$ and observe that $Z$ and $W$ are isotropic for $Q$, we have
	\begin{align*}
		B( w_v-z_v , z_v ) &=
		Q(w_v) - Q(w_v- z_v) - Q(z_v) \\
		& = Q(w_v - z_v) \\
		&= Q(\ell^{j} \epsilon)  \\
		& = \ell^{2j}Q(\epsilon) \in \ell^{j+1} \bz_\ell. \qedhere
	\end{align*}
\end{proof}

As in \autoref{ssec: markov 1-eigenspace}, it suffices to show that as $Z$ and $W$ are drawn from the canonical measure on $\ogr_{(V,Q)}(\Z_\ell)$, the alternating form $A_j$ is uniformly distributed.

\begin{lemma}\label{lem: trans action}
$\o(V,Q)$ acts transitively on $\ogr_{(V,Q)}(\Z_\ell)$. 
\end{lemma}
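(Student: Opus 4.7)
The plan is to show that the orbit of the standard maximal isotropic summand $Z_0 := \langle x_1,\ldots,x_m\rangle$ under $\o(V,Q)$ is all of $\ogr_{(V,Q)}(\bz_\ell)$. Concretely, given any $Z \in \ogr_{(V,Q)}(\bz_\ell)$, I would produce a ``Lagrangian complement'' $Z' \subset V$ -- i.e., a second maximal isotropic summand with $V = Z \oplus Z'$ and $B_Q$ restricting to a perfect pairing $Z \times Z' \to \bz_\ell$ -- and then construct an isometry $V \to V$ sending $(Z_0, \langle y_1,\ldots,y_m\rangle)$ to $(Z, Z')$ by matching up bases.

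The key technical step is the construction of $Z'$. First I would verify that $Z = Z^\perp$: nondegeneracy of $Q$ identifies $V \xrightarrow{\sim} V^\vee$, and composing with the surjection $V^\vee \twoheadrightarrow Z^\vee$ (surjective since $Z$ is a $\bz_\ell$-summand) yields a surjection $V \twoheadrightarrow Z^\vee$ with kernel $Z^\perp$; since $Z \subseteq Z^\perp$, both have rank $m$, and $V/Z$ is torsion-free, we conclude $Z = Z^\perp$ and $V/Z \cong Z^\vee$. Now pick a basis $e_1,\ldots,e_m$ of $Z$, lift the dual basis of $Z^\vee$ to elements $f_1,\ldots,f_m \in V$, so that $\{e_i, f_j\}$ is a $\bz_\ell$-basis for $V$ with $B_Q(e_i,f_j) = \delta_{ij}$. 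I would then correct $\tilde f_j := f_j + \sum_k c_{jk} e_k$ by choosing $c_{jj} := -Q(f_j)$ and, for $j < k$, $c_{jk} := -B_Q(f_j, f_k)$ with $c_{kj} := 0$. A short computation using $Q(u+v) = Q(u)+Q(v)+B_Q(u,v)$ and the isotropy of $Z$ gives $Q(\tilde f_j) = 0$, $B_Q(\tilde f_j, \tilde f_k) = 0$ for $j \neq k$, and $B_Q(e_i, \tilde f_j) = \delta_{ij}$; hence $Z' := \langle \tilde f_1,\ldots,\tilde f_m\rangle$ is a Lagrangian complement to $Z$.

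To finish, for two such decompositions $V = Z \oplus Z' = Z_0 \oplus Z_0^*$ with matching structural data, the $\bz_\ell$-linear map $g$ sending $(e_i, \tilde f_j) \mapsto (x_i, y_j)$ preserves $Q$ on basis vectors and $B_Q$ on all pairs of basis vectors. Since $Q(\sum a_i e_i + \sum b_j \tilde f_j)$ is determined by these values via the identity $Q(\sum v_k) = \sum Q(v_k) + \sum_{k<l} B_Q(v_k, v_l)$, the map $g$ lies in $\o(V,Q)$ and sends $Z$ to $Z_0$, giving transitivity. I expect the main obstacle to be the $\ell = 2$ case of the diagonal correction $c_{jj} := -Q(f_j)$: when $\ell$ is odd one could work purely with the bilinear form (splitting the symmetric correction $c_{jk} + c_{kj} = -B_Q(f_j,f_k)$ evenly), but for $\ell = 2$ the relation $B_Q(f_j, f_j) = 2 Q(f_j)$ loses information, and the self-pairing $Q(f_j)$ -- a genuine piece of data of the quadratic refinement rather than its polarization -- must be cancelled by a well-chosen $c_{jj}$, while the off-diagonal part is handled by a strictly upper-triangular choice.
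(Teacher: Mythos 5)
Your proof is correct but follows a genuinely different route from the paper's. The paper's argument is short and indirect: fixing $W,Z \in \ogr_{(V,Q)}(\Z_\ell)$, it considers the scheme $\Isom(W,Z) \subset \o(V,Q)$ of isometries carrying $W$ to $Z$, observes that this is a torsor under the smooth parabolic subgroup scheme $\Isom(W,W)$, invokes Witt's extension theorem over the residue field $\F_\ell$ to produce an $\F_\ell$-point of the torsor, and then lifts that point to a $\Z_\ell$-point by smoothness (Hensel's lemma). Your argument instead constructs things by hand: starting from an arbitrary $Z$, you verify $Z = Z^\perp$ using the duality $V \cong V^\vee$ and torsion-freeness of $V/Z$, lift a dual basis, and then cancel the self-pairings and cross-pairings of the lifts by a unipotent correction to produce a genuine Lagrangian complement $Z'$, showing every $Z \in \ogr_{(V,Q)}(\Z_\ell)$ extends to a hyperbolic basis; transitivity then follows by matching hyperbolic bases. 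The paper's route is shorter and conceptually cleaner once one has the torsor formalism at hand, but it hides the $\ell=2$ subtleties inside the statement of Witt's theorem for quadratic (not merely bilinear) forms over $\F_2$ together with smoothness of the orthogonal group scheme over $\Z_2$; your route is longer but entirely elementary and self-contained, and your choice of diagonal correction $c_{jj} = -Q(f_j)$ (using $Q$ itself rather than the unavailable $\tfrac{1}{2}B_Q(f_j,f_j)$) combined with the strictly upper-triangular off-diagonal choice is exactly what makes the construction work uniformly for $\ell = 2$ without any appeal to smoothness or to characteristic-$2$ versions of classical theorems.
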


\begin{proof}
Fix $W,Z \in \ogr_{(V,Q)}(\Z_\ell)$. Then we have a scheme 
\[
\mrm{Isom}(W,Z) = \{ g \in  \o(V,Q) \co g W = Z\}  \subset \o(V,Q)
\]
over $\Z_\ell$. This is evidently a torsor for the parabolic subgroup $\Isom(W,W) \subset \o(V,Q)$. Moreover, Witt's theorem implies that $\mrm{Isom}(W,Z)$ has a point over $\F_\ell$, which lifts to a $\Z_\ell$-point because $\mrm{Isom}(W,Z)$ is smooth (being a torsor for a smooth group scheme). 
\end{proof}

It will suffice to show that conditioning on a fixed $W$, the distribution of $A_j$ is already uniform. The distribution of $Z$ conditioned on a fixed $W$ coincides with the orbit measure on $\ogr_{(V,Q)}(\Z_\ell)$ 
induced by the Haar measure on $\o(V,Q)$, since $\o(V,Q)$ acts transitively on $\ogr_{(V,Q)}(\Z_\ell)$ by \autoref{lem: trans action}. As in \autoref{ssec: markov 1-eigenspace}, it suffices to show that the distribution of $A_j$ is already uniform as $Z$ varies over an orbit of a coset of the principal congruence subgroup
\[
\Gamma(\ell^j) := \{ g \in \o(V,Q) \co g \equiv \Id \pmod{\ell^j} \}.
\]
For fixed $Z^0$, which induces the alternating form 
\[
A_j(v, x) = \ol{B}(\ell^{-j} (w_v-z_v^0), x),
\]
the alternating form associated to $\gamma Z^0$ for $\gamma \in \Gamma(\ell^j)$ is 
\[
\ol{B}(\ell^{-j}(w_v-\gamma z_v^0), x)
\]
which changes the functional by 
\[
x \mapsto \ol{B}(\ell^{-j} (1-\gamma) z_v^0, x).
\]
Now, since the map $\gamma \mapsto 1 - \gamma$ induces an isomorphism $\Gamma(\ell^j) / \Gamma(\ell^{j+1}) \xrightarrow{\sim} \Lie \o(V_{\F_\ell}, Q)$,
the resulting alternating form $A_j$ is uniformly distributed,
so we are done.
\qed
\begin{remark}
	\label{remark:}
	Note that unlike in the case of the random kernel model, where we had additional complications to deal with associated to $\ell = 2$ in \autoref{subsubsection:ell=2},
	there are no additional complications here for $\ell=2$ in the proof of \autoref{thm: random intersection markov}, because here we are working with the full congruence subgroup $\Gamma(\ell^j),$ instead of a subgroup which may have index $2$,
as was the case in \autoref{subsection:proof-of-random-kernel-markov}.
\end{remark}

\section{Proofs of the main theorems}
\label{section:proofs}

We conclude the paper by proving our main theorems.
In \autoref{subsection:selmer-to-random-kernel}
we connect the actual Selmer distribution to the random kernel model,
while in
\autoref{subsection:random-kernel-to-bklpr}
we connect the random kernel model to the BKLPR distribution.
Combining these gives us a proof of our main theorem, \autoref{theorem:main}.
Finally, in
\autoref{subsection:remaining-results}
we prove \autoref{corollary:squarefree-moments}
and
\autoref{corollary:prime-moments}.

\subsection{Comparing the Selmer distribution with the random kernel model}
\label{subsection:selmer-to-random-kernel}

To start, we state one of our main theorems, which compares the distribution of Selmer groups of elliptic curves to the random kernel model.
We prove this at the end of the subsection.

\begin{theorem}
	\label{theorem:large-q-distribution}
	Fix integers $d \geq 2$ and $n \geq 1$.
	For $q$ ranging over prime powers, with $\gcd(q,2n) = 1$ and $(r,G) \in \bz_{\geq 0} \times\ab n$, we have
	\begin{equation}
	\label{equation:selmer-error-estimate}
		\begin{aligned}
			\prob(\rvsel n d {\mathbb F_q} \simeq G) &= \prob(\rvker n d {\mathbb F_q} = G) + O_{n,d}(q^{-1/2})
		\end{aligned}
\end{equation}
and
	\begin{equation}
	\label{equation:error-estimate}
		\begin{aligned}
		\prob(\algsel n d {\mathbb F_q} = (r,G)) &=
		\prob(\ansel n d {\mathbb F_q} = (r,G)) + O_{n,d}(q^{\error d}) \\
	&=	\prob(\flr n d {\mathbb F_q} = (r,G)) + O_{n,d}(q^{\error d}).
\end{aligned}
\end{equation}
In particular,
	\begin{align}
		\label{equation:lim-sup}
		\limsup\limits_{\substack{q \ra \infty\\ \gcd(q,2n)=1}} \ansel n d {\mathbb F_q} &=
		\limsup\limits_{\substack{q \ra \infty\\ \gcd(q,2n)=1}} \algsel n d {\mathbb F_q} =
		\limsup\limits_{\substack{q \ra \infty}} \flr n d {\mathbb F_q} \\
		\label{equation:lim-inf}
		\liminf\limits_{\substack{q \ra \infty\\ \gcd(q,2n)=1}} \ansel n d {\mathbb F_q} &=
		\liminf\limits_{\substack{q \ra \infty\\ \gcd(q,2n)=1}} \algsel n d {\mathbb F_q} =
		\liminf\limits_{\substack{q \ra \infty}} \flr n d {\mathbb F_q},
	\end{align}
The values of \eqref{equation:lim-sup} and \eqref{equation:lim-inf} agree when $d$ is odd or $n \leq 2$, but differ when $d$ is even and $n > 2$.
		\end{theorem}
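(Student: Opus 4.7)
The plan is to prove the three clusters of assertions in turn: first the Selmer-distribution estimate \eqref{equation:selmer-error-estimate}, then the joint rank--Selmer estimates \eqref{equation:error-estimate}, and finally the $\limsup$/$\liminf$ comparison.

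For \eqref{equation:selmer-error-estimate}, I would combine \autoref{lemma:selmer-fiber-estimate}(1) with \autoref{corollary:finite-field-distribution}. \autoref{lemma:selmer-fiber-estimate}(1) identifies $\Sel_n(E_x)$ with $\pi^{-1}(x)(\mathbb F_q)$ for $x\in\smespace d k(\mathbb F_q)$, and the latter is naturally isomorphic as a $\bz/n\bz$-module to $\ker(\mono n d k(\frob_x)-\id) \subset \vsel n d k$. \autoref{corollary:finite-field-distribution} then asserts that the distribution of $\mono n d k(\frob_x)$ across $x$ agrees with a uniform draw from the relevant coset $H_{n,\mathbb F_q}^d$ up to an error $O_{n,d}(q^{-1/2})$, which by definition is the random kernel model $\rvker n d {\mathbb F_q}$. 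The complement $\espace d k -\smespace d k$, as well as the discrepancy between the scheme and stack counts, contributes only $O_{n,d}(q^{-1/2})$ by \autoref{remark:distribution-definition-variations} and Lang--Weil. Assembling these ingredients yields the claim.

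For \eqref{equation:error-estimate}, the strategy is to bridge from the algebraic rank to the dimension of the $1$-eigenspace of Frobenius, via the analytic rank and the generalized $1$-eigenspace. By \autoref{lemma:generalized-eigenspace-is-rank}, for every $x\in\sfespace d k(\mathbb F_q)$ the analytic rank of $E_x$ equals $\dim_{\bz_\ell}$ of the generalized $1$-eigenspace of $\mono {\bz_\ell} d k(\frob_x)$. By \autoref{proposition:eigenvalues}, the subset of $\o(\qsel{\zprime p}d k)$ whose generalized $1$-eigenspace strictly exceeds its $1$-eigenspace is pure codimension $1$ and has measure zero. Using equidistribution of Frobenii in the monodromy group (\autoref{corollary:selmer-equidistribution}), combined with the quantitative Zariski-closed estimate of \autoref{lemma:subscheme-measure-0}, one bounds the proportion of $x$ for which analytic rank disagrees with the $1$-eigenspace dimension by $O_{n,d}(q^{\error d})$. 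For curves where these match, the analytic rank is $0$ or $1$, the Birch--Swinnerton-Dyer conjecture holds (as invoked in the proof of \autoref{proposition:zywina}(2)), and therefore algebraic rank equals analytic rank equals $\dim\ker(\mono n d k(\frob_x)-\id)$; this identifies $\algsel n d {\mathbb F_q}$ with $\ansel n d {\mathbb F_q}$ with $\flr n d {\mathbb F_q}$ up to the stated error.

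The main obstacle will be extracting the specific exponent $\error d = -1/(216d^2-162d+31)$. It emerges from a two-variable optimization: one must simultaneously control (i) the equidistribution error for Frobenius classes in the monodromy group $\im\mono n d k$, whose size grows like $q^{O(d^2)}$, and (ii) the measure of the ``bad'' Zariski-closed locus where $\dim V^{[g=1]}>\dim V^{g=1}$. Since the dimension of $\o(\qsel n d k)$ is of order $d^2$, balancing these two sources of error is what produces the inverse-quadratic-in-$d$ exponent, and the precise constants require tracking the codimensions of the bad loci carefully.

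For the remaining claims, equations \eqref{equation:lim-sup} and \eqref{equation:lim-inf} are immediate from \eqref{equation:error-estimate} by passing to $\limsup$ and $\liminf$ as $q\to\infty$. For the final comparison, by \autoref{theorem:monodromy} the spinor norm of the image of Frobenius is controlled by the class $[q^{d-1}]\in(\zprime p)^\times/((\zprime p)^\times)^2$. When $d$ is odd or $n\le 2$ this class is trivial in the relevant quotient, so the monodromy group equals the geometric monodromy group and $\rvker n d {\mathbb F_q}$ is independent of $q$; hence the $\limsup$ and $\liminf$ agree. When $d$ is even and $n>2$, the class $[q^{d-1}]=[q]$ differs according to whether $q$ is a square, giving two distinct cosets of the geometric monodromy with genuinely different kernel distributions by \autoref{theorem: coset generating functions}, producing an honest discrepancy between $\limsup$ and $\liminf$.
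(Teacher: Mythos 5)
Your proposal is correct and follows essentially the same route as the paper. The paper packages the density argument for \eqref{equation:error-estimate} into \autoref{proposition:component-and-rank} (where the two-variable optimization over the auxiliary level $\ell^e$ happens, producing the exponent $\error d$) and cites it as a black box, whereas you reconstruct that argument inline; the content is the same. Two minor imprecisions: (a) the equidistribution error term in the optimization scales with $\ell^{eO(d^2)}$ (the size of $\im\mono{\ell^e}d k$), not $q^{O(d^2)}$ --- the $q$-dependence enters only through the $q^{-1/2}$ factor, and the optimization is over $e$; and (b) the agreement of algebraic and analytic rank for curves of analytic rank at most $1$ is a theorem over function fields (Gross--Zagier-type results of Ulmer and Yun--Zhang, cited in the proof of \autoref{proposition:component-and-rank}(3)), rather than an application of the BSD conjecture as you frame it by pointing to \autoref{proposition:zywina}(2). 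Finally, for showing \eqref{equation:lim-sup} and \eqref{equation:lim-inf} differ when $d$ is even and $n>2$, your appeal to \autoref{theorem: coset generating functions} works, but the paper uses the more elementary witness that $\prob(\rvker n d {\mathbb F_q} = (\bz/n\bz)^{12d-4})>0$ exactly when the identity lies in the relevant coset, i.e.\ when $q$ is a square.
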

		
We are nearly ready to prove \autoref{theorem:large-q-distribution},
but first we will need to establish two preliminary results.
The first preliminary result relates the Selmer group of an elliptic curve to the $1$-eigenspace of Frobenius.
\begin{lemma}
	\label{lemma:selmer-as-frob-fixed}
	For $n \geq 1, d \geq 2$ and $[E_x] = x \in \smespace d {\bz[1/2n]}(\F_q)$, we have 
	\[
	\Sel_n(E_x) = \ker\left( \mono n d {\bz[1/2n]}(\frob_x) - \id |_{\left(\smestack d k\right)_{x}}\right).
	\]
\end{lemma}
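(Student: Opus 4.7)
The plan is to identify both sides of the claimed equality with the $\F_q$-rational points of the fiber of $\pi : \smsspace{n}{d}{\bz[1/2n]} \to \smespace{d}{\bz[1/2n]}$ over $x$. Concretely, since $\smespace d {\bz[1/2n]} \subset \sfespace d {\bz[1/2n]}$ and the restriction $\pi : \smsspace n d {\bz[1/2n]} \to \smespace d {\bz[1/2n]}$ is finite étale (it represents the locally constant constructible sheaf of rank $12d-4$ free $\bz/n\bz$-modules from \cite[Corollary 3.19]{landesman:geometric-average-selmer}), the cover $\pi$ is exactly the one associated to the monodromy representation $\mono n d {\bz[1/2n]}$ acting on the geometric generic fiber $\vsel n d {\bz[1/2n]}$.

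The first step is to upgrade \autoref{lemma:selmer-fiber-estimate}(1) from an equality of cardinalities to a canonical isomorphism of $\bz/n\bz$-modules $\Sel_n(E_x) \simeq \pi^{-1}(x)(\F_q)$. Unpacking the construction of $\sspace n d B$ as the algebraic space representing $\ssheaf n d B = R^1 g_*(R^1 f_* \mu_n)$, the fiber $\pi^{-1}(x)$ is the étale group scheme with $\bar\F_q$-points given by the stalk of $\ssheaf n d B$ at the geometric point $\bar x$. Because $[E_x] = x \in \smespace d {\bz[1/2n]}(\F_q)$, the map $f_x : \mathscr E_x \to \PP^1_{\F_q}$ is smooth and proper, so proper base change together with \autoref{lemma:mu-n-pushforward-to-selmer-space} (valid since $\smespace d B \subset \sfespace d B$) identifies this stalk with $H^1(\PP^1_{\bar\F_q}, \mathscr E_{\bar x}[n])$ as a $\bz/n\bz$-module. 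On the other hand, everywhere good reduction of $E_x$ over $\F_q(t)$ (which is what $x \in \smespace d k$ encodes) implies that $\Sel_n(E_x)$ coincides with the integral cohomology $H^1(\PP^1_{\F_q}, \mathscr E_x[n])$, and the Hochschild--Serre spectral sequence for $\PP^1_{\F_q} \subset \PP^1_{\bar\F_q}$ identifies this with the $\Frob_x$-invariants of $H^1(\PP^1_{\bar\F_q}, \mathscr E_{\bar x}[n])$.

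The second step is the general fact that for a finite étale cover $\pi$ classified by a representation $\rho$, the $\F_q$-points of the fiber over $x \in \smespace d k(\F_q)$ are exactly the $\Frob_x$-fixed points of the geometric fiber, i.e., $\ker(\rho(\Frob_x) - \id)$ on $\vsel n d {\bz[1/2n]}$. Combining this with the previous step yields the desired isomorphism, interpreting $(\smestack d k)_x$ as the geometric fiber of the Selmer cover at $x$ (on which $\mono n d {\bz[1/2n]}(\frob_x)$ acts).

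The only nontrivial technical point is tracking that the compositions in the first step preserve the $\bz/n\bz$-module structure (rather than only cardinalities as in \autoref{lemma:selmer-fiber-estimate}); this follows from the naturality of proper base change, the Kummer sequence, and the Hochschild--Serre spectral sequence, all of which are compatible with the $\bz/n\bz$-action coming from $\mu_n$. Everything else is standard étale machinery once the good reduction hypothesis built into $\smespace d k$ is invoked.
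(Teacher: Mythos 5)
Your overall strategy — identify both sides with $\pi^{-1}(x)(\F_q)$ and invoke the finite-\'etale dictionary between covers and monodromy representations — is exactly the paper's approach; the paper states it in two lines, while you supply the details of the isomorphism (rather than mere cardinality) implicit in Lemma~\ref{lemma:selmer-fiber-estimate}(1). Two factual slips in the supporting details should be corrected, though they do not damage the skeleton of the argument.

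First, the containment you invoke is reversed: it is $\sfespace d B \subset \smespace d B$, not $\smespace d B \subset \sfespace d B$ (squarefree discriminant forces only $I_1$ fibers, hence a smooth total space; a smooth minimal Weierstrass model may still have $I_1$ or $II$ fibers, so need not have squarefree discriminant). Consequently Proposition~\ref{lemma:mu-n-pushforward-to-selmer-space}, which is only established over $\sfespace d B$, cannot be invoked for an arbitrary $x \in \smespace d {\bz[1/2n]}(\F_q)$. This is routable: proper base change for $\ssheaf n d B = R^1 g_*(R^1 f_* \mu_n)$, combined with the Kummer sequence for the smooth proper morphism $f_x \colon \uespace d B|_x \to \PP^1$, identifies the stalk at $\bar x$ with $H^1(\PP^1_{\bar\F_q}, \mathscr E^0_{\bar x}[n])$ without appealing to Proposition~\ref{lemma:mu-n-pushforward-to-selmer-space} at all — this is the content of \cite[Corollary 3.21]{landesman:geometric-average-selmer}, which is what Lemma~\ref{lemma:selmer-fiber-estimate} actually cites.

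Second, $x \in \smespace d k(\F_q)$ does \emph{not} mean $E_x$ has everywhere good reduction — it only means the total space of the minimal Weierstrass model is smooth over $k$, which still permits multiplicative (type $I_1$) and type $II$ fibers. What one does get is that this smooth Weierstrass model agrees with the identity component $\mathscr E_x^0$ of the N\'eron model, and the correct identification (cf.\ \autoref{subsec: overview} of the paper and the references underlying Lemma~\ref{lemma:selmer-fiber-estimate}) is $\Sel_n(E_x) \simeq H^1(\PP^1_{\F_q}, \mathscr E_x^0[n])$. After replacing ``$n$-torsion of the elliptic scheme with everywhere good reduction'' by ``$n$-torsion of $\mathscr E_x^0$,'' your Hochschild--Serre step goes through as written and the rest of the proof is correct.
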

\begin{proof}
	Notate the geometric fiber of $\smestack d k$ over $x$ by $\left(\smsstack n d k\right)_{\ol x}$, and the fiber by $\left(\smsstack n d k\right)_{x}$.
	Since $\left(\smestack d k\right)_{x}$ is a finite \'{e}tale $\F_q$-scheme, we have 
\[
	\left(\smsstack n d k\right)_{x}(\mathbb F_q) = \ker\left( \mono n d {\bz[1/2n]}(\frob_x) - \id |_{\left(\smestack d k\right)_{\ol{x}}}\right).
\]
	Hence, combining this with \autoref{lemma:selmer-fiber-estimate}, we obtain that for $[E_x] = x \in \smestack d k$,
	\[
		\ker\left( \mono n d {\bz[1/2n]}(\frob_x) - \id |_{\left(\smestack d k\right)_{\ol{x}}}\right) = \Sel_n(E_x).
	\]
	Here we are using that there is an isomorphism $(\smsstack n d k)_x \simeq (\smsspace n d k)_{x'}$ for $x' \in \smespace d k$ mapping to $x$,
	coming from the definition of $\smsstack n d k$ and $\smestack d k$ as quotients of $\smsspace n d k$ and $\smespace d k$ by a compatible group action.
\end{proof}

Our second preliminary result relates the rank of an elliptic curve
$[E_x] \in \smespace d k(\F_q)$ to the Dickson invariant of $\mono {\bz_\ell} d k(\frob_x)$.

Recall from \autoref{definition:selmer-space-quadratic-form}
	that $(\qsel \bz d k, \vsel \bz d k)$ denotes the quadratic space over $\bz$, whose reduction $\mod n$ is $(\qsel n d k, \vsel n d k)$
on which the monodromy representation
$\mono n d k$ acts.
Let $(\qsel {\bz_\ell} d k, \vsel {\bz_\ell} d k) := (\qsel \bz d k \otimes_{\bz} \bz_\ell, \vsel \bz d k \otimes_\bz \bz_\ell)$ denote the base change to $\bz_\ell$.

\begin{proposition}
	\label{proposition:component-and-rank}
	Let $d \geq 2$, and let $\ell$ be a prime. For $q$ a prime power with $\gcd(q, 2\ell) = 1$,
define	
\begin{align*}
	\smallrank \ell d q := 
	\left\{[E_x] = x \in \sfespace d {\bz[1/2\ell]}(\mathbb F_q) :
	\anrk(E_x) \leq 1  \right\}.
\end{align*}
	\begin{enumerate}
		\item For $q$ ranging over prime powers with $\gcd(q,2\ell) = 1$, we have
\begin{align*}
\frac{\# \smallrank \ell d q }
{ \# \sfespace d {\bz[1/2\ell]}(\mathbb F_q)} = 1 + O_{d}\left(q^{\error d} \right).
\end{align*}
	\item For all $x \in \smallrank \ell d q \subset \sfespace d {\bz[1/2\ell]}(\mathbb F_q)$, we have 
	\[
	\rk \ker\left( \mono {\bz_\ell} d {\bz[1/2\ell]}(\frob_x) - \id\right) = \begin{cases} 0 & \iff \mono {\bz_\ell} d {\bz[1/2\ell]}(\frob_x) \in \so(\qsel {\bz_\ell} d {\bz[1/2\ell]}) \\
	1 & \iff \mono {\bz_\ell} d {\bz[1/2\ell]}(\frob_x) \notin \so(\qsel {\bz_\ell} d {\bz[1/2\ell]}).\end{cases}
\]
	\item The above statements are true with analytic rank replaced by algebraic rank.
	\end{enumerate}
\end{proposition}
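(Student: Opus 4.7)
For part (1), the plan is to use \autoref{lemma:generalized-eigenspace-is-rank} to identify the analytic rank of $E_x$ with the $\bz_\ell$-rank of the generalized $1$-eigenspace of $g_x := \mono{\bz_\ell}d{\bz[1/2\ell]}(\frob_x)$, and then apply \autoref{proposition:eigenvalues}, which shows that the locus $Z \subset \o(\qsel{\bz_\ell}d{\bz[1/2\ell]})$ where the generalized $1$-eigenspace has dimension $\geq 2$ is Zariski closed of pure codimension $1$. For each $e \geq 1$, writing $Z_e$ for the image of $Z(\bz_\ell)$ inside $\o(\qsel{\bz/\ell^e\bz}d{\bz[1/2\ell]})$, \autoref{lemma:subscheme-measure-0} gives $\#Z_e = O_d(\ell^{e(\dim\o - 1)})$, so $Z_e$ occupies a proportion $O_d(\ell^{-e})$ of any coset of the image of the monodromy group there. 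Applying \autoref{proposition:chavdarov} to the mod-$\ell^e$ monodromy representation then bounds the proportion of $x \in \sfespace d{\bz[1/2\ell]}(\mathbb F_q)$ with $\mono{\ell^e}d{\bz[1/2\ell]}(\frob_x) \in Z_e$ by $O_d(\ell^{-e}) + O_d(\ell^{e(3\dim\o - 1)/2} q^{-1/2})$. Choosing $e = \lceil \log_\ell q/(3 \dim\o + 1) \rceil$ balances these two contributions and yields an error of $O_d(q^{\error d})$, using $\dim\o = (12d-4)(12d-5)/2 = 72d^2 - 54d + 10$ so that $3\dim\o + 1 = 216 d^2 - 162 d + 31$.

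For part (2), I would observe that for $x \in \smallrank \ell d q$ the generalized $1$-eigenspace of $g_x$ has dimension at most $1$ and therefore coincides with the honest $1$-eigenspace. Since $2$ is invertible in $\bq_\ell$, the Dickson invariant agrees with the determinant over $\bq_\ell$ by \cite[Corollary C.3.2]{conrad:reductive-group-schemes}, and equals $\dim V^{g_x = 1} \bmod 2$ by \cite[p.~160]{taylor:the-geometry-of-the-classical-groups}. Because the Dickson invariant is locally constant, its value over $\bz_\ell$ is determined by its value over $\bq_\ell$; hence $\dim V^{g_x = 1}$ equals $0$ precisely when $g_x \in \so(\qsel{\bz_\ell}d{\bz[1/2\ell]})$ and equals $1$ otherwise.

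For part (3), I would use the unconditional inequality $\rk(E_x) \leq \anrk(E_x)$, which follows because taking the inverse limit of \autoref{lemma:selmer-as-frob-fixed} identifies $\dim V^{g_x = 1}$ with the $\bz_\ell$-rank of the Tate module of $\Sel_{\ell^\infty}(E_x)$, and the latter dominates $\rk(E_x)$ while being bounded by $\anrk(E_x)$. Combining this with the parity conjecture for elliptic curves over function fields (which gives $\rk(E_x) \equiv \anrk(E_x) \bmod 2$) and the bound $\anrk(E_x) \leq 1$ on $\smallrank \ell d q$, one concludes $\rk(E_x) = \anrk(E_x)$ on this set. Hence $\{x : \rk(E_x) \leq 1\}$ agrees with $\smallrank \ell d q$ outside a set of proportion $O_d(q^{\error d})$, and the algebraic-rank versions of parts (1) and (2) follow from their analytic-rank analogues. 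The main technical obstacle is the delicate bookkeeping in part (1), where both the mod-$\ell^e$ density of $Z_e$ and the equidistribution error from \autoref{proposition:chavdarov} must be balanced in $e$ to recover the precise exponent appearing in $\error d$; the remaining parts are comparatively direct given the structural results of \autoref{theorem:monodromy} and \autoref{proposition:eigenvalues}.
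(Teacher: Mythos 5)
Parts~(1) and~(2) of your argument reproduce the paper's reasoning: identify the analytic rank with the generalized $1$-eigenspace via Lemma~\ref{lemma:generalized-eigenspace-is-rank}, bound the locus of generalized $1$-eigenspace dimension $\geq 2$ via Proposition~\ref{proposition:eigenvalues} and Lemma~\ref{lemma:subscheme-measure-0}, and balance the equidistribution error of Proposition~\ref{proposition:chavdarov} against the mod-$\ell^e$ density by optimizing $e$; your exponent bookkeeping matches the paper's.

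Your part~(3), however, rests on the parity conjecture for the Mordell--Weil rank over function fields, and this is a genuine gap. The chain $\rk(E_x) \leq \rk_{\bz_\ell}\Sel_{\ell^\infty}(E_x) \leq \anrk(E_x)$ you set up is correct, and the second inequality is an equality on $\smallrank \ell d q$ by Proposition~\ref{proposition:eigenvalues}. But to get $\rk(E_x) = \anrk(E_x)$ you still need to rule out $\Sha(E_x)[\ell^\infty]$ having $\bz_\ell$-corank exactly $1$, and that is precisely what the (full) parity conjecture would provide. What \emph{is} known unconditionally over function fields (Trihan--Wuthrich and related work) is the $\ell$-parity conjecture, i.e.\ parity of the $\ell^\infty$-\emph{Selmer} corank against the root number --- but on $\smallrank \ell d q$ that is automatic from the functional equation and says nothing new. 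Passing from Selmer-rank parity to Mordell--Weil-rank parity requires knowing $\Sha[\ell^\infty]$ has even $\bz_\ell$-corank, which amounts to a form of the Tate conjecture for the associated elliptic surface and is open in general. The paper circumvents this by appealing to the Gross--Zagier formula over function fields: for $\chr \F_q > 3$ and analytic rank $\leq 1$, \cite[\S 3.8]{ulmer:elliptic-curves-and-analogies} together with \cite[Theorem 1.2]{ulmer:geometric-non-vanishing} produces a point of infinite order when $\anrk = 1$, forcing $\rk = \anrk$; for $\chr \F_q = 3$ one supplements this with the everywhere-semistable Gross--Zagier formula of \cite[Remark 1.5]{YZ19}, which is exactly why the proposition restricts to $\sfespace d {\bz[1/2\ell]}$. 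You should replace the appeal to the parity conjecture with this Gross--Zagier input, or else supply a reference proving the rank parity conjecture in the needed generality.
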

\begin{proof}
	To start, observe that $(2)$ follows directly from 
	\autoref{lemma:generalized-eigenspace-is-rank} and
	\autoref{proposition:eigenvalues}

	We next demonstrate (1). By \autoref{lemma:generalized-eigenspace-is-rank}, whenever $x \in \sfespace d \bz[1/2\ell]$,
	the analytic rank of $E_x$ is equal to the rank of the $1$-generalized eigenspace of
	$\mono {\bz_\ell} d {\bz[1/2\ell]}(\frob_x) - \id$.

	By \autoref{proposition:eigenvalues}, whenever $x \notin \smallrank \ell d q$, there is a particular Zariski closed hypersurface
	$Z$ in the algebraic group $\o(\qsel {\bz_\ell} d {\bz[1/2\ell]})$,
	i.e., the hypersurface parameterizing elements with a two or more dimensional generalized $1$-eigenspace,
	such that $\mono {\bz_\ell} d {\bz[1/2\ell]}(\frob_x) \in Z(\bz_\ell)$.
	By \autoref{lemma:subscheme-measure-0}, for any positive integer $e$, we have 
	\[
	\im ( Z(\bz/\ell^e \bz) \ra \o(\qsel {\bz_\ell} d {\bz[1/2\ell]})(\Z/\ell^e \Z)) = O_{Z}\left(\ell^{e(\dim \o(\qsel {\bz_\ell} d {\bz[1/2\ell]})-1)} \right) = O_{\ell, d}\left(\ell^{e(\dim \o(\qsel {\bz_\ell} d {\bz[1/2\ell]})-1)} \right).
	\]
	By \autoref{theorem:monodromy}, we know $\im \mono {\ell^e} d {\bz[1/2\ell]}$ has index at most $2$ in $\o(\qsel {\bz_\ell} d {\bz[1/2\ell]})$,
	and hence has size within a constant factor of $\ell^{e\dim \o(\qsel {\bz_\ell} d {\bz[1/2\ell]})}$.
	Therefore, it follows from \autoref{proposition:chavdarov}
	that
\begin{equation}
	\label{equation:rank-2-bound}\Scale[0.9]{
\begin{aligned}
	\frac{\#  \left( \sfespace d {\bz[1/2\ell]}(\mathbb F_q)- \smallrank \ell d q \right) }
	{ \# \sfespace d {\bz[1/2\ell]}(\mathbb F_q)} 
	&=
	\frac
{\# \im ( Z(\bz/\ell^e \bz) \ra \o(\qsel {\bz_\ell} d {\bz[1/2\ell]}))}
	{\# \im \mono {\ell^e} d {\bz[1/2\ell]}} \\
	&\hspace{1cm}+
	O_{d}\left( \# \im \mono {\ell^e} d {\bz[1/2\ell]} \sqrt{ \frac{ \# \im ( Z(\bz/\ell^e \bz) \ra \o(\qsel {\bz_\ell} d {\bz[1/2\ell]}))}{q}} \right) \\
	&=
	O_{\ell, d} \left( \frac{\ell^{e(\dim \o(\qsel {\bz_\ell} d {\bz[1/2\ell]})-1)} }{\ell^{e \dim \o(\qsel {\bz_\ell} d {\bz[1/2\ell]})}}+ q^{-1/2} \ell^{e \dim \o(\qsel {\bz_\ell} d {\bz[1/2\ell]})}
	{\ell^{\frac{1}{2} e(\dim \o(\qsel {\bz_\ell} d {\bz[1/2\ell]})-1)} } \right) \\
	&=
	O_{\ell,d} \left( \ell^{-e} + q^{-1/2} (\ell^{e})^{(\frac{3}{2}  \dim \o(\qsel {\bz_\ell} d {\bz[1/2\ell]}) - \frac 1 2)}\right).
\end{aligned} } \end{equation}
Crucially, the above constant does not depend on $e$, and so we may freely choose $e$ to minimize the above error term.
Indeed, we may take $e$ to be the least positive integer so that $q \leq (\ell^e)^{(1 + 3 \dim \o(\qsel {\bz_\ell} d {\bz[1/2\ell]}))}$,
or equivalently $q^{\frac{1}{1 + 3 \dim \o(\qsel {\bz_\ell} d {\bz[1/2\ell]})}} \leq \ell^e$. 
Then, so long as $q > \ell$, replacing $q$ by $(\ell^e)^{(1 + 3 \dim \o(\qsel {\bz_\ell} d {\bz[1/2\ell]}))}$
will introduce at most a factor of $\ell$, and so
\begin{equation}
	\label{equation:error-for-rank-2}\Scale[0.9]{
\begin{aligned}
	O_{\ell, d}(\ell^{-e}) &= O_{\ell, d}(q^{\frac{-1}{1 + 3 \dim \o(\qsel {\bz_\ell} d {\bz[1/2\ell]}))}} ) \\
	O_{\ell, d}(q^{-1/2} (\ell^{e})^{(\frac 3 2 \dim \o(\qsel {\bz_\ell} d {\bz[1/2\ell]}) - \frac 1 2)}) &= 
O_{\ell, d}\left(q^{-\frac{1}{2} + \frac{\frac{3}{2}  \dim \o(\qsel {\bz_\ell} d {\bz[1/2\ell]}) - \frac 1 2}{1 + 3 \dim \o(\qsel {\bz_\ell} d {\bz[1/2\ell]})}} \right)
=O_{\ell, d}\left(q^{\frac{-1}{1 + 3 \dim \o(\qsel {\bz_\ell} d {\bz[1/2\ell]})}} \right).
\end{aligned}}
\end{equation}
Further, for the finitely many $q < \ell$, we can adjust the constants so that the above still holds with no dependence on $q$.

Combining \eqref{equation:rank-2-bound} and \eqref{equation:error-for-rank-2},
we find
\begin{align*}
	\frac{\#  \left( \sfespace d {\bz[1/2\ell]}(\mathbb F_q)- \smallrank \ell d q \right) }
	{ \# \sfespace d {\bz[1/2\ell]}(\mathbb F_q)} 
	&=
O_{\ell, d}\left(q^{\frac{-1}{1 + 3 \dim \o(\qsel {\bz_\ell} d {\bz[1/2\ell]}))}} \right).
\end{align*}
Further, the constant above does not depend on $\ell$ because the analytic rank, and hence the subset
$\smallrank \ell d q \subset \sfespace d {\bz[1/2\ell]}(\mathbb F_q)$
is independent of the auxiliary choice of $\ell$.
Now, (1) follows because 
\begin{align*}
\frac{-1}{1 + 3\dim \o(\qsel {\bz_\ell} d {\bz[1/2\ell]}))} = \frac{-1}{1 + \frac{3(12d-4)(12d-5)}{2}} 
= \frac{-1}{1 + 3(6d-2)(12d-5)}
= \frac{-1}{216d^2 -162d+31}.
\end{align*}

	Part (3) follows from the proceeding ones and 
	fact that, for elliptic curves of rank at most $1$ over $\F_q$ of characteristic $\geq 3$, we know on a full density (as $q \rightarrow \infty$) subset that algebraic rank equals analytic rank. For $\chr \F_q >3$ the statement holds for every elliptic curve of rank at most $1$, as explained in \cite[\S 3.8]{ulmer:elliptic-curves-and-analogies}, using the analogue of the Gross-Zagier formula in \cite[Theorem 1.2]{ulmer:geometric-non-vanishing}. If $\chr \F_q = 3$, it follows by combining \cite[\S 3.8]{ulmer:elliptic-curves-and-analogies} with the Gross-Zagier formula for everywhere semistable elliptic curves in \cite[Remark 1.5]{YZ19}. Note that there is an open subscheme $\sfespace d B \subset \smespace d B$ parameterizing those elliptic surfaces which have squarefree discriminant, so are everywhere semistable. This is fiberwise dense over $B$ by \cite[Lemma 3.14]{landesman:geometric-average-selmer}, so that in the large $q$ limit, a density $1$ subset of $\espace d B(\mathbb F_q)$
	corresponds to elliptic curves with everywhere semistable reduction.
\end{proof}

\begin{proof}[Proof of \autoref{theorem:large-q-distribution}]
	We will explain how the distribution of 
	$\ansel n d {\mathbb F_q}$ and $\algsel n d {\mathbb F_q},$ up to an error of $O_{n,d}(q^{\error d})$, are determined by the distributions of $\frob_x$ for $x \in  \smallrank \ell d q \subset \sfestack d {\mathbb F_q}(\mathbb F_q)$,
	as defined in \autoref{proposition:component-and-rank}.
	By definition, these distributions are determined by $\frob_x$ for
	$x \in \estack d {\mathbb F_q}(\mathbb F_q)$, so we only
	need justify why there are 
	$O_{n,d}(q^{\error d})$ points in 
$\sfestack d {\mathbb F_q}(\mathbb F_q)- \smallrank \ell d q$,

	To start, we explain why
	$\ansel n d {\mathbb F_q}$ and $\algsel n d {\mathbb F_q}$ agree with their restrictions 
	from $\estack d {\mathbb F_q}(\mathbb F_q)$	
	to $\sfestack d {\mathbb F_q}(\mathbb F_q)$, up to an error of $O_{n,d}(q^{-1/2})$.
	The argument here is analogous to that in \autoref{remark:distribution-definition-variations}.
	Indeed, the closed substack $\sfestack d {\mathbb F_q} - \estack d {\mathbb F_q} \subset \estack d {\mathbb F_q}$ has positive codimension. Hence, contributes at most $O_{n,d}(q^{-1/2})$
	to the distributions $\ansel n d {\mathbb F_q}$ and $\algsel n d {\mathbb F_q}$, as can be deduced from the Lang-Weil estimate and \cite[Lemma 5.3]{landesman:geometric-average-selmer}.

	We next explain how to relate the distribution of $\mono n d {\bz[1/2]}(\frob_x)$ over $x \in \sfestack d {\mathbb F_q}(\mathbb F_q)$ to $\ansel n d {\mathbb F_q}$ and $\algsel n d {\mathbb F_q}$.
	The key will be the following two results shown above.
	\begin{enumerate}[(i)]
		\item By \autoref{lemma:selmer-as-frob-fixed}, we have $\Sel_n(E_x) = \ker\left( \mono n d {\bz[1/2n]}(\frob_x) - \id |_{\left(\smestack d {\mathbb F_q} \right)_{x}}\right)$.
		\item By \autoref{proposition:component-and-rank}, there is a subset $\smallrank \ell d q \subset \sfestack d {\mathbb F_q}(\mathbb F_q)$ whose density is $1 + O_{d}(q^{\error d})$ for
			$q$ ranging over prime powers with $\gcd(q,2\ell)  = 1$
		such that
		\[
			\rk(E_x) = \anrk(E_x) = \delta_{\mono n d {\bz[1/2n]}(\frob_x) \notin \so(\qsel n d {\mathbb F_q})},
		\]
		where $\delta_{a \notin B} = 1$ if $a \notin B$ and $0$ if $a \in B$.
	\end{enumerate}
	The observation (i) then establishes \eqref{equation:selmer-error-estimate}.
	Combining (i) and (ii) with the preceding discussion, we have explained how the distribution of Frobenius elements determines the joint distributions
		$\ansel n d {\mathbb F_q}$ and $\algsel n d {\mathbb F_q}$,
		up to an error of $O_{n,d}(q^{\error d})$.
	By 
	\autoref{corollary:finite-field-distribution} and \autoref{corollary:selmer-equidistribution},
	up to an error of $O_{n,d}(q^{-1/2})$, the elements $\mono n d {\bz[1/2n]}(\frob_x)$ 
are equidistributed between the two cosets of $\Omega(\qsel n d k)$ given by 
\begin{align*}
\left( \dickson{\qsel n d  {\bz[1/2n]}}, \sp{\qsel n d {\bz[1/2n]}} \right) \in \left\{ \left( (0, \ldots, 0), [q^{d-1}] \right) , \left( (1, \ldots, 1), [q^{d-1}] \right)\right\}.
\end{align*}
This describes the distribution $\flr n d {\mathbb F_q}$ and hence yields 
\eqref{equation:error-estimate}, \eqref{equation:lim-sup} and \eqref{equation:lim-inf}.

	To conclude the proof we need justify the values of \eqref{equation:lim-sup} and \eqref{equation:lim-inf} agree when $d$ is odd or $n \leq 2$ but differ when $d$ is even and $n > 2$.
	Because these limits approach $\flr n d {\mathbb F_q}$, 
	it suffices to show $\flr n d {\mathbb F_q}$ is independent of $q$ when $d$ is odd or $n \leq 2$ but depends on $q$ when $d$ is even.
	When $d$ is odd, this follows from \autoref{definition:kernel-distribution} because the square class of $q^{d-1}$ is always trivial, hence independent of $q$.
	Also, when $n \leq 2$, this holds again by \autoref{definition:kernel-distribution} because the spinor norm is trivial.
	However, when $d$ is even and $n > 2$, the spinor norm is nontrivial, and $\flr n d {\mathbb F_q}$ will change depending on whether $q$ is a square or nonsquare.
	Indeed, when $q$ is a square, $\prob(\rvker n d {\mathbb F_q} = \left( \bz/n\bz \right)^{12d-4}) > 0$,
	corresponding to the case that $g = \id$ in \autoref{definition:kernel-distribution},
	while when $q$ is not a square, $\prob(\rvker n d {\mathbb F_q} = \left( \bz/n\bz \right)^{12d-4}) = 0$.
\end{proof}

\subsection{Comparing the random kernel model with the BKLPR heuristic}
\label{subsection:random-kernel-to-bklpr}

We now prove: 

\begin{theorem}\label{theorem:kernel vs bklpr}
The TV distance between the BKLPR heuristic and $\limsup\limits_{\substack{q \ra \infty}} \flr n d {\mathbb F_q}$ is $O(2^{-(6d-2)^2})$, where the implicit constant is absolute, and similarly for the TV distance between the BKLPR heuristic and $\liminf\limits_{\substack{q \ra \infty}} \flr n d {\mathbb F_q}$

In particular, we have
\begin{align*}
	\bklpr {n} & = \lim_{d \rightarrow \infty} \limsup\limits_{\substack{q \ra \infty}} \flr n d {\mathbb F_q} \\
	&= \lim_{d \rightarrow \infty}  \liminf\limits_{\substack{q \ra \infty}} \flr n d {\mathbb F_q}.
\end{align*}
\end{theorem}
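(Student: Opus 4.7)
The plan is to prove \autoref{theorem:kernel vs bklpr} by reducing to the prime case already handled in \autoref{cor: TV estimate for prime case} via the Chinese Remainder Theorem, and then propagating that prime bound to prime powers using the Markov properties established in \autoref{section:markov}. Write $n = \prod_{\ell \mid n} \ell^{e_\ell}$. Both distributions factor, conditional on rank, as independent products over primes $\ell \mid n$: on the random kernel side this is because the decomposition $(\vsel n d {\mathbb F_q}, \qsel n d {\mathbb F_q}) = \bigoplus_{\ell}(\vsel{\ell^{e_\ell}} d {\mathbb F_q}, \qsel{\ell^{e_\ell}} d {\mathbb F_q})$ gives $\o(\qsel n d {\mathbb F_q}) = \prod_\ell \o(\qsel{\ell^{e_\ell}} d {\mathbb F_q})$, with the sole constraint that the Dickson invariant (which governs the rank) be diagonal across factors; on the BKLPR side this factorization into $\ell$-primary parts is built directly into the construction of \autoref{subsubsection:bklpr-n-selmer}. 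Since TV distance is subadditive on product distributions, it suffices to prove that for each prime $\ell$, the TV distance between $\bklpr{\ell^{e_\ell}}$ and the corresponding limit of $\flr{\ell^{e_\ell}} d {\mathbb F_q}$ is $O(\ell^{-(6d-2)^2})$. Summing over primes yields the overall bound $O(2^{-(6d-2)^2})$, as $\sum_{\ell \geq 2} \ell^{-(6d-2)^2}$ is dominated by its first term.

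Now fix a prime $\ell$ and exponent $e \geq 1$. The isomorphism class of a finite abelian $\ell$-group $G$ of exponent dividing $\ell^e$ is encoded by the sequence $(d_1, \ldots, d_e)$, where $d_j := \dim_{\F_\ell}\bigl(\ell^{j-1}G[\ell^j]\bigr)$. Moreover, in both models the rank is a deterministic function of $d_1$: in the random kernel model this holds because the Dickson invariant equals $d_1 \bmod 2$, and in the BKLPR model the torsion part of $S$ carries a nondegenerate alternating pairing forcing its $\ell$-torsion to have even dimension. Hence the joint distribution of $(\text{rank}, \Sel_{\ell^e})$ is determined by the distribution of $(d_1, \ldots, d_e)$, and it suffices to bound the TV distance of the latter.

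This is where the Markov properties become crucial. By \autoref{cor: 1-eigenspace markov}, the sequence $(d_1, d_2, \ldots, d_e)$ in the random kernel model is Markov with transition kernel given by the dimension of the kernel of a uniformly random alternating form on $\F_\ell^{d_i}$. By \autoref{thm: random intersection markov}, the analogous sequence in the BKLPR model is Markov with the same transition kernel. Two Markov chains with identical transitions can be coupled so as to move in lockstep whenever they agree, so the TV distance between the distributions of the full trajectories $(d_1, \ldots, d_e)$ is bounded by the TV distance between the initial distributions of $d_1$. That initial distribution is precisely the $\ell$-Selmer distribution, whose TV distance to the BKLPR analogue is $O(\ell^{-(6d-2)^2})$ by \autoref{cor: TV estimate for prime case}. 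Combining with \autoref{theorem:large-q-distribution} to pass to $\limsup_{q\to\infty}$ and $\liminf_{q\to\infty}$, and then letting $d \to \infty$, yields the two claimed limit identifications.

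The main obstacle I anticipate lies in carefully handling edge cases in the Markov property: the transition kernel degenerates when $d_i$ equals the full rank $12d - 4$, and for $\ell = 2$ the image of the principal congruence subgroup in the Lie algebra of $\o(Q)$ has codimension one (see \autoref{subsubsection:ell=2}), so the transitions fail to be exactly uniform alternating forms in a measure-zero regime. Both issues are already absorbed into the statements of \autoref{thm: 1-eigenspace markov} and \autoref{thm: random intersection markov}, and any residual defect contributes events of probability at most $O(\ell^{-(6d-2)^2})$, which is swallowed by the final error term.
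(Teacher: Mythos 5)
Your proposal matches the paper's proof in both structure and all essential ingredients: reduce to prime powers via the product decomposition of $\o(\qsel n d k)$ and the built-in $\ell$-primary factorization in the BKLPR model, with the diagonal Dickson constraint handled by conditioning on rank; use subadditivity of TV distance on (conditional) products; pass from $\ell^e$ to $\ell$ via the matching Markov transition kernels of \autoref{cor: 1-eigenspace markov} and \autoref{thm: random intersection markov} (your lockstep-coupling phrasing makes this step a bit more explicit than the paper's ``agree conditioned upon agreeing for $\Sel_\ell$''); invoke \autoref{cor: TV estimate for prime case} at $\ell$; handle the $\ell=2$, $d_i=12d-4$ degeneracy exactly as the paper does, noting it occurs with probability $1/\#\mathrm{O}(12d-4,\F_\ell)$; and sum over $\ell \mid n$ to get $O(2^{-(6d-2)^2})$. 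The only small slip is your final appeal to \autoref{theorem:large-q-distribution} for passing to $\limsup_q$ and $\liminf_q$: that theorem compares $\flr n d {\mathbb F_q}$ with the arithmetic distributions $\algsel n d {\mathbb F_q}$ and plays no role here. The $\limsup$/$\liminf$ in $q$ only arise because $\flr n d {\mathbb F_q}$ depends on the square class of $q^{d-1}$; since the TV bound you derive holds uniformly over the finitely many possible cosets (cf.\ the argument at the end of the proof of \autoref{theorem:p-selmer-distribution}), it applies to both the $\limsup$ and $\liminf$ directly, and letting $d\to\infty$ gives the limit identities.
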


\begin{proof}
By \autoref{definition:kernel-distribution}, with probability one the rank is $0$ or $1$, and determined by whether the random $g$ in the random kernel model has Dickson invariant $0$ or $1$, respectively. Hence the rank component of these distributions is completely determined by the Selmer component, we can focus our attention on the Selmer component. 

Thanks to \autoref{cor: TV estimate for prime case}, we know that the TV distance between $\liminf_{q \to \infty} \dim \rvker \ell d {\mathbb F_q}$ and the BKLPR heuristic for $\Sel_{\ell}$ is $O(\ell^{-(6d-2)^2})$, and similarly for $\limsup_{q \to \infty}$ in place of $\liminf_{q \to \infty}$. The Markov properties \autoref{thm: 1-eigenspace markov} and \autoref{cor: 1-eigenspace markov} and \autoref{thm: random intersection markov} imply that for $\ell>2$, the two distributions for $\Sel_{\ell^e}$ agree conditioned upon them agreeing for $\Sel_{\ell}$. For $\ell=2$, the same is true as long as $d_1<12d-4$ where the notation $d_1$ is as in \autoref{thm: 1-eigenspace markov}, which only fails if $g$ reduces to the identity element in $\mrm{O}(12d-4, \F_{\ell})$. This happens with probability $1/\# \mrm{O}(12d-4, \F_{\ell})$, which is negligible compared to the error term we seek. We conclude that the TV distance between the two distributions for $\Sel_{\ell^e}$ is also $O(\ell^{-(6d-2)^2})$. 

Finally, we consider general $n$. For $n = \prod \ell^{a_{\ell}}$, the prime factorization of $n$, we have 
\[
\Sel_n \cong \oplus_{\ell} \Sel_{\ell^{a_{\ell}}}.
\]
The BKLPR heuristic predicts that the distributions of the $\Sel_{\ell^{a_{\ell}}}$ are independent after conditioning on the rank. If $(V,Q)$ is a quadratic form over $\bz/n\bz$ then note that $\Omega(Q) \simeq \prod_{\text{prime }\ell \mid n} \Omega(Q|_{\bz/\ell^{a_\ell} \bz}).$
Therefore, conditioned on each coset of $\Omega$ in $H_{\ell,k}^{d,i}$ the distributions $(\mrm{RSel}_{\ell^{a_{\ell}}}^{\mrm{kernel}})_{\F_q}^d$ are independent. 

Since the TV distance of two product distributions is the sum of the TV distance of the factors, the TV distance between the BKLPR heuristic and $\limsup\limits_{\substack{q \ra \infty}} \flr n d {\mathbb F_q}$ is
\[
\ll \sum_{\text{prime } \ell \mid n} \ell^{-(6d-2)^2} \ll \zeta((6d-2)^2)-1 \ll 2^{-(6d-2)^2}.\qedhere
\]
\end{proof}

We can now complete the proof of \autoref{theorem:main}. 

\begin{proof}[Proof of \autoref{theorem:main}]
This follows immediately from combining \autoref{theorem:large-q-distribution} and  \autoref{theorem:kernel vs bklpr}.
\end{proof}

\subsection{Remaining results}
\label{subsection:remaining-results}
We conclude by proving two remaining results, promised in the introduction.
First, we prove \autoref{corollary: minimalist-precise}, which is a version of \autoref{corollary: minimalist} with more precise error terms,
and then we prove \autoref{corollary:squarefree-moments-precise} which is a version of \autoref{corollary:squarefree-moments} with more precise error terms.

\begin{corollary}[Large $q$ analog of \protect{\cite[Conjecture 1.2]{poonenR:random-maximal-isotropic-subspaces-and-selmer-groups}}]
	\label{corollary: minimalist-precise}
	For fixed integers $d \geq 2$ and $n \geq 1$, and $q$ ranging over prime powers with $\gcd(q, 2n) = 1$, 
	we have
\begin{align}
	\prob(\rvrk n d {\mathbb F_q} = r) &=
		\begin{cases}
			1/2 + O_d(q^{\error d}) & \text{ if } r \leq 1, \\
			O_d(q^{\error d})  & \text{ if } r \geq 2.
		\end{cases} 
	\end{align}
		Furthermore, 
		\[
			\EE[\rvrk n d {\mathbb F_q}] = 1/2 +  O_d(q^{\error d}).
\]
\end{corollary}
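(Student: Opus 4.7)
The plan is to combine the rank bound of \autoref{proposition:component-and-rank} with the equidistribution in \autoref{corollary:selmer-equidistribution} and the monodromy determination of \autoref{theorem:monodromy}, reading off the rank of $E_x$ from the Dickson invariant (equivalently, the determinant) of $\mono{\bz_\ell}{d}{}(\frob_x)$.

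Fix any odd prime $\ell \neq \chr(\F_q)$. By \autoref{proposition:component-and-rank}(3), the locus $\smallrank{\ell}{d}{q} \subset \sfespace{d}{\bz[1/2n\ell]}(\F_q)$ of elliptic curves of algebraic rank $\le 1$ has density $1 + O_d(q^{\error d})$ in $\sfespace{d}{}(\F_q)$; passage between $\sfespace{d}{}$ and $\espace{d}{}$ (via Lang--Weil) and between the schemes and their quotient stacks (cf.\ \autoref{remark:distribution-definition-variations}) contributes only $O_d(q^{-1/2})$. Hence $\prob(\rvrk n d {\F_q} \ge 2) = O_d(q^{\error d})$. On the good locus, \autoref{proposition:component-and-rank}(2) identifies $\rk(E_x) = 0$ precisely when $\mono{\bz_\ell}{d}{\bz[1/2n\ell]}(\frob_x) \in \so(\qsel{\bz_\ell}{d}{})$, and $\rk(E_x) = 1$ otherwise.

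Since $\bz_\ell$ has characteristic zero, $\so$ on $\o(\qsel{\bz_\ell}{d}{})$ agrees with $\ker(\det)$, and hence with $\ker(\dickson{\qsel{\bz_\ell}{d}{}})$. By \autoref{theorem:monodromy}, $\dickson{\qsel{\bz_\ell}{d}{}}$ surjects from $\im \mono{\bz_\ell}{d}{\bz[1/2n\ell]}$---and already from the geometric monodromy $\im \mono{\bz_\ell}{d}{\overline{\bz[1/2n\ell]}}$---onto $\Delta_{\bz/2\bz}(\bz/2\bz) \cong \bz/2\bz$. Because $\dickson{}$ is a group homomorphism to a group of order $2$ and is already surjective on the geometric monodromy subgroup, its restriction to every coset of the geometric monodromy in the full monodromy is also surjective; consequently each such coset splits $50{:}50$ between trivial- and nontrivial-Dickson elements. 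Applying \autoref{corollary:selmer-equidistribution}, the Frobenius images are equidistributed in their coset with error $O_{n,d}(q^{-1/2})$, giving
\[
\prob(\rvrk n d {\F_q} = r) \;=\; \tfrac12 + O_{n,d}(q^{-1/2}) + O_d(q^{\error d}) \;=\; \tfrac12 + O_d(q^{\error d}) \qquad \text{for } r \in \{0,1\}.
\]

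For the expected value, a uniform bound suffices: $\rvrk n d {\F_q}$ is bounded above by the analytic rank (the ``easy'' inequality of BSD for function fields over finite fields, due to Tate), which in turn is at most $\rk_{\bz_\ell} \vsel{\bz_\ell}{d}{} = 12d-4$ by \autoref{lemma:generalized-eigenspace-is-rank}. Hence $\rvrk n d {\F_q} \le 12d-4$ deterministically, and
\[
\EE[\rvrk n d {\F_q}] \;=\; \prob(\rvrk n d {\F_q} = 1) + \sum_{r \ge 2} r \cdot \prob(\rvrk n d {\F_q} = r) \;=\; \tfrac12 + O_d(q^{\error d}),
\]
the tail being absorbed into the error. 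The main obstacle is purely organizational---all deep inputs (monodromy image, rank bound, Chebotarev-type equidistribution) are already established---leaving only the elementary observation that $\dickson{}$ is nontrivial on every Frobenius coset, which is immediate from \autoref{theorem:monodromy}.
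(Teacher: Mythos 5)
Your proof is correct and relies on essentially the same ingredients as the paper's, but it takes a genuinely different organizational route. The paper's proof of the probability statement is a one-liner: it cites the joint distribution estimate \eqref{equation:error-estimate} from \autoref{theorem:large-q-distribution} and sums over the finitely many groups $G$. You instead unfold the derivation of that estimate directly, working from \autoref{proposition:component-and-rank}, \autoref{theorem:monodromy}, and \autoref{corollary:selmer-equidistribution}, and reading off the rank from the Dickson invariant of $\mono{\bz_\ell}{d}{}(\frob_x)$. One advantage of your unfolding is that it makes transparent why the error constant is $O_d$ rather than $O_{n,d}$: since $\rvrk n d {\F_q}$ is independent of $n$, one may fix a single small auxiliary prime $\ell$; the paper's terse ``sum over $G$'' phrasing, taken at face value, would give only $O_{n,d}$ and leaves this observation implicit.

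For the uniform rank bound needed for the expectation, the two approaches genuinely diverge. The paper observes that the algebraic rank is bounded by the $\Z/n\Z$-corank of $\Sel_n$, which is uniformly bounded because the Selmer space is quasi-compact and quasi-finite over $\espace d {\F_q}$ and hence has uniformly bounded fiber degree (via \cite[Corollary 3.24]{landesman:geometric-average-selmer}). You instead invoke Tate's inequality $\rk(E) \leq \anrk(E)$ plus the bound $\anrk(E) \leq 12d-4$ from the degree of the $L$-function. Both work; yours is arguably cleaner conceptually, but carries one small imprecision of sourcing: \autoref{lemma:generalized-eigenspace-is-rank} applies only to $x \in \sfespace d {\mathbb F_p}(\F_q)$. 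For arbitrary $x \in \espace d {\F_q}(\F_q)$ the bound $\anrk(E_x) \leq 12d-4$ still holds because $\deg L(T, E_x) = \deg \mathfrak n_{E_x} - 4 \leq 12d - 4$, but this should be attributed to the degree formula rather than to the lemma. This is a citation adjustment, not a mathematical gap.
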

\begin{proof}	
	The first statement follows immediately from \eqref{equation:error-estimate}
	by summing over the set of possible groups $G$ which can appear.
	For the statement regarding average rank, we also need to know that there is a uniform bound
	on the rank of elliptic curves of height $d$ over $\mathbb F_q(t)$, only depending on $d$.
	This holds because the rank is bounded by the size of the Selmer group, which is uniformly bounded in $q$ among all elliptic curves of height $d$,
	as follows from \cite[Corollary 3.27]{landesman:geometric-average-selmer}, since the Selmer space $\sspace n d {\mathbb F_q}$ is quasi-compact and quasi-finite over
	$\espace d {\mathbb F_q}$
	and hence has uniformly bounded fiber degree.
\end{proof}

\begin{theorem}[Large $q$ analog of \protect{\cite[Conjecture 1.4]{poonenR:random-maximal-isotropic-subspaces-and-selmer-groups}}]
	\label{corollary:squarefree-moments-precise}
	Let $n$ be a squarefree positive integer, $d \geq 2$, and $\omega(n)$ be the number of prime factors of $n$.
	\begin{enumerate}
		\item Fix $c_\ell \in \bz_{\geq 0}$ for each prime $\ell \mid n$. Then
\begin{equation}
	\label{equation:poonen-rains-squarefree-prediction-precise}\Scale[0.8]{
\begin{aligned}
	&\lim_{d \ra \infty}\limsup_{\substack{q \ra \infty \\ \gcd(q,2n)=1}} \prob\left( \rvsel n d {\mathbb F_q} \simeq \prod_{\ell \mid n} \left( \bz/\ell \bz \right)^{c_\ell} \right) =\lim_{d \ra \infty}\liminf_{\substack{q \ra \infty \\ \gcd(q,2n)=1}} \prob\left( \rvsel n d {\mathbb F_q} \simeq \prod_{\ell \mid n} \left( \bz/\ell \bz \right)^{c_\ell} \right) \\
	&= 
	\begin{cases}
	2^{\omega(n)-1} \prod_{\ell \mid n} \left( \left( \prod_{j \geq 0} \left( 1-\ell^{-j} \right)^{-1} \right) \left( \prod_{j=1}^{c_\ell} \frac{\ell}{\ell^j-1} \right) \right)
		& \text{ if all $c_\ell$ have the same parity},  \\
		0 & \text{otherwise.}  \\
	\end{cases}
\end{aligned}}
\end{equation}
		\item For $q$ ranging over prime powers with $\gcd(q, 2n) = 1$, we have
		\[
			\EE[ \# \rvsel n d {\mathbb F_q}] = \sigma(n) +O_{n,d}(q^{-1/2}) :=\sum_{s \mid n} s + O_{n,d}(q^{-1/2}).
		\]
		\item For $m \leq 6d-3$ the $m$th moment of $\rvsel n d {\mathbb F_q}$ is 
			\[
				\EE[(\# \rvsel n d {\mathbb F_q})^m] = \prod_{\text{prime }\ell \mid n} \prod_{i=1}^m \left( \ell^i +1 \right) + O_{n,d,m}(q^{-1/2}) .
			\]
	\end{enumerate}
\end{theorem}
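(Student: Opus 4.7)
The plan is to deduce parts (2) and (3) from a moment computation via orbit counting on the orthogonal group, and to deduce part (1) from \autoref{theorem:main} combined with an explicit computation in the BKLPR model. I would address part (3) first, treating part (2) as its $m = 1$ specialization.

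For part (3), \autoref{theorem:large-q-distribution} gives
\[
\prob(\rvsel n d {\mathbb F_q} \simeq G) = \prob(\rvker n d {\mathbb F_q} = G) + O_{n,d}(q^{-1/2}).
\]
Since $\#\rvsel n d {\mathbb F_q}$ is uniformly bounded in $q$ (as argued in the proof of \autoref{corollary: minimalist-precise}, using \cite[Corollary 3.24]{landesman:geometric-average-selmer}), summing against $(\#G)^m$ over the finitely many possible $G$ preserves the error, giving
\[
\EE[(\#\rvsel n d {\mathbb F_q})^m] = \EE[(\#\rvker n d {\mathbb F_q})^m] + O_{n,d}(q^{-1/2}).
\]
For the right-hand side, observe that $(\#\ker(g - \id))^m = \#\{x \in (\vsel n d {\mathbb F_q})^m : g \cdot x = x\}$ for $g$ acting diagonally. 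A Burnside-type double-counting argument, extending the usual argument for subgroups to the union of cosets $H_{n,k}^d$ of $\Omega(\qsel n d {\mathbb F_q})$ and relying on \autoref{thm: orbit count}(2) (which shows the $\Omega$-orbits, $H_{n,k}^d$-orbits, and $\o$-orbits on $(\vsel n d {\mathbb F_q})^m$ all coincide for $m \leq 6d - 3$), then identifies the average with the number of $\o(\qsel n d {\mathbb F_q})$-orbits on $(\vsel n d {\mathbb F_q})^m$. By \autoref{thm: orbit count}(1), this orbit count equals $\prod_{\ell \mid n}\prod_{i=1}^m (\ell^i + 1)$. Part (2) is the specialization $m = 1$, using $\sigma(n) = \prod_{\ell \mid n}(1 + \ell)$ for squarefree $n$.

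For part (1), I would apply \autoref{theorem:main} to reduce the claim to computing the Selmer marginal of $\bklpr n$ at $G = \prod_{\ell \mid n}(\bz/\ell)^{c_\ell}$. By \autoref{definition:bklpr-rank-selmer} this marginal equals $\tfrac{1}{2}(\mathscr T_{0, \bz/n\bz}(G) + \mathscr T_{1, \bz/n\bz}(G))$, and \autoref{subsubsection:bklpr-n-selmer} factorizes each summand into a product over primes $\ell \mid n$ once $r$ is fixed. The key structural input is that in the BKLPR model, $\dim_{\F_\ell}\Sel_\ell^{\mrm{BKLPR}} \equiv r \pmod 2$, reflecting the alternating nondegenerate Cassels--Tate-type pairing on the Tate--Shafarevich component. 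Consequently the marginal vanishes unless all $c_\ell$ share a common parity $\epsilon$, in which case only $r = \epsilon$ contributes. Because the unconditional BKLPR rank is uniform on $\{0, 1\}$, conditioning on a fixed rank parity doubles each single-prime probability; taking the product of these doublings with the overall prefactor $\tfrac{1}{2}$ yields $2^{\omega(n) - 1}$. Substituting the large-$d$ limit of the Rudvalis--Shinoda formula from \autoref{theorem:fulman-stanton} for each $\prob(\Sel_\ell^{\mrm{BKLPR}} \simeq (\bz/\ell)^{c_\ell})$ produces the explicit expression in the statement.

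The principal obstacle lies in part (1): independence across primes in the BKLPR model only holds after conditioning on the rank, so one must carefully track how the global rank parity entangles the single-prime factors into a joint constraint on the vector $(c_\ell)_{\ell \mid n}$. Once this entanglement is resolved, the analysis reduces cleanly to the single-prime Rudvalis--Shinoda distribution established in \autoref{theorem:fulman-stanton}, and the combinatorial identity $\prod_{j=1}^c \frac{\ell}{\ell^j - 1} = \bigl(\ell^{c(c-1)/2}\prod_{j=1}^c (1-\ell^{-j})\bigr)^{-1}$ puts the answer into the desired form.
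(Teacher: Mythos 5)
Your proof is correct and follows essentially the same route as the paper: reduce parts (2)–(3) to moments of the random kernel model via \autoref{theorem:large-q-distribution}, compute those moments by Burnside's lemma plus the orbit counts of \autoref{thm: orbit count} (together with the CRT), and reduce part (1) via \autoref{theorem:main} to the explicit BKLPR formula, with the prime-case distribution supplied by the Rudvalis--Shinoda computation. The only cosmetic deviations are that you obtain part (2) as the $m=1$ case of part (3) rather than by the paper's separate component-counting argument, and in part (1) you substitute \autoref{theorem:fulman-stanton} (already identified with the Poonen--Rains formula in \autoref{cor: TV estimate for prime case}) where the paper cites Poonen--Rains directly; your explicit bookkeeping of the $2^{\omega(n)-1}$ factor from the shared rank parity, which the paper leaves implicit, is accurate.
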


\begin{proof}
	The first part follows from \autoref{theorem:main} once we establish that 
	$\mathrm{Sel}_{n}^{\mathrm{BKLPR}}$ has distribution as predicted in the bottom line of
	\eqref{equation:poonen-rains-squarefree-prediction-precise}.
	To see this, note that, by definition, the model $\mathrm{Sel}_{n}^{\mathrm{BKLPR}}$ is determined by the models for
	$\mathrm{Sel}_{\ell}^{\mathrm{BKLPR}}$ with $\ell \mid n$ which are independent, except for the constraint that the parities of their $\bz/\ell\bz$ ranks are all equal.
	Hence, it suffices to establish the first part in the case $n= \ell$ is prime. 	Note that the model
	$\mathrm{Sel}_{\ell}^{\mathrm{BKLPR}}$ agrees with the model for $\ell$-Selmer groups
	defined in \cite[Definition 2.9]{poonenR:random-maximal-isotropic-subspaces-and-selmer-groups}
	by \cite[Theorem 2.19(f)]{poonenR:random-maximal-isotropic-subspaces-and-selmer-groups}.
	Therefore, in the case $n = \ell$ is prime, $\mathrm{Sel}_{\ell}^{\mathrm{BKLPR}}$ has distribution as predicted in the bottom line of
	\eqref{equation:poonen-rains-squarefree-prediction-precise} by 
	\cite[Proposition 2.6(d) and (f)]{poonenR:random-maximal-isotropic-subspaces-and-selmer-groups}.

	Note that (2) is the special case of (3) with $m = 1$, so it suffices to prove (3).
	To simplify notation in the ensuing proof, we use $\smsspace n {d,m} {\mathbb F_q}$ to denote $\underbrace{\smsspace n d {\mathbb F_q} \times_{\smespace d {\mathbb F_q}} \cdots \times_{\smespace d {\mathbb F_q}} \smsspace n d {\mathbb F_q}}_{m \text{ times}}$ and
$\sspace n {d,m} {\mathbb F_q}$ to denote $\underbrace{\sspace n d {\mathbb F_q} \times_{\espace d {\mathbb F_q}} \cdots \times_{\espace d {\mathbb F_q}} \sspace n d {\mathbb F_q}}_{m \text{ times}}$
	To establish parts (2) and (3), 
	we claim it is equivalent to show 
	$\lim_{\substack{q \ra \infty \\ \gcd(q,2n)=1}}  \frac{\# \smsspace n {d,m} {\mathbb F_q} (\mathbb F_q)}{\#\smespace d {\mathbb F_q}(\mathbb F_q)}$
	has values as given by the right hand sides of (2) and (3).
	To show this is the case, it is enough to show that both $\#\smespace d {\mathbb F_q}(\mathbb F_q)$ is within a factor of $1 + O_{n,d,m}(q^{-1/2})$ of the total number of height $d$ elliptic curves and
$\# \smsspace n {d,m} {\mathbb F_q}(\mathbb F_q)$
is within a factor of $1 + O_{n,d,m}(q^{-1/2})$ of the sum of $\#\sel_n(E)^m$ over all height $d$ elliptic curves.
First, $\#\smespace d {\mathbb F_q}(\mathbb F_q)$ certainly furnishes a lower bound for the size of the set of all elliptic curves of height $d$,
while $\espace d {\mathbb F_q}(\mathbb F_q)$ furnishes an upper bound (it is only an upper bound because it includes non-minimal smooth elliptic curves).
Next, using \autoref{lemma:selmer-fiber-estimate} to compare $\# \sel_n(E)$ to $\# H^1(\mathbb P^1, \mathscr E^0[n])$, for $E$ with smooth Weierstrass model, we find that 
$\# \smsspace n {d,m}{\mathbb F_q} (\mathbb F_q)$
indeed furnishes a lower bound for the sum of $\#\sel_n(E)^m$ over all height $d$ elliptic curves. 

Finally, 
to reduce to computing $\lim_{\substack{q \ra \infty \\ \gcd(q,2n)=1}}  \frac{\# \smsspace n {d,m} {\mathbb F_q} (\mathbb F_q)}{\#\smespace d {\mathbb F_q}(\mathbb F_q)}$
for (3),
we wish to show that up to a factor of $1 + O_{n,d,m}(q^{-1/2})$, 
$\# \smsspace n {d,m}{\mathbb F_q} (\mathbb F_q)$ also furnishes an upper bound for 
the sum of $\#\sel_n(E)^m$ over all height $d$ elliptic curves. 
Since $\sspace n d {\mathbb F_q} \to \espace d {\mathbb F_q}$ is \'etale and quasi-finite,
and $\smsspace n d {\mathbb F_q}$ constitutes a dense open in $\sspace n d {\mathbb F_q}$, it follows that 
$\smsspace n {d,m} {\mathbb F_q}$ 
constitutes a dense open in the maximal dimensional components of
$\sspace n {d,m} {\mathbb F_q}$.
Therefore, 
$\#\sspace n {d,m} {\mathbb F_q}(\mathbb F_q) - \#\smsspace n {d,m} {\mathbb F_q}$
is bounded by $O_{n,d,m}(q^{-1/2})$, using the Lang-Weil estimates.
The difference
$\#\sspace n {d,m} {\mathbb F_q}(\mathbb F_q) - \#\smsspace n {d,m} {\mathbb F_q}$
is not necessarily an upper bound for the sum of 
$\#\sel_n(E)^m$. However, as shown in
\cite[Corollary 3.27]{landesman:geometric-average-selmer},
it is an upper bound for the sum over all height $d$ elliptic curves of $\#(n^2 \cdot \sel_n(E))^m$.

To conclude, it remains to determine $\lim_{\substack{q \ra \infty \\ \gcd(q,2n)=1}}  \frac{\# \smsspace n {d,m} {\mathbb F_q} (\mathbb F_q)}{\#\smespace d {\mathbb F_q}(\mathbb F_q)}$.
%
Using the Lang-Weil estimates as in \cite[Lemma 5.1]{landesman:geometric-average-selmer}, it is enough to compute the number of geometrically irreducible components of 
$\# \smsspace n {d,m} {\mathbb F_q}$.
	Now, Part (3) follows from Burnside's lemma for the action of $\im \mono n d k$ acting diagonally on $(\vsel n d k)^m$, which we claim has a total of
	$\prod_{\ell \mid n}\prod_{i=1}^m \left( \ell^i +1 \right)$
	orbits.
	Note that $\Omega(\qsel n d k) \subset \im \mono n d k \subset \o(\qsel n d k)$,
	so it suffices to show both
	$\Omega(\qsel n d k)$ and $\o(\qsel n d k)$ have 
$\prod_{\ell \mid n}\prod_{i=1}^m \left( \ell^i +1 \right)$ orbits on $(\vsel n d k)^m$.
This follows from \autoref{theorem:fulman-stanton} and \autoref{lemma:same-orbits}, together with the Chinese remainder theorem to bootstrap this latter result from primes to squarefree integers.
\end{proof}

\subsection*{Conflict of interest}
On behalf of all authors, the corresponding author states that there is no conflict of interest.
\subsection*{Data availability}
Data sharing not applicable to this article as no datasets were generated or analysed during the current study.
		
\bibliographystyle{alpha}
\bibliography{bibliography}

\end{document}